\documentclass{article}
\usepackage{graphicx}
\usepackage{fullpage}
\usepackage{placeins}

\usepackage{amsthm}
\usepackage{amsmath}
\usepackage{amssymb}
\usepackage{amsfonts}
\usepackage{bm}
\usepackage{url}
\usepackage[top=3cm,bottom=3cm,left=2.5cm,right=2.5cm]{geometry}
\usepackage{enumitem}
\usepackage[dvipsnames]{xcolor}
\usepackage{graphicx}
\usepackage{multirow}
\usepackage{booktabs}
\usepackage{soul}
\usepackage{subfigure}
\usepackage{rotating}
\usepackage{longtable}
\usepackage{dsfont}
\usepackage[normalem]{ulem}
\usepackage{xparse}
\usepackage{algorithm}
\usepackage{algpseudocode}
\usepackage{booktabs}
\usepackage{hyperref}
\usepackage{cleveref}
\usepackage{authblk}
\usepackage{array}
\usepackage{cases}

\NewDocumentCommand{\cg}{ O{L} O{L_1} O{L_2} m m m }{C^{#1 #4}_{#2 #5 #3 #6}}
\newcommand{\vrho}[2]{\varrho^{#1}_{#2}}

\newcommand{\rep}[2]{\rho^{#1}_{#2}}
\newcommand{\blamb}[2]{\bm\lambda^{#1}_{#2}}
\newcommand{\bphi}[2]{\bar\phi^{#1}_{#2}}
\newcommand{\bpsi}[2]{\bar\psi^{#1}_{#2}}
\newcommand{\bV}{{\bar V}}

\newcommand{\dd}{{d}}
\newcommand{\card}[1]{|{#1}|}
\newcommand{\dimge}[2]{\dim(V^{#1,#2})}
\newcommand{\dimpi}[2]{\dim(\bV^{#1,#2})}
\newcommand{\dimgeest}[2]{\dim^{\textup{est}}(V^{#1,#2})}
\newcommand{\dimpiest}[2]{\dim^{\textup{est}}(\bV^{#1,#2})}

\newtheorem{theorem}{Theorem}[section]
\newtheorem{proposition}[theorem]{Proposition}

\newtheorem{definition}[theorem]{Definition}
\newtheorem{corollary}[theorem]{Corollary}
\newtheorem{remark}[theorem]{Remark}
\newtheorem{example}[theorem]{Example}
\newtheorem{lemma}[theorem]{Lemma}
\newtheorem{conjecture}[theorem]{Conjecture}

\numberwithin{equation}{section}

\def\N{\mathbb{N}}
\def\K{\mathbb{K}}
\def\Z{\mathbb{Z}}
\def\R{\mathbb{R}}
\def\C{\mathbb{C}}
\def\F{\mathbb{F}}

\def\Nb{{N_{\rm blocks}}}
\def\bc{{\bm c}}

\def\br{{\bm r}}
\def\bk{{\bm k}}
\def\bl{{\bm l}}
\def\bbm{{\bm m}}

\def\bmu{{\bm \mu}}
\def\bn{{\bm n}}

\def\bL{{\bm L}}

\def\bR{{\bm R}}

\def\obm{{\overline\bbm}}

\def\calN{\mathcal{N}}

\def\calM{\mathcal{M}}

\def\calL{\mathcal{L}}
\def\calS{\mathcal{S}}
\def\calLG{\mathcal{\calL_{\rm G}}}

\def\oMl{\overline{\calM}_\bl}
\def\oMlK{\overline{\calM}_{\bl,K}}

\def\MlL{{M}}

\def\<{\langle}
\def\>{\rangle}

\def\Nat{{N}}

\def\dim{{{\rm dim}}}
\def\Mpm{M^{\pm}}
\def\Mp{M^{+}}
\def\Mm{M^{-}}
\def\Apm{A^{\pm}}
\def\Ap{A^{+}}
\def\Am{A^{-}}
\def\Bpm{B^{\pm}}
\def\Bp{B^{+}}
\def\Bm{B^{-}}
\def\Mup{{\MlL}^{\textup{up}}}
\def\Mlow{{\MlL}^{\textup{down}}}

\title{Efficient construction and explicit dimensionality of Lie group-equivariant and permutation-invariant spaces}

\author[1]{Eloïse Barthelemy}
\affil[1]{Université Marie et Louis Pasteur, CNRS, LmB (UMR 6623), F-25000 Besançon, France}

\author[1]{Genevi\`eve Dusson\footnote{genevieve.dusson@math.cnrs.fr}}

\author[1]{Camille Hernandez}
  
\author[2]{Liwei Zhang}
\affil[2]{Institut f\"ur Geometrie und Praktische Mathematik, RWTH Aachen University, Im Süsterfeld 2, 52072 Aachen, Germany }

\date{}

\begin{document}

\maketitle

\begin{abstract}
We introduce a practical construction of group-equivariant and 
permutation-invariant functions of $\Nat$ variables 
given a finite-dimensional space stable with respect to the group action.
The construction applies to any connected linear Lie group
and relies on leveraging the Lie algebra to build a matrix $M$
whose kernel is in one-to-one correspondence with the subspace with desired equivariance and invariance properties, removing the need for prior knowledge of Clebsch--Gordan coefficients. 
A similar construction is proposed for group-equivariant functions alone, without imposing permutation-invariance.
For the groups $SO(3)$ and $SU(2)$, we further exploit the structure of the Lie algebra to demonstrate the sparsity pattern and rank of the matrix $M$, which yields the exact dimension of the group-equivariant and permutation-invariant space, as well as the dimension of the group-equivariant space alone.
We demonstrate analytically and verify numerically that the proposed method scales linearly with respect to the dimensionality of the basis, 
offering a high computational gain compared to existing methods in the literature which typically scale exponentially.
We finally perform a dimensionality comparison, showing that for large values of~$\Nat$, the dimension of group-equivariant and permutation-invariant spaces is of comparable order as the dimension of permutation-invariant spaces, while pre-asymptotically,
the first dimensionality is orders of magnitude lower than the second. Hence a substantial computational gain can be achieved by explicitly enforcing group-equivariance on top of permutation-invariance when approximating such functions.
\end{abstract}

\vspace{1em}
\noindent\textbf{Keywords} Permutation-invariance,  Lie group-equivariance, Generalized Clebsch-Gordan coefficients, Numerical linear algebra
\\

\noindent\textbf{Mathematics Subject Classification} 65D15, 65Y20, 46N50, 81R05

\tableofcontents

\section{Introduction}

In this work, we are interested in $\Nat \in \N = \{1,2,\ldots\}$ variable functions satisfying either an equivariance property with respect to some Lie group action (group-equivariance; GE), or an invariance with respect to the permutation of the variables on top of the Lie group equivariance (group-equivariance and permutation-invariance; GE-PI). 
Such functions are fundamental across diverse scientific fields, from physics and chemistry to materials science. They are especially crucial for studying interacting particle systems, where permutation-invariance arises from considering identical particles.
Often Lie group equivariance naturally arises from the physical setting, e.g. $O(3)$ in atomistic modeling~\cite{Batatia2022-zi}, $SO^+(1,3)$ in high-energy physics~\cite{Munoz2022-ur}, or $SU(n)$ in quantum mechanics~\cite{Lagrave2021-cw,Favoni2022-zb}.
When approximating such functions, constructing a parametrization that respects the inherent symmetries is essential. 
This approach typically reduces the number of required parameters, making the approximation more efficient~\cite{Bachmayr2024-pv}.
As a toy example, the dimension of the space of $N$-variable polynomials with 
given total degree is asymptotically $\Nat!$ times larger than the corresponding number of totally symmetric polynomials.
While permutation-invariant functions are well-understood and can be efficiently constructed with symmetric polynomials and variations thereof, 
efficiently constructing functions satisfying both permutation-invariance and group-equivariance is in general difficult; this is the main motivation for this article.

To do so, we consider a  connected linear
Lie group $G$ with neutral element $e$ and Lie algebra $\mathfrak g$.
We then consider  a vector space
$\F$
over a field $\K$, typically $\R$ or $\C$ in the examples below, $\rho$ a continuous representation of $G$ of dimension $K$ on $\K$, $\Omega$ a spatial domain, and $\cdot$ a group action for $G$ on $\Omega$:
\[
    \left\{ 
    \begin{array}{lc}
        \forall \br\in \Omega, g\in G,  &  g\cdot \br \in \Omega, \\
       \forall \br\in \Omega,  & e \cdot \br = \br, \\
       \forall g_1,g_2\in G, & 
       (g_1 g_2) \cdot \br = g_1 \cdot (g_2 \cdot \br).
    \end{array}
    \right.
\]
We define the corresponding group action over $\Omega^\Nat$, which is still denoted by $\cdot$, 
as 
\[
    \forall \bR = (\br_1,\ldots,\br_\Nat) \in \Omega^\Nat, \; 
     g \in G, \quad 
    g \cdot \bR = 
    (g \cdot \br_1, 
    \ldots, 
    g \cdot \br_\Nat).
\]
GE  and GE-PI functions are therefore defined as follows.
\begin{definition}[Group-equivariant (GE) function]
    A function $F: \Omega^\Nat \rightarrow \F^K$ is group-equivariant for the representation $\rho$ if 
\begin{equation}\label{eq:GE}
  \forall \bR \in \Omega^\Nat, \;  g \in G,  \quad   F(g \cdot \bR) = \rho(g) \; F(\bR).
\end{equation}
\end{definition}

\begin{definition}[Group-equivariant and permutation-invariant (GE-PI) function]
\label{def:GEPI}
A function $F: \Omega^\Nat \rightarrow \F^K$ is group-equivariant for the representation $\rho$
and permutation-invariant if
\begin{equation}\label{eq:GE-PI}
  \forall \bR \in \Omega^\Nat, \; g \in G, \; \sigma \in S_\Nat, \quad   F(g \cdot \bR_\sigma) = \rho(g) \; F(\bR),
\end{equation}
where $S_\Nat$ denotes the symmetric group of degree $\Nat$, and
\[
    \forall \bR = (\br_1,\ldots,\br_\Nat) \in \Omega^\Nat, 
    \quad 
    \bR_\sigma = (\br_{\sigma_1},\ldots,\br_{\sigma_\Nat}).
\]
\end{definition}

In some contexts, e.g., for functions defined on multisets, the permutation invariance of the functions is hidden in the domain of definition. Precisely, denoting by $\textup{MS}(\Omega)$ the union of multisets of $\Omega$ with arbitrary multiplicity, that is 
\[
    \textup{MS}(\Omega) 
    = \bigcup_{n=1}^{+\infty} 
    \Big\{ 
    \left\{ 
    \br_1, \br_2, \ldots, \br_n
    \right\}, 
    \br_1,\br_2,\ldots, \br_n \in \Omega
    \Big\},
\]
a function $F:\textup{MS}(\Omega)\to\F^K$ satisfying
\[
    \forall \bR \in \textup{MS}(\Omega), \;  g \in G,  \quad   F(g \cdot \bR) = \rho(g) \; F(\bR),
\]
for the representation $\rho$ with dimension $K$ is also said to be GE-PI.

The aim of this manuscript is therefore the following: \textit{Given a finite-dimensional space that is stable with respect to the group action, efficiently determine the subspace of GE or GE-PI functions in this space, and provide its dimension.}

To begin with, we consider one-variable function spaces, sometimes also called one-particle function spaces~\cite{Dusson2022-ie,Batatia2023-us}, stable with respect to the group action, and denoted by
\[
    V^l =  {\rm Span}\{ \phi^l_m, \; m\in  \calM_l \}, 
\]
 where the index $l$ lies in an index set $\calL$ which is typically countable and totally ordered (e.g. $\N_0 = \{0,1,\ldots\}$ for the group $SO(3)$, see Section~\ref{sec:compatible_bases} for more examples). Here $\calM_l$ denotes a finite totally ordered index set depending on $l$, typically $\{ 1,2, \ldots, \card{\calM_l} \}$. The functions $\phi^l_m:\Omega \rightarrow \F$ are defined on a spatial domain $\Omega$ in a space endowed with a scalar product denoted by $\langle \cdot, \cdot \rangle$, typically $\langle \cdot, \cdot \rangle_{L^2(\Omega)}$,
and
are assumed for simplicity to be orthonormal, i.e.,
\begin{equation}
\forall l,l' \in \calL, \;
     m,m' \in \calM_l,
\quad 
\langle\phi^l_m,\phi^{l'}_{m'}\rangle
=\delta_{ll'}\delta_{mm'}.
\label{eq:orthogonality}
\end{equation}
The space $V^l$ being stable with respect to the group action means that
\[
    \forall g \in G,
    \quad \br \mapsto \phi^l_m(g \cdot \br) \in V^l.
\]
This implies that there exists a representation of $G$ of dimension $\card{\calM_l}$, denoted by $\rep{l}{}$, such that
\begin{equation}\label{eq:1b-basis}
   \forall g\in G, \;  \br \in \Omega, \quad  \phi^l_m(g \cdot \br) = \sum_{m' \in \calM_l} \rep{l}{m,m'}(g) \phi^l_{m'}(\br).
\end{equation}
Indeed, by the uniqueness of the coordinates in the basis 
$\{ \phi^l_m \}_{m\in \calM_l}$, we have
$\rep{l}{}(e) = I_{\calM_l}$, and 
\[
    \forall g_1,g_2\in G, \quad \rep{l}{}(g_1 g_2) = \rep{l}{}(g_1) \rep{l}{}(g_2).
\]
We call $\{\phi^l_m\}_{m\in\calM_l}$ a compatible basis of $V^l$ for the representation $\rep{l}{}$. 

We then consider for $\bl \in \calL^\Nat$ an $\Nat$-variable function space as the
tensor product space 
\[
    V^{\bl} = \bigotimes_{i=1}^\Nat V^{l_i},
\]
which due to~\eqref{eq:1b-basis} is stable by group action. 
An orthonormal basis for $V^{\bl}$ is 
\begin{equation}
\label{eq:calMbl}
       \left\{  \phi^\bl_\bbm := \prod_{i=1}^\Nat \phi^{l_i}_{m_i}, \quad \bbm \in \calM_\bl \right\}, \quad  \calM_\bl := \bigotimes_{i=1}^\Nat \calM_{l_i},
\end{equation}
where the corresponding scalar product on $V := \bigoplus_{\bl\in\calL^\Nat} V^{\bl}$ is defined as 
\[
  \forall  \phi^\bl_\bbm, \; \phi^{\bl'}_{\bbm'} \in V,
  \quad \langle \phi^\bl_\bbm, \phi^{\bl'}_{\bbm'} \rangle 
    = \prod_{i=1}^\Nat \langle \phi^{l_i}_{m_i}, \phi^{l'_i}_{m'_i} \rangle =\delta_{\bl\bl'}\delta_{\bbm\bbm'}.
\]
To characterize GE (respectively GE-PI) functions whose components are in $V^{\bl}$, we derive a linear system based on~\eqref{eq:GE} or~\eqref{eq:GE-PI} that has to be satisfied for any element of the group $G$, similarly to~\cite{Batatia2023-us}. 
Differentiating this linear system and evaluating it at the generators of the Lie algebra $\mathfrak g$ gives rise to another, way simpler linear system $M \bc = 0$: 
a basis of the kernel of the matrix $M$ is in one-to-one correspondence with a basis of the space of GE (resp. GE-PI) functions having components in $V^{\bl}$.
Indeed since $G$ is a  connected linear Lie group, the continuous representations $\rep{l}{}$ are differentiable and the derivative of $\rep{l}{}$ at the neutral element $e\in G$ is defined as (see~\cite[Theorem 3.28]{Hall2013-af})
\begin{equation}\label{eq:rep-deriv}
    \forall X\in\mathfrak g,\quad  d\rep{l}{}(X) = \left.\frac{d}{dt}\right|_{t=0} \rep{l}{}(\exp(t  X)).
\end{equation}
The linear system therefore involves terms depending on 
the derivatives of $\rep{l}{}$ at a set of generators of the Lie algebra $X_\dd\in \mathfrak g$ 
for $l \in \calL$ and $d = 1, \ldots, N_{\rm{dim}}$, denoted by
\[
    \vrho{l,\dd}{}=[\vrho{l,\dd}{m,m'}]_{m,m' \in \calM_l}:= [d\rep{l}{}(X_\dd)]_{m,m' \in \calM_l}.
\]
We then prove that reciprocally, if the linear system $M\bc =0$ is satisfied, the corresponding functions are GE or GE-PI according to the setting, guaranteeing that a basis of GE or GE-PI space is obtained. 
Moreover, the matrix $M$ is usually very sparse (c.f. equations~\eqref{eq:M_GE} and~\eqref{eq:M_GE_PI}), making the computation of its kernel inexpensive. In particular, a major numerical gain can be expected for the GE-PI case, as the matrix is expressed independently of permutations, which significantly reduces the computational cost compared to existing literature as detailed below.

\paragraph{Main contributions} The main results of this article are the following.
\begin{enumerate}
    \item We provide in Proposition~\ref{prop:GE_basis} sufficient and necessary conditions to obtain a basis of GE functions with components in a given finite-dimensional space stable with respect to the group action.
    \item We present in Theorem~\ref{thm:GEPI_basis} similar conditions on the more challenging case of GE-PI functions which completely avoids explicitly symmetrizing over permutations.
    \item We propose an alternative recursive construction for GE and GE-PI spaces in Propositions~\ref{prop:GE_rec} and~\ref{prop:GEPI_rec}, the latter 
    similarly
    avoiding an explicit symmetrization over permutations.
    \item For the groups $SU(2)$ and $SO(3)$, we simplify the linear systems of  Proposition~\ref{prop:GE_basis} and Theorem~\ref{thm:GEPI_basis}. We show that the corresponding matrix is extremely sparse and characterize the dimensionality of the kernel a priori, providing an explicit dimensionality of GE and GE-PI spaces in Proposition~\ref{prop:ge-su2} and Theorem~\ref{thm:gepi-su2}.
    We also provide asymptotic dimensionalities in Proposition~\ref{prop:asym-dim-ge}, and Proposition~\ref{prop:asym-dim-gepi}.
    \item We carry out a theoretical complexity analysis showing that the numerical method of constructing GE-PI bases scales linearly with respect to the number of basis functions in Section~\ref{sec:efficiency}, overcoming the exponential complexity of existing methods, and we numerically observe  the linear complexity.
    \item We benchmark our method against several packages from the literature (see Section~\ref{sec:num}) and show that the proposed method is substantially faster and can deal with a large number of variables $N$ where other methods are too expensive to even run the calculations.
    \item Finally, we provide a comparison between the dimensionalities of non-symmetric, GE, PI, and GE-PI spaces in Section~\ref{sec:dimensionalities}.
\end{enumerate}

\paragraph{Prior work}

GE and GE-PI functions are 
used in different contexts. Related to the groups $SO(3)$ and $O(3)$, such functions appear in e.g. the construction of interatomic potentials~\cite{Schutt2018-dw, Drautz2019-mn, Batzner2022-az, Batatia2022-zi}, Hamiltonians~\cite{Zhang2022-ay,Li2022-cj,Zhong2023-ar,Suman2025-qz,Qian2025-du,gong2023general}, density matrices~\cite{Wetherell2020-us,Hazra2024-cy,Zhang2025-ve}, wavefunctions~\cite{zhou2024-wf,unke2021se}, friction tensors in Langevin heatbaths~\cite{Sachs2025}, or protein structure~\cite{lu2024dynamicbind,eismann2021hierarchical}. 
GE-PI functions related to $SO^+(1,3)$ are used for the classification of high-energy particles and the regression of their properties (e.g. mass)~\cite{Bogatskiy2020-jt,Munoz2022-ur,Bogatskiy2024-oc}.
The corresponding proposed parametrizations vary across the literature, and are typically either linear in a chosen symmetry-adapted basis~\cite{Munoz2022-ur,Witt2023-rm,Sachs2025,Zhang2025-ve}, or rely on the use of equivariant neural networks~\cite{Batatia2022-zi,geiger2022e3nn,lu2024dynamicbind}.

Most of the works cited above build on the construction of so-called many-body equivariant features, which consist of linearly independent basis functions equivariant 
for specific group representations.
When considering functions of $\Nat$ variables satisfying GE but not PI, linearly independent equivariant features correspond to so-called generalized Clebsch--Gordan coefficients, already defined in 1962~\cite[Chapter 2]{Yutsis1962-wv}. A practical method for computing these coefficients was, e.g., proposed in~\cite{Dusson2022-ie} within the framework of the Atomic Cluster Expansion (ACE).
However, an extra difficulty arises when permutation-invariance is enforced, as the systematic construction of generalized Clebsch--Gordan coefficients in this case typically involves evaluating a Gramian matrix to remove linear dependencies, 
which requires performing a sum over permutations leading to an exploding cost when the number of variables $\Nat$ grows~\cite{Dusson2022-ie}. Alternative methods have been proposed such as~\cite{Nigam2020-zj,Batatia2022-zi,
Goff2024-sw,geiger2022e3nn,geiger2024accelerate} for groups like $SO(3)/SU(2)$ and~\cite{Batatia2023-us} in the larger framework of reductive Lie groups but to the best of our knowledge, the computational cost always scales exponentially with respect to the space dimension and therefore the construction becomes costly and even unfeasible for large values of $\Nat$. 
Note that, compared to other methods, such as~\cite{Nigam2020-zj,Dusson2022-ie}, we do not need Clebsch--Gordan coefficients which characterize 2-variable GE functions as a prerequisite to compute generalized Clebsch--Gordan coefficients. In the numerical Section~\ref{sec:num} we benchmark our method against four of the existing packages mentioned above~\cite{geiger2022e3nn,Batatia2022-zi,Batatia2023-us,geiger2024accelerate}.

Regarding the explicit dimensionalities obtained in this work for GE and GE-PI in the case of rotation groups, previous results were obtained by one of the authors for GE spaces in~\cite{Dusson2022-ie} in a less explicit way, see Remark~\ref{rem:ace}. For GE-PI functions, few results on the dimensionality are presented in~\cite{Zhao2005-zx,Talmi2005-gu,Pain2019-ft} for specific representations, but without the construction of the corresponding GE-PI basis, mostly relying on characters.
As a direct outcome of exact dimensionality and dimensionality estimates, we can compare the cardinality
of bases with different symmetries (no symmetry, PI, GE, GE-PI), complementing the asymptotic results presented in~\cite{Bachmayr2024-pv} restricted to the PI case.
Indeed asymptotically the number of PI basis functions is comparable to the number of GE-PI basis functions while preasymptotically, which is of practical interest, the respective numbers of PI and GE basis functions are much larger than the number of GE-PI basis functions.
This is 
particularly useful to contribute to the growing discussion whether known symmetries need to be put in the model as a constraint or if data-augmentation is sufficient and can be used as a replacement. For example, for $SO(3)$ the work~\cite{Bigi2026-yx} discusses how symmetry-unconstrained models can still be accurate, while the article~\cite{Klinteback2026-xg} shows the high cost of data augmentation. There is a similar discussion for the Lorentz group in~\cite{Li2024-bm}.

\paragraph{Outline}

The outline of the article is as follows.
In Section~\ref{sec:equivariant_lie_algebra} we present two methods for obtaining a basis of GE and GE-PI functions: a direct and a recursive one. We apply the method to $SO(3)$ and $SU(2)$ groups and extend it to $O(3)$, and provide explicit dimensionality as well as asymptotics in this case in Section~\ref{sec:appl}. We then provide numerical results in Section~\ref{sec:num}, showcasing that the proposed method is more efficient than existing methods from the literature. 
We also provide a complexity analysis and comparison of GE and GE-PI dimensionalities. 
Finally we present concluding remarks in Section~\ref{sec:conclusion}
and gather the proofs of the main results in Section~\ref{sec:proofs}.

\section{Generating equivariant bases using the Lie Algebra}
\label{sec:equivariant_lie_algebra}

In this section we determine group-equivariant basis functions (both GE and GE-PI)
via the kernel of a specific matrix 
based on the Lie algebra of the group.

\subsection{Group-equivariant basis}

In the following we provide necessary and sufficient conditions for characterizing a basis of GE functions for a representation $\rep{L}{}$ in the space $[V^\bl]^{\dim(\rep{L}{})}$ for a given $\bl\in\calL^\Nat$.
Unless otherwise specified, we assume $L\in\calL$, so that $\calM_L$ is well-defined and $\dim(\rep{L}{})=\card{\calM_L}$. We denote the space of desired GE functions by $V^{\bl,L}$, and characterize a basis of this space in the following proposition.

\begin{proposition}[Basis of the GE space $V^{\bl,L}$]
    \label{prop:GE_basis}
    Let $\bl\in\calL^\Nat$, $L\in\calL$, and let
    \begin{equation}
     \label{eq:matM}
         M^{\bl,L} = \begin{pmatrix}
         M_1 \\
         M_2 \\
         \vdots \\
         M_{N_{\rm{dim}}}
     \end{pmatrix},
     \end{equation}
     where for $\dd = 1,\ldots, N_{\rm{dim}}$,
     the elements of the submatrices 
     $M_\dd$ are defined for $(\bbm,k),\; (\bbm',k') \in \calM_\bl\times\calM_L$ as
    \begin{equation}
        M_\dd[(\bbm,k),(\bbm',k')] = 
        \begin{cases}
        \displaystyle\sum_{j=1}^\Nat
        \vrho{l_j,\dd}{m_j,m_j} - \vrho{L,\dd}{k,k}, \qquad &\textup{if}\;\; \bbm=\bbm',  \; \; k=k', \\
         -\vrho{L,\dd}{k,k'}, \qquad &\textup{if}\;\; 
         \bbm = \bbm',  \; \;  k\ne k', \\
         \vrho{l_j,\dd}{m'_j,m_j}, \qquad &\textup{if}\;\; 
         \forall s\in \{1,\ldots,\Nat\} \neq j,\; m_s = m'_s, \;\; m_j\ne m'_j, \;\;  k = k',\\
        0, \qquad &\textup{otherwise.} \\
    \end{cases}
    \label{eq:M_GE}
    \end{equation}
    Then, a basis of the GE space $V^{\bl,L}$
    is given by
    \begin{equation*}      
        b^{\bl,L}_i = 
        \left[ 
            \sum_{\bbm \in \calM_\bl}c_{\bbm,k,i}^{\bl,L}\phi^\bl_\bbm 
        \right]_{k\in \calM_L},\;\; i=1,\ldots,\dim\left(\ker(M^{\bl,L})\right),
    \end{equation*}
    where $\bc^{\bl,L}_i = \{c_{\bbm,k,i}^{\bl,L}\}_{\bbm\in\calM_\bl,k\in\calM_L},\; i=1,\ldots,\dim\big(\ker(M^{\bl,L})\big)$ is a basis of $\ker(M^{\bl,L})$. 
    Thus,  
    \[
        \dimge{\bl}{L}=\dim\big(\ker(M^{\bl,L})\big).
    \]
\end{proposition}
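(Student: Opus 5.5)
We reduce the GE condition to a finite linear system on the coefficient vector $\bc$. The argument goes through the Lie algebra: equivariance under the whole group will be shown equivalent to the infinitesimal conditions at the generators $X_\dd$.

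\textbf{Step 1 (parametrization).} Any $F\in[V^\bl]^{\card{\calM_L}}$ can be written uniquely as $F=\big[\sum_{\bbm}c_{\bbm,k}\phi^\bl_\bbm\big]_{k\in\calM_L}$. By orthonormality \eqref{eq:orthogonality} of the tensor basis, the map $\bc\mapsto F$ is a linear isomorphism. It therefore suffices to show that $F$ is GE if and only if $M^{\bl,L}\bc=0$. Once that equivalence holds, a basis of the kernel maps to a basis of $V^{\bl,L}$, and the dimension formula follows.

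\textbf{Step 2 (group-level system).} Apply \eqref{eq:1b-basis} to each factor, so that $\phi^\bl_\bbm(g\cdot\bR)=\sum_{\bbm'}\prod_j\rep{l_j}{m_j,m'_j}(g)\,\phi^\bl_{\bbm'}(\bR)$. Substitute this into \eqref{eq:GE} and identify coefficients in the basis $\phi^\bl_{\bbm'}$. This shows that $F$ is GE if and only if, for all $g\in G$ and all $(\bbm',k)$,
\[
\sum_{\bbm}c_{\bbm,k}\prod_{j}\rep{l_j}{m_j,m'_j}(g)=\sum_{k'}\rep{L}{k,k'}(g)\,c_{\bbm',k'}.
\]
In operator form this reads $(\rep{\bl}{}(g))^{T}C=C\,(\rep{L}{}(g))^{T}$ in a suitable matrix arrangement, where $\rep{\bl}{}=\bigotimes_j\rep{l_j}{}$ and $C$ is the coefficient matrix indexed by $(\bbm,k)$. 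Equivalently, $C$ intertwines two representations of $G$.

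\textbf{Step 3 (necessity).} Set $g=\exp(tX_\dd)$ and differentiate at $t=0$ using \eqref{eq:rep-deriv}. The product rule gives $d\rep{\bl}{}(X)=\sum_j I\otimes\cdots\otimes\vrho{l_j,\dd}{}\otimes\cdots\otimes I$, so the differentiated relation is
\[
\sum_j\sum_{m_j}\vrho{l_j,\dd}{m_j,m'_j}\,c_{(\bbm'\text{ with }m_j),k}-\sum_{k'}\vrho{L,\dd}{k,k'}\,c_{\bbm',k'}=0.
\]
Now split the terms by index. Diagonal contributions with $m_j=m'_j$ and $k'=k$ combine into the first case of \eqref{eq:M_GE}. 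The $k'\neq k$ terms give the second case, and the $m_j\neq m'_j$ terms give the third. This is exactly $M_\dd\bc=0$. One point needs care here: whether row and column indices are matched as $(\bbm,k)$ versus $(\bbm',k')$ in the third case, which involves a transpose. This is bookkeeping, and I would verify it against \eqref{eq:M_GE}.

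\textbf{Step 4 (sufficiency, the main obstacle).} Assume $M_\dd\bc=0$ for all $\dd$. Define the linear map $\Phi(X)=\big(d\rep{\bl}{}(X)\big)^T C - C\big(d\rep{L}{}(X)\big)^T$ on $\mathfrak g$; it is linear because $d\rho$ is linear. The generators span $\mathfrak g$, so $\Phi\equiv0$ on all of $\mathfrak g$.

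Next fix $X$ and put $A(t)=(\rep{\bl}{}(\exp tX))^TC$ and $B(t)=C(\rep{L}{}(\exp tX))^T$. Using $\rho(\exp tX)=\exp(t\,d\rho(X))$ (the property of the differential in \cite[Theorem 3.28]{Hall2013-af}), both $A$ and $B$ solve the same linear ODE $Y'=(d\rep{\bl}{}(X))^T Y$. For $B$ this uses the intertwining relation $(d\rep{\bl}{}(X))^TC=C(d\rep{L}{}(X))^T$. The two solutions share the initial value $C$, so they coincide, and the group relation holds on $\exp(\mathfrak g)$.

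Finally, $G$ is connected, so it is generated by $\exp(\mathfrak g)$. Both sides of the relation are multiplicative in $g$, since each is an intertwining relation with respect to homomorphisms, so the relation extends to all of $G$. Hence $F$ is GE, which completes the equivalence and the proof.
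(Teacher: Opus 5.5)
Your proposal is correct and follows the paper's proof essentially step by step. You use the same intertwining relation $\rep{L}{}(g)(C^{\bl,L})^T=(C^{\bl,L})^T\rep{\bl}{}(g)$ (yours is its transpose) and differentiate at the generators; your index bookkeeping does reproduce \eqref{eq:M_GE}. For sufficiency you return to $G$ via $\rho(\exp X)=\exp(d\rho(X))$ and generation of the connected group by exponentials, exactly as the paper does. The only difference is cosmetic: you obtain $\exp(t\,d\rep{\bl}{}(X))^T C = C\exp(t\,d\rep{L}{}(X))^T$ by ODE uniqueness, whereas the paper iterates the Lie-algebra relation to all powers and sums the exponential series, and these are equivalent.
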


We sometimes refer to the coefficients $\bc^{\bl,L}_i$ as coupling coefficients. The proof of Proposition~\ref{prop:GE_basis} can be found in Section~\ref{sec:proof_equivariant_lie_algebra}. For ease of notation, the superscript is sometimes omitted from the matrix $M^{\bl,L}$ when the associated $\bl$ and $L$ are clear from the context.
Using the definition of the matrix $\MlL$~\eqref{eq:matM}, its kernel is the intersection of the kernels of its submatrices, that is,
\begin{equation}
\label{eq:ker_inclusion}
    {\rm Ker}(M) = \bigcap_{d=1}^{N_{\rm dim}} {\rm Ker}(M_d).
\end{equation}
Note that $M_\dd$ can be written as a sum of Kronecker products, 
\begin{equation}
     M_\dd = \sum_{j=1}^\Nat 
    \left[ 
    \bigotimes_{s=1}^{j-1}I_{\card{\calM_{l_s}}}\otimes (\vrho{l_j,\dd}{})^T\bigotimes_{s=j+1}^{\Nat}I_{\card{\calM_{l_s}}}
    \right]
    \bigotimes I_{\card{\calM_L}} - \bigotimes_{j=1}^\Nat I_{\card{\calM_{l_j}}}\otimes \vrho{L,\dd}{},
    \label{eq:Mp}
\end{equation}
and its kernel can be related to the tensor products of eigenvectors of matrices $\vrho{l,\dd}{}$, as presented in the next proposition. 
Note also that the linear system of Proposition~\ref{prop:GE_basis} can be expressed as $N_{\rm dim}$ Sylvester equations.

\begin{proposition}
\label{prop:kernel_sum}
    For any $l\in \calL$, $d \in \{ 1, \ldots, N_{\rm dim} \}$, let $(\epsilon^{l,d}_j, u^{l,d}_j, v^{l,d}_j) \in \F \times \F^{|\calM_l|}\times \F^{|\calM_l|}$ be respectively the eigenvalues, left eigenvectors, and right eigenvectors  of the matrices $\vrho{l,\dd}{}$ for $j \in \calM_{l}$. Then the kernel of the matrix $M_d$ defined in~\eqref{eq:M_GE} is spanned by all
    \[
        u^{l_1,d}_{j_1} \otimes 
        u^{l_2,d}_{j_2} \otimes 
        \ldots 
        \otimes 
        u^{l_N,d}_{j_N} 
        \otimes v^{L,d}_{j},
    \]
    such that 
    \[
    \epsilon^{l_1,d}_{j_1} + \epsilon^{l_2,d}_{j_2} + \ldots 
    + \epsilon^{l_\Nat,d}_{j_\Nat}
    -  \epsilon^{L,d}_{j} = 0.
    \]
\end{proposition}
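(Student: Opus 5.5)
The plan is to read \eqref{eq:Mp} as a Kronecker sum and diagonalize it with the tensor-product eigenbasis. I interpret the statement as assuming, as its indexing by $j\in\calM_l$ suggests, that each $\vrho{l,\dd}{}$ is diagonalizable over $\F$. That is, it has $\card{\calM_l}$ eigenvalues $\epsilon^{l,d}_j$ with a basis of right eigenvectors $v^{l,d}_j$ and a basis of left eigenvectors $u^{l,d}_j$. Here I write a left eigenvector as a column vector $u$ with $u^T\vrho{l,\dd}{}=\epsilon\, u^T$, so that equivalently $(\vrho{l,\dd}{})^T u = \epsilon\, u$. This transposed form is exactly the form in which $\vrho{l_j,\dd}{}$ enters \eqref{eq:Mp}.

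\emph{Step 1: each product vector is an eigenvector of $M_\dd$.} Fix indices $j_1,\ldots,j_\Nat,j$ and set $w=u^{l_1,d}_{j_1}\otimes\cdots\otimes u^{l_\Nat,d}_{j_\Nat}\otimes v^{L,d}_{j}$. Apply the $j$-th summand of the first term of \eqref{eq:Mp} to $w$. By the mixed-product property of Kronecker products, it acts as the identity on every factor except the $j$-th, where it acts by $(\vrho{l_j,\dd}{})^T$. It therefore returns $\epsilon^{l_j,d}_{j_j}\, w$. Likewise the last term of \eqref{eq:Mp} acts only on the final factor, by $\vrho{L,\dd}{}$, and returns $\epsilon^{L,d}_j\, w$. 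Summing gives
\[
M_\dd\, w=\Big(\sum_{s=1}^\Nat \epsilon^{l_s,d}_{j_s}-\epsilon^{L,d}_{j}\Big)\, w .
\]
Hence every such $w$ with vanishing eigenvalue combination lies in $\ker(M_\dd)$.

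\emph{Step 2: these vectors span the whole kernel.} Because the $u^{l_s,d}_{j}$ form a basis of $\F^{\card{\calM_{l_s}}}$ for each $s$, and the $v^{L,d}_j$ form a basis of $\F^{\card{\calM_L}}$, the vectors $w$ range over a basis of the full tensor product space $\F^{\card{\calM_\bl}\,\card{\calM_L}}$. By Step 1, $M_\dd$ is diagonal in this basis, with diagonal entries $\sum_s\epsilon^{l_s,d}_{j_s}-\epsilon^{L,d}_j$. Now write an arbitrary kernel vector as $x=\sum \alpha_w w$. Then $M_\dd x=\sum \alpha_w\lambda_w w=0$, and linear independence of the $w$ forces $\alpha_w=0$ whenever $\lambda_w\neq0$. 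So $x$ lies in the span of the $w$ with $\lambda_w=0$, which is the claim.

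\emph{Main obstacle: diagonalizability.} The only nontrivial point is the diagonalizability hypothesis, and I would make it explicit rather than prove it in general. For compact groups one can choose $\rep{l}{}$ unitary, so $\vrho{l,\dd}{}$ is skew-Hermitian and hence diagonalizable over $\C$. For $SO(3)$ and $SU(2)$ with the usual generators, the relevant matrices are diagonal or diagonalizable in the standard basis. If some $\vrho{l,\dd}{}$ were not diagonalizable, Step 1 would still give the inclusion of the span into the kernel, but equality could fail. So I expect the proof to state this assumption, or to work over $\C$, at the outset.
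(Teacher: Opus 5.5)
Your proposal is correct and follows the paper's argument. Like the paper, you read \eqref{eq:Mp} as a Kronecker sum and show that the product vectors are eigenvectors of $M_\dd$ with eigenvalue $\sum_s \epsilon^{l_s,d}_{j_s}-\epsilon^{L,d}_j$. Your Step 2, together with the explicit diagonalizability hypothesis and the transpose convention for left eigenvectors, makes rigorous the final ``therefore'' step that the paper's two-line proof leaves implicit.
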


\begin{proof}
By using the expression of the matrices $M_d$ in~\eqref{eq:Mp}, the eigenvectors of the matrices $M_d$
are $u^{l_1,d}_{j_1} \otimes 
        u^{l_2,d}_{j_2} \otimes 
        \ldots 
        \otimes 
        u^{l_N,d}_{j_N} 
        \otimes v^{L,d}_{j}$
        for all possible indices $j_i \in \calM_{l_i}$ for $i=1,\ldots,\Nat$ and $j\in \calM_{L}$,
        with corresponding eigenvalues 
        $\epsilon^{l_1,d}_{j_1} + \epsilon^{l_2,d}_{j_2} + \ldots 
    + \epsilon^{l_\Nat,d}_{j_\Nat}
    -  \epsilon^{L,d}_{j}$.
    Therefore the kernel of $M_d$ corresponds to the space spanned by eigenvectors with zero eigenvalues.
\end{proof}

In principle, one could determine the kernel either using Proposition~\ref{prop:GE_basis}
or Proposition~\ref{prop:kernel_sum}. While we mostly use Proposition~\ref{prop:GE_basis} to numerically determine the coupling coefficients in practice, Proposition~\ref{prop:kernel_sum} could be useful to obtain some explicit dimensionality of GE spaces.

We end this section with a remark providing a link with Clebsch--Gordan coefficients.

\begin{remark}
\label{rem:CG}
    In the case where $\bl = (l_1,l_2)$, the coupling coefficients correspond to the well-known Clebsch--Gordan coefficients. For larger $\bl$'s, these coupling coefficients are sometimes called generalized Clebsch--Gordan coefficients~\cite{Dusson2022-ie,Batatia2023-us}. 
\end{remark}

\subsection{Group-equivariant and permutation-invariant basis}\label{sec:ge-pi}

We now turn to characterize GE-PI functions (see Definition~\ref{def:GEPI}).
This is essential in practice, since incorporating the permutation invariance into modeling can significantly reduce the dimension of the targeted function space, especially when high-dimensional problems are considered. 
As a first result, we provide a basis for PI functions in $V^\bl$ for some $\bl \in \calL^\Nat$.
To this end, we define
\[
    \calS_{\bl}=\{\pi \in S_\Nat \; | \;
    \pi\bl=\bl\}\subset S_\Nat,
\]
and 
\begin{equation}\label{eq:GE_class}
    \oMl:=\{\overline{\bbm}\;|\;\bbm\in \calM_{\bl}\},
\end{equation}
where $\overline{\bbm}$ stands for the equivalent class of $\bbm$ with the equivalent relation $\sim$ being defined as 
\[
    \bbm\sim\bbm^{\prime}  \Leftrightarrow \exists \; \pi\in \calS_{\bl}, \quad \bbm^{\prime}=\pi\bbm.
\]
For simplicity of notation, and as it is often clear from the context, we still denote the classes in $\oMl$ by $\bbm$. 
\begin{proposition}[Basis of PI functions in $V^\bl$]\label{prop:basis-pi}
    Let $\bl\in\calL^\Nat$. 
    A basis of PI functions belonging to $V^{\bl}$ consists of 
\begin{equation}
\label{eq:PI_Nbody}
        \left\{ \displaystyle\bpsi{\bl}{\bbm}: =\sum_{\sigma\in S_\Nat}\phi^\bl_\bbm\circ\sigma, \quad \bbm\in \oMl \right\}.
\end{equation}
In particular, the dimension of the space of PI functions in $V^\bl$ is $\card{\oMl}$.
\end{proposition}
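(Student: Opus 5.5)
The plan is a standard symmetrization argument. A permutation $\sigma$ acts on $V$ by $f\mapsto f\circ\sigma$, where $(f\circ\sigma)(\bR)=f(\bR_\sigma)$. Because $\phi^\bl_\bbm\circ\sigma$ is a product of one-variable factors with permuted variables, this action sends $\phi^\bl_\bbm$ to $\phi^{\sigma\bl}_{\sigma\bbm}$. Here $\sigma\bl$ and $\sigma\bbm$ are the correspondingly permuted multi-indices, with the convention for $\sigma$ versus $\sigma^{-1}$ fixed once and kept throughout.

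The key steps, in order, are as follows.
\begin{enumerate}
\item \textbf{Define the space and the symmetrizer.} First I must clarify what ``PI functions in $V^\bl$'' means. Each $\bpsi{\bl}{\bbm}$ lies in $\bigoplus_{\pi} V^{\pi\bl}$, so it is not in $V^\bl$ when $\bl$ has distinct entries. I will therefore read the statement as concerning the PI functions in the stable space $\bigoplus_{\bl'\in S_\Nat\bl}V^{\bl'}$, equivalently the image of $V^\bl$ under the symmetrizer $P=\sum_{\sigma\in S_\Nat}(\cdot)\circ\sigma$. Every PI $f$ in this sum satisfies $f=\frac{1}{\Nat!}P f$.
\item \textbf{Spanning.} Since $P$ is linear and each $V^{\pi\bl}$ is spanned by the $\phi^{\pi\bl}_{\bbm}$, the identities $P(\phi^{\pi\bl}_{\pi\bbm})=P(\phi^\bl_\bbm)$ show that PI functions are spanned by $\{P\phi^\bl_\bbm\}_{\bbm\in\calM_\bl}$.
\item \textbf{Well-definedness on classes.} If $\bbm'=\pi\bbm$ with $\pi\in\calS_\bl$, then $\phi^\bl_{\bbm'}=\phi^\bl_\bbm\circ\pi$ because $\pi\bl=\bl$. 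Reindexing the sum over $S_\Nat$ then gives $\bpsi{\bl}{\bbm'}=\bpsi{\bl}{\bbm}$. So $\bpsi{\bl}{\bbm}$ depends only on the class in $\oMl$, and the family is indexed by $\oMl$.
\item \textbf{Linear independence.} I will expand each $\bpsi{\bl}{\bbm}$ in the orthonormal basis $\{\phi^{\bl'}_{\bbm'}\}$. Its support is the orbit $\{(\sigma\bl,\sigma\bbm):\sigma\in S_\Nat\}$, and every coefficient is a positive integer. The orbits of two distinct classes $\bbm\not\sim\bbm'$ in $\oMl$ are disjoint: if $(\sigma\bl,\sigma\bbm)=(\tau\bl,\tau\bbm')$, then $\tau^{-1}\sigma\in\calS_\bl$ maps $\bbm$ to $\bbm'$, so the two are equivalent. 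By orthonormality~\eqref{eq:orthogonality}, the $\bpsi{\bl}{\bbm}$ are then pairwise orthogonal and nonzero, hence linearly independent. The dimension count $\card{\oMl}$ follows.
\end{enumerate}

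The main obstacle is bookkeeping rather than mathematics. It lies in the precise action convention, where $\phi\circ\sigma$ permutes the indices and not only the variables, and in the disjointness-of-orbits step. There I have to check carefully that $(\sigma\bl,\sigma\bbm)=(\tau\bl,\tau\bbm')$ forces $\tau^{-1}\sigma\in\calS_\bl$ under the chosen left or right action convention.
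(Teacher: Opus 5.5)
Your proposal is correct and follows the paper's proof. In both, symmetrizing the product basis gives a spanning set, $\calS_\bl$-invariance reduces it to one function per class in $\oMl$, and orthonormality of the product basis gives linear independence. The paper computes $\langle\bpsi{\bl}{\bbm},\bpsi{\bl}{\bbm'}\rangle = \Nat!\sum_{\sigma\in\calS_\bl}\delta_{\bbm,\sigma\bbm'}$, which is your disjoint-orbit argument written as a Gram matrix. Your reading of the ambient space as $\bigoplus_{\bl'\in S_\Nat\bl}V^{\bl'}$ is also the one the paper uses implicitly, since its computation involves $\phi^{\sigma\bl}_{\sigma\bbm'}$ with $\sigma\bl\neq\bl$.
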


The proof can be found in Section~\ref{sec:proof_equivariant_lie_algebra}.

Since the evaluation of basis functions defined in~\eqref{eq:PI_Nbody} scales exponentially with $N$ 
due to the sum over permutations, 
a density projection trick was proposed~\cite{Drautz2019-mn,Dusson2022-ie}, which relies on using another set of $N$-variable PI functions
\begin{equation}
      \forall l\in \calL, \; m\in \calM_l, \quad  
  \bphi{l}{m} : \bR = (\br_1,\ldots,\br_\Nat) \in \Omega^\Nat \mapsto \sum_{i=1}^\Nat \phi^l_m(\br_i).
  \label{eq:sumphi}
\end{equation}
The considered PI function space is then 
\[
    \bV^l = {\rm Span} \{
    \bphi{l}{m}, \; 
    m \in \calM_l
    \}.
\]
Similar to $V^{\bl}$, the space of PI functions $\bV^{\bl}$ for $\bl\in \calL^\Nat$ can be defined by forming tensor products of $\bV^{l_i}$, as 
\[
  \bV^{\bl} := \bigotimes_{i=1}^\Nat \bV^{l_i} = {\rm Span} \left\{  \bphi{\bl}{\bbm} := \prod_{i=1}^\Nat \bphi{l_i}{m_i}, \; 
  \bbm \in \calM_\bl \right\}, \quad  \calM_\bl := \bigotimes_{i=1}^\Nat \calM_{l_i}.
\]
As indicated in~\cite{Dusson2022-ie}, the evaluation cost of the $\bphi{\bl}{\bbm}$ scales linearly with respect to $\Nat$.
Similarly as with $\bpsi{\bl}{\bbm}$, some functions in this set are identical. We therefore restrict $\bbm\in\oMl$, and obtain the following result.

\begin{proposition}[Basis of PI space $\bV^\bl$]\label{prop:basis-pi2}
    Let $\bl\in\calL^\Nat$. A basis of $\bV^{\bl}$ consists of the functions
\[
    \left\{ \bphi{\bl}{\bbm},\; \bbm\in\oMl\right\}.
\]
In particular, the dimension of $\bV^\bl$ is $\card{\oMl}$.
\end{proposition}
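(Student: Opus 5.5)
Three things must be checked: the functions $\bphi{\bl}{\bbm}$ span $\bV^\bl$ with one representative per class; distinct classes give distinct functions; and the representatives are linearly independent, so that $\dim \bV^\bl=\card{\oMl}$.

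\textbf{Spanning and well-definedness.} By definition $\bV^\bl$ is spanned by all $\bphi{\bl}{\bbm}$ with $\bbm\in\calM_\bl$. If $\bbm'=\pi\bbm$ with $\pi\in\calS_\bl$, then $\bphi{\bl}{\bbm'}=\prod_i \bphi{l_{i}}{m_{\pi(i)}}$. Since $l_{\pi(i)}=l_i$, this is a reordering of the factors of the commutative product $\prod_i\bphi{l_i}{m_i}$. Hence $\bphi{\bl}{\bbm}$ depends only on the class $\overline{\bbm}$, and the family indexed by $\oMl$ spans $\bV^\bl$.

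\textbf{Linear independence.} I would reduce to Proposition~\ref{prop:basis-pi} by expanding the product. Write
\[
\bphi{\bl}{\bbm}(\bR)=\sum_{i_1,\ldots,i_\Nat=1}^{\Nat}\prod_{s=1}^\Nat \phi^{l_s}_{m_s}(\br_{i_s})
\]
and split the sum according to whether the map $s\mapsto i_s$ is a bijection. The bijective terms sum exactly to $\bpsi{\bl}{\bbm}$ (up to relabelling $\sigma$). Each non-bijective term is a product in which some variable $\br_i$ appears in at least two factors and some variable does not appear at all.

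Now suppose $\sum_{\bbm\in\oMl}a_\bbm\bphi{\bl}{\bbm}=0$, and take the $L^2(\Omega^\Nat)$ inner product with a function $\prod_s\phi^{l_{\tau(s)}}_{m'_{\tau(s)}}(\br_s)$, where every variable carries exactly one one-particle factor. A non-bijective term leaves some $\br_i$ absent, so its pairing factorizes and contains $\int\phi^{l}_{m}(\br_i)\,d\br_i$. This vanishes by orthogonality~\eqref{eq:orthogonality} if a constant function lies in the $V^{l}$ spaces and $\phi^l_m$ is not that constant. The bijective part then gives $\langle\bpsi{\bl}{\bbm},\bpsi{\bl}{\bbm'}\rangle$. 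This is nonzero exactly when $\bbm\sim\bbm'$, because the pairs $(l_s,m_s)$ must match as multisets, which for fixed $\bl$ means $\bbm'=\pi\bbm$ with $\pi\in\calS_\bl$. It would follow that $a_{\bbm'}=0$ for every class.

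\textbf{Main obstacle.} The weak point is that the non-bijective terms need not be orthogonal to the test functions, for example when the functions have nonzero mean. A more robust route, which I would switch to, avoids pairing altogether. The product $\prod_i\bphi{l_i}{m_i}$ is the image of the symmetric tensor $\mathrm{Sym}(\phi^{l_1}_{m_1}\otimes\cdots\otimes\phi^{l_\Nat}_{m_\Nat})$ under the map ``power sums $\to$ functions''. On the symmetric polynomial algebra, products of power sums $p_{\lambda}$ with at most $\Nat$ parts are linearly independent as functions of $\Nat$ points. This is the classical fact that $p_1,\ldots,p_\Nat$ are algebraically independent generators; it extends to the linearly independent one-particle functions $\phi^l_m$ by evaluating on generic point configurations, using the unisolvence of linearly independent functions. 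A monomial in the $\bphi{l}{m}$ of degree $\Nat$ corresponds to a multiset of pairs $(l_s,m_s)$, which for fixed $\bl$ is exactly a class in $\oMl$. So I would prove the multivariate analogue: for linearly independent $f_1,\ldots,f_p$ on $\Omega$, the monomials of degree at most $\Nat$ in the $\bar f_a=\sum_i f_a(\br_i)$ are linearly independent on $\Omega^\Nat$. The plan is induction on $\Nat$: differentiate in a scaling parameter, or restrict to configurations in which $\Nat-1$ points coincide, and extract the top-degree symmetric part, which equals $\Nat!$ times the monomial-symmetrized tensor and is injective by Proposition~\ref{prop:basis-pi}. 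This independence lemma is where the real work lies.
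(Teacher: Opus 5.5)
Your spanning argument is correct and matches the paper's. The gap is linear independence, which the proposal never establishes.

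You are right to drop the pairing argument. Besides nonzero means, the repeated variable produces integrals of triple products of one-particle functions, which orthogonality does not control. The replacement, however, is only announced, as you concede (``this independence lemma is where the real work lies''). As stated, it is also false. If a constant lies in the span of the one-particle functions, e.g.\ $Y_0^0$ for $SO(3)$, then $\bphi{0}{0}$ is constant, and the degree-two monomial $\bphi{0}{0}\bphi{1}{m}$ is a multiple of the degree-one monomial $\bphi{1}{m}$. Only monomials of degree exactly $\Nat$ occur in $\bV^\bl$, and only that version is true.

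The power-sum justification does not help either. Independence of the $p_\lambda$ with at most $\Nat$ parts does hold, but by triangularity against the monomial symmetric polynomials. It does not follow from algebraic independence of $p_1,\dots,p_\Nat$, which concerns the $p_\lambda$ with all parts at most $\Nat$. Nor does anything explain how a statement about powers of a scalar variable would transfer to arbitrary linearly independent $\phi^l_m$ on an arbitrary $\Omega$.

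The induction on $\Nat$ does not run as sketched, for three reasons:
\begin{itemize}
\item there is no continuous scaling parameter inside $\bV^\bl$ to differentiate in;
\item splitting $\sum_{i\le\Nat}\phi(\br_i)=\sum_{i\le\Nat-1}\phi(\br_i)+\phi(\br_\Nat)$ produces products of several one-particle functions at the single point $\br_\Nat$, which can be linearly dependent, together with degree-$\Nat$ monomials on $\Omega^{\Nat-1}$, which need not be independent;
\item configurations with $\Nat-1$ coincident points give only two-point information.
\end{itemize}

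The missing idea is a mechanism that extracts $\sum_\bbm c_\bbm\bpsi{\bl}{\bbm}$ from $\sum_\bbm c_\bbm\bphi{\bl}{\bbm}=0$. The paper uses integer multiplicities as the discrete substitute for your scaling parameter. Take a configuration $\bR_{\bk}$ made of $d$ distinct points repeated according to the count vector $\blamb{}{\bk}$. Then $\bphi{\bl}{\bbm}(\bR_{\bk})=\prod_i\sum_{n=1}^d\blamb{}{\bk,n}\,\phi^{l_i}_{m_i}(\br_n)$ is a polynomial in the multiplicities. Its coefficients are the partially symmetrized sums over tuples drawn from these points. Induction on $d$ removes the coefficients that use fewer than $d$ points. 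The remaining square system is inverted with the Bernstein--Vandermonde matrix $\big(\prod_{i=1}^d(\blamb{}{\bk,i})^{\blamb{}{\bn,i}}\big)_{\bk,\bn}$. At $d=\Nat$ this yields $\sum_\bbm c_\bbm\bpsi{\bl}{\bbm}=0$, and Proposition~\ref{prop:basis-pi} concludes.

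A shorter equivalent route: fix $\br_1,\dots,\br_\Nat$. The map $\lambda\mapsto\sum_\bbm c_\bbm\prod_i\sum_{n=1}^\Nat\lambda_n\phi^{l_i}_{m_i}(\br_n)$ is a homogeneous polynomial of degree $\Nat$. It vanishes on the simplex lattice $\{\lambda\in\N_0^\Nat:\sum_n\lambda_n=\Nat\}$, which is unisolvent for such polynomials, so it vanishes identically. Applying $\partial_{\lambda_1}\cdots\partial_{\lambda_\Nat}$ then gives exactly $\sum_\bbm c_\bbm\bpsi{\bl}{\bbm}(\br_1,\dots,\br_\Nat)=0$. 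Some such step is indispensable, and your proposal does not contain it.
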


We defer the proof of this proposition to Section~\ref{sec:proof_equivariant_lie_algebra}. Note that Definition~\eqref{eq:sumphi} can naturally be extended to an arbitrary number of variables by defining $\bphi{l}{m} : \bR \in \textup{MS}(\Omega) \mapsto \sum_{\br \in \bR } \phi^l_m(\br)$. 
We can therefore also account for multiset functions within the same framework, although Proposition~\ref{prop:basis-pi2} may no longer hold in the multiset setting.

\begin{remark}[Group action]
\label{rem:group_action}
Note that $\bV^\bl$ is stable with respect to the group action, and the latter acts in the same way over the functions $\phi^\bl_\bbm$, 
 $\bpsi{\bl}{\bbm}$ and $\bphi{\bl}{\bbm}$. Consider $\rep{\bl}{}=\displaystyle\bigotimes_{i=1}^\Nat\rep{l_i}{}$, which is a representation as tensor product of representations. Since
    \[
        \bphi{\bl}{\bbm}(\bR)=\sum_{1\le n_1,\ldots,n_\Nat\le\Nat}\phi^\bl_\bbm(\br_{n_1},\ldots,\br_{n_\Nat}),
    \]
    we have that for any $g\in G$
    \begin{align*}
          \bphi{\bl}{\bbm}(g \cdot \bR)&=\sum_{1\le  n_1,\ldots,n_\Nat\le\Nat}\phi^\bl_\bbm(g \cdot (\br_{n_1},\ldots,\br_{n_\Nat}))\\
          &= \sum_{\bbm'\in \calM_\bl}\rep{\bl}{\bbm,\bbm'}(g)\sum_{1\le n_1,\ldots,n_\Nat\le\Nat}\phi^\bl_{\bbm'}(\br_{n_1},\ldots,\br_{n_\Nat})\\
          &= \sum_{\bbm'\in \calM_\bl}\rep{\bl}{\bbm,\bbm'}(g)\bphi{\bl}{\bbm'}(\bR).
    \end{align*}
It can be easily checked that the same property holds for $\phi^\bl_\bbm$ and $\bpsi{\bl}{\bbm}$.
\end{remark}

{\begin{definition}[Scalar product on $\bV^\bl$]
    As $\left\{  \bphi{\bl}{\bbm},\; \bbm \in \oMl \right\}$ is a basis of $\bV^{\bl}$, we can define an inner product on $\bV^{\bl}$ by the relation
    \[
        \langle\bphi{\bl}{\bbm},\bphi{\bl}{\bbm'}\rangle=\delta_{\bbm,\bbm'}.
    \]
\end{definition}
We now turn to characterizing GE-PI functions with components in $\bV^\bl$, starting by introducing some notation.
For $\bl$'s containing a single value $l$, i.e., 
    $\bl = (l,l,\ldots,l)\in\calL^N,$
the classes defined in~\eqref{eq:GE_class} are in one-to-one correspondence with ordered $\bbm\in\calM_\bl$, which we use by default as a representative for the corresponding class. 
Furthermore, the classes $\bbm\in\oMl$ are also in one-to-one correspondence with their Parikh vectors or count vectors 
$\blamb{}{\bbm}$ (see e.g.~\cite[Supplementary lecture H]{Kozen2007-ot}), which are defined as the vectors containing the number of occurrences of the elements in $\calM_l$. 
We give an explicit example to better illustrate this concept.
\begin{example}
    Assume $l=1$, $\calM_l = \{-1,0,1\}$ and $\bl=(l,l,l,l,l)$, then the class $\bbm=\overline{(-1,-1,0,1,1)}\in\oMl$ has a count vector 
    $\blamb{}{\bbm}=(2,1,2)$, whose components stand for the two occurrences of $-1\in\calM_l$, one occurrence of $0\in\calM_l$ and two occurrences of $1\in\calM_l$, respectively. 
\end{example}

Note that the sum of the elements in the count vector is equal to $\Nat$, the length of $\bl$, and we can identify $\oMl$ with the set
\begin{equation}
\label{eq:lambda}
    \Lambda_{\bl} := 
    \{\blamb{}{}\in \N^{\calM_l}:\sum_{i\in\calM_l}\blamb{}{i} = \Nat\}.
\end{equation}

For a general $\bl$ possibly containing different values, we assume that it is ordered for notation simplicity, but without loss of generality. The index $\bl$ can then be divided into a set of minimal length $\Nb$ of sub-indices $\{\bl^{(j)}\}_{j=1}^\Nb$ of length $|\bl^{(j)}|$, so that all elements in $\bl^{(j)}$ are identical, that we call minimal partition as defined below. Specifically, we say that $\bl^{(j)}$ and $\bl^{(j')}$ do not intersect if $\bl^{(j)}_{p}\neq\bl^{(j')}_{q}$ for all possible $p,q$, and we write $\bl^{(j)} \cap \bl^{(j')} = \varnothing$ in this case.

\begin{definition}[Minimal partition]
    Let $\bl\in\calL^\Nat$ be ordered, we say that $(\bl^{(j)})_{j=1}^{\Nb}$ is the minimal partition of $\bl$ if each block $\bl^{(j)}$ has a single repeated value and that for all $j,j'\in\{1,\ldots,\Nb\},j\neq j', \; \bl^{(j)}\cap\bl^{(j')}=\varnothing$.
\end{definition}

Then it immediately follows that 
\[
     \calS_{\bl} = \bigotimes_{j=1}^\Nb S_{|\bl^{(j)}|},
\quad 
\text{and}
\quad
     \oMl = \bigotimes_{j=1}^\Nb \overline{\calM}_{\bl^{(j)}}.
\]
For $\obm=(\obm^{(1)}, \ldots, \obm^{(\Nb)})\in\oMl$, we use $\bbm = (\bbm^{(1)}, \ldots, \bbm^{(\Nb)})$ as its representative, where $\bbm^{(j)}$ are ordered, for all $j\in\{1,2,\ldots,\Nb\}$. 
Thus, for a general ordered $\bl$, $\oMl$ is in one-to-one correspondence with 
$\displaystyle \bigotimes_{j=1}^\Nb \Lambda_{\bl^{(j)}}$, 
with $\Lambda_{\bl^{(j)}}$ defined in~\eqref{eq:lambda}. For simplicity, we also denote this set by~$\Lambda_{\bl}$, which will not cause confusion since when $\bl$ contains only identical values, this definition is consistent with~\eqref{eq:lambda}.
For any $\bbm\in\oMl$, we denote its corresponding element in $\Lambda_{\bl}$ by $\blamb{}{\bbm}=(\blamb{}{\bbm^{(1)}},\blamb{}{\bbm^{(2)}},\ldots,\blamb{}{\bbm^{(\Nb)}})$. Similarly, for any $\blamb{}{} \in \Lambda_\bl$, we denote the associated element in $\oMl$ by $\bbm_{\blamb{}{}}$. 

We now define the set of interacting classes as follows, which will be key in constructing a GE-PI basis.

\begin{definition}[Interacting classes]
    For a given $\bbm\in\oMl$, we define the set of its interacting classes $\calN_\bbm\subset\oMl$ as 
    \begin{equation}
        \label{eq:interacting_classes}
         \begin{split}
     \calN_\bbm := 
     \Bigg\{\bbm^{}_{(j,p,q)}:= &\;\;\bbm_{\blamb{}{(j,p,q)}},
     \quad 
     j\in\{1,2,\ldots,\Nb\}, \;\; p,q\in\calM_{\bl^{(j)}_1},\;\; p\ne q, \\
     &
     \textup{where } \quad 
     \blamb{}{(j,p,q)}
     =  (\blamb{}{\bbm^{(1)}},\blamb{}{\bbm^{(2)}},\ldots,\blamb{}{\bbm^{(j-1)}},\blamb{}{\bbm^{(j)}} - {\bf e}_p + {\bf e}_q,\blamb{}{\bbm^{(j+1)}},\ldots,\blamb{}{\bbm^{(\Nb)}})\\
     & \textup{if $\blamb{}{\bbm^{(j)},p}> 0$, otherwise   $ \blamb{}{(j,p,q)}$ is discarded }
     \Bigg\},
 \end{split}
    \end{equation}
where ${\bf e}_k$ denotes the $k$-th vector of the canonical basis of $\R^{\calM_{\bl^{(j)}_1}}$, $\bl^{(j)}_1$ is the first component of $\bl^{(j)}$ and $\blamb{}{\bbm^{(j)},p}$ the $p$-th component of $\blamb{}{\bbm^{(j)}}$. 
\end{definition}
The following theorem provides a basis for GE-PI functions for the representation $\rep{L}{}$ in the space $\left[ \bV^{\bl} \right]^{\card{\calM_L}}$, whose span we denote by $\bV^{\bl,L}$, by analogy with the GE case.

\begin{theorem}[Basis of the GE-PI space $\bV^{\bl,L}$]
\label{thm:GEPI_basis}
Let $\bl\in\calL^\Nat$, $L\in\calL$, and let 
\begin{equation}\label{eq:matM-PI}
         M^{\bl,L} = \begin{pmatrix}
         M_1 \\
         M_2 \\
         \vdots \\
         M_{N_{\rm{dim}}}
    \end{pmatrix},
    \end{equation}
    where for $\dd = 1,\ldots, N_{\rm{dim}}$,
    the elements of the matrices $M_\dd$ read for $(\bbm,k), (\bbm',k') \in \oMl\times \calM_L$ as
    \begin{equation}
    M_\dd[(\bbm,k),(\bbm',k')] = 
        \begin{cases}
        \displaystyle\sum_{j=1}^\Nat
        \vrho{\bl^{(j)}_1,\dd}{m_j,m_j} - \vrho{L,\dd}{k,k}, \qquad &\textup{if}\;\; \bbm=\bbm',  \; \; k=k', \\
         -\vrho{L,\dd}{k,k'}, \qquad &\textup{if}\;\; 
         \bbm = \bbm',  \; \;  k\ne k', \\
         \blamb{}{{\bbm'}^{(j)},q}
         \;\vrho{\bl^{(j)}_1,\dd}{q,p},  \qquad &\textup{if}\;\; 
         \bbm'=\bbm_{(j,p,q)}\in\calN_{\bbm}, \;\;  k = k',\\
        0, \qquad &\textup{otherwise,} \\
    \end{cases}
    \label{eq:M_GE_PI}
    \end{equation}
    with $\calN_\bbm$ defined in~\eqref{eq:interacting_classes}.
    Then a basis of $\bV^{\bl,L}$
    is given by 
    \[
    b^{\bl,L}_i = 
    \left[ 
    \sum_{\bbm \in \oMl}c_{\bbm,k,i}^{\bl,L}\bphi{\bl}{\bbm} \right]_{k\in \calM_L},\;\; i=1,\ldots,\dim\big(\ker(M^{\bl,L})\big),
    \]
     where 
     $\bc^{\bl,L}_i = \{c_{\bbm,k,i}^{\bl,L}\}_{\bbm\in\oMl,k\in\calM_L},\; i=1,\ldots,\dim\big(\ker(M^{\bl,L})\big)$
     is a basis of $\ker(M^{\bl,L})$. 
    Thus, 
    \[
        \dimpi{\bl}{L}=\dim\big(\ker(M^{\bl,L})\big).
    \]
\end{theorem}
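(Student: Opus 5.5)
The plan follows the route of Proposition~\ref{prop:GE_basis}, working in the basis $\{\bphi{\bl}{\bbm}\}_{\bbm\in\oMl}$ of $\bV^\bl$ given by Proposition~\ref{prop:basis-pi2}. A general element of $[\bV^\bl]^{\card{\calM_L}}$ is written as $F=[\sum_{\bbm\in\oMl} c_{\bbm,k}\bphi{\bl}{\bbm}]_{k\in\calM_L}$. Since every such function is automatically PI, the GE-PI condition reduces to $F(g\cdot\bR)=\rep{L}{}(g)F(\bR)$. By Remark~\ref{rem:group_action}, $\bphi{\bl}{\bbm}(g\cdot\bR)=\sum_{\bbm'\in\calM_\bl}\rep{\bl}{\bbm,\bbm'}(g)\bphi{\bl}{\bbm'}(\bR)$, where the sum runs over all of $\calM_\bl$, not over classes. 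I then regroup it over classes, using $\bphi{\bl}{\bbm'}=\bphi{\bl}{\pi\bbm'}$ for $\pi\in\calS_\bl$, since the product of sums is commutative within a block. Linear independence of the class basis then turns the GE condition into the coefficient identity
\[
\sum_{\bbm\in\oMl} c_{\bbm,k}\sum_{\bbm''\in \bbm'}\rep{\bl}{\bbm,\bbm''}(g)=\sum_{k'}\rep{L}{k,k'}(g)\,c_{\bbm',k'}\qquad\forall\, \bbm'\in\oMl,\ k\in\calM_L,\ g\in G,
\]
where the inner sum is over the representatives $\bbm''$ of the class $\bbm'$, and $\bbm$ denotes the chosen ordered representative.

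Next I would prove the equivalence with the Lie algebra system.

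\textbf{Necessity.} Set $g=\exp(tX_\dd)$ and differentiate at $t=0$ using~\eqref{eq:rep-deriv}. The derivative of the tensor product representation is $d\rep{\bl}{}(X)=\sum_j I\otimes\cdots\otimes\vrho{l_j,\dd}{}\otimes\cdots\otimes I$. Hence $\frac{d}{dt}\rep{\bl}{\bbm,\bbm''}$ at $t=0$ is nonzero only if $\bbm''$ differs from $\bbm$ in at most one slot $s$, in which case it equals $\vrho{l_s,\dd}{m_s,m''_s}$. The diagonal contribution ($\bbm''=\bbm$) gives the first case of~\eqref{eq:M_GE_PI}, and the $\rep{L}{}$ side gives the first two cases. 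The off-diagonal contributions come from changing one slot $s$ in block $j$ from value $p=m_s$ to value $q$, which yields the class $\bbm_{(j,p,q)}$.

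\textbf{Multiplicity (main obstacle).} The delicate point is the bookkeeping of how many pairs $(\bbm'',s)$ produce a given pair of classes. Fixing the target class $\bbm'$ and the sum over its representatives, the number of representatives $\bbm''$ of $\bbm'$ that differ from the fixed ordered $\bbm$ in exactly one slot, with the change $p\to q$, should be $\blamb{}{{\bbm'}^{(j)},q}$. This is the number of positions in block $j$ of $\bbm'$ carrying $q$ that can be the changed slot. I would verify this carefully, including the transpose convention. The factor $\vrho{}{q,p}$ and the row/column roles must match~\eqref{eq:M_GE_PI} when the identity is read as $M\bc=0$ with rows indexed by $(\bbm,k)$, possibly after relabelling $\bbm\leftrightarrow\bbm'$. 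Stacking over $\dd$ then gives $M^{\bl,L}\bc=0$.

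\textbf{Sufficiency.} Conversely, suppose $M\bc=0$. The identity above can be written as $A(g)\bc=B(g)\bc$ for two representations $A,B$ of $G$ acting on the coefficient space $\K^{\oMl\times\calM_L}$. These are the class-compressed version of $\rep{\bl}{}$, which is well defined because $\bV^\bl$ is stable, and $\rep{L}{}$. The condition $M\bc=0$ says that $(dA(X_\dd)-dB(X_\dd))\bc=0$ for all generators; by linearity this extends to all $X\in\mathfrak g$. The identity $A(\exp(tX))\bc=\exp(t\,dA(X))\bc$ then lets me show that $t\mapsto A(\exp(tX))^{-1}B(\exp(tX))\bc$ has zero derivative, so it is constant and equal to $\bc$. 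Connectedness of $G$ implies that $G$ is generated by $\exp(\mathfrak g)$, which extends the identity to all $g\in G$, exactly as in Proposition~\ref{prop:GE_basis}.

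\textbf{Conclusion.} The map $\bc\mapsto F$ is injective by Proposition~\ref{prop:basis-pi2}, so a basis of $\ker(M^{\bl,L})$ maps to a basis of $\bV^{\bl,L}$, and the dimension formula follows.
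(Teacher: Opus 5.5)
\textbf{The multiplicity count is wrong.} You flag this step as the main obstacle, and it is where the proposal fails. It is also the only computation that is genuinely new compared with Proposition~\ref{prop:GE_basis}.

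In your identity the ordered representative $\bbm$ of the column class is fixed, and $\bbm''$ runs over the representatives of the row class $\bbm'=\bbm_{(j,p,q)}$. Each admissible $\bbm''$ arises from $\bbm$ by turning one entry equal to $p$ in block $j$ into $q$, and different slots give different $\bbm''$. So there are $\blamb{}{\bbm^{(j)},p}$ of them, the number of $p$'s in the \emph{fixed} class, not $\blamb{}{{\bbm'}^{(j)},q}$.

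For example, take $\bl=(1,1)$, $\bbm=\overline{(-1,-1)}$ and $\bbm'=\overline{(-1,0)}$. Both $(-1,0)$ and $(0,-1)$ qualify, so the count is $2=\blamb{}{\bbm,-1}$, whereas $\blamb{}{\bbm',0}=1$. Directly, differentiating $(\bphi{1}{-1})^2$ along $X_\dd$ gives $2\,\bphi{1}{-1}\sum_{m'}\vrho{1,\dd}{-1,m'}\bphi{1}{m'}$, whose coefficient on $\bphi{1}{-1}\bphi{1}{0}$ is $2\vrho{1,\dd}{-1,0}$.

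The general rule is this: the multiplicity is the number of occurrences, in the class whose ordered representative is held fixed (the column class), of the value by which that class exceeds the other one. In the paper's~\eqref{eq:system_GEPI} the fixed class is $\obm'=\obm_{(i,p,q)}$, which carries the extra $q$; hence the factor $\blamb{}{{\obm'}^{(i)},q}$. You reproduced that symbol but attached it to the row class.

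With the corrected count, your entry in row $\bbm'$, column $\bbm$ is $\blamb{}{\bbm^{(j)},p}\,\vrho{\bl^{(j)}_1,\dd}{p,q}$. Renaming the row class $\bbm$ and the column class $\bbm'$, and exchanging $p$ and $q$, gives exactly the third case of~\eqref{eq:M_GE_PI}. No relabelling $\bbm\leftrightarrow\bbm'$ alone can rescue your version, because the count vector must belong to the column class.

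\textbf{The rest of the argument.} Your necessity argument otherwise follows the paper. Your sufficiency argument takes a different and somewhat cleaner route.

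The paper introduces the matrices $S^\bl,T^\bl$ and proves $T^\bl S^\bl\rep{\bl}{}(g)T^\bl=\rep{\bl}{}(g)T^\bl$, together with its Lie-algebra analogue, by an explicit permutation computation. It uses these to iterate $(d\rep{L}{}(X))^p(C^{\bl,L})^T=(C^{\bl,L})^TS^\bl(d\rep{\bl}{}(X))^pT^\bl$ and then to pass to products of exponentials.

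You instead observe that $\widetilde\rho(g):=S^\bl\rep{\bl}{}(g)T^\bl$ is the matrix of the action on $\bV^\bl$ in the basis of Proposition~\ref{prop:basis-pi2}. It is therefore a continuous representation by uniqueness of coordinates, as for $\rep{l}{}$ in the introduction. The proof of Proposition~\ref{prop:GE_basis} then applies verbatim with $\rep{\bl}{}$ replaced by $\widetilde\rho$, and the lemma becomes unnecessary.

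When you write this up, state the condition as the intertwining relation $\rep{L}{}(g)(C^{\bl,L})^T=(C^{\bl,L})^T\widetilde\rho(g)$, not as $A(g)\bc=B(g)\bc$ for two representations. Two things make your argument work: $A(\exp tX)^{-1}B(\exp tX)\bc$ is constant, and $\{g:A(g)\bc=B(g)\bc\}$ is closed under products. Both hold because $A$ and $B$ act on different indices of $\bc$ and hence commute. Note also that $A$, which acts through $\widetilde\rho(g)^T$, is in fact an anti-homomorphism. For general non-commuting $A,B$, neither step holds.
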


The proof of this theorem can be found in Section~\ref{sec:proof_equivariant_lie_algebra}. 
As with the GE case, the superscript of the matrix $M^{\bl,L}$ is sometimes omitted when the associated $\bl$ and $L$ are clear. Compared to Proposition~\ref{prop:GE_basis} that deals with the GE case, the only two differences are: (1) the set $\calM_\bl$ being replaced by the set of classes $\oMl$, an even smaller set compared with $\calM_\bl$; and (2) the presence of the
 $
    \blamb{}{{\bbm'}^{(j)},q} \;\;
         \vrho{\bl^{(j)}_1,\dd}{q,p},  $
         instead of $\vrho{l_j,\dd}{m'_j,m_j}$.
Therefore, our construction of the GE-PI basis is almost as efficient as that of the GE basis. To the best of our knowledge, this has not been the case in any existing work, where the presence of permutation invariance often incurs substantial additional costs~\cite{Batatia2023-us,Dusson2022-ie,geiger2022e3nn}.

We now provide a similar result when the basis $\bpsi{\bl}{\bbm}$ is used.
\begin{corollary}
    Let $\bl\in\calL^\Nat$, $L\in\calL$.
    Let $\bc^{\bl,L}_i = \{c_{\bbm,k,i}^{\bl,L}\}_{\bbm\in\oMl,k\in\calM_L}, i=1,\ldots, \dim(\ker(M^{\bl,L}))$ be a basis of $\ker(M^{\bl,L})$. 
    A basis of GE-PI
    functions for the representation $\rep{L}{}$ in the space $\left[V^\bl\right]^{\card{\calM_L}}$ is
    given by
    \[
    b^{\bl,L}_i = 
    \left[ 
    \sum_{\bbm \in \oMl}c_{\bbm,k,i}^{\bl,L} \bpsi{\bl}{\bbm}\right]_{k\in \calM_L},\;\; i=1,\ldots,\dim\big(\ker(M^{\bl,L})\big).
    \]
\end{corollary}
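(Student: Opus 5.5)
The corollary follows once we exhibit a linear isomorphism $T:\bV^\bl\to\mathrm{Span}\{\bpsi{\bl}{\bbm},\ \bbm\in\oMl\}$ with $T(\bphi{\bl}{\bbm})=\bpsi{\bl}{\bbm}$ for every $\bbm\in\oMl$ that commutes with the group action. The plan is to transport Theorem~\ref{thm:GEPI_basis} along this map.

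\textbf{Step 1: $T$ is a well-defined isomorphism.} By Proposition~\ref{prop:basis-pi2}, $\{\bphi{\bl}{\bbm}\}_{\bbm\in\oMl}$ is a basis of $\bV^\bl$. By Proposition~\ref{prop:basis-pi}, $\{\bpsi{\bl}{\bbm}\}_{\bbm\in\oMl}$ is a basis of the PI subspace of $V^\bl$. Both bases are indexed by the same set $\oMl$, so $T$ defined on basis vectors extends uniquely to an isomorphism. It also identifies the target space as exactly the space of PI functions in $V^\bl$.

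\textbf{Step 2: $T$ intertwines the group actions.} By Remark~\ref{rem:group_action}, for every $g\in G$,
\[
\bphi{\bl}{\bbm}(g\cdot\bR)=\sum_{\bbm'\in\calM_\bl}\rep{\bl}{\bbm,\bbm'}(g)\,\bphi{\bl}{\bbm'}(\bR),
\]
and the same identity holds with $\bpsi{}{}$ in place of $\bphi{}{}$. The sum runs over all of $\calM_\bl$, not over classes. We therefore first note that both $\bphi{\bl}{\bbm'}$ and $\bpsi{\bl}{\bbm'}$ depend only on the class $\overline{\bbm'}$. Regrouping the sum by classes then gives the same coefficient matrix on $\oMl$ for both families, so $T$ commutes with $f\mapsto f(g\,\cdot)$.

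\textbf{Step 3: transport GE-PI functions.} Apply $T$ componentwise to vector-valued functions. A vector $F=[F_k]_{k\in\calM_L}$ with $F_k\in\bV^\bl$ satisfies $F(g\cdot\bR)=\rep{L}{}(g)F(\bR)$ if and only if $TF$ does, because $T$ is linear and commutes with the action. Since $\bpsi{\bl}{\bbm}$ are PI, $TF$ is then GE-PI. Conversely, every GE-PI function in $[V^\bl]^{\card{\calM_L}}$ has PI components, so its components lie in the span of the $\bpsi{\bl}{\bbm}$ and it equals $TF$ for a unique $F$, which is GE-PI. Hence $T$ maps $\bV^{\bl,L}$ bijectively onto the GE-PI space in $[V^\bl]^{\card{\calM_L}}$. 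The basis $b_i^{\bl,L}$ of Theorem~\ref{thm:GEPI_basis} is therefore sent to a basis with the same coefficients $c_{\bbm,k,i}^{\bl,L}$, with $\bphi{}{}$ replaced by $\bpsi{}{}$.

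The main obstacle is the independence of $\bpsi{\bl}{\bbm}$ from the class representative in Step 2. Since $\bpsi{\bl}{\bbm}$ symmetrizes over all of $S_\Nat$, for $\pi\in\calS_\bl$ we have $\phi^\bl_{\pi\bbm}=\phi^\bl_\bbm\circ\pi'$ for a suitable $\pi'\in S_\Nat$. Re-indexing the sum over $\sigma\in S_\Nat$ then gives $\bpsi{\bl}{\pi\bbm}=\bpsi{\bl}{\bbm}$. The analogous statement for $\bphi{}{}$ holds because $\bphi{\bl}{\bbm}$ is a product of the factors $\bphi{l_i}{m_i}$, and $\pi\in\calS_\bl$ only permutes factors with equal $l_i$.
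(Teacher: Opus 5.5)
Your proposal is correct and rests on the same two facts as the paper's brief proof. The first is that the group acts on $\{\bpsi{\bl}{\bbm}\}$ and on $\{\bphi{\bl}{\bbm}\}$ through identical coefficients (Remark~\ref{rem:group_action}); the second is that the $\bpsi{\bl}{\bbm}$, $\bbm\in\oMl$, form a basis of the PI functions in $V^\bl$ (Proposition~\ref{prop:basis-pi}). Where the paper only says to adapt the proof of Theorem~\ref{thm:GEPI_basis}, you transport its conclusion through the equivariant isomorphism $T$, which relies on Proposition~\ref{prop:basis-pi2} to be well defined; this is a clean, rigorous way to carry out that adaptation without redoing the computation.
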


\begin{proof}
    Since the group action on $G$ acts the same over the functions $\bpsi{\bl}{\bbm}$ and $\bphi{\bl}{\bbm}$, Proposition~\ref{prop:basis-pi} combined with an adapted proof of Theorem~\ref{thm:GEPI_basis} easily provides the result.
\end{proof}

\subsection{Recursive construction of GE and GE-PI bases}

In this section, we show how the GE and GE-PI bases can be constructed recursively. This also offers a way to obtain an exact recursive formula for the dimensionality of the considered GE and GE-PI spaces. 
Indeed, a priori it is not so clear whether the tensor product of two GE sub-bases is a basis of the final space or an overcomplete set of functions. To obtain these results, we need a few additional assumptions on the considered group and the representations. We assume that the group $G$ is compact, so that the Haar measure is well-defined~\cite[Chapter 4]{Hall2013-af}, and we consider a complete set of mutually inequivalent irreducible representations $\rep{l}{}$ for $l\in \calL$ with corresponding spaces $V^l$.
Under these assumptions, the spaces $V^\bl$ and $\bV^\bl$ for $\bl\in\calL^N$ can respectively be decomposed into the direct sum of GE and GE-PI spaces.

\begin{proposition}[Direct sum of spaces with GE functions]\label{prop:direct_sum}
    Suppose $G$ is a 
    compact Lie group,
    $\bl 
    \in \calL^\Nat$ and 
    $V^\bl$ is stable by group action. 
    There holds 
    \begin{equation}
    \label{eq:direct_sum_GE}
        V^{\bl} = \bigoplus_{L \in \calL}  
        \;
        \bigoplus_{k\in \calM_L} [V^{\bl,L}]_k, 
    \end{equation}
    where $[V^{\bl,L}]_k$ denotes the space spanned by the $k$-th component of the functions in $V^{\bl,L}$.
\end{proposition}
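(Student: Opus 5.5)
The plan is to combine the Peter--Weyl decomposition of the finite-dimensional representation $\rep{\bl}{}=\bigotimes_{i=1}^\Nat \rep{l_i}{}$ acting on $V^\bl$ with the explicit isotypic projections given by the Haar measure. Since $G$ is compact and $V^\bl$ is finite-dimensional and stable by the action $(g\cdot f)(\bR)=f(g\cdot\bR)$, we first average the given inner product over $G$ with the Haar measure. In the resulting $G$-invariant inner product the representation is unitary, so it is completely reducible: $V^\bl$ is an orthogonal direct sum of irreducible invariant subspaces. Because $\{\rep{L}{}\}_{L\in\calL}$ is a complete set of inequivalent irreducibles, each of these subspaces is equivalent to some $\rep{L}{}$.

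Next we identify each summand with components of GE functions. Take an irreducible subspace $W\subset V^\bl$ equivalent to $\rep{L}{}$ (or to its dual/conjugate, depending on the convention in~\eqref{eq:1b-basis}; this is where the index conventions must be checked). Choose a basis $(f_k)_{k\in\calM_L}$ of $W$ transported from the compatible basis by the intertwiner, so that $f_k(g\cdot\bR)=\sum_{k'}\rep{L}{k,k'}(g)f_{k'}(\bR)$. Then $F=[f_k]_{k\in\calM_L}$ satisfies~\eqref{eq:GE}, so $F\in V^{\bl,L}$. Hence each $f_k$ lies in $[V^{\bl,L}]_k$, and $V^\bl=\sum_{L}\sum_k [V^{\bl,L}]_k$. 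The reverse inclusion is trivial, since each component of a GE function lies in $V^\bl$.

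The main obstacle is proving that the sum is direct. For this we use the Schur orthogonality relations: for $F\in V^{\bl,L}$,
\[
P^{L}_{kk}\,f := \frac{|\calM_L|}{\int_G dg}\int_G \overline{\rep{L}{k,k}(g)}\, f(g\cdot\,\cdot)\,dg
\]
(with the appropriate conjugation convention) acts as the identity on $[V^{\bl,L}]_k$ and as $0$ on $[V^{\bl,L'}]_{k'}$ whenever $(L',k')\neq(L,k)$. To verify this, substitute $F_{k'}(g\cdot\bR)=\sum_{k''}\rep{L'}{k',k''}(g)F_{k''}(\bR)$ and apply $\int_G \overline{\rep{L}{k,k}}\rep{L'}{k',k''}\,dg\propto\delta_{LL'}\delta_{kk'}\delta_{kk''}$, which holds after choosing unitary representatives. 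This step uses inequivalence of distinct $\rep{L}{}$ together with irreducibility. Given a vanishing sum $\sum_{L,k} f_{L,k}=0$ with $f_{L,k}\in[V^{\bl,L}]_k$, applying $P^L_{kk}$ yields $f_{L,k}=0$, so the sum is direct.

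A subtle point is that unitarity of $\rep{L}{}$ in the basis $\phi^L_m$ is needed for the matrix-coefficient orthogonality. This holds if the basis is orthonormal for an invariant inner product. Otherwise we conjugate by a fixed matrix $A$ making $A\rep{L}{}A^{-1}$ unitary, and note that the spaces $[V^{\bl,L}]_k$ then transform linearly among themselves. In that case we instead use the projector $P^L=\dim\rep{L}{}\int\overline{\chi_L}$ onto the isotypic component, combined with linear independence of the coordinate functionals within each isotypic component.
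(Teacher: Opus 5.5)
Your proof is correct over $\K=\C$, which the paper's argument also tacitly needs for Schur's lemma. It reaches directness by a different route from the paper.

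\textbf{How the two proofs differ.} The paper block-diagonalizes $\rep{\bl}{}$ into copies of the $\rep{L}{}$ with adapted basis functions $v^L_{i,k}$. It then asserts, via a Schur-lemma step left unproved, that $V^{\bl,L}$ is exactly the span of the $d_L$ vectors $[v^L_{i,k}]_{k\in\calM_L}$, so $[V^{\bl,L}]_k=\mathrm{Span}_i\{v^L_{i,k}\}$. Directness is then read off from the fact that all the $v^L_{i,k}$ together form a basis of $V^{\bl}$. You use the decomposition only for the inclusion $V^\bl\subset\sum_{L,k}[V^{\bl,L}]_k$, and obtain directness from the projectors $P^L_{kk}$, so you never need to characterize all of $V^{\bl,L}$. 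Your route makes explicit the step the paper skips. The paper's route yields the stronger fact $\dim V^{\bl,L}=\dim[V^{\bl,L}]_k=d_L$, which is what is implicitly used when the proposition is invoked in Proposition~\ref{prop:GE_rec}.

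\textbf{Caveats you can drop.} The dual/conjugate hedge is unnecessary. With convention \eqref{eq:1b-basis}, the coefficient matrices $\sigma(g)$ of $f\mapsto f(g\cdot\,\cdot)$ on an invariant subspace satisfy $\sigma(g_1g_2)=\sigma(g_1)\sigma(g_2)$. So each irreducible summand directly yields a GE function for some $\rep{L}{}$.

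Unitarity is not needed either. Set $P^L_{kk}f=\card{\calM_L}\int_G\rep{L}{k,k}(g^{-1})\,f(g\cdot\,\cdot)\,dg$ with normalized Haar measure. The relation $\int_G\rep{L}{i,j}(g^{-1})\rep{L'}{a,b}(g)\,dg=\delta_{LL'}\delta_{ib}\delta_{ja}/\card{\calM_L}$ holds for arbitrary complex irreducible representations, unitary or not. It gives $P^L_{kk}F_{k'}=\delta_{LL'}\delta_{kk'}F_k$ for $F\in V^{\bl,L'}$ directly.

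Your last paragraph's fallback is therefore superfluous. As written it is also too vague to carry weight, since conjugating $\rep{L}{}$ by $A$ changes the spaces $[V^{\bl,L}]_k$ themselves.
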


\begin{proof}
    Since the space $V^\bl$ is stable by group action, the group action defines a representation $\rep{\bl}{}$, which can be blockdiagonalized with irreducible representations $\rep{L}{}$, $L\in\calL$ and corresponding basis functions $v^L_{i,k}$, $i \in \{1, \ldots, d_L\}$ and $k\in \calM_L$, where $d_L$ is the number of copies of $\rep{L}{}$ in the blockdiagonalization.
    Noting that 
    $V^{\bl,L} = {\rm Span} \{ [v^L_{i,k}]_{k\in \calM_L}, i\in \{1, \ldots, d_L\}  \}$, the result follows.
\end{proof}

\begin{proposition}[Direct sum of spaces with GE-PI functions]
    Suppose $G$ is a compact Lie group,
    $\bl
    \in \calL^\Nat$ and 
    $\bV^\bl$ is stable by group action.
    There holds
    \begin{equation}
    \label{eq:direct_sum_GEPI}
         \bV^{\bl} = \bigoplus_{L \in \calL}  
        \;
        \bigoplus_{k\in \calM_L} [\bV^{\bl,L}]_k,
    \end{equation}
    where $[\bV^{\bl,L}]_k$ denotes the space spanned by the $k$-th component of the functions in $\bV^{\bl,L}$.
\end{proposition}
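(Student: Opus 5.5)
The plan is to mirror the proof of Proposition~\ref{prop:direct_sum}, replacing $V^\bl$ by the finite-dimensional space $\bV^\bl$. By Remark~\ref{rem:group_action}, the group acts on $\bV^\bl$ through the basis $\{\bphi{\bl}{\bbm}\}$, so $\bV^\bl$ carries a finite-dimensional continuous representation of $G$. Two points need care. First, $\{\bphi{\bl}{\bbm}\}_{\bbm\in\calM_\bl}$ is only a spanning family; by Proposition~\ref{prop:basis-pi2}, a basis is indexed by $\oMl$. Second, the formula in Remark~\ref{rem:group_action} expresses $\bphi{\bl}{\bbm}(g\cdot\bR)$ as a linear combination of the $\bphi{\bl}{\bbm'}(\bR)$. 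This shows that the map $f\mapsto f(g^{-1}\cdot\,\cdot)$ is linear on $\bV^\bl$ and preserves it. I will call the resulting representation $\bar\rho^\bl$, taken in the basis $\{\bphi{\bl}{\bbm}\}_{\bbm\in\oMl}$. It is well defined because $\bV^\bl$ is stable, which is assumed, and because its coordinates in a basis are unique. Its homomorphism property follows as for $\rep{l}{}$ in the introduction.

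Next I use compactness of $G$. Averaging any inner product on $\bV^\bl$ against the Haar measure gives an invariant inner product. Hence $\bar\rho^\bl$ is completely reducible, and $\bV^\bl$ splits as a direct sum of irreducible invariant subspaces. The set $\{\rep{L}{}\}_{L\in\calL}$ is a complete set of mutually inequivalent irreducibles. So each summand is isomorphic to some $\rep{L}{}$, and inside it I can choose functions $v^L_{i,k}$, $k\in\calM_L$, that transform exactly as in~\eqref{eq:1b-basis}. That is, $v^L_{i,k}(g\cdot\bR)=\sum_{k'}\rep{L}{k,k'}(g)v^L_{i,k'}(\bR)$, for $i=1,\ldots,d_L$. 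This choice is arranged by composing with the intertwining isomorphism. Each vector $[v^L_{i,k}]_{k\in\calM_L}$ then satisfies~\eqref{eq:GE}. Its components are symmetric functions, being elements of $\bV^\bl$, so the vector is GE-PI and lies in $\bV^{\bl,L}$. It follows that $\bV^\bl=\bigoplus_L\bigoplus_k\mathrm{Span}\{v^L_{i,k}\}_i$.

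The main obstacle is identifying $[\bV^{\bl,L}]_k$ exactly with $\mathrm{Span}\{v^L_{i,k}, i=1,\ldots,d_L\}$, so that the decomposition above is the one claimed. The inclusion $\supseteq$ is immediate from the previous step. For $\subseteq$, take $F=[F_k]_k\in\bV^{\bl,L}$ and project each component $F_k$ onto the isotypic components. The projections onto the isotypic components commute with the group action, so each projected vector is still equivariant. The projected vector for an isotypic component of type $L'\neq L$ defines an intertwiner $\rep{L}{}\to\rep{L'}{}$, and by Schur's lemma this intertwiner vanishes. Within the $L$-isotypic component, Schur's lemma again forces $F_k=\sum_i a_i v^L_{i,k}$ with coefficients $a_i$ independent of $k$. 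This gives $[\bV^{\bl,L}]_k\subseteq\mathrm{Span}\{v^L_{i,k}\}_i$. Directness of the whole sum then follows from the directness of the isotypic decomposition together with the linear independence of the $v^L_{i,k}$ over $(i,k)$.
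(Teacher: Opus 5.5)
Your proposal is correct and follows the same route as the paper. The paper's proof likewise uses Remark~\ref{rem:group_action} to transfer the argument of Proposition~\ref{prop:direct_sum} to $\bV^\bl$: it block-diagonalizes the representation on $\bV^\bl$ into copies of the $\rep{L}{}$ with basis functions $v^L_{i,k}$, then identifies $\bV^{\bl,L}$ with the span of the vectors $[v^L_{i,k}]_{k\in\calM_L}$. The only difference is that you justify this identification with isotypic projections and Schur's lemma, whereas the paper simply asserts it. Note that the scalar form of Schur's lemma you invoke requires the $\rep{L}{}$ to be absolutely irreducible (e.g.\ $\K=\C$), an assumption the paper's argument also uses implicitly.
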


\begin{proof}
    Since the group action is identical on $V^l$ and $\bV^l$ for $l\in\calL$ using Remark~\ref{rem:group_action}, a similar proof as Proposition~\ref{prop:direct_sum} applies in this case.
\end{proof}

We now provide a recursive construction of GE bases and a recursive formula for the dimensionality of the GE spaces.

\begin{proposition}[GE basis based on recursion]
\label{prop:GE_rec}
Let $N,N_1,N_2\in \N$ with $N_1+N_2=N$.
Let $\bl\in\calL^\Nat$  and $L\in\calL$. Suppose $\bl = (\bl^{(1)}, \bl^{(2)})\in\calL^{\Nat_1}\times\calL^{\Nat_2}$, then a basis for $V^{\bl,L}$ is given by
\[
    b^{\bl,L}_{(L_1,i_1,L_2,i_2,j)} = \left[ \sum_{\substack{k_1\in\calM_{L_1} \\  k_2\in\calM_{L_2}}} 
    c_{(k_1,k_2), k,j}^{(L_1,L_2),L}
    [b^{\bl^{(1)},L_1}_{i_1}]_{k_1} [b^{\bl^{(2)},L_2}_{i_2}]_{k_2} \right]_{k\in \calM_L},\quad 
    \begin{array}{l}
         L_1,L_2\in\calL,   \\
         i_1\in\{1,\ldots, \dimge{\bl^{(1)}}{L_1}\}, \\
         i_2\in\{1,\ldots, \dimge{\bl^{(2)}}{L_2}\},\\
         j \in \{1,\ldots,\dimge{(L_1,L_2)}{L}\},
    \end{array}
\]
where $\{b^{\bl^{(1)},L_1}_{i_1}\}_{i_1 = 1,\ldots, \dimge{\bl^{(1)}}{L_1}}$, 
$\{b^{\bl^{(2)},L_2}_{i_2}\}_{i_2 = 1,\ldots, \dimge{\bl^{(2)}}{L_2}}$ are bases of $V^{\bl^{(1)},L_1}$ and $V^{\bl^{(2)},L_2}$, respectively, and
$\{c_{(k_1,k_2), k,j}^{(L_1,L_2),L}\}_{j=1,\ldots,\dimge{(L_1,L_2)}{L}}$ is a basis of $\ker(M^{(L_1,L_2),L})$ with $M^{(L_1,L_2),L}$ being defined in~\eqref{eq:matM}. 
Moreover, there holds
\begin{equation}\label{eq:dim_GE_rec}
    \dimge{\bl}{L} = \sum_{L_1,L_2\in \calL} \dimge{(L_1,L_2)}{L} \;\dimge{\bl^{(1)}}{L_1}\;\dimge{\bl^{(2)}}{L_2}.
\end{equation}
\end{proposition}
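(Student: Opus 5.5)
We will prove the statement in three steps: each proposed function lies in $V^{\bl,L}$, the functions are linearly independent, and they span $V^{\bl,L}$. Throughout we work under the standing assumptions of this subsection: $G$ is compact and $\{\rep{l}{}\}_{l\in\calL}$ is a complete set of mutually inequivalent irreducible representations. The dimension formula~\eqref{eq:dim_GE_rec} then follows by counting the indices $(L_1,i_1,L_2,i_2,j)$.

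\emph{Equivariance.} Since $V^{\bl}=V^{\bl^{(1)}}\otimes V^{\bl^{(2)}}$ as a space of functions of $(\bR^{(1)},\bR^{(2)})\in\Omega^{\Nat_1}\times\Omega^{\Nat_2}$, each component of $b^{\bl,L}_{(L_1,i_1,L_2,i_2,j)}$ lies in $V^\bl$. Write $\beta^1_{k_1}=[b^{\bl^{(1)},L_1}_{i_1}]_{k_1}$ and $\beta^2_{k_2}=[b^{\bl^{(2)},L_2}_{i_2}]_{k_2}$. By the GE property, $\beta^1(g\cdot\bR^{(1)})=\rep{L_1}{}(g)\beta^1(\bR^{(1)})$, and similarly for $\beta^2$. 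Hence the family $\{\beta^1_{k_1}\beta^2_{k_2}\}$ transforms under $G$ exactly as the basis $\{\phi^{L_1}_{k_1}\phi^{L_2}_{k_2}\}$ of $V^{(L_1,L_2)}$, with the representation $\rep{L_1}{}\otimes\rep{L_2}{}$. Proposition~\ref{prop:GE_basis}, applied with $\bl=(L_1,L_2)$, shows that a coefficient vector $c$ in $\ker(M^{(L_1,L_2),L})$ is exactly a $G$-intertwiner, that is, it satisfies $\rep{L_1}{}(g)\otimes\rep{L_2}{}(g)$-compatibility with $\rep{L}{}(g)$. This property depends only on the representation matrices and not on the particular functions. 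Therefore the same coefficients make $b^{\bl,L}$ GE. To use this cleanly, I would restate the content of Proposition~\ref{prop:GE_basis} in coefficient form: $\sum_{\bbm} c_{\bbm,k}\phi^\bl_\bbm$ is GE if and only if $\sum_{\bbm}c_{\bbm,k}\,\rep{\bl}{\bbm,\bbm'}(g)=\sum_{k'}\rep{L}{k,k'}(g)\,c_{\bbm',k'}$ for all $g$.

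\emph{Spanning and independence via decompositions.} By Proposition~\ref{prop:direct_sum}, $V^{\bl^{(1)}}=\bigoplus_{L_1,k_1}[V^{\bl^{(1)},L_1}]_{k_1}$, and $V^{\bl^{(2)}}$ decomposes in the same way. The proof of that proposition also shows that the functions $\{[b^{\bl^{(1)},L_1}_{i_1}]_{k_1}\}_{L_1,i_1,k_1}$ form a basis of $V^{\bl^{(1)}}$ adapted to the isotypic decomposition, with each fixed $(L_1,i_1)$ spanning an irreducible copy of $\rep{L_1}{}$. I would justify this with Schur's lemma: the component maps $f\mapsto [f]_{k_1}$ are injective on $V^{\bl^{(1)},L_1}$, and distinct $k_1$ or distinct copies give independent functions. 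Consequently, the products $\beta^1_{k_1}\beta^2_{k_2}$ over all indices form a basis of $V^\bl$. Grouping these products gives
\[
V^\bl=\bigoplus_{L_1,i_1,L_2,i_2} W_{L_1,i_1,L_2,i_2},
\]
a direct sum of $G$-stable subspaces, each isomorphic to $V^{(L_1,L_2)}$ via the map $\beta^1_{k_1}\beta^2_{k_2}\mapsto\phi^{L_1}_{k_1}\phi^{L_2}_{k_2}$.

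A GE function $F$ with values in $V^{\bl}$ is equivalent to a $G$-intertwiner from the dual of $\rep{L}{}$ into $V^\bl$. Its components therefore split along this direct sum, and each piece is GE with components in $W_{L_1,i_1,L_2,i_2}$, since the projections commute with the group action. Transporting through the isomorphism with $V^{(L_1,L_2)}$ and applying Proposition~\ref{prop:GE_basis} to $(L_1,L_2)$ shows that each piece is a unique linear combination over $j$ of the proposed functions with that fixed $(L_1,i_1,L_2,i_2)$. This gives spanning. Linear independence follows from the directness of the sum together with the linear independence of the kernel basis $\{c_{\cdot,\cdot,j}\}_j$. Counting the indices then yields~\eqref{eq:dim_GE_rec}.

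\emph{Main obstacle.} The delicate step is verifying that the products $\beta^1_{k_1}\beta^2_{k_2}$ form a basis of $V^\bl$, and more precisely that the component functions of the GE bases from Proposition~\ref{prop:direct_sum} are jointly linearly independent across $(L_1,i_1,k_1)$. This is also why complete sets of inequivalent irreducible representations are needed. I would handle it with a Schur orthogonality argument: integrating against the Haar measure shows that distinct irreducible copies are $L^2$-orthogonal after a suitable choice of basis, so independence is preserved under taking tensor products.
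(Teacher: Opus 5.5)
Your proposal is correct and follows essentially the same route as the paper. The paper also uses \eqref{eq:direct_sum_GE} to conclude that the products $[b^{\bl^{(1)},L_1}_{i_1}]_{k_1}[b^{\bl^{(2)},L_2}_{i_2}]_{k_2}$ form a basis of $V^{\bl}=V^{\bl^{(1)}}\otimes V^{\bl^{(2)}}$, expands a candidate $F$ in that basis, and observes that the equivariance condition on the coefficients is exactly the system \eqref{eq:GE-coeff} with $(L_1,L_2)$ in place of $\bl$. Two points the paper leaves implicit are handled more explicitly in your version: the injectivity of the component maps on $V^{\bl^{(1)},L_1}$, and the decoupling of the equivariance condition across the $G$-stable blocks $W_{L_1,i_1,L_2,i_2}$.
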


The proof can be found in Section~\ref{sec:proof_equivariant_lie_algebra}.
Note that,
as mentioned in Remark~\ref{rem:CG}, 
the coupling coefficients $c_{(k_1,k_2), k,j}^{(L_1,L_2),L}$ are called Clebsch--Gordan coefficients. 

We then provide a result that can be used to compute a GE-PI basis recursively. 

\begin{proposition}[GE-PI basis based on recursion]
\label{prop:GEPI_rec}
Let $N,N_1,N_2\in \N$ with $N_1+N_2=N$.
Let $\bl\in\calL^\Nat$ and $L\in\calL$ . Suppose $\bl = (\bl^{(1)}, \bl^{(2)})\in\calL^{\Nat_1}\times\calL^{\Nat_2}$, then a spanning set for $\bV^{\bl,L}$ is 
\[
        b^{\bl,L}_{(L_1,i_1,L_2,i_2,j)} = \left[ \sum_{\substack{k_1\in\calM_{L_1} \\  k_2\in\calM_{L_2}}}  
        c_{(k_1,k_2), k,j}^{(L_1,L_2),L}
        [b^{\bl^{(1)},L_1}_{i_1}]_{k_1} [b^{\bl^{(2)},L_2}_{i_2}]_{k_2} \right]_{k\in \calM_L}, \quad
        \begin{array}{l}
         L_1,L_2\in\calL,   \\
         i_1\in\{1,\ldots, \dimpi{\bl^{(1)}}{L_1}\}, \\
         i_2\in\{1,\ldots, \dimpi{\bl^{(2)}}{L_2}\},\\
         j \in \{1,\ldots,\dimge{(L_1,L_2)}{L}\},
    \end{array}
\]
where $\{b^{\bl^{(1)},L_1}_{i_1}\}_{i_1 = 1,\ldots, \dimpi{\bl^{(1)}}{L_1}}$ and 
$\{b^{\bl^{(2)},L_2}_{i_2}\}_{i_2 = 1,\ldots, \dimpi{\bl^{(2)}}{L_2}}$ are bases of $\bV^{\bl^{(1)},L_1}$ and $\bV^{\bl^{(2)},L_2}$
respectively, and
$\{c_{(k_1,k_2), k,j}^{(L_1,L_2),L}\}_{j=1,\ldots,\dimpi{(L_1,L_2)}{L}}$ is a basis of $\ker(M^{(L_1,L_2),L})$ with $M^{(L_1,L_2),L}$ being defined in~\eqref{eq:matM}. 

Moreover, if $\bl^{(1)} \cap \bl^{(2)} = \varnothing$, this family is a basis of $\bV^{\bl,L}$ and there holds
\begin{equation*}
    \dimpi{\bl}{L} = \sum_{L_1,L_2\in\calL}
    \dimge{(L_1,L_2)}{L}
    \;
    \dimpi{\bl^{(1)}}{L_1} \;
    \dimpi{\bl^{(2)}}{L_2}.
\end{equation*}
\end{proposition}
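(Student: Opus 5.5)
We prove the two claims separately: first that the family spans $\bV^{\bl,L}$, then that it is a basis when $\bl^{(1)}\cap\bl^{(2)}=\varnothing$, by reducing to the GE recursion of Proposition~\ref{prop:GE_rec}.

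\emph{Equivariance of the family.} Each $b^{\bl,L}_{(L_1,i_1,L_2,i_2,j)}$ lies in $[\bV^{\bl,L}]$. Its components are products of an element of $\bV^{\bl^{(1)}}$ with an element of $\bV^{\bl^{(2)}}$, hence belong to $\bV^{\bl}=\bV^{\bl^{(1)}}\otimes\bV^{\bl^{(2)}}$. Equivariance for $\rep{L}{}$ then follows in three steps:
\begin{itemize}
\item apply $g$ to each factor, using the equivariance of $b^{\bl^{(1)},L_1}_{i_1}$ and $b^{\bl^{(2)},L_2}_{i_2}$;
\item this produces $\rep{L_1}{}(g)\otimes\rep{L_2}{}(g)$ acting on the product;
\item the Clebsch--Gordan kernel vector $c^{(L_1,L_2),L}_{\cdot,\cdot,j}$ intertwines $\rep{L_1}{}\otimes\rep{L_2}{}$ with $\rep{L}{}$, by Proposition~\ref{prop:GE_basis} applied with $N=2$ (read at the group level through its proof).
\end{itemize}

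\emph{Spanning.} Let $F\in\bV^{\bl,L}$. Write each component of $F$ in $\bV^{\bl^{(1)}}\otimes\bV^{\bl^{(2)}}$. Use the direct sum decomposition \eqref{eq:direct_sum_GEPI} for $\bV^{\bl^{(1)}}$ and $\bV^{\bl^{(2)}}$ separately. The components $[b^{\bl^{(1)},L_1}_{i_1}]_{k_1}$ form a basis of $\bV^{\bl^{(1)}}$, and likewise for $\bl^{(2)}$. Hence the products $[b^{\bl^{(1)},L_1}_{i_1}]_{k_1}[b^{\bl^{(2)},L_2}_{i_2}]_{k_2}$ form a basis of $\bV^{\bl}$, so
\[
F_k=\sum a^{k}_{(L_1,i_1,k_1),(L_2,i_2,k_2)}\,[b^{\bl^{(1)},L_1}_{i_1}]_{k_1}[b^{\bl^{(2)},L_2}_{i_2}]_{k_2}.
\]
Impose equivariance of $F$. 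The group acts on each block $(L_1,i_1,L_2,i_2)$ through $\rep{L_1}{}\otimes\rep{L_2}{}$, and the decomposition is unique, so for each fixed $(L_1,i_1,L_2,i_2)$ the coefficient array $(a^k_{k_1,k_2})$ must satisfy the intertwining relation. Differentiating at the generators, this means the array lies in $\ker(M^{(L_1,L_2),L})$, exactly as in the proof of Proposition~\ref{prop:GE_basis}. It is therefore a combination of the $c^{(L_1,L_2),L}_{\cdot,\cdot,j}$, which gives spanning.

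\emph{Linear independence when $\bl^{(1)}\cap\bl^{(2)}=\varnothing$.} This is the only step that is not formal: when the two blocks share a value $l$, products $\bphi{l}{m}\bphi{l}{m'}$ coincide across the split, so the products above may be linearly dependent. In the disjoint case I expect to show that the products form a genuine basis of $\bV^\bl$, by checking the hypothesis of the spanning step directly. The minimal partition of $\bl$ is the concatenation of those of $\bl^{(1)}$ and $\bl^{(2)}$, so $\oMl=\overline{\calM}_{\bl^{(1)}}\times\overline{\calM}_{\bl^{(2)}}$. By Proposition~\ref{prop:basis-pi2}, $\bphi{\bl}{\bbm}=\bphi{\bl^{(1)}}{\bbm^{(1)}}\bphi{\bl^{(2)}}{\bbm^{(2)}}$ then gives $\dim\bV^\bl=\dim\bV^{\bl^{(1)}}\dim\bV^{\bl^{(2)}}$. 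Since the products span $\bV^\bl$ and there are exactly this many of them, they are linearly independent. Consequently the coefficient arrays in the spanning step are unique, and distinct $(L_1,i_1,L_2,i_2,j)$ give independent functions. Counting the family yields the dimension formula.
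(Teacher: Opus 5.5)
Your overall route matches the paper's: reduce to Proposition~\ref{prop:GE_rec} through products of the block bases. The disjoint case is handled correctly. Your count via $\oMl=\overline{\calM}_{\bl^{(1)}}\times\overline{\calM}_{\bl^{(2)}}$ and Proposition~\ref{prop:basis-pi2} is an explicit justification that the products form a basis of $\bV^{\bl}$, which the paper only asserts.

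The gap is in your \emph{Spanning} paragraph, which is meant to cover the general case. There you state that the products $[b^{\bl^{(1)},L_1}_{i_1}]_{k_1}[b^{\bl^{(2)},L_2}_{i_2}]_{k_2}$ form a basis of $\bV^{\bl}$. You then use uniqueness of the coefficients $a^k$ to conclude that each block satisfies the intertwining relation. When $\bl^{(1)}\cap\bl^{(2)}\neq\varnothing$ this is false, as you note yourself a paragraph later. That is exactly the case where the spanning assertion is not already covered by the basis assertion.

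To see what fails, write $R(g):=\rep{L_1}{}(g)\otimes\rep{L_2}{}(g)$ blockwise, and let $\Phi$ be the multiplication map sending a coefficient array to the corresponding combination of products. With dependent products, equivariance of $F$ only gives
\[
R(g)^{T}a^{k}-\sum_{k'}\rep{L}{k,k'}(g)\,a^{k'}\in\ker\Phi ,
\]
not that this defect vanishes. So you cannot conclude that the $a^k$ are combinations of Clebsch--Gordan vectors. The paper's proof treats this case only by ``following the steps of the non-intersecting case'', so an extra argument is needed either way.

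The missing ingredient is compactness of $G$, which your proposal never uses. Since $\Phi$ commutes with the group action, $\Phi(a)(g\cdot\bR)=\Phi(R(g)^{T}a)(\bR)$. You can therefore replace an arbitrary representing array by its Haar average (normalized measure):
\[
\tilde a^{k}=\int_G\sum_{k'\in\calM_L}\rep{L}{k,k'}(g^{-1})\,R(g)^{T}a^{k'}\,dg .
\]
Equivariance of $F$ gives $\Phi(\tilde a^{k})=F_k$. Invariance of the Haar measure gives $R(h)^{T}\tilde a^{k}=\sum_{k'}\rep{L}{k,k'}(h)\,\tilde a^{k'}$ for all $h\in G$ in every block. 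Your kernel argument then applies to $\tilde a$ and yields spanning.

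Equivalently, run the argument of Proposition~\ref{prop:GE_rec} in the abstract tensor product, where the products are independent. Then use complete reducibility to lift every element of $\bV^{\bl,L}$ to an equivariant preimage under the equivariant surjection $\Phi$.
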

\begin{proof}
    The proof for the non-intersecting case is similar to the proof of Proposition~\ref{prop:GE_rec} 
    noting that if $\bl^{(1)} \cap \bl^{(2)} = \varnothing$,
     then $\bV^\bl = \bV^{\bl^{(1)}} \otimes \bV^{\bl^{(2)}}$, for which a basis is given by
     \[
    ([b^{\bl^{(1)},L_1}_{i_1}]_{k_1} [b^{\bl^{(2)},L_2}_{i_2}]_{k_2})_{L_1,L_2 \in\calL,\; i_1 \in \{ 1, \ldots, \dimpi{\bl^{(1)}}{L_1}\},\; i_2 \in \{1, \ldots, \dimpi{\bl^{(2)}}{L_2}\} },
    \]
    and finally using~\eqref{eq:direct_sum_GEPI} instead of~\eqref{eq:direct_sum_GE}. In that case, the dimensionality is a direct consequence.
    If $\bl^{(1)} \cap \bl^{(2)} \neq \varnothing$, we have $\bV^\bl = \bV^{\bl^{(1)}} \otimes \bV^{\bl^{(2)}}$ but the functions \[
    ([b^{L_1}_{\bl^{(1)} i_1}]_{k_1} [b^{L_2}_{\bl^{(2)} i_2}]_{k_2})_{L_1,L_2 \in \N, i_1 \in \{ 1, \ldots, \dimpi{\bl^{(1)}}{L_1}\}, i_2 \in \{1, \ldots, \dimpi{\bl^{(2)}}{L_2}\} } 
    \]
    are only a spanning set and not a basis of  $\bV^\bl$.
    Therefore, following the steps of the non-intersecting case, we only generate a spanning set of GE-PI functions.
\end{proof}

Note that in the case of intersecting $\bl^{(1)},\bl^{(2)},$ we can numerically obtain a basis by, for example, performing a singular value decomposition on the spanning set given in Proposition~\ref{prop:GEPI_rec}.

\medskip

In practice, it is not always clear which construction is more efficient between the direct and recursive constructions. However, with the result on GE-PI functions for $\bl\in\calL^N$ with minimal partition $\bl=(\bl^{(1)},\bl^{(2)},\ldots,\bl^{(\Nb)})$, we expect it to be efficient to first compute basis functions for the different blocks $\bl^{(j)}$ of identical values, and then assemble them using Proposition~\ref{prop:GEPI_rec}, especially for large $N$, see Section~\ref{sec:num-rec} for an elementary numerical comparison.

\section{Application to some rotation groups}\label{sec:appl}

We now apply our construction to the groups  $SU(2)$ and $SO(3)$, which naturally appear in quantum mechanics, molecular mechanics, and computer graphics. Specifically, the generators of the underlying Lie algebra are taken as the infinitesimal rotations under the ZYZ convention. We show how the matrices~\eqref{eq:matM} and~\eqref{eq:matM-PI} 
simplify for these specific groups, and investigate the algebraic properties of the corresponding GE and GE-PI bases. In particular, we provide their exact dimensionality and prove asymptotic estimates.

\subsection{Representations and Lie algebra}
We first introduce the explicit expression of the Wigner-D matrices with respect to Euler angles, which are irreducible representations of $SU(2)$ and $SO(3)$~\cite{wigner1931gruppentheorie}. As it is standard, we denote the Wigner-D matrices by $D^\ell$ instead of $\rep{\ell}{}$.

\begin{definition}[Wigner-D matrix]\label{def:wigner-D}
Let $(\alpha, \beta, \gamma)\in
[0,4\pi)\times[0,\pi)\times[0,4\pi)$ for $SU(2)$
or 
$[0,2\pi)\times[0,\pi)\times[0,2\pi)$
for $SO(3)$
be the three Euler angles that parametrize an element $Q$ in either the group $SU(2)$ or $SO(3)$ under the ZYZ convention. Then for $\ell = 0,\; \frac12, \;1, \;\frac32, \;\ldots$, the Wigner-D matrix $D^\ell$ is defined as a $(2\ell+1)\times(2\ell+1)$ matrix, whose elements read
\begin{equation}
      D^\ell_{\mu m}(Q) = D^\ell_{\mu m}(\alpha, \beta, \gamma) = e^{-\mathrm{i}m\alpha}d^\ell_{\mu m}(\beta) e^{-\mathrm{i}\mu\gamma},
      \label{eq:wigner-D}
\end{equation}
for $\mu,m \in \{-\ell, -\ell+1, \ldots, \ell-1, \ell\}$, where
\[
    d^\ell_{\mu m}(\beta) = [(\ell+m)!(\ell-m)!(\ell+\mu)!(\ell-\mu)!]^\frac{1}{2}\sum_{s = \max(0, \mu-m)}^{\min(\ell+\mu,\ell-m)} \frac{(-1)^{s}(\cos \frac{\beta}{2})^{2l+\mu-m-2s}(\sin \frac{\beta}{2})^{m-\mu+2s}}{(\ell+\mu-s)!s!(m-\mu+s)!(\ell-m+s)!}.
\]
\end{definition}
In particular, the irreducible representations of $SU(2)$ are the Wigner-D matrices for 
$\ell \in \calL_{SU(2)}$ defined~as
\begin{equation}
    \label{eq:calL}
     \calL_{SU(2)} :=  \left\{\frac{\ell}{2}: \ell\in\N_0 \right\},
\end{equation}
while the irreducible representations of $SO(3)$ correspond to Wigner-D matrices for 
$\ell \in \calL_{SO(3)}$
defined as 
\begin{equation}
\label{eq:calL2}
     \calL_{SO(3)} := \N_0.
\end{equation}
To unify the notation and emphasize the dependency of the index sets for the irreducible representations on the group $G$, we denote the sets as $\calLG$, particularly for $G=SU(2)$ and $G=SO(3)$, in this section. To apply Proposition~\ref{prop:GE_basis} and Theorem~\ref{thm:GEPI_basis}, we need the partial derivatives of the Wigner-D matrices with respect to the Euler angles at the origin, which corresponds to the neutral element of the group (identity denoted by~$I$). For notational convenience, we define for $\ell\in\calLG$ the set
\begin{equation}
    \label{eq:calMU2}
        \calM_\ell = \{-\ell,-\ell+1,\ldots,\ell-1,\ell\}, 
        \quad \text{ with }
        \quad 
         \card{\calM_\ell} = 2\ell+1.
\end{equation}
Then by~\eqref{eq:rep-deriv}, the derivatives of the representations $D^\ell$ evaluated at the chosen generators are given element-wise by the following Proposition~\ref{prop:deri_alpha_gamma}. 

\begin{proposition}[Derivatives of Wigner-D matrices]
\label{prop:deri_alpha_gamma}
Let $\ell\in\calLG$, $\mu,m \in \calM_\ell$, then
\begin{align}
\label{eq:deri_alpha}
    \vrho{\ell,1}{\mu,m} &:=  \frac{\partial D^\ell_{\mu m}(I)}{\partial \alpha} = - \mathrm{i} m \delta_{\mu m}, \\
\label{eq:deri_beta}
     \vrho{\ell,2}{\mu,m} &:= \frac{\partial D^\ell_{\mu m}(I)}{\partial \beta} = 
    \begin{cases}
        \frac{1}{2}[(\ell-m+1)(\ell+m)]^{\frac{1}{2}},  & \quad \textup{if } m = \mu + 1\\
        -\frac{1}{2}[(\ell+m+1)(\ell-m)]^{\frac{1}{2}},  & \quad \textup{if } m = \mu - 1\\
        0, & \quad \mbox{otherwise,} \\ 
    \end{cases} 
    \\
        \vrho{\ell,3}{\mu,m} &:=  
        \frac{\partial D^\ell_{\mu m}(I)}{\partial \gamma} = - \mathrm{i} m \delta_{\mu m}.
    \label{eq:deri_gamma}
\end{align}
\end{proposition}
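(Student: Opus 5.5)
The plan is to differentiate the closed-form expression~\eqref{eq:wigner-D} of Definition~\ref{def:wigner-D} directly with respect to each Euler angle, and then evaluate at the origin $(\alpha,\beta,\gamma)=(0,0,0)$, which parametrizes the identity $I$. Each one-parameter curve obtained by varying a single angle (with the other two set to zero) is a one-parameter subgroup $t\mapsto\exp(tX_d)$: the rotations about $z$, $y$ and $z$ respectively. Hence these partial derivatives are exactly $dD^\ell(X_d)$ in the sense of~\eqref{eq:rep-deriv}. In particular, the $\alpha$- and $\gamma$-derivatives both come from rotations about the $z$-axis, which explains why~\eqref{eq:deri_alpha} and~\eqref{eq:deri_gamma} coincide.

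The first step is to establish $d^\ell_{\mu m}(0)=\delta_{\mu m}$. At $\beta=0$ only the terms with exponent of $\sin\frac{\beta}{2}$ equal to zero survive, i.e. $m-\mu+2s=0$. Combined with $s\ge\max(0,\mu-m)$, this forces $s=0$ and $m=\mu$. The prefactor $[(\ell+m)!^2(\ell-m)!^2]^{1/2}$ then cancels the denominator $(\ell+m)!\,(\ell-m)!$, giving $1$.

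With this in hand, the $\alpha$- and $\gamma$-derivatives follow by the product rule. Differentiating in $\alpha$ gives $-\mathrm{i}m\,d^\ell_{\mu m}(0)=-\mathrm{i}m\delta_{\mu m}$. Differentiating in $\gamma$ gives $-\mathrm{i}\mu\,\delta_{\mu m}$, which equals $-\mathrm{i}m\delta_{\mu m}$ because of the Kronecker delta.

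For~\eqref{eq:deri_beta}, the exponential factors equal $1$ at $\alpha=\gamma=0$, so it suffices to compute $\frac{d}{d\beta}d^\ell_{\mu m}(\beta)|_{\beta=0}$. A term has a nonzero derivative at $0$ only if its $\sin\frac{\beta}{2}$ exponent is exactly $1$; that term then contributes $\frac12$ times its coefficient, since $\cos 0=1$ and $\frac{d}{d\beta}\sin\frac{\beta}{2}\big|_0=\frac12$. The condition $m-\mu+2s=1$ together with the summation range leaves exactly two cases.

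In the first case, $s=0$ and $m=\mu+1$. The coefficient is
\[
\frac{[(\ell+m)!(\ell-m)!(\ell+m-1)!(\ell-m+1)!]^{1/2}}{(\ell+m-1)!\,(\ell-m)!} = [(\ell+m)(\ell-m+1)]^{1/2},
\]
with a plus sign.

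In the second case, $s=1$ and $m=\mu-1$, which is admissible since then $\mu-m=1$. The sign is $(-1)^1=-1$, and the denominator is $(\ell+m)!\,(\ell-m+1)!$, so the coefficient is
\[
\frac{[(\ell+m)!(\ell-m)!(\ell+m+1)!(\ell-m-1)!]^{1/2}}{(\ell+m)!\,(\ell-m+1)!}\,(\ell-m+1)!\Big/(\ell-m+1)! = [(\ell+m+1)(\ell-m)]^{1/2}
\]
after simplifying the factorial ratios. In all other cases the derivative vanishes.

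The main obstacle is the careful bookkeeping of the summation bounds, to confirm that each case is admissible. This includes the edge cases $m=\pm\ell$, where the formula correctly yields $0$ or the term is absent. It also includes the factorial simplifications; I would verify these by treating $\ell$ possibly half-integer, since only the differences $\ell\pm m$, which are integers, appear in the factorials.
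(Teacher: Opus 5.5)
Your approach is the paper's: differentiate the closed form~\eqref{eq:wigner-D} in each angle, use $d^\ell_{\mu m}(0)=\delta_{\mu m}$ for the $\alpha$- and $\gamma$-derivatives, and note that for $\beta$ only the terms with $\sin\frac{\beta}{2}$-exponent $m-\mu+2s=1$ contribute. This leaves exactly $(s,m)=(0,\mu+1)$ and $(s,m)=(1,\mu-1)$. Your proof of $d^\ell_{\mu m}(0)=\delta_{\mu m}$, the reduction to these two cases, and the first case are all correct.

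The second case, however, is wrong as written. You took the last denominator factor to be $(\ell-m+s)!$, as printed in Definition~\ref{def:wigner-D}, which gives the denominator $(\ell+m)!\,(\ell-m+1)!$. With that denominator the coefficient is
\[
\frac{[(\ell+m)!(\ell-m)!(\ell+m+1)!(\ell-m-1)!]^{1/2}}{(\ell+m)!\,(\ell-m+1)!}=\frac{[(\ell+m+1)(\ell-m)]^{1/2}}{(\ell-m+1)(\ell-m)},
\]
not $[(\ell+m+1)(\ell-m)]^{1/2}$. Multiplying by $(\ell-m+1)!/(\ell-m+1)!=1$ cannot change this, so your displayed equality is false.

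The fix is that the factor must be $(\ell-m-s)!$. This is the standard Wigner formula, and it is the reading that matches the upper summation limit $s\le\min(\ell+\mu,\ell-m)$, since $s\le\ell-m$ is exactly the condition for $(\ell-m-s)!$ to be defined. It is also what the paper's own proof uses. With it, the denominator for $s=1$, $\mu=m+1$ is $(\ell+m)!\,1!\,0!\,(\ell-m-1)!$. Writing $(\ell+m+1)!=(\ell+m+1)(\ell+m)!$ and $(\ell-m)!=(\ell-m)(\ell-m-1)!$ then gives the coefficient $-[(\ell+m+1)(\ell-m)]^{1/2}$, hence $-\frac12[(\ell+m+1)(\ell-m)]^{1/2}$ as claimed. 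Your first case and the $d^\ell_{\mu m}(0)$ step only involve $s=0$, where the two readings coincide, which is why only this case broke.
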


While this proposition is a standard result, we were not able to find a complete reference for a proof of it in the literature. Whereas some ideas can be found in~\cite{rose1995elementary}, we provide a proof in Appendix~\ref{app:Wigner-Dmatricesproof} for completeness.

\subsection{One-variable compatible bases}
\label{sec:compatible_bases}

We now introduce some possible one-variable compatible bases for $SU(2)$ and $SO(3)$. Although a large freedom exists in the choice of the bases that are stable by group action, we focus on compatible bases with respect to the irreducible representation $D^\ell$ for $\ell\in\calLG$. Specifically, we focus on the bases that are frequently used in real implementations. 
For each case, we need to define the considered spatial domain $\Omega$, the chosen $\F$ and $\K$, the considered group action, and the one-variable compatible bases.

\paragraph{Wigner-D matrices}

First, for all $\ell\in\calLG$, the elements in each column of $D^\ell$ are functions from $\Omega := G$ to $\F := \C$, and form a compatible basis with respect to the representation $D^\ell$ itself. This corresponds to $\K := \C$.
Precisely, let $l=\ell$, $\calM_l=\calM_\ell$, then for any index $\mu\in\calM_\ell$, the space
\[
    \displaystyle V^l_\mu = \textup{Span} 
    \big\{D^\ell_{m\mu}:G\to\C, \; m\in\calM_l \big\}
\]
is stable under the group action defined by matrix multiplication as
\[
    \forall Q, \; \br \in G, \quad 
    Q \cdot \br := Q \br,
\]
since
\[
    \forall Q \in G, \; \br \in G,  \qquad D^\ell_{m \mu }(Q\cdot \br) = [D^\ell(Q)\cdot D^\ell(\br)]_{m \mu}=\sum_{m'\in\calM_\ell}D^\ell_{mm'}(Q)D^\ell_{m' \mu}(\br).
\]
Hence, columns of Wigner-D matrices $\{D^\ell_{m\mu}:G\to\C\}_{m\in\calM_\ell}$ form compatible bases for the representations $D^\ell$. 

\paragraph{Spherical harmonics}

In particular, for $\ell\in\N_0$, $\mu=0\in\calM_l$, and for $Q\in SO(3)$ represented by Euler angles $(\alpha,\beta,\gamma)$, there holds
\[
    D^\ell_{m0}(Q) = D^\ell_{m0}(\alpha,\beta,\gamma) = e^{-\textup{i}m\alpha}\cdot (-1)^m\sqrt{\frac{(l-m)!}{(l+m)!}}P_\ell^m(\cos\beta),
\]
where $P_\ell^m$ is the associated Legendre polynomial. Clearly, $D^\ell_{m0}$ is independent of the angle $\gamma$, and is proportional to the complex spherical harmonics of order $\ell$ and angular momentum quantum number $m$: $Y_\ell^m(\alpha,\beta)$.
In other words, the spherical harmonics $\{Y_\ell^m\}_{m\in \calM_\ell}$ form a compatible basis corresponding to $D^\ell$ for $\ell\in\N_0$, the irreducible representations of $SO(3)$. Thus, defining $\Omega := S^2$ the 2-dimensional unit sphere, $\F := \C$, $\K:= \C$, and the group action as matrix-vector multiplication
\[
    \forall Q \in SO(3), \; \br \in S^2, \quad 
    Q\cdot \br := Q\br,
\]
the space for $l:=\ell\in \N_0$ defined by
\[
    V^l := \textup{Span} \big\{Y_l^m:S^2\to\C, \; 
    m\in\calM_l \big\}
\]
is stable under group action. Note that we could also take $\F := \R$, and $\K :=\R$ by considering real spherical harmonics, although another set of irreducible representations would have to be taken into account in this case.

\paragraph{One-variable basis on $\R^3$}

In practice, it is of particular interest to investigate a compatible basis on $\R^3$. This is often done by padding a 1D radial basis to spherical harmonics. 
Precisely speaking, we take the spatial domain $\Omega$ as $\R^3\backslash \{\mathbf{0}\}$ in this case, 
and let $\F := \C$, and $\K :=\C$. Then for $\br=|\br|\hat{\br}\in\R^3\backslash\{\mathbf{0}\}$ with $|\br|> 0$ and 
$\hat{\br} \in S^2$, we define the group action as
\[
    \forall Q\in SO(3), \;  \br \in \R^3\backslash\{\mathbf{0}\}, 
    \quad 
    Q \cdot \br := |\br| (Q \hat{\br}).
\]
Taking $\{P_n\}_{n\in\N_0}\subset L^2(\R)$, an orthogonal basis of $L^2(\R)$, we
define for $l=(n,\ell)\in\N_0\times\N_0=:\calL$ the set $\calM_l := \calM_\ell$, and for $m\in \calM_l$ 
\[
    \phi^l_m=\phi^{(n,\ell)}_m : \br \in \R^3\backslash\{\mathbf{0}\}  \mapsto P_n(|\br|)Y_\ell^m(\hat{\br}).
\]
Then, $\big\{\phi^l_m \big\}_{m\in\calM_l}$
forms a compatible basis of $SO(3)$ corresponding to representation $D^\ell$, as the radial function $P_n$ remain invariant under $SO(3)$ rotations. The space
\[
    V^{l} = \textup{Span}\big\{ \phi^l_m, \; 
    m\in\calM_l \big\}
\]
is therefore stable under the defined group action. 

\paragraph{One-variable basis on higher-dimensional spaces}

In fact, any invariant transformation with respect to rotations 
has no impact on the stability of the corresponding basis. One can take into account higher-dimensional spaces $\Omega = W \times S^2$, with $W$ independent of rotations. 
For example, a particle in $\R^3$ is often described not only by its position, but also by other properties, such as its chemical species. In this case, one could, e.g., take 
\[
    W = \N\times\big(\R\backslash\{0\}\big), \quad \Omega = \N \times \big(\R^3\backslash\{\mathbf{0}\}\big), \quad \F = \C, \quad \K = \C,
\]
with the group action being defined as 
\[
    \forall Q \in SO(3),\; (z,\br) \in \Omega, 
    \quad Q \cdot (z,\br) := 
    (z,Q\br).
\]
Hence, we may write for $l=(n_1,n_2,\ell)\in \N \times \N_0 \times \N_0=:\calL$ the basis index set $\calM_l:=\calM_\ell$, and the compatible basis 
\[
    \{\phi^l_m: (z,\br)
    \in \N \times \big(\R^3\backslash\{\mathbf{0}\}\big) \mapsto F_{n_1}(z)P_{n_2}(|\br|)Y_\ell^m(\hat\br)\}_{m\in\calM_l},
\]
where $F_{n_1}$ are basis functions depending on the chemical species (and possibly other parameters independent of rotations),
which are usually chosen as $\delta$-functions (also called one-hot embedding in some contexts),
$P_{n_2}$ are radial functions as stated above. 
Thus, the
corresponding function space
\[
    V^l = \textup{Span}
    \big\{\phi^l_m, \; m\in\calM_l \big\}
\]
is stable for the representation $D^\ell$. 

\medskip

To summarize, we present in Table~\ref{tab:comp_bases} the aforementioned compatible bases in a unified form.
Note that $\ell$ is always a component of $l$, and the basis index set for $l\in\calL$ satisfies $\calM_l = \calM_\ell$.
In some cases that should be clear from the context, when it comes to operations on $l\in\calL$, we refer to the corresponding operations on the $\ell$-components.

\begin{remark}
    Note that in general the vector space $\F$ and the field $\K$ do not have to be the same. For example, considering functions with two components, such as one spin up and one spin down, we would have $\F$ = $\C^2$ and $\K=\C$.
\end{remark}

\begin{table}[htb!]
\centering
\caption{Summary of one-variable compatible bases for the irreducible representations of $SU(2)$ and $SO(3)$}
\label{tab:comp_bases}
\begin{tabular}{|m{3cm}|m{3.4cm}|m{2cm}|m{2.5cm}|c|}
    \hline
    \centering Compatible basis functions $\phi^l_m$ & \centering {Applicable groups $G$} & \centering{Domain $\Omega$} & \centering{Index set $\calL$} & 
    {$\;\;\; \displaystyle\bigoplus_{l\in\calL} V^l$}
    \\
    \hline
    \centering $D^l_{m\mu}$ & \centering  $SU(2)$, $SO(3)$ & \centering   $G$   &  \centering    $\calLG$ \eqref{eq:calL}, \eqref{eq:calL2}    & -         \\
    \hline
    \centering $Y_l^m$    &  \centering    $SO(3)$      &  \centering  $S^2$ & \centering    $\N_0$       & $L^2(S^2)$ \\
    \hline
    \centering $P_nY_l^m$ & \centering     $SO(3)$      &  \centering $\R^3\backslash\{\mathbf{0}\}$ & \centering $\N_0\times\N_0$ & $L^2(\R^3\backslash\{\mathbf{0}\})$ \\
    \hline
    \centering $F_{n_1} P_{n_2} Y_l^m$ & \centering     $SO(3)$      & \centering  $\N\times\big(\R^3\backslash\{\mathbf{0}\}\big)$ & \centering $\N\times\N_0\times\N_0$ & $L^2(\Omega)$ \\
    \hline
\end{tabular}
\end{table}

In the following, given compatible bases $\{\phi^l_m\}_{m\in\calM_\ell}$ for $l\in \calL$ depending on the chosen group and representations, such as those summarized in Table~\ref{tab:comp_bases} and the corresponding function spaces $V^l$, our goal is to practically find a basis for the GE space $V^{\bl,L}$ or the GE-PI space $\bV^{\bl,L}$, for given $\bl\in\calL^N$ and $L\in\calL_G$, using the strategy described in Section~\ref{sec:equivariant_lie_algebra}. We present the details in the coming subsections.

\subsection{Construction of multi-variable GE bases}

We first construct the matrix~\eqref{eq:matM} for the groups $SU(2)$ and $SO(3)$.
Instead of considering the whole matrix $\MlL$, we deal with each matrix $M_\dd,\; d=1,2,3$ independently, with the indices $1$, $2$ and $3$ corresponding to the Euler angles $\alpha$, $\beta$ and $\gamma$, respectively.

\subsubsection{Matrices $\MlL_1$ and $\MlL_3$}

Due to~\eqref{eq:deri_alpha} and~\eqref{eq:deri_gamma}, the derivatives of the Wigner-D matrices with respect to the Euler angles $\alpha$ and $\gamma$, $\MlL_1$ and $\MlL_3$, are two identical and diagonal matrices 

\begin{equation}
\label{eq:M1}
        \forall (\bbm,k)\in \calM_\bl\times\calM_L, \qquad \MlL_1[(\bbm,k),(\bbm,k)] = \MlL_3[(\bbm,k),(\bbm,k)] = -\left(\sum_{i=1}^N m_i -k\right)\cdot \mathrm{i},
\end{equation}
with $\calM_L$ defined in~\eqref{eq:calMU2} and $\calM_\bl$ defined in~\eqref{eq:calMbl}.
The following proposition holds trivially. 

\begin{proposition}\label{prop:ker-M1-GE}
    Let $\bl\in \calL^N$, $L\in \calLG$, then for $\MlL_1$ and $\MlL_3$ being defined in~\eqref{eq:M1}, there holds
    \[
        \ker(\MlL_1) = \ker(\MlL_3) = \{\bc=(\bc_{\bbm,k})_{(\bbm,k)\in\calM_\bl\times\calM_L}\in\F^{\card{\calM_\bl\times\calM_L}}: \bc_{\bbm,k} = 0 \textup{ if } \sum_{i=1}^N m_i -k \neq 0\}.
    \]
\end{proposition}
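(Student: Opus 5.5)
The plan is to read off the kernel directly from the entries of $\MlL_1$ and $\MlL_3$, since both are diagonal. First I will confirm the formula~\eqref{eq:M1} from the general definition~\eqref{eq:M_GE} together with Proposition~\ref{prop:deri_alpha_gamma}.

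By~\eqref{eq:deri_alpha}, the matrix $\vrho{\ell,1}{}$ is diagonal with entries $\vrho{\ell,1}{m,m}=-\mathrm{i}m$. Hence every off-diagonal case in~\eqref{eq:M_GE} vanishes:
\begin{itemize}
\item the term $-\vrho{L,1}{k,k'}$ with $k\neq k'$ is zero;
\item the term $\vrho{l_j,1}{m'_j,m_j}$ with $m_j\neq m'_j$ is zero.
\end{itemize}
On the diagonal, $\MlL_1$ takes the value $\sum_{i}(-\mathrm{i}m_i)-(-\mathrm{i}k)=-\mathrm{i}\left(\sum_i m_i-k\right)$. Here $\vrho{l_i,1}{}$ depends only on the $\ell$-component of $l_i$, because $\calM_{l_i}=\calM_{\ell_i}$. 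By~\eqref{eq:deri_gamma}, the same computation gives $\MlL_3=\MlL_1$.

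Next I use that for a diagonal matrix $D$ we have $(D\bc)_{(\bbm,k)}=D[(\bbm,k),(\bbm,k)]\,\bc_{\bbm,k}$. So $D\bc=0$ holds if and only if, at each index, either the diagonal entry is zero or the coefficient $\bc_{\bbm,k}$ is zero. The diagonal entry $-\mathrm{i}(\sum_i m_i-k)$ vanishes exactly when $\sum_i m_i=k$. This gives precisely the stated characterization of $\ker(\MlL_1)=\ker(\MlL_3)$.

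No real obstacle is expected. The only points needing care are bookkeeping ones: the indices $m_i$ and $k$ may be half-integers in the $SU(2)$ case, which does not affect the argument, and one must remember that $\vrho{l,d}{}$ only involves the $\ell$-component of $l$.
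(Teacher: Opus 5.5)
Your proposal is correct and follows the paper's own route. The paper derives the diagonal form~\eqref{eq:M1} from~\eqref{eq:M_GE} and Proposition~\ref{prop:deri_alpha_gamma} exactly as you do, and then calls the kernel characterization trivial, which is your observation that a diagonal matrix annihilates $\bc$ if and only if $\bc_{\bbm,k}=0$ wherever $-\mathrm{i}\left(\sum\bbm-k\right)\neq 0$.
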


For simplicity, we denote $\sum\bbm := \sum_{i=1}^N m_i$ for any vector of scalars $\bbm$ of length $\Nat$. 
Additionally, for $\bl\in \calL^N$, we define $\sum\bl:=\sum\boldsymbol{\ell}$, with
$\boldsymbol{\ell}$ consisting of the $\ell$-components of $\bl$.
As a direct consequence of Proposition~\ref{prop:ker-M1-GE} and~\eqref{eq:ker_inclusion}, 
\begin{equation}\label{eq:kerM-nullpart}
    \forall\bc=(\bc_{\bbm,k})_{(\bbm,k)\in\calM_\bl\times\calM_L}\in\ker(\MlL), \quad \bc_{\bbm,k} \ne 0 \textup{ only if } k=\sum\bbm\in\calM_L.
\end{equation}
That is, we may restrict the matrix $\MlL$ only to the following column index set 
\begin{equation}\label{nonzero-ker}
    \big\{(\bbm,\sum \bbm): \;  \bbm\in\calM_\bl, \; \sum \bbm \in\calM_L\big\} = \Big\{(\bbm,\sum \bbm):\;\bbm\in\calM_\bl, \;|\sum \bbm|\le L\Big\}.
\end{equation}
Additionally, we have the following result on the dimensionality of the GE space in certain cases. 

\begin{proposition}
Let $\bl\in\calL^N$ and $L \in \calLG$  be such that $(L+\sum\bl)\notin\Z$. There holds
\[ 
    \dimge{\bl}{L} = 0.
\]
\end{proposition}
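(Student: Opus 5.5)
The plan is to show that $\ker(\MlL)=\{0\}$ by using only the diagonal block $\MlL_1$, and then conclude with Proposition~\ref{prop:GE_basis}. By~\eqref{eq:ker_inclusion}, $\ker(\MlL)\subset\ker(\MlL_1)$. By Proposition~\ref{prop:ker-M1-GE}, or equivalently~\eqref{eq:kerM-nullpart}, every $\bc\in\ker(\MlL)$ satisfies $\bc_{\bbm,k}=0$ unless $k=\sum\bbm$. It is therefore enough to show that $k=\sum\bbm$ never holds for $(\bbm,k)\in\calM_\bl\times\calM_L$ under the hypothesis $(L+\sum\bl)\notin\Z$.

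The key step is a parity observation on the index sets~\eqref{eq:calMU2}. For each $\ell\in\calLG$, every $m\in\calM_\ell=\{-\ell,\ldots,\ell\}$ satisfies $m-\ell\in\Z$. Summing over the components of $\bl$, using that $\calM_{l_i}=\calM_{\ell_i}$ with $\ell_i$ the $\ell$-component of $l_i$, gives
\[
\sum\bbm-\sum\bl\in\Z \quad\text{for all } \bbm\in\calM_\bl .
\]
Similarly, $k-L\in\Z$ for all $k\in\calM_L$. Now suppose $k=\sum\bbm$. Then
\[
L+\sum\bl = (L-k) + \Big(\sum\bbm-\sum\bl\Big) + 2\sum\bl .
\]
The first two terms are integers. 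The last term $2\sum\bl$ is also an integer, because each $\ell_i\in\frac12\N_0$. Hence $L+\sum\bl\in\Z$, which contradicts the hypothesis.

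It follows that no admissible pair $(\bbm,k)$ satisfies $k=\sum\bbm$. Every $\bc\in\ker(\MlL_1)$ is therefore zero, so $\ker(\MlL)=\{0\}$, and Proposition~\ref{prop:GE_basis} gives $\dimge{\bl}{L}=\dim(\ker(\MlL))=0$.

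The only point that needs care is the bookkeeping of half-integers. We use $k-L\in\Z$ rather than $k+L\in\Z$, and we absorb the sign through $2\sum\bl\in\Z$ (equivalently, $a-b\in\Z$ if and only if $a+b\in\Z$ when $2b\in\Z$). Beyond this, the argument needs nothing about $\MlL_2$.
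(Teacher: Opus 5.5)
Your proof is correct and follows the paper's argument essentially verbatim: the diagonal matrix $\MlL_1$ forces $\bc_{\bbm,k}=0$ unless $k=\sum\bbm$, and the integrality facts $k-L\in\Z$, $\sum\bbm-\sum\bl\in\Z$ show that $k=\sum\bbm$ is impossible when $L+\sum\bl\notin\Z$. Your explicit use of $2\sum\bl\in\Z$ to pass from $L-\sum\bl$ to $L+\sum\bl$ fills a small step that the paper leaves implicit.
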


\begin{proof}
We see from Proposition~\ref{prop:GE_basis} that $\dimge{\bl}{L} = \dim\big(\ker(\MlL)\big)$. 
Note that $k\in\calM_L$ and $\bbm\in\calM_\bl$, we have $k-L\in\Z$ and $\sum\bbm-\sum\bl\in\Z$. When $\big(L+\sum\bl\big)\notin\Z$, the condition $\sum\bbm - k = 0$ is never fulfilled. Thus, due to~\eqref{eq:kerM-nullpart}, $\ker(\MlL)=\{\mathbf{0}\}$. Therefore, $\dim(\ker(\MlL))=0$, which leads to the desired result. 
\end{proof}

\subsubsection{Matrix $\MlL_2$}

We now turn to investigate the matrix $\MlL_2$. 
As suggested in~\eqref{eq:kerM-nullpart} , we consider for $\MlL_2$ only the restricted column index set~\eqref{nonzero-ker}.} In fact, the matrix $\MlL_2$ can be arranged to be a block matrix whose blocks are structured with respect to the values of $\sum \bbm$. To facilitate the writing, we denote
\begin{equation}
\label{eq:calMK}
       \calM_{\bl,K} := \{\bbm\in\calM_\bl:\sum\bbm=K\}.
\end{equation}
Then, it is natural to identify the set~\eqref{nonzero-ker} to $\Big\{\bbm\in\calM_\bl:|\sum\bbm|\le L
\Big\}$, and denote it by
\[
     \calM_{\bl,\le L} := \bigcup_{K\in\calM_L}\calM_{\bl,K}.
\]
As a consequence of~\eqref{eq:deri_beta}, for any column index $\bbm'\in\calM_{\bl,\le L}$ of the matrix $\MlL_2$, the row indices $(\bbm,k)$ that can lead to non-zero elements can either be $\bbm=\bbm'$ with $k = \sum \bbm' \pm 1$, or $\bbm$ satisfying for some $j\in\{1,2,\ldots,N\}$
\[
    \bbm^{\pm}_j := [m_1,m_2,\ldots,m_{j-1},m_j\pm1,m_{j+1},\ldots,m_N]=\bbm',
\]
with $k=\sum\bbm'$.
Precisely, we define for $(\bbm,k)\in \calM_\bl\times\calM_L$, $\bbm'\in \calM_{\bl,\le L}$
\begin{equation}
        \MlL_2[(\bbm,k),\bbm'] = 
    \begin{cases}
        \sqrt{(L+\sum\bbm+1)(L-\sum\bbm)}, & \qquad  \textup{if }\bbm'=\bbm,\;\;\; k = \sum\bbm+1, \\
        -\sqrt{(L-\sum\bbm+1)(L+\sum\bbm)}, & \qquad  \textup{if }\bbm'=\bbm,\;\;\; k = \sum\bbm-1, \\
        -\sqrt{(l_j+m_j+1)(l_j-m_j)}, & \qquad \textup{if }\bbm' = \bbm_j^+, \; k = \sum\bbm+1, \\
        \sqrt{(l_j-m_j+1)(l_j+m_j)}, & \qquad \textup{if }\bbm' = \bbm_j^-, \; k = \sum\bbm-1, \\
        0, & \qquad \textup{otherwise},
    \end{cases}
    \label{eq:M2}
\end{equation}
which is a direct consequence of~\eqref{eq:M_GE}, using~\eqref{eq:deri_beta}, where we have also removed a constant factor 1/2 over the whole matrix for simplicity, as it does not change its kernel.

To characterize the row indices $\bbm$ such that $\bbm^\pm_j\in\calM_\bl$, we prove the following lemma. 

\begin{lemma}\label{lem:row_ind}
Let $\bl\in\calL^N$ and $L \in \calLG$, then there hold
\begin{equation}
    \label{eq:mj+}
    \{\bbm\in\calM_{\bl}:\; \exists j\in \{ 1,\ldots, \Nat\},\; \bbm_j^+ \in \calM_{\bl,K+1}\} = \begin{cases}
        \calM_{\bl,K}, & \;\; \textup{if }K\in\calM_{L+1}\backslash\{L,L+1\}, \; K<\sum\bl, \\
        \varnothing, & \;\; \textup{otherwise,}
    \end{cases}
\end{equation}
and 
\begin{equation}
    \label{eq:mj-}
    \{\bbm\in\calM_{\bl}:\; \exists j\in \{ 1,\ldots, \Nat\},\;\bbm_j^- \in \calM_{\bl,K-1}\} = \begin{cases}
        \calM_{\bl,K}, & \;\; \textup{if }K\in\calM_{L+1}\backslash\{-L,-L-1\}, \; K>-\sum\bl, \\
        \varnothing, & \;\; \textup{otherwise.}
    \end{cases}
\end{equation}
\end{lemma}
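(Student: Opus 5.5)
The plan is to prove \eqref{eq:mj+} by a direct two-sided set inclusion, and then to get \eqref{eq:mj-} from it by the symmetry $\bbm\mapsto-\bbm$. The definition of $\calM_{\bl,K+1}$ (which implicitly lives inside the restricted column set $\calM_{\bl,\le L}$) imposes two separate constraints on $\bbm_j^+$. The first is that it lies in $\calM_\bl$. The second is that its sum $K+1$ is an admissible column value, i.e.\ $K+1\in\calM_L$.

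First, I fix $\bbm\in\calM_\bl$ and $j$ with $\bbm_j^+\in\calM_{\bl,K+1}$. Since $\sum\bbm_j^+=\sum\bbm+1$, this forces $\sum\bbm=K$, so $\bbm\in\calM_{\bl,K}$. Next I read off the constraints on $K$.
\begin{itemize}
\item The column index must satisfy $K+1\in\calM_L$, that is $-L\le K+1\le L$ with $K+1-L\in\Z$. This is exactly $K\in\calM_{L+1}\setminus\{L,L+1\}$, since $\calM_{L+1}=\{-L-1,\ldots,L+1\}$.
\item Membership $\bbm_j^+\in\calM_\bl$ requires $m_j+1\le l_j$, so $m_j<l_j$ (here $l_j$ means the $\ell$-component). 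Together with $m_s\le l_s$ for all $s$, this gives $K=\sum\bbm<\sum\bl$.
\end{itemize}
Hence the left-hand side is empty unless both conditions hold, and it is always contained in $\calM_{\bl,K}$.

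Conversely, I assume $K\in\calM_{L+1}\setminus\{L,L+1\}$ and $K<\sum\bl$, and take any $\bbm\in\calM_{\bl,K}$. Because $\sum\bbm=K<\sum\bl$ and $m_s\le l_s$ for every $s$, there is some $j$ with $m_j<l_j$. Since $m_j$ and $l_j$ differ by an integer, $m_j+1\le l_j$, so $\bbm_j^+\in\calM_\bl$. Its sum is $K+1\in\calM_L$, hence $\bbm_j^+\in\calM_{\bl,K+1}$ as an admissible column. This gives the reverse inclusion.

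For \eqref{eq:mj-}, I would either repeat the argument with $m_j>-l_j$ and the condition $-L\le K-1$, or apply the involution $\bbm\mapsto-\bbm$. This involution maps $\calM_{\bl,K}$ onto $\calM_{\bl,-K}$, exchanges $\bbm_j^+$ and $\bbm_j^-$, and preserves $\calM_L$ and $\calM_{L+1}$.

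The only delicate point is not mathematical but interpretive. One must make explicit that the condition ``$\bbm_j^\pm\in\calM_{\bl,K\pm1}$'' is understood within the restricted column set $\calM_{\bl,\le L}$, so that $|K\pm1|\le L$ is enforced. One must also keep track of the half-integer parity: $K-L\in\Z$, so $K+1\in\calM_L$ really is equivalent to $K\in\calM_{L+1}\setminus\{L,L+1\}$. If $\calM_{\bl,K}$ is empty for parity reasons, both sides are empty and the statement holds trivially.
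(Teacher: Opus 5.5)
Your proof is correct and follows essentially the same route as the paper. The forward inclusion comes from $\sum\bbm_j^+=\sum\bbm+1$, and the reverse inclusion from the existence of an index $j$ with $m_j<l_j$ whenever $\sum\bbm<\sum\bl$; the paper argues this by contradiction, noting that otherwise $\bbm=(l_1,\ldots,l_N)$ and hence $K=\sum\bl$. Your explicit reading of $\calM_{\bl,K\pm1}$ inside the admissible column set $\calM_{\bl,\le L}$ is used only tacitly in the paper, yet it is what makes the ``otherwise'' case with $K<\sum\bl$ but $K\notin\calM_{L+1}\setminus\{L,L+1\}$ hold, and the involution $\bbm\mapsto-\bbm$ cleanly replaces the paper's ``a similar proof works'' for the second identity.
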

\begin{proof}
    First, if $K\ge\sum\bl$, then $\calM_{\bl,K+1}$ is empty, and~\eqref{eq:mj+} holds trivially. 
    Otherwise, assume $K\in\calM_{L+1}\backslash\{L,L+1\}$ and $K<\sum\bl$, then it is straightforward that if $\bbm_j^+\in\calM_{\bl,K+1}$, $\bbm\in\calM_{\bl,K}$. For $\bbm\in\calM_{\bl,K}$,
    if for all $j\in \{1,\ldots,\Nat\}$, $\bbm_j^+ \notin \calM_{\bl,K+1}$, then it means that $\bbm_j^+ \notin \calM_{\bl}$, and 
    $\bbm = (l_1, \ldots, l_\Nat)$.
    Hence, 
    $K=\sum\bbm = \sum\bl$, which is impossible since $K<\sum\bl$. This proves~\eqref{eq:mj+}.
    A similar proof works for~\eqref{eq:mj-}. 
\end{proof}

From the expression of $\MlL_2$ given in~\eqref{eq:M2} and Lemma~\ref{lem:row_ind}, the matrix $\MlL_2$, after removing zero rows, can be arranged to have a block structure with the block 
columns being indexed by $\calM_{\bl,K}$ for $K\in\calM_L$, and 
the block rows being indexed by 
\begin{equation}\label{eq:MlK_minus}
    \big\{ (\bbm, {\sum}\bbm+1), \;  \bbm \in \calM_{\bl,K} \big\},
\quad K \in \calM_{L+1} \backslash \{ L, L+1\}, \quad K<\sum\bl,
\end{equation}

and 
\begin{equation}\label{eq:MlK_plus}
\big\{ (\bbm, {\sum} \bbm-1), 
\; \bbm \in \calM_{\bl,K} \big\},
\quad K \in \calM_{L+1} \backslash \{ -L-1, -L\}, \quad K>-\sum\bl.
\end{equation}
Since the second component of each element in the sets~\eqref{eq:MlK_minus} and~\eqref{eq:MlK_plus} 
is uniquely determined by its first component $\bbm\in\calM_{\bl,K}$, both sets can be identified as $\calM_{\bl,K}$. We note that the restrictions $K<\sum\bl$ and $K>-\sum\bl$ have an impact only when $L>\sum\bl$.

Such a block structure of the matrix $\MlL_{2}$ is illustrated in Figure~\ref{fig:block_structure_of_M}(a), where the yellow ($A^-_K$), pink ($A^+_K$), blue ($\Bm_K$), and purple ($\Bp_K$) blocks respectively correspond to the four non-zero cases in \eqref{eq:M2}. 
The upper half of the matrix corresponds to the row blocks having indices in $\calM_{L+1} \backslash \{ L, L+1\}$ while those for the lower half are in $\calM_{L+1} \backslash \{ -L-1, -L\}$. For $L=0$, in particular, the first two non-zero cases in \eqref{eq:M2} vanish because for any $\bbm\in\calM_{\bl,0}$, 
$\sum\bbm\pm1=\pm1\notin\calM_{0}$. 
As a result, $M^{\bl,0}_{2}$ consists of only two blocks (one blue and one purple).

\begin{figure}[ht]
    \centering
    \includegraphics[width=\linewidth]{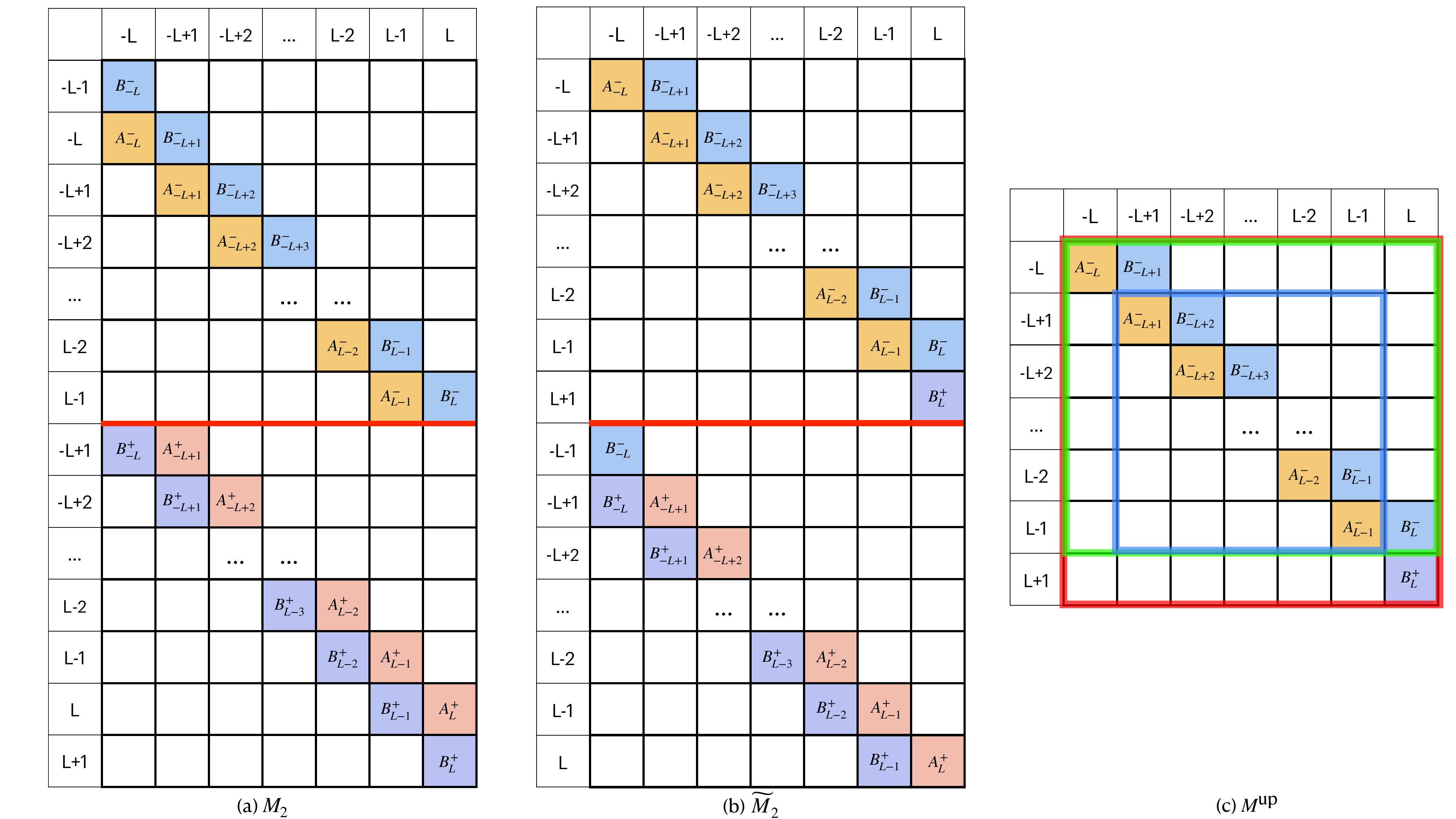}
    \caption{Block structure of matrix $\MlL_2$: (a) $\MlL_2$: original derivation; (b) $\widetilde{\MlL}_2$: Row reordering of the $\MlL_2$ to enable a fast kernel solver; (c) $\MlL^{\textup{up}}$: upper half of $\widetilde{\MlL}_2$; the final matrix for which we find the kernel. In the plots, the red, green and blue boxes correspond to the matrix in the cases $L<\sum\bl,\; L=\sum\bl$, and $L>\sum\bl$, respectively. All the blocks left blank are zeros.
    }
    \label{fig:block_structure_of_M}
\end{figure}

To characterize the kernel of $M_2$, we first switch the ordering of its row blocks as shown in Figure~\ref{fig:block_structure_of_M}(b), which highly simplifies the numerical computation of coupling coefficients (c.f. Section~\ref{sec:num-complexity}).
We denote the rearranged matrix by $\widetilde{\MlL}_2$.
As illustrated in Figure~\ref{fig:block_structure_of_M}(b), the matrix $\widetilde{\MlL}_2$ admits a natural partition into two parts
\[
    \widetilde{\MlL}_2 = \begin{pmatrix}
         \Mup \\
         \Mlow \\
     \end{pmatrix}.
\]

It turns out that either the upper or the lower half of $\widetilde{\MlL}_2$ (Figure~\ref{fig:block_structure_of_M}(b)) is sufficient to determine its kernel. 
Thus, we consider only the upper half $\Mup$ presented in Figure~\ref{fig:block_structure_of_M}(c), which is a block matrix with column blocks indexed by $K'\in\calM_L$ with column indices $\bbm'\in\calM_{\bl,K'}$ and row blocks indexed by 
 $K\in
    \calM_{L+1}\backslash \{L,-L-1\}
$
with row indices $\bbm\in\calM_{\bl,K}$. 
The nonzero blocks of $\Mup$ read 
for $K\in \{-L, \ldots, L-1\}$
\begin{align}
    \Mup_{K,K}[\bbm,\bbm'] &= \sqrt{(L+K+1)(L-K)}\delta_{\bbm,\bbm'} \label{eq:MKK}\\
    \Mup_{K,K+1}[\bbm,\bbm'] &= 
    -\sqrt{(l_j+m_j+1)(l_j-m_j)}\delta_{\bbm',\bbm_j^+},
\end{align}
and for $K = L+1$ 
\begin{equation}\label{eq:MLpL}
    \Mup_{L+1,L}[\bbm,\bbm'] = 
    \sqrt{(l_j- m_j+1)(l_j+  m_j)}\delta_{\bbm',\bbm_j^-}.
\end{equation}

We formalize our claim in the following proposition.

\begin{proposition}\label{prop:ker-M2-GE}
Let $\bl\in\calL^N$, $L \in \calL$ and $\MlL_2$ be defined in~\eqref{eq:M2}. 
The matrix $\Mup$ defined in~\crefrange{eq:MKK}{eq:MLpL} has full rank and satisfies
\[
    \ker(\Mup) =  \ker(\MlL_2).
\]
\end{proposition}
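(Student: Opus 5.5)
The plan has two parts. First I show that $\Mup$ has full row rank, using its block-triangular structure together with an $\mathfrak{sl}_2$ argument on the last block row. Then I obtain $\ker(\Mup)=\ker(\MlL_2)$ from an inclusion and a dimension count.

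Write $W_K:=\F^{\calM_{\bl,K}}$. Define the operators $J_\pm:W_K\to W_{K\pm1}$ by
\[
(J_+\bc)_{\bbm}=\sum_j\sqrt{(l_j-m_j+1)(l_j+m_j)}\,\bc_{\bbm_j^-},\qquad
(J_-\bc)_{\bbm}=\sum_j\sqrt{(l_j+m_j+1)(l_j-m_j)}\,\bc_{\bbm_j^+}.
\]
These are the raising and lowering operators of the tensor product representation $\bigotimes_i D^{l_i}$, acting on its weight spaces in the coordinates $\phi^\bl_\bbm$. By~\eqref{eq:MKK}--\eqref{eq:MLpL}, for $\bc=(\bc_K)_{K\in\calM_L}$ the block rows of $\Mup\bc$ read
\[
a_K\,\bc_K - J_-\bc_{K+1}\quad (K=-L,\ldots,L-1),\qquad J_+\bc_L\quad (\text{row block }L+1),
\]
where $a_K:=\sqrt{(L+K+1)(L-K)}>0$ for $-L\le K\le L-1$.

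For full row rank, order the column blocks $K'=-L,\ldots,L$ and the row blocks $K=-L,\ldots,L-1,L+1$. With this ordering $\Mup$ is block upper triangular. Its first $2L$ diagonal blocks are $a_K I$, which are invertible. The last row block involves only the column block $L$ and equals $J_+:W_L\to W_{L+1}$. It therefore suffices to show that $J_+$ is surjective onto $W_{L+1}$; block back-substitution then gives full row rank.

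For surjectivity of $J_+$ I use the Casimir. On $W_{L+1}$,
\[
J_+J_-=C-J_z^2+J_z=C-(L+1)L.
\]
The tensor product decomposes into irreducibles $D^j$, and only those with $j\ge L+1$ have a nonzero weight-$(L+1)$ part. On each such part $C$ acts by $j(j+1)$, so $J_+J_-$ acts by $j(j+1)-L(L+1)>0$. Hence $J_+J_-$ is invertible on $W_{L+1}$, and $J_+$ is onto. Consequently
\[
\dim\ker(\Mup)=\sum_{K=-L}^{L}\dim W_K-\Big(\sum_{K=-L}^{L-1}\dim W_K+\dim W_{L+1}\Big)=\dim W_L-\dim W_{L+1}.
\]
This number is the multiplicity of $D^L$ in $\bigotimes_iD^{l_i}$, which is standard $\mathfrak{sl}_2$ weight counting.

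For the kernel equality, I first check that every row of $\Mup$ is, after the reordering in Figure~\ref{fig:block_structure_of_M}(b), a row of $\MlL_2$ restricted to the columns~\eqref{nonzero-ker}. This gives $\ker(\MlL_2)\subseteq\ker(\Mup)$. The columns have already been restricted by~\eqref{eq:kerM-nullpart}, which encodes $\ker(\MlL_1)=\ker(\MlL_3)$. So $\ker(\MlL_2)$ on these columns coincides with $\ker(\MlL)$. By Proposition~\ref{prop:GE_basis} and Proposition~\ref{prop:direct_sum}, its dimension is $\dimge{\bl}{L}$, the multiplicity of $D^L$. This equals $\dim W_L-\dim W_{L+1}$, and the inclusion then forces equality.

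The main obstacle is justifying this last identification, namely $\dimge{\bl}{L}=\dim W_L-\dim W_{L+1}$, cleanly. If citing the multiplicity is unsatisfactory, I would instead argue directly. Kernel vectors of $\Mup$ are determined by a highest-weight vector $\bc_L\in\ker J_+$ through $\bc_K=a_K^{-1}J_-\bc_{K+1}$. I would then verify the omitted lower-half rows, $J_+\bc_K=a_{K}\bc_{K+1}$, by induction downward in $K$, using $[J_+,J_-]=2J_z$. This is exactly the standard construction of the irreducible module generated by $\bc_L$.
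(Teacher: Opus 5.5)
Your argument is correct. The full-rank step is essentially the paper's. There, $\Bm_{L+1}$ (which is $-J_-$ restricted to weight $L+1$ in your notation) is shown to have full column rank through the identity $(\Bm_{L+1})^T\Bm_{L+1}=(\Bp_{L+1})^T\Bp_{L+1}+2(L+1)I$. In your language this is $J_+J_-=J_-J_++2(L+1)$ on weight $L+1$, combined with $J_+=J_-^T$. It gives your positivity of $J_+J_-$ directly, without complete reducibility or Casimir eigenvalues, and the same block back-substitution then applies.

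The kernel equality, however, follows a genuinely different route. The paper never uses the multiplicity of $D^L$. It shows that the two halves of $\MlL_2$ have the same Gram matrix, $(\Mm_2)^T\Mm_2=(\Mp_2)^T\Mp_2$, hence $\ker\MlL_2=\ker\Mm_2$. It then compares $\Mup$ with $\Mm_2$: they have the same size, $\ker\MlL_2\subseteq\ker\Mup$, and $\Mup$ has full rank, which forces equality. You instead import $\dimge{\bl}{L}=\card{\calM_{\bl,L}}-\card{\calM_{\bl,L+1}}$ from Schur's lemma and classical weight counting. This is legitimate, but it inverts the paper's logic: there, this identity is the \emph{output} of the present proposition (Proposition~\ref{prop:ge-su2}), obtained by elementary linear algebra. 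The paper's Gram-matrix argument also carries over almost verbatim to the GE-PI case, using the diagonal weights of Lemma~\ref{lem:laddar-pi} (Proposition~\ref{prop:ker-M2-GEPI}). Your route would instead need the analogous multiplicity statement for $\bV^{\bl}$.

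Your fallback builds kernel vectors from $\bc_L\in\ker J_+$ and checks the discarded rows. It would give $\ker\Mup\subseteq\ker\MlL_2$ directly and make the proof self-contained. As written, though, it only checks the lower-half rows. $\Mup$ also discards the upper-half row block $K=-L-1$, namely the condition $J_-\bc_{-L}=0$; for $L=0$ this is the only discarded row. The condition does follow from your induction. Indeed, $J_+J_-\bc_{-L}=J_-J_+\bc_{-L}-2L\,\bc_{-L}=(a_{-L}^2-2L)\bc_{-L}=0$, and therefore $\|J_-\bc_{-L}\|^2=\langle \bc_{-L},J_+J_-\bc_{-L}\rangle=0$ using $J_+=J_-^T$.
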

The proof of this result will be presented in Section~\ref{sec:proof-half-mat}. 

\begin{remark}[Size of the matrix $\Mup$]\label{remark:size-M2-GE}
From the expression of the matrix $\Mup$ given in~\crefrange{eq:MKK}{eq:MLpL}, 
the number of columns of the matrix $\Mup$ is always 
$\displaystyle\sum_{K\in\calM_L}\card{\calM_{\bl,K}}$
while the number of rows is always 
$\displaystyle\sum_{K\in\calM_{L+1} \backslash \{ L, L+1\}}\card{\calM_{\bl,K}}$.
In particular, when $L\ge\sum\bl$, the sets $\calM_{\bl,\pm(L+1)}$ are empty and thus $\Mup_{L+1,L}$ vanishes, leaving only the green box as shown in Figure~\ref{fig:block_structure_of_M}(c), 
and when $L>\sum\bl$, all the sets $\calM_{\bl,K}$ with $|K|>\sum\bl$ are empty, leaving only a square matrix (the blue box in Figure~\ref{fig:block_structure_of_M}(c)).
Hence, depending on the relative ordering of $L$ and $\sum\bl$, 
\begin{equation*}
    \displaystyle\sum_{K\in\calM_{L+1} \backslash \{ L, L+1\}}\card{\calM_{\bl,K}} = 
    \begin{cases}
    \displaystyle {\sum_{K\in\calM_L}\card{\calM_{\bl,K}}} + \card{\calM_{\bl,L+1}}-\card{\calM_{\bl,L}}, & \quad \textup{if }L<\sum\bl, \\
    \displaystyle \sum_{K\in\calM_L}\card{\calM_{\bl,K}}-\card{\calM_{\bl,L}}, & \quad \textup{if }L=\sum\bl, \\
    \displaystyle \sum_{K\in\calM_L}\card{\calM_{\bl,K}}, & \quad \textup{if }L>\sum\bl.
\end{cases}
\end{equation*}
\end{remark}

\subsubsection{GE space and its dimensionality}

We now turn to characterize the GE space $V^{\bl,L}$, including its dimensionality and a basis. 

\begin{proposition}\label{prop:ge-su2}
Let $\bl\in\calL^N$ and $L \in \calL$. Let
\[
    \bc^{\bl,L}_i = \{c_{\bbm,i}^{\bl,L}\}_{\bbm\in\calM_{\bl,\le L}},\quad i=1,\ldots,\dim\big(\ker(\Mup)\big)
\]
be a basis of $\ker(\Mup)$, with $\Mup$ given in~\crefrange{eq:MKK}{eq:MLpL}.
Then, a basis of $V^{\bl,L}$ is given by 
\begin{equation}\label{eq:GE-basis-su2}
     b^{\bl,L}_i = 
    \left[
        \sum_{\bbm \in \calM_{\bl,K}}c_{\bbm,i}^{\bl,L}\phi^\bl_\bbm
    \right]_{K\in\calM_L},\qquad i=1,\ldots,\dim\big(\ker(\Mup)\big).
\end{equation}
In addition, 
\begin{equation}\label{eq:dim_su2}
    \dimge{\bl}{L}=\dim\big(\ker(\Mup)\big)=\card{\calM_{\bl,L}} - \card{\calM_{\bl,L+1}}.
\end{equation}
\end{proposition}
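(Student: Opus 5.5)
The plan is to combine Proposition~\ref{prop:GE_basis} with the reductions already established for $M_1$, $M_2$ and $M_3$. By Proposition~\ref{prop:GE_basis}, $\dimge{\bl}{L}=\dim\ker(\MlL)$, and by~\eqref{eq:ker_inclusion} this kernel is $\ker\MlL_1\cap\ker\MlL_2\cap\ker\MlL_3$.

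\textbf{Step 1: reduce to $\Mup$.} By Proposition~\ref{prop:ker-M1-GE} and~\eqref{eq:kerM-nullpart}, any kernel vector is supported on pairs $(\bbm,\sum\bbm)$ with $\bbm\in\calM_{\bl,\le L}$. Conversely, every vector so supported lies in $\ker \MlL_1=\ker\MlL_3$. Hence $\ker(\MlL)$ is identified with the kernel of $\MlL_2$ restricted to the columns~\eqref{nonzero-ker}. Proposition~\ref{prop:ker-M2-GE} then identifies this kernel with $\ker(\Mup)$.

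\textbf{Step 2: the basis.} Writing each kernel vector as $c_{\bbm,k}$ with $k=\sum\bbm=K$, the functions of Proposition~\ref{prop:GE_basis} take exactly the form~\eqref{eq:GE-basis-su2}. The $K$-th component collects the $\bbm\in\calM_{\bl,K}$, so the basis statement follows.

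\textbf{Step 3: the dimension count.} Proposition~\ref{prop:ker-M2-GE} gives that $\Mup$ has full rank. The remaining task is to show that this means full \emph{row} rank, i.e.\ surjectivity: the number of rows is at most the number of columns in every case of Remark~\ref{remark:size-M2-GE}, since $\card{\calM_{\bl,L+1}}\le\card{\calM_{\bl,L}}$ when $L<\sum\bl$. That inequality follows from the unimodality and symmetry of the coefficient sequence of $\prod_j (x^{-l_j}+\dots+x^{l_j})$, taking $L\ge 0$ with $L+\sum\bl\in\Z$; the non-integer case gives zero dimension trivially. Rank–nullity then gives
\[
\dim\ker\Mup=\sum_{K\in\calM_L}\card{\calM_{\bl,K}}-\sum_{K\in\calM_{L+1}\setminus\{L,L+1\}}\card{\calM_{\bl,K}}.
\]
By Remark~\ref{remark:size-M2-GE} this equals $\card{\calM_{\bl,L}}-\card{\calM_{\bl,L+1}}$ in each of the three cases. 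When $L=\sum\bl$ it gives $\card{\calM_{\bl,L}}-0$, and when $L>\sum\bl$ both cardinalities vanish, giving $0$, consistent with the formula.

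\textbf{Main obstacle.} The delicate step is ensuring that ``full rank'' in Proposition~\ref{prop:ker-M2-GE} really means full row rank, so that the row/column count in Remark~\ref{remark:size-M2-GE} yields the nullity. I would verify this directly from the structure. The diagonal blocks $\Mup_{K,K}$ for $K=-L,\dots,L-1$ are nonzero multiples of the identity, since $(L+K+1)(L-K)>0$ there. Thus the block upper-bidiagonal part restricted to the columns $K'\in\{-L,\dots,L-1\}$ is invertible by back-substitution, which gives surjectivity onto those row blocks. The remaining row block $K=L+1$ needs the map $\bbm\mapsto\sum_j\sqrt{\cdot}\,\bbm_j^-$ from $\calM_{\bl,L}$ to $\calM_{\bl,L+1}$ to be surjective modulo the contributions already covered. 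This is the $SU(2)$ lowering-operator statement, which I would take from Proposition~\ref{prop:ker-M2-GE} as given.
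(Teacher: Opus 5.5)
Your proposal is correct and follows the paper's proof. It combines Propositions~\ref{prop:GE_basis}, \ref{prop:ker-M1-GE} and~\ref{prop:ker-M2-GE}, then reads off the nullity of the full-rank matrix $\Mup$ from the row/column count in Remark~\ref{remark:size-M2-GE}. Your unimodality argument for rows $\le$ columns is valid but not needed: the paper's proof of Proposition~\ref{prop:ker-M2-GE} shows that $\Bm_{L+1}$ is injective, hence $\Bp_L=-(\Bm_{L+1})^T$ is surjective, which already makes the full rank a full row rank and forces $\card{\calM_{\bl,L+1}}\le\card{\calM_{\bl,L}}$.
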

\begin{proof}
This is a direct result of Propositions~\ref{prop:GE_basis},~\ref{prop:ker-M1-GE} and~\ref{prop:ker-M2-GE}. Specifically, $\dimge{\bl}{L}$ can be determined by using the fact that $\Mup$ is full rank whose size is given in Remark~\ref{remark:size-M2-GE}.
\end{proof}

In the GE case, 
the dimensionality can be further expressed as follows.

\begin{proposition}[Exact GE dimensionality]
    For $\bl\in\calL^N$ and $L\in\calLG$, we have
    \begin{equation}\label{eq:explicit_dim_GE}
        \dimge{\bl}{L}=\sum_{{\mathbf S}\subseteq \{1,2,\ldots,N\}}
        (-1)^{\left( 1+\sum {\mathbf S} \right)} 
        \;\binom{L+\sum\bl-\sum_{s\in{\mathbf S}}(2l_s+1)+N-1 }{N-2}.
    \end{equation}
\end{proposition}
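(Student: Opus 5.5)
The plan is to start from the dimension formula \eqref{eq:dim_su2} of Proposition~\ref{prop:ge-su2}, $\dimge{\bl}{L}=\card{\calM_{\bl,L}}-\card{\calM_{\bl,L+1}}$. This reduces the statement to a lattice-point count: count bounded compositions for $K=L$ and $K=L+1$, then combine the two counts with Pascal's rule. Throughout, $l_s$ stands for the $\ell$-component of $l_s$. The sign $(-1)^{1+\sum{\mathbf S}}$ will come out as $(-1)^{1+\card{{\mathbf S}}}$, which I read as the intended meaning. Binomial coefficients $\binom{n}{k}$ are taken to vanish whenever $n<k$, including negative $n$.

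\textbf{Step 1: counting $\card{\calM_{\bl,K}}$.} Substitute $n_i=m_i+l_i$, so that $n_i\in\{0,1,\ldots,2l_i\}$ and the constraint $\sum\bbm=K$ becomes $\sum_i n_i = K+\sum\bl=:A$.

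If $A\notin\Z$, then $\card{\calM_{\bl,K}}=0$ for both $K=L$ and $K=L+1$. In that case every binomial in \eqref{eq:explicit_dim_GE} has a non-integer top entry, which should be read as zero. This agrees with the earlier proposition giving $\dimge{\bl}{L}=0$ when $L+\sum\bl\notin\Z$, so I will dispose of this case separately.

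For integer $A$, use inclusion--exclusion over the set ${\mathbf S}$ of indices violating the upper bound $n_s\le 2l_s$. Replacing $n_s$ by $n_s-(2l_s+1)$ for $s\in{\mathbf S}$ and applying stars and bars gives
\[
\card{\calM_{\bl,K}}=\sum_{{\mathbf S}\subseteq\{1,\ldots,N\}}(-1)^{\card{{\mathbf S}}}\binom{K+\sum\bl-\sum_{s\in{\mathbf S}}(2l_s+1)+N-1}{N-1}.
\]
Terms whose top entry is too small correspond to an empty set of solutions and vanish under the stated convention.

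\textbf{Step 2: subtracting the two counts.} For fixed ${\mathbf S}$, set $x=L+\sum\bl-\sum_{s\in{\mathbf S}}(2l_s+1)+N-1$. The ${\mathbf S}$-term of the difference is then $(-1)^{\card{{\mathbf S}}}\big[\binom{x}{N-1}-\binom{x+1}{N-1}\big]$. By Pascal's rule this equals $(-1)^{\card{{\mathbf S}}+1}\binom{x}{N-2}$, and summing over ${\mathbf S}$ gives \eqref{eq:explicit_dim_GE}.

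\textbf{Main obstacle: validity of Pascal's rule at the boundary.} I expect the real work to be checking that Pascal's rule holds with the truncated-binomial convention when $x$ is small or negative, since the counting convention is "zero if no solutions". The cases to check are:
\begin{itemize}
\item For $x\ge 0$ the identity is the standard one.
\item For $x=-1$ (possible only when $N\ge 2$), both sides are $0$, provided $\binom{-1}{N-2}$ is read as $0$. This needs care when $N=2$, where it reads $\binom{-1}{0}$.
\item For $x\le -2$, all three terms vanish.
\end{itemize}
I will state this convention explicitly before the proof.

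The case $N=1$ has to be handled directly, because the bottom index $N-2=-1$ makes the formula degenerate. There $\card{\calM_{\bl,L}}-\card{\calM_{\bl,L+1}}=\delta_{L,l_1}$, and I will check that the formula reproduces this under the same convention.
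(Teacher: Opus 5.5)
Your proposal is correct and follows the paper's own proof: the paper obtains the same count of $\card{\calM_{\bl,K}}$ by reading off the coefficient of $x^K$ in $\prod_{i=1}^N\sum_{m=-l_i}^{l_i}x^m$ via the binomial theorem, which is the generating-function form of your inclusion--exclusion/stars-and-bars argument, and then finishes with \eqref{eq:dim_su2} and Pascal's rule exactly as in your Step~2; your reading of the sign as $(-1)^{1+\card{\mathbf S}}$ and your truncated-binomial convention are exactly what the paper leaves implicit. The only loose end is $N=1$, where $x=-1$ does occur (for $\mathbf S=\{1\}$ and $L=l_1$), so under the usual convention $\binom{n}{-1}=0$ the right-hand side vanishes identically instead of giving $\delta_{L,l_1}$, and you must explicitly set $\binom{n}{-1}=\delta_{n,-1}$ (the value Pascal's rule forces under your truncation) rather than appeal to ``the same convention''.
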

\begin{proof} By definition of $\calM_{\bl,L}$, $\card{\calM_{\bl,L}}$ is the coefficients of the term $x^L$ in the polynomial
$
    \prod_{i=1}^N \left(\sum_{m=-l_i}^{l_i} x^m \right).
$
Therefore, by the binomial theorem,
\begin{equation*}
    \card{\calM_{\bl,L}}=\sum_{{\mathbf S}\subseteq \{1,2,\ldots,N\}}(-1)^{\sum {\mathbf S}} \binom{L+\sum\bl-\sum_{s\in{\mathbf S}}(2l_s+1)+N-1}{ N-1}.
\end{equation*}
From \eqref{eq:dim_su2}, the dimensionality of the GE basis is,
\begin{align*}
    \dimge{\bl}{L}&=\sum_{{\mathbf S}\subseteq \{1,2,\ldots,N\}}(-1)^{(1+\sum {\mathbf S})} \left(\binom{L+\sum\bl-\sum_{s\in{\mathbf S}}(2l_s+1)+N-1 }{ N-1}-\binom{L+\sum\bl-\sum_{s\in{\mathbf S}}(2l_s+1)+N }{ N-1}\right)\\
    &=\sum_{{\mathbf S}\subseteq \{1,2,\ldots,N\}}(-1)^{(1+\sum {\mathbf S})} \binom{L+\sum\bl-\sum_{s\in{\mathbf S}}(2l_s+1)+N-1 }{ N-2},
\end{align*}
using the binomial formula $\binom{n}{k}+\binom{n}{k+1}=\binom{n+1}{k+1}$.
\end{proof}

\begin{remark}[Link with~\cite{Dusson2022-ie}]
\label{rem:ace}
Note that~\cite[Proposition 5]{Dusson2022-ie}
provides a result on the dimensionality of the space $V^{\bl,0}$ for $G=SO(3)$ as the cardinality of the set
\[
\bigg\{ \bL = (L_2,L_3,\ldots,L_N)\in \N^{N-1}, 
      \quad 
      |l_1-l_2| \le L_2 \le l_1+l_2, \\
       \forall \; 3\le i\le N, \;
      |L_{i-1}-l_i| \le L_i \le {L_{i-1}+l_i}, L_N =0
      \bigg\}.
\]
We note that~\eqref{eq:explicit_dim_GE} is more explicit and more general than what was obtained in~\cite{Dusson2022-ie}.
\end{remark}

\begin{example}\label{exam:dim_cg_su2}
As an immediate result of~\eqref{eq:explicit_dim_GE}, the dimensionality of the Clebsch--Gordan coefficients for $SU(2)$ and $SO(3)$ can at most be 1.
Indeed let $L,L_1,L_2\in\calL_G$, then 
\begin{equation*}
        \dimge{(L_1,L_2)}{L}= \begin{cases}
        1, & \textup{if } L+L_1+L_2\in\Z, \; |L_1-L_2|\le L\le L_1+L_2,\\
        0, & \textup{otherwise}.
    \end{cases}
\end{equation*}
Consequently, for $\bl\in\calL^N$ and $L\in\calLG$, the recursive formula~\eqref{eq:dim_GE_rec} for the dimensions becomes
\begin{equation}\label{eq:dim_recursive_su2}
    \dimge{\bl}{L} = \sum_{L_1=0}^{\sum\bl^{(1)}}\;\sum_{L_2=|L-L_1|}^{\min\{\sum\bl^{(2)},L_1+L\}}\dimge{\bl^{(1)}}{L_1} \; \dimge{\bl^{(2)}}{L_2}.
\end{equation}
\end{example}

With~\eqref{eq:dim_recursive_su2}, we have an easy way to recursively compute GE dimensionality for any $\bl$. 
For more specific values, we provide dimensionalities for different values of $\bl$ and $L$ in 
Appendix~\ref{app:table_of_dim}.

\medskip

While \eqref{eq:explicit_dim_GE} is explicit, the sum contains a total of $2^N$ terms, which becomes increasingly difficult to access as $N$ grows, and when $\bl$ contains different values. 
Asymptotically, we have the following proposition.

\begin{proposition}[Asymptotic dimensionality]\label{prop:asym-dim-ge}
    For $\bl\in\calL^N$, $L\in\calLG$, $L+\sum\bl\in\Z$, define
    \[
        \textup{Var}_\bl = \sum_{i=1}^N \frac{l_i(l_i+1)}{3}.
    \]
    If there exist $l_{\min},l_{\max}> 0$ independent of $\Nat$ such that
    \begin{equation}\label{eq:bounded-ll}
    \forall i=1,\ldots,\Nat, \quad l_{\min} \le l_i\le l_{\max},\;
    \end{equation}
    then for $L\ll\sqrt{\textup{Var}_\bl}$, there holds for $\Nat$ sufficiently large that
    \begin{equation}
        \label{eq:dimge-est}
        \dimge{\bl}{L} = \prod_{i=1}^N\big(2l_i+1\big)
        \left(\frac{2L+1}{2\sqrt{2\pi}(\textup{Var}_\bl)^{3/2}}+O\left(\frac{1}{\Nat^{5/2}}\right)\right).
    \end{equation}
\end{proposition}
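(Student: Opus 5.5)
Starting from Proposition~\ref{prop:ge-su2}, I reduce the statement to a local limit theorem. Let $m_1,\ldots,m_N$ be independent random variables with $m_i$ uniformly distributed on $\calM_{l_i}$, and set $S_N=\sum_i m_i$. Then
\[
\card{\calM_{\bl,K}}=\prod_{i=1}^N(2l_i+1)\,\mathbb P(S_N=K),
\]
so \eqref{eq:dim_su2} gives
\[
\dimge{\bl}{L}=\prod_i(2l_i+1)\big(\mathbb P(S_N=L)-\mathbb P(S_N=L+1)\big).
\]
Each $m_i$ is symmetric with variance
\[
\frac{1}{2l_i+1}\sum_{m=-l_i}^{l_i}m^2=\frac{l_i(l_i+1)}{3}.
\]
Hence $S_N$ has mean $0$ and variance $\sigma^2=\textup{Var}_\bl$, which by \eqref{eq:bounded-ll} is of order $N$. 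It therefore remains to show that the discrete derivative $p(L)-p(L+1)$ of the lattice probability $p(K)=\mathbb P(S_N=K)$ equals
\[
\frac{2L+1}{2\sqrt{2\pi}\,\sigma^3}+O(N^{-5/2}).
\]

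I use the Fourier representation. The variable $S_N$ lives on the lattice $\sum\bl+\Z$, and the hypothesis $L+\sum\bl\in\Z$ puts $L$ on that lattice. The characteristic function is $\varphi(t)=\prod_i \chi_{l_i}(t)$, where
\[
\chi_l(t)=\frac{\sin((2l+1)t/2)}{(2l+1)\sin(t/2)}.
\]
Thus
\[
p(L)-p(L+1)=\frac1{2\pi}\int_{-\pi}^{\pi}\varphi(t)\big(e^{-iLt}-e^{-i(L+1)t}\big)\,dt .
\]
After a shift to absorb the half-integer lattice offset, $e^{-iLt}-e^{-i(L+1)t}=e^{-i(L+1/2)t}\,2i\sin(t/2)$. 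Since $\varphi$ is even, this reduces to the real integral
\[
\frac{1}{\pi}\int_{-\pi}^{\pi}\varphi(t)\sin\big((L+\tfrac12)t\big)\sin(t/2)\,dt .
\]

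The integral splits into a near region $|t|\le N^{-1/2+\delta}$ and its complement. On the complement, $|\chi_{l}(t)|\le 1-c\,t^2$ holds uniformly for $l_{\min}\le l\le l_{\max}$, since $|\chi_l|<1$ away from $t=0$. For integer $l$ this also holds near $t=\pm\pi$, and the condition $l_{\min}>0$ excludes the trivial factor $\chi_0\equiv1$. The far part is therefore exponentially small.

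For half-integer $l$ there is a second problem: $|\chi_l(\pi)|=1$, with a sign. This is the parity obstruction, and the lattice-shift bookkeeping resolves it. The contributions at $t=\pm\pi$ reproduce a mirrored Gaussian peak, so the real issue is the lattice: $S_N$ is supported on the lattice $\sum\bl+\Z$ with span $1$. I restrict the integral to the period in which only $t=0$ is a maximum, checking that the span is exactly $1$.

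In the near region I expand
\[
\log\varphi(t)=-\frac{\sigma^2t^2}{2}+\kappa_4\frac{t^4}{24}+O(Nt^6),
\qquad \kappa_4=O(N),
\]
and use $\sin((L+\tfrac12)t)\sin(t/2)=\tfrac12(L+\tfrac12)t^2+O(L^3t^4+t^4)$. The leading Gaussian integral is
\[
\frac1\pi\cdot\frac{L+1/2}{2}\int t^2e^{-\sigma^2t^2/2}\,dt=\frac{(2L+1)}{2\sqrt{2\pi}\,\sigma^3}.
\]
The $\kappa_4 t^4$ correction and the $t^4$ terms each give $O(N\cdot\sigma^{-7})+O(\sigma^{-5})=O(N^{-5/2})$. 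Here I use the assumption $L\ll\sqrt{\textup{Var}_\bl}$, read as $L$ bounded, or at least $L^3\sigma^{-5}$ being of lower order.

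I expect the main obstacle to be making the error uniform in the non-identical $l_i$ and controlling the far region: I must bound $\sup_{c\le|t|\le\pi}|\chi_l(t)|<1$ uniformly over the finitely many admissible $l$. The half-integer periodicity of $\chi_l$ also has to be handled correctly, so that no spurious peak at $t=\pm\pi$ survives after the lattice shift. Everything else is a standard Edgeworth-type computation. The symmetry of the $m_i$ kills the odd cumulants, so no $N^{-2}$ term appears.
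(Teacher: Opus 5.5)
Your proof is correct. It rests on the same reduction as the paper's: write $\card{\calM_{\bl,K}}=\prod_i(2l_i+1)\,\mathbb P(\sum\bbm=K)$ and use a local limit expansion for a sum of independent symmetric uniform variables. The analytic route is different, though.

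The paper quotes a local Edgeworth expansion (from Petrov) for $\mathbb P(\sum\bbm=L)$, with the $H_4$ correction and a relative $O(N^{-2})$ remainder. It then subtracts the expansions at $L$ and $L+1$.

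You instead prove the estimate directly, by Fourier inversion of the difference itself. The factor $e^{-iLt}-e^{-i(L+1)t}=2i\sin(t/2)\,e^{-i(L+1/2)t}$ supplies the extra power of $t$ that produces the $\sigma^{-3}$ leading term. As a result, $\log\varphi(t)=-\sigma^2t^2/2+O(Nt^4)$ already suffices, because the $\kappa_4$ term is itself of the size of the error. The paper, by contrast, needs the $H_4$ term explicitly: pointwise it is of order $N^{-3/2}$, and it only drops to $N^{-5/2}$ after differencing.

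What each route buys:
\begin{itemize}
\item Your version is self-contained. It handles non-identical $l_i$ and the half-integer lattice $\sum\bl+\Z$ explicitly rather than delegating them to a cited theorem.
\item It also shows the $L$-dependence of the remainder, $O\big((1+L^3)N^{-5/2}\big)$, which makes the informal condition $L\ll\sqrt{\textup{Var}_\bl}$ precise.
\item The paper's version is shorter and gives the pointwise asymptotics of $\card{\calM_{\bl,L}}$ as a by-product.
\end{itemize}

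One paragraph of your proposal is wrong, although nothing depends on it.
\begin{itemize}
\item For half-integer $l$, $2l+1$ is even, so $\chi_l(\pm\pi)=\sin\big((2l+1)\pi/2\big)/(2l+1)=0$, not of modulus one.
\item The points where $|\chi_l|=1$ with a sign are $t=\pm2\pi$, where $\chi_l=-1$. These lie outside the inversion interval $[-\pi,\pi]$.
\item $[-\pi,\pi]$ is the correct interval, because for $K\in\{L,L+1\}\subset\sum\bl+\Z$ the integrand $\varphi(t)e^{-iKt}$ is $2\pi$-periodic.
\item Each $m_i$ ranges over at least two consecutive points of a translate of $\Z$; this is where $l_{\min}>0$ enters. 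Hence $|\chi_l(t)|<1$ for $0<|t|\le\pi$ for every admissible $l$, integer or half-integer.
\item Therefore $|\chi_l(t)|\le 1-ct^2$ holds on all of $[-\pi,\pi]$, uniformly over the finitely many admissible $l$.
\end{itemize}
There is no parity obstruction and no mirrored peak at $t=\pm\pi$, so that discussion should simply be dropped.
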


Proposition~\ref{prop:asym-dim-ge} suggests that we may approximate $\dimge{\bl}{L}$ by 
\[ \dimgeest{\bl}{L} = \frac{(2L+1)\big(\prod_{i=1}^N(2l_i+1)\big)}{2\sqrt{2\pi}(\textup{Var}_\bl)^{3/2}} \]
in the asymptotic regime, and that the dimensionalities of the equivariant function spaces scale linearly with respect to $L$, the order of equivariance, when $L$ is not too large. A proof of this Proposition is given in Section~\ref{sec:proof-asym-dim-ge}.

\subsection{Construction of multi-variable GE-PI bases}
As illustrated in Section~\ref{sec:ge-pi}, most of the arguments for the GE case can be applied to the GE-PI case directly, with an exception that the set $\calM_\bl$ is replaced by $\oMl$ for given $\bl\in\calL^N$. Hence, before presenting the construction of multi-variable GE-PI bases and the properties of the matrix~\eqref{eq:matM-PI}, 
we first clarify the form of $\oMl$ in this specific case. 

\begin{remark}
For the compatible bases given in Subsection~\ref{sec:compatible_bases}, the index $\bl\in\calL^N$ can either be a tuple of scalars or a tuple of vectors. In the previous case, the definition of $\oMl$ is clear, 
while in the latter case, for example when $\bl = \big((n_i,\ell_i)\big)_{i=1}^N$, the set $\oMl$ depends not only on the $\ell$-component but also on the $n$-component. Specifically, there hold
\[
    \oMl = \big\{\obm:\bbm\in\calM_\bl\big\},
\]
where the equivalent class $\obm$ is defined as
\begin{equation*}
    \obm  = \{\pi\bbm:\pi\in S_\bl\}
    \quad\text{with}\quad 
    S_\bl = \{\pi\in S_N:\pi\bl=\bl\} = \{\pi\in S_N: \pi\bn=\bn \textup{ and } \pi\boldsymbol{\ell}=\boldsymbol{\ell}\},
\end{equation*}
where $\bn=\left(n_i\right)_{i=1}^N$ and $\boldsymbol{\ell}=\left(\ell_i\right)_{i=1}^N$.
\end{remark}

\begin{remark}\label{rem:id-ell-blocks}
    In the case where $\bl$ is a tuple of vectors as above,  while the set $S_\bl$ depends on $\bn$, the set $\oMl = \bigotimes_{j=1}^\Nb \overline{\calM}_{\boldsymbol{\ell}^{(j)}}$ depends only on the $\boldsymbol\ell$-component of the minimal partition of $\bl$, and so do the coupling coefficients. In other words, if $\bl,\;\bl'\in\calL^N$ induce the same $\boldsymbol\ell = (\boldsymbol\ell^{(1)},\ldots,{\boldsymbol\ell}^{(\Nb)})$ in their minimal partitions, they share the same coupling coefficients, despite the potential differences in the underlying compatible bases. 
\end{remark}

With a slight abuse of notation, we still use $\MlL$ to denote the matrix $M^{\bl,L}$ defined in~\eqref{eq:matM-PI}, and use $\MlL_d,\;d=1,2,3$ for the three parts of $\MlL$. 
\subsubsection{Matrices $\MlL_1$ and $\MlL_3$}

Following a similar argument as above, $\MlL_1$ and $\MlL_3$ in the GE-PI case are also two identical and diagonal matrices with diagonal elements
\begin{equation}
\label{eq:M1_PI}
        \forall (\bbm,k)\in {\oMl}\times\calM_L, \qquad \MlL_1[(\bbm,k),(\bbm,k)] = \MlL_3[(\bbm,k),(\bbm,k)] = \left(\sum_{i=1}^N m_i -k\right)\cdot \mathrm{i},
\end{equation}
where $\calM_L$ is defined in~\eqref{eq:calMU2} and  $\oMl$ is defined in~\eqref{eq:GE_class}. 
Hence, the following proposition holds true in the GE-PI case as well.

\begin{proposition}\label{prop:ker-M1-GEPI}
    Let $\bl\in \calL^N$, $L\in \calLG$, then for $\MlL_1$ and $\MlL_3$ being defined in~\eqref{eq:M1_PI}, there holds
    \[
        \ker(\MlL_1) = \ker(\MlL_3) = \{\bc=(\bc_{\bbm,k})_{(\bbm,k)\in\oMl\times\calM_L}\in\F^{\card{\oMl\times\calM_L}}: \bc_{\bbm,k} = 0 \textup{ if } \sum\bbm -k \neq 0\}.
    \]
    Consequently, if $(L+\sum\bl)\notin\Z$, then
    \[ 
        \dimpi{\bl}{L}=
        0.
    \]
\end{proposition}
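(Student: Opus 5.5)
The argument runs parallel to the GE case (Proposition~\ref{prop:ker-M1-GE} and the subsequent vanishing result). First I make the diagonal entries of $\MlL_1$ and $\MlL_3$ explicit from Theorem~\ref{thm:GEPI_basis}. By~\eqref{eq:deri_alpha} and~\eqref{eq:deri_gamma}, the matrices $\vrho{\ell,1}{}$ and $\vrho{\ell,3}{}$ are diagonal with entries $-\mathrm{i}m\delta_{\mu m}$. Hence in~\eqref{eq:M_GE_PI} every off-diagonal case vanishes. The entry $-\vrho{L,d}{k,k'}$ with $k\neq k'$ is zero. The interacting-class entry $\blamb{}{{\bbm'}^{(j)},q}\vrho{\bl^{(j)}_1,d}{q,p}$ with $p\neq q$ is also zero. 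Only the diagonal case $\bbm=\bbm'$, $k=k'$ survives, with value $\sum_j(-\mathrm{i}m_j)+\mathrm{i}k=-\mathrm{i}(\sum\bbm-k)$. This quantity does not depend on which representative of the class $\bbm\in\oMl$ is chosen, since permutations preserve $\sum\bbm$. This agrees with~\eqref{eq:M1_PI} up to a sign, and the sign does not affect the kernel.

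Since $\MlL_1=\MlL_3$ is diagonal, its kernel consists exactly of the vectors $\bc$ whose entries vanish at every index $(\bbm,k)$ with nonzero diagonal entry, that is, wherever $\sum\bbm-k\neq0$. This gives the first claim.

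For the consequence, I use~\eqref{eq:ker_inclusion}: $\ker(\MlL)\subseteq\ker(\MlL_1)$. Any $\bc\in\ker(\MlL)$ can therefore be nonzero only at indices with $k=\sum\bbm$. Now $k\in\calM_L$ gives $k-L\in\Z$, and $\bbm\in\calM_\bl$ gives $m_i-\ell_i\in\Z$ for each $i$, so $\sum\bbm-\sum\bl\in\Z$. If $k=\sum\bbm$, then $L+\sum\bl=(L-k)+(\sum\bbm-\sum\bl)+2k$. Since $2k\in\Z$ for half-integer or integer $k$, this shows $L+\sum\bl\in\Z$. So when $L+\sum\bl\notin\Z$, no index can be nonzero, and $\ker(\MlL)=\{\mathbf 0\}$. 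Theorem~\ref{thm:GEPI_basis} then gives $\dimpi{\bl}{L}=0$.

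The only mildly delicate step is the integrality bookkeeping. It needs $2k\in\Z$, and I would phrase it more carefully as $k\equiv L$ and $\sum\bbm\equiv\sum\bl$ modulo $\Z$, which forces $L-\sum\bl\in\Z$. Since $2\sum\bl\in\Z$, this is equivalent to $L+\sum\bl\in\Z$. The rest is immediate from the diagonal structure.
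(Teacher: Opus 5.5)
Your proposal is correct and takes the same route as the paper. $\MlL_1=\MlL_3$ is diagonal because the $\alpha$- and $\gamma$-derivatives of the Wigner-D matrices are diagonal, which kills the interacting-class entries, and the vanishing result comes from the same integrality bookkeeping the paper uses in the GE case. One small slip: the displayed identity should read $L+\sum\bl=(L-k)-(\sum\bbm-\sum\bl)+2k$, since with your plus sign the right-hand side equals $L-\sum\bl+2k$; your congruence-modulo-$\Z$ reformulation at the end is correct and suffices.
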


\subsubsection{Matrix $\MlL_2$}
Using Proposition~\ref{prop:ker-M1-GEPI}, we only need to set up a system for the non-zero coefficients, that is, the column indices of $\MlL_2$ can be restricted to the set
\[
    \big\{(\bbm,\sum \bbm), \quad  \bbm\in\oMl, \quad |\sum \bbm| \le L\big\}\subset\oMl\times\calM_L.
\]
We may identify this set to 
\[
    \overline{\calM}_{\bl,\le L} := \Big\{\bbm\in\oMl:|\sum \bbm| \le L
    \Big\}=\bigcup_{K\in\calM_L}\oMlK, 
\]
where 
\[
    \oMlK = \{\bbm\in\oMl: \; \sum\bbm=K\}.
\]

As a direct consequence of~\eqref{eq:M_GE_PI}, using~\eqref{eq:deri_beta}, the matrix $\MlL_2$ in the GE-PI case is defined for $\bbm'\in \overline{\calM}_{\bl,\le L}$, $(\bbm,k)\in \oMl\times\calM_L$ by
\begin{equation}
        \MlL_2[(\bbm,k),\bbm'] = 
    \begin{cases}
        \sqrt{(L+\sum\bbm+1)(L-\sum\bbm)}, & \qquad  \textup{if }\bbm'=\bbm,\; k = \sum\bbm+1, \\
        -\sqrt{(L-\sum\bbm+1)(L+\sum\bbm)}, & \qquad  \textup{if }\bbm'=\bbm,\; k = \sum\bbm-1, \\
        -\blamb{}{\bbm'^{(j)},p+1}\sqrt{(\bl^{(j)}_1+p+1)(\bl^{(j)}_1-p)}, & \qquad \textup{if }\bbm' = \bbm_{(j,p,p+1)}, \; k = \sum\bbm', \\
        \blamb{}{{\bbm'}^{(j)},p-1}\sqrt{(\bl^{(j)}_1-p+1)(\bl^{(j)}_1+p)}, & \qquad \textup{if }\bbm' = \bbm_{(j,p,p-1)}, \;  k = \sum\bbm', \\
        0, & \qquad \textup{otherwise},
    \end{cases}
    \label{eq:M2-PI}
\end{equation}
where we have also removed the constant factor 1/2 for simplicity.
Following a similar discussion as in Lemma~\ref{lem:row_ind}, we see that the row index $\bbm\in\oMl$ that can lead to nonzero elements in~\eqref{eq:M2-PI} lie in the sets 
\[
\big\{ (\bbm, {\sum}\bbm+1), \;  \bbm \in \overline{\calM}_{\bl,K} \big\},
\quad K \in \calM_{L+1} \backslash \{ L, L+1\},
\]
and 
\[
\big\{ (\bbm, {\sum} \bbm-1), 
\; \bbm \in \overline{\calM}_{\bl,K} \big\},
\quad K \in \calM_{L+1} \backslash \{ -L-1, -L\}.
\]
Hence, we are able to sort the rows and columns using the same convention as that of the GE case. 

Then, after removing the zero rows, the above-defined $\MlL_2$ also has a block structure illustrated in Figure~\ref{fig:block_structure_of_M}(a).
Rearranging the block rows in the manner shown in Figure~\ref{fig:block_structure_of_M}(b), we obtain a reordered matrix $\widetilde{\MlL}_2$, which admits a partition into two parts, that is, 
\[
    \widetilde{\MlL}_2 = \begin{pmatrix}
         \Mup \\
         \Mlow \\
     \end{pmatrix}.
\]
We claim that either half of $\widetilde{\MlL}_2$ is sufficient to determine its kernel, and consider only its upper half $\Mup$, which is a block matrix with column blocks indexed by $K'\in\calM_L$ and row blocks indexed by $K\in\calM_{L+1} \backslash \{ L, L+1\}$, and with nonzero blocks being defined 
for $K\in \{-L, \ldots, L-1\}$ by
\begin{align}
    \Mup_{K,K}[\bbm,\bbm'] &= \sqrt{(L+K+1)(L-K)}\delta_{\bbm',\bbm} \label{eq:MKK-PI}\\
    \Mup_{K,K+1}[\bbm,\bbm'] &= 
    -\blamb{}{{\bbm'}^{(j)},p+1}\sqrt{(\bl^{(j)}_1+p+1)(\bl^{(j)}_1-p)}\;\delta_{\bbm',\bbm_{(j,p,p+1)}}
\end{align}
and for $K = L+1$ by
\begin{equation}\label{eq:MLpL-PI}
    \Mup_{L+1,L}[\bbm,\bbm'] = 
    \blamb{}{{\bbm'}^{(j)},p-1}\sqrt{(\bl^{(j)}_1-p+1)(\bl^{(j)}_1+p)}
    \;\delta_{\bbm' ,\bbm_{(j,p,p-1)}}.
\end{equation}
As in the GE case, the following proposition holds true.
\begin{proposition}\label{prop:ker-M2-GEPI}
Let $\bl\in\calL^N$, $L \in \calL$ and $\MlL_2$ be defined in~\eqref{eq:M2-PI}. 
The matrix $\Mup$ given in~\crefrange{eq:MKK-PI}{eq:MLpL-PI} has full rank and satisfies
\[
    \ker(\Mup) = \ker(\MlL_2).
\]
\end{proposition}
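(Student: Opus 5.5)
We follow the same strategy as for Proposition~\ref{prop:ker-M2-GE}, exploiting the block structure of $\widetilde{\MlL}_2$. The plan has two parts: first show that $\Mup$ has full (row) rank, then show that the lower half $\Mlow$ imposes no additional constraint on $\ker(\Mup)$. Since $\ker(\MlL_2)=\ker(\Mup)\cap\ker(\Mlow)$ and, after discarding zero rows, $\widetilde{\MlL}_2$ is a row permutation of $\MlL_2$, these two facts give the claim.

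\textbf{Full rank of $\Mup$.} Order the column blocks by $K'=-L,\ldots,L$ and the row blocks by $K=-L,\ldots,L-1$ followed by $K=L+1$. For $K\in\{-L,\ldots,L-1\}$ the diagonal block $\Mup_{K,K}=\sqrt{(L+K+1)(L-K)}\,I$ is a nonzero multiple of the identity on $\oMlK$, and the only other nonzero block in that row block is $\Mup_{K,K+1}$. Hence the first $L$ row blocks have an invertible "block upper-bidiagonal" structure on the columns indexed by $K'\le L-1$. Block elimination then reduces the question to the last row block $K=L+1$ acting on the columns indexed by $\overline{\calM}_{\bl,L}$. Consequently $\Mup$ has full row rank if and only if $\Mup_{L+1,L}$ has full row rank, i.e. if and only if the lowering map $\overline{\calM}_{\bl,L}\to\overline{\calM}_{\bl,L+1}$ (transposed) is surjective. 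This block is the matrix of the symmetrized operator $J_-$ restricted between the weight spaces $L+1$ and $L$ of $\bV^\bl$, written in the basis $\{\bphi{\bl}{\bbm}\}$. By Remark~\ref{rem:group_action}, $\bV^\bl$ is a finite-dimensional representation of $SU(2)$. Standard $\mathfrak{sl}_2$ theory then says that $J_-:W_{K}\to W_{K-1}$ is injective for $K>0$ (equivalently, $J_+$ is surjective onto $W_{K}$ for $K>0$); the transposition and the conventions of \eqref{eq:M2-PI} determine which of these two facts is needed. Applied with $K=L+1>0$, this gives full rank. This is the step where the PI weights $\blamb{}{\bbm'^{(j)},p\pm1}$ must be checked carefully. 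The key point is that the entries of \eqref{eq:M2-PI} are exactly the matrix coefficients of $d\rep{\bl}{}(X_2)$ on $\bV^\bl$ in the basis $\bphi{\bl}{\bbm}$, computed through the count vectors, and this is precisely what the proof of Theorem~\ref{thm:GEPI_basis} established. The boundary cases $L\ge\sum\bl$, where $\Mup_{L+1,L}$ is empty, are then trivial.

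\textbf{Redundancy of $\Mlow$.} Take $\bc\in\ker(\Mup)$ and interpret it as a family of vectors $c_K\in W_K$, $K\in\calM_L$, where $W_K$ denotes the weight space of $\bV^\bl$ with $\sum\bbm=K$. Solving the upper block rows recursively gives $c_{K+1}$ proportional to $J_+$-type images of $c_K$; in other words, the vectors $c_K$ realize the standard ladder relations of a copy of $\rep{L}{}$. The last block row encodes the highest/lowest weight condition. Since the upper-half relations fix all $c_K$ in terms of a vector annihilated by the appropriate raising operator, the lower-half relations follow from the $\mathfrak{sl}_2$ commutation relation $[J_+,J_-]=2J_0$ together with the known eigenvalues on $W_K$. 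This is verified row block by row block, exactly as in the GE case. Alternatively, one can argue by dimension counting. We have $\ker(\MlL_2)\subseteq\ker(\Mup)$. By full rank, $\dim\ker(\Mup)=|\overline{\calM}_{\bl,L}|-|\overline{\calM}_{\bl,L+1}|$. Each highest-weight vector of weight $L$ in $\bV^\bl$ produces an element of $\ker(\MlL)$, and the number of such vectors is $\dim W_L-\dim W_{L+1}$, again by $\mathfrak{sl}_2$ theory. Hence $\dim\ker(\MlL_2)\ge|\overline{\calM}_{\bl,L}|-|\overline{\calM}_{\bl,L+1}|$, which forces equality of the two kernels. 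We expect the main obstacle to be the precise identification of the weighted entries of \eqref{eq:M2-PI} with the action of $\mathfrak{sl}_2$ on $\bV^\bl$. Once that identification is in place, the GE proof of Proposition~\ref{prop:ker-M2-GE} carries over verbatim.
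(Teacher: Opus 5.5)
Your argument is correct, but it takes a genuinely different route from the paper.

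\textbf{The paper's route.} The paper stays at the level of explicit linear algebra. It splits $\MlL_2$ into the halves $\Mm_2$ and $\Mp_2$ and proves, by direct count-vector computations, the identities $(\Bm_K)^TW_{K-1}=-W_K\Bp_{K-1}$ and $(\Bp_K)^TW_{K+1}\Bp_K-(\Bm_K)^TW_{K-1}\Bm_K=-2K\,W_K$. Here $W_K$ is the diagonal matrix of products of factorials of the count vectors. From these it deduces $(\Mm_2)^TW^-\Mm_2=(\Mp_2)^TW^+\Mp_2$, hence $\ker(\Mm_2)=\ker(\Mp_2)=\ker(\MlL_2)$. It gets full rank of $\Bp_L$ from positive definiteness of $(\Bm_{L+1})^TW_L\Bm_{L+1}$. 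It concludes by comparing the ranks of $\Mup$ and $\Mm_2$, which have the same size.

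These identities are hand-made versions of exactly the facts you import. Up to the factor $N!$, $W_K$ is the Gram matrix of the functions $\bpsi{\bl}{\bbm}$. The first identity says the two ladder blocks are adjoint, up to sign, for this inner product. The second is a weighted form of $[J_+,J_-]=2J_0$.

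\textbf{Your route.} You identify the blocks of $\Mup$ with the ladder operators of the representation $X\mapsto S^\bl d\rep{\bl}{}(X)T^\bl$ on $\K^{\oMl}$. This is legitimate: the identity $T^\bl S^\bl d\rep{\bl}{}(X)T^\bl=d\rep{\bl}{}(X)T^\bl$ makes this a Lie algebra representation, and the proof of Theorem~\ref{thm:GEPI_basis} computes its entries. You then obtain full rank from surjectivity of the outward ladder map onto the weight space $L+1>0$. You obtain $\dim\ker(\MlL_2)$ as the multiplicity $\card{\overline{\calM}_{\bl,L}}-\card{\overline{\calM}_{\bl,L+1}}$ of $\rep{L}{}$, via complete reducibility and Schur's lemma. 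The kernel equality then becomes a pure dimension count.

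\textbf{Trade-offs.} Your route is shorter and explains the dimension formula of Theorem~\ref{thm:gepi-su2} conceptually; indeed it yields that formula independently of the proposition. Your hedging on transposes and signs is harmless, because the weight multiplicities are symmetric under $K\mapsto-K$. The paper's route avoids Weyl's theorem and the classification of $\mathfrak{sl}_2$-modules. It is fully self-contained, and it reduces to the GE proof when all $W_K=I$.

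\textbf{One slip.} In your first sketch of the redundancy of $\Mlow$, the recursion runs the wrong way. The invertible blocks $a^-_K I$ sit in column $K$, so row block $K$ expresses $c_K$ through $\Bm_{K+1}c_{K+1}$. Everything is therefore determined by $c_L$, and row block $L+1$ is the condition $\Bp_Lc_L=0$. It is your dimension-counting alternative that actually closes the argument.
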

The proof of this result will be presented in Section~\ref{sec:proof-half-mat-pi}. Notably, $\Mup$ in the GE-PI case also has the structure shown in Figure~\ref{fig:block_structure_of_M}(c). 
\begin{remark}[Size of the matrix $\Mup$]\label{remark:size-M2-GEPI}
From the expression of the matrix $\Mup$ given in~\crefrange{eq:MKK-PI}{eq:MLpL-PI}, 
the number of columns of the matrix $\Mup$ is always 
$\displaystyle\sum_{K\in\calM_L}\card{\oMlK}$
while the number of rows is always 
$\sum_{K\in\calM_{L+1} \backslash \{ L, L+1\}}\card{\oMlK}$. 
Hence, depending on the relative ordering of $L$ and $\sum\bl$, 
\begin{equation*}
    \displaystyle\sum_{K\in\calM_{L+1} \backslash \{ L, L+1\}}\card{\oMlK} = 
    \begin{cases}
    \displaystyle {\sum_{K\in\calM_L}\card{\oMlK}} + \card{\overline{\calM}_{\bl,L+1}}-\card{\overline{\calM}_{\bl,L}}, & \quad \textup{if }L<\sum\bl, \\
    \displaystyle \sum_{K\in\calM_L}\card{\oMlK}-\card{\overline{\calM}_{\bl,L}}, & \quad \textup{if }L=\sum\bl, \\
    \displaystyle \sum_{K\in\calM_L}\card{\oMlK}, & \quad \textup{if }L>\sum\bl.
\end{cases}
\end{equation*}
\end{remark}

\subsubsection{GE-PI space and its dimensionality}
Note that the structure of the matrix $M$ remains almost unchanged compared with the GE case, with the only difference being that the index sets $\calM_{\bl,K}$ are replaced by $\oMlK$. 
In addition, the following proposition holds true.

\begin{theorem}\label{thm:gepi-su2}
Let $\bl\in\calL^N$, $L \in \calL$, and let
\[
    \bc^{\bl,L}_i = \{c_{\bbm,i}^{\bl,L}\}_{\bbm\in\overline{\calM}_{\bl,L}},\quad i=1,\ldots,\dim\big(\ker(\Mup)\big)
\]
be a basis of $\ker(\Mup)$, where $\Mup$ is given in~\crefrange{eq:MKK-PI}{eq:MLpL-PI}.
Then, a basis of $\bV^{\bl,L}$
is given by 
\begin{equation*}
     b^{\bl,L}_i = 
    \left[
        \sum_{\bbm \in \oMlK}c_{\bbm,i}^{\bl,L}
        \bphi{\bl}{\bbm}
    \right]_{K\in\calM_L},\quad i=1,\ldots,\dim\big(\ker(\Mup)\big).
\end{equation*}
In addition,
\begin{equation}\label{eq:dim_su2_pi}
    \dimpi{\bl}{L}=\dim\big(\ker(\Mup)\big)=\card{\overline{\calM}_{\bl,L}} - \card{\overline{\calM}_{\bl,L+1}}.
\end{equation}
\end{theorem}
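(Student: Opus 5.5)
The plan mirrors the proof of Proposition~\ref{prop:ge-su2}, with $\calM_{\bl,K}$ replaced by $\oMlK$ throughout.

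First, Theorem~\ref{thm:GEPI_basis} gives $\dimpi{\bl}{L}=\dim(\ker M^{\bl,L})$. Moreover, any basis of $\ker M^{\bl,L}$ yields a basis of $\bV^{\bl,L}$ through the functions $\sum_{\bbm,k} c_{\bbm,k}\bphi{\bl}{\bbm}$. By~\eqref{eq:ker_inclusion}, $\ker M^{\bl,L}=\ker M_1\cap\ker M_2\cap\ker M_3$. Proposition~\ref{prop:ker-M1-GEPI} shows that $\ker M_1=\ker M_3$ consists exactly of the vectors supported on the pairs $(\bbm,\sum\bbm)$ with $|\sum\bbm|\le L$. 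We may therefore identify a coefficient vector with $(c_\bbm)_{\bbm\in\overline{\calM}_{\bl,\le L}}$, setting $k=\sum\bbm=K$. This is exactly the column index set of $M_2$ in~\eqref{eq:M2-PI}. Under this identification $\ker M^{\bl,L}\cong\ker M_2$, and Proposition~\ref{prop:ker-M2-GEPI} gives $\ker M_2=\ker\Mup$. With $K=k$, the basis functions $b^{\bl,L}_i$ then take exactly the displayed form, the $K$-th component collecting the classes in $\oMlK$. This establishes the basis statement.

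For the dimension count, Proposition~\ref{prop:ker-M2-GEPI} also says that $\Mup$ has full rank. The first task is to read this correctly as full \emph{row} rank, which is the form needed in the cases where $\Mup$ is wide or square. Then $\dim\ker\Mup=\#\text{cols}-\#\text{rows}$. By Remark~\ref{remark:size-M2-GEPI}, the number of columns is $\sum_{K\in\calM_L}\card{\oMlK}$, and the number of rows depends on how $L$ compares with $\sum\bl$:
\begin{itemize}
\item If $L<\sum\bl$, the difference is $\card{\overline{\calM}_{\bl,L}}-\card{\overline{\calM}_{\bl,L+1}}$ directly.
\item If $L=\sum\bl$, the difference is $\card{\overline{\calM}_{\bl,L}}$, and $\overline{\calM}_{\bl,L+1}=\varnothing$ because $\sum\bbm\le\sum\bl$.
\item If $L>\sum\bl$, the difference is $0$. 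Here both $\overline{\calM}_{\bl,L}$ and $\overline{\calM}_{\bl,L+1}$ are empty, since $L,L+1>\sum\bl$.
\end{itemize}
In all three cases this gives~\eqref{eq:dim_su2_pi}.

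The main obstacle is Proposition~\ref{prop:ker-M2-GEPI} itself, which we may assume here: the rank claim and the fact that $\Mlow$ is redundant. What remains delicate is bookkeeping. One must check that ``full rank'' in the overdetermined case $L<\sum\bl$ still means rank equal to the number of rows. This holds because the row count there exceeds the column count only through $\card{\overline{\calM}_{\bl,L+1}}-\card{\overline{\calM}_{\bl,L}}$, and the identity must be read consistently with that statement. One must also verify that the index identification with $\oMlK$ is compatible with the ordering of the representatives $\bbm^{(j)}$. As a sanity check I would compare~\eqref{eq:dim_su2_pi} against small cases, for instance $N=2$ with $\bl=(l,l)$, where the symmetric part of $D^l\otimes D^l$ contains $L=2l,2l-2,\dots$ with multiplicity one.
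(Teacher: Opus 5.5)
Your proposal is correct and follows the paper's own argument. Theorem~\ref{thm:GEPI_basis}, together with Propositions~\ref{prop:ker-M1-GEPI} and~\ref{prop:ker-M2-GEPI}, reduces everything to $\ker(\Mup)$, and the dimension is the column count minus the row count from Remark~\ref{remark:size-M2-GEPI}. The one correction is that your worry about an ``overdetermined'' case $L<\sum\bl$ is unfounded. The block structure behind Proposition~\ref{prop:ker-M2-GEPI} gives full \emph{row} rank, because the $A^{-}_K$ are invertible for $K\in\{-L,\ldots,L-1\}$ and $B^{+}_L$ has full row rank. So the rows can never outnumber the columns, and $\card{\overline{\calM}_{\bl,L+1}}\le\card{\overline{\calM}_{\bl,L}}$ holds automatically, with no tall case to handle.
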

\begin{proof}
This is a direct result of Theorem~\ref{thm:GEPI_basis}, Propositions~\ref{prop:ker-M1-GEPI} and~\ref{prop:ker-M2-GEPI}. Specifically, $\dimpi{\bl}{L}$ can be determined by using the fact that $\Mup$ is full rank, whose size is given in Remark~\ref{remark:size-M2-GEPI}.
\end{proof}

\begin{example}
    In the specific case where $\bl=(1,\ldots,1)\in\N^N$ or $\bl=\left((\boldsymbol{\ell}_i,\bn_i)\right)_{i=1}^{\Nat}\in\calL^N$ with $\boldsymbol{\ell}=(1,\ldots,1)\in\N^N,\; \bn=(n,\ldots,n)\in\N^N$, we have for $L\in\calLG$ that
    \[
        \card{\overline\calM_{\bl,L}} =\left\{
    \begin{array}{ll}
        \lfloor\frac{\Nat-L}{2}\rfloor+1 & \textup{if } 0\le L\le \Nat \\
        0 & \textup{otherwise},
    \end{array}
    \right.
    \]
    and as a consequence, 
    \[
        \dimpi{\bl}{L} = \left\{
    \begin{array}{ll}
        1, & \text{if } L\le \Nat \text{ and } \Nat+L\text{ is even}, \\
        0, & \text{otherwise}.
    \end{array}
\right.
    \]
Indeed, the elements $\bbm\in\oMl$ 
have coordinates either -1, 0 or 1 and are thus uniquely determined by their count vectors $\blamb{}{\bbm}=(\blamb{-1}{\bbm},\blamb{0}{\bbm},\blamb{1}{\bbm})$ with $\blamb{i}{\bbm}\ge 0$ and $\blamb{-1}{\bbm}+\blamb{0}{\bbm}+\blamb{1}{\bbm}=\Nat$. Moreover, $\sum\bbm=L$ if and only if $\blamb{-1}{\bbm}+L=\blamb{1}{\bbm}$. Thus, $\bbm\in\overline\calM_{\bl,L}$ if and only if $2\blamb{-1}{\bbm}+\blamb{0}{\bbm}+L=\Nat$. If $L>\Nat$, this equation has no solution. If $0\le L\le\Nat$, there are $\lfloor\frac{\Nat-L}{2}\rfloor+1 $ possible values of $\blamb{-1}{\bbm}$ such that $N-L-2\blamb{-1}{\bbm}\ge0$ and for each possible value of $\blamb{-1}{\bbm}$, there is only one possible value for $\blamb{0}{\bbm}$, which is $N-L-2\blamb{-1}{\bbm}$. 
The dimensionality of the GE-PI basis is then a direct consequence of Theorem~\ref{thm:gepi-su2}. 
\end{example}

Similar to the discussion in Example~\ref{exam:dim_cg_su2}, we have the following corollary for the recursive formula of GE-PI dimensions. 
\begin{corollary}
    Let $N,N_1,N_2\in \N$ with $N_1+N_2=N$.
    Let $\bl\in\calL^\Nat$ and $L\in\calL$. 
    Suppose $\bl = (\bl^{(1)}, \bl^{(2)})\in\calL^{\Nat_1}\times\calL^{\Nat_2}$ with $\bl^{(1)} \cap \bl^{(2)} = \varnothing$, then
    \begin{equation}\label{eq:rec-su2-gepi}
        \dimpi{\bl}{L} = \sum_{L_1=0}^{\sum\bl^{(1)}}\;\sum_{L_2=|L-L_1|}^{\min\{\sum\bl^{(2)},L_1+L\}}\dimpi{\bl^{(1)}}{L_1} \; \dimpi{\bl^{(2)}}{L_2}.
    \end{equation}
\end{corollary}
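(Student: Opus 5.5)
This follows the derivation of~\eqref{eq:dim_recursive_su2} from~\eqref{eq:dim_GE_rec}, with the GE-PI recursion of Proposition~\ref{prop:GEPI_rec} in place of Proposition~\ref{prop:GE_rec}. Since $\bl^{(1)}\cap\bl^{(2)}=\varnothing$, the second part of Proposition~\ref{prop:GEPI_rec} applies and gives
\[
    \dimpi{\bl}{L} = \sum_{L_1,L_2\in\calLG}\dimge{(L_1,L_2)}{L}\,\dimpi{\bl^{(1)}}{L_1}\,\dimpi{\bl^{(2)}}{L_2}.
\]
Everything then reduces to identifying which terms of this double sum survive and what weight they carry.

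The first step is to substitute the Clebsch--Gordan dimensionality of Example~\ref{exam:dim_cg_su2}. There, $\dimge{(L_1,L_2)}{L}=1$ exactly when $L+L_1+L_2\in\Z$ and $|L_1-L_2|\le L\le L_1+L_2$, and it is $0$ otherwise. The triangle condition $|L_1-L_2|\le L\le L_1+L_2$ is equivalent to $|L-L_1|\le L_2\le L_1+L$, which gives the inner summation range in $L_2$. The integrality condition may be dropped. If $L_1+\sum\bl^{(1)}\notin\Z$ or $L_2+\sum\bl^{(2)}\notin\Z$, the corresponding factor $\dimpi{\bl^{(i)}}{L_i}$ already vanishes by Proposition~\ref{prop:ker-M1-GEPI}. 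If $L+\sum\bl\notin\Z$, both sides are zero. Otherwise $L+L_1+L_2\in\Z$ holds automatically. The $L_2$ steps are integer because $L_2\equiv L-L_1$ modulo~$\Z$, so the sum runs over admissible half-integers as in~\eqref{eq:dim_recursive_su2}.

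The second step is to truncate the ranges. I need $\dimpi{\bl^{(i)}}{L_i}=0$ whenever $L_i>\sum\bl^{(i)}$. By Theorem~\ref{thm:gepi-su2}, $\dimpi{\bl^{(i)}}{L_i}=\card{\overline{\calM}_{\bl^{(i)},L_i}}-\card{\overline{\calM}_{\bl^{(i)},L_i+1}}$. Both sets are empty when $L_i>\sum\bl^{(i)}$, because $\sum\bbm\le\sum\bl^{(i)}$ for every $\bbm$. This yields the upper limit $\sum\bl^{(1)}$ on $L_1$ and the cap $\min\{\sum\bl^{(2)},L_1+L\}$ on $L_2$. The lower limit $L_1\ge 0$ holds because $\calLG$ consists of nonnegative values.

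I expect no real obstacle here. The only delicate point is the half-integer bookkeeping: making sure that the summation conventions (step one, starting from $0$ or $1/2$) are consistent with the parity condition, as in Example~\ref{exam:dim_cg_su2}. The essential input is that Proposition~\ref{prop:GEPI_rec} gives a basis, not just a spanning set, which is exactly where the hypothesis $\bl^{(1)}\cap\bl^{(2)}=\varnothing$ is used.
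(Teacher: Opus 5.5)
Your proposal is correct and matches the paper's argument. The paper justifies this corollary only by saying it follows as in Example~\ref{exam:dim_cg_su2}, that is, by inserting the $0/1$ Clebsch--Gordan dimensionality into the formula of Proposition~\ref{prop:GEPI_rec}, which gives a basis precisely because $\bl^{(1)}\cap\bl^{(2)}=\varnothing$. Your extra bookkeeping correctly fills in details the paper leaves implicit: you truncate the ranges using that $\overline{\calM}_{\bl^{(i)},L_i}$ is empty for $L_i>\sum\bl^{(i)}$, and you observe that stepping $L_2$ by integers from $|L-L_1|$ encodes the condition $L+L_1+L_2\in\Z$.
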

With~\eqref{eq:rec-su2-gepi}, the GE-PI dimensionalities for non-identical $\bl\in\calL^N$ can be accessed via the knowledge of the dimensionalities of identical $\bl$'s. We provide in Appendix~\ref{app:table_of_dim} the GE-PI dimensionalities for some typical values of identical $\bl$ and $L$. 

We then turn to investigate the asymptotic dimensions for the GE-PI case. 
Suppose $(\bl^{(j)})_{j=1}^{\Nb}$ is the minimal partition of $\bl$ whose $j$-th block has length $N_j$, then the set $\overline\calM_{\bl,L}$ is isometric to
\begin{equation}\label{eq:Lamb_blL}
    \{\blamb{}{}\in \Lambda_\bl:\sum_{j=1}^{\Nb}\sum_{k=-\bl^{(j)}_1}^{\bl^{(j)}_1}\blamb{(j)}{k}\cdot k = L\},
\end{equation}
which is actually the set of solutions to the equations
\begin{equation}\label{eq:ge-pi-int-part}
    \begin{cases}
        \displaystyle \sum_{k=-\bl^{(j)}_1}^{\bl^{(j)}_1}\blamb{(j)}{k} & = N_j, \quad \forall j=1,\ldots,\Nb,\\ 
        \displaystyle \sum_{j=1}^{\Nb}\sum_{k=-\bl^{(j)}_1}^{\bl^{(j)}_1}\blamb{(j)}{k}\cdot k & = L.
    \end{cases}
\end{equation}
Hence, $\card{\overline\calM_{\bl,L}}$ is the number of solutions to~\eqref{eq:ge-pi-int-part}. When $\Nb=1$ in particular,~\eqref{eq:Lamb_blL}
degenerates to 
\begin{equation*}
    \left\{\blamb{}{}\in \Lambda_\bl:
        \displaystyle \sum_{k=-l}^{l}\blamb{}{k}  = N, 
        \quad
        \displaystyle \sum_{k=-l}^{l}\blamb{}{k}\cdot k  = L
        \right\},
\end{equation*}
where $l$ is the identical value in $\bl$.
In this case, we have that $\card{\overline\calM_{\bl,L}}$ is the coefficients of the term $x^{L+Nl}$ in the Gaussian binomial coefficient 
$
   \displaystyle \left[ \begin{matrix} N+2l \\ 2l \end{matrix} \right]_x = 
        \prod_{i=1}^{2l}\frac{1-x^{N+i}}{1-x^{i}},\quad x\neq 1.
$
With the properties of the Gaussian binomial coefficient being well-studied, an asymptotic estimate can be obtained for identical $\bl$'s. Indeed, denoting 
\[
    \textup{Var}_\bl = \frac{Nl(N+2l+1)}{6}, 
\]
we obtain using~\cite[(45)]{takacs1986asymptotic} that
\[
    \lim_{\Nat\to+\infty} \left(\frac{\card{\overline\calM_{\bl,L}}}{\binom{N+2l }{ 2l }} - \frac{\exp{(-L^2/(2\textup{Var}_\bl))}}{\sqrt{2\pi \textup{Var}_\bl}}\right) = 0,
\]
which yields
\[
  \dimpi{\bl}{L} = \binom{N+2l }{ 2l }\left(\frac{2L+1}{2\sqrt{2\pi} (\textup{Var}_\bl)^{3/2}}+o\left(\frac{1}{\Nat^{3/2}}\right)\right).
\]
Numerically, we observe a faster decay in the remainder of this estimation, which we present as the following conjecture, as its proof goes beyond the scope of this manuscript.
Note that the estimation below seems stronger than the results presented in a recent paper~\cite{melczer2020counting}, which require that both $l$ and $N$ go to infinity.

\begin{conjecture}[Asymptotic dimensionality for identical $\bl$]\label{conj:asym-dim-gepi-1block}
    Let $\bl = (l,\ldots,l)\in\calL^N$ containing identical nonzero values $l$, $L\in\calL_G$ such that $L+\sum\bl\in\Z$. 
    Then for $L\ll\sqrt{\textup{Var}_\bl}$,
    \begin{equation*}
                \dimpi{\bl}{L} = \binom{N+2l }{ 2l }\left(\frac{2L+1}{2\sqrt{2\pi} (\textup{Var}_\bl)^{3/2}}+O\left(\frac{1}{\Nat^{5/2}}\right)\right).
    \end{equation*}
\end{conjecture}

When extending to the multi-block case, we have that for general $\bl$, $\card{\overline\calM_{\bl,L}}$ is the coefficients of the term $x^{L+\sum_{j=1}^\Nb N_{j}\bl^{(j)}_1}$ in the following
\[
    \prod_{j=1}^\Nb\left[ \begin{matrix} N_{j}+2\bl^{(j)}_1 \\ 2\bl^{(j)}_1 \end{matrix} \right]_x = 
    \prod_{j=1}^\Nb\prod_{i=1}^{2\bl^{(j)}_1}\frac{1-x^{N_{j}+i}}{1-x^i},\quad x\neq 1.
\]
In addition, the sums of the blocks can be considered as independent variables. Hence, by using a similar strategy as shown in the proof of Proposition~\ref{prop:asym-dim-ge}, we have the following estimation with the increase of $\Nb$. 
\begin{proposition}[Asymptotic dimensionality for general $\bl$]\label{prop:asym-dim-gepi}
    For $\bl\in\calL^N$, $L\in\calLG$, $L+\sum\bl\in\Z$, where $\bl = (\bl^{(j)})_{j=1}^\Nb$ is the minimal partition of $\bl$ 
    with $N_j$ being the length of its $j$-th block, define
    \[
        \textup{Var}_\bl = \sum_{j=1}^\Nb \frac{N_{j}\bl^{(j)}_1(N_j+2\bl^{(j)}_1+1)}{6}.
    \]
    If there exist $\kappa_{\min},\kappa_{\max}> 0$ independent of $\Nb$ such that
    \begin{equation*}
    \forall j=1,\ldots,\Nb, \quad \kappa_{\min} \le N_j\bl^{(j)}_1\le \kappa_{\max},\;
    \end{equation*}
    then for $L\ll\sqrt{\textup{Var}_\bl}$, there holds for $\Nb$ sufficiently large
    \begin{equation}
        \label{eq:dimpi-est}
        \dimpi{\bl}{L} = \prod_{j=1}^\Nb \binom{N_j+2\bl^{(j)}_1 }{ 2\bl^{(j)}_1}
        \left(\frac{2L+1}{2\sqrt{2\pi}(\textup{Var}_\bl)^{3/2}}+O\left(\frac{1}{\Nb^{5/2}}\right)\right).
    \end{equation}
\end{proposition}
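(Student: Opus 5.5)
We reduce the statement to a local limit theorem for a sum of independent, non-identically distributed bounded integer-valued random variables, exactly as for Proposition~\ref{prop:asym-dim-ge}. By Theorem~\ref{thm:gepi-su2}, $\dimpi{\bl}{L}=\card{\overline{\calM}_{\bl,L}}-\card{\overline{\calM}_{\bl,L+1}}$. For each block $j$, let $X_j$ be uniformly distributed on $\overline{\calM}_{\bl^{(j)}}$ (multisets of size $N_j$ in $\calM_{\bl^{(j)}_1}$), and take the random variable $\sum\bbm^{(j)}$. Its generating function is $x^{-N_j\bl^{(j)}_1}\left[\begin{smallmatrix} N_j+2\bl^{(j)}_1 \\ 2\bl^{(j)}_1\end{smallmatrix}\right]_x\big/\binom{N_j+2\bl^{(j)}_1}{2\bl^{(j)}_1}$. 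Since $\oMl=\bigotimes_j\overline{\calM}_{\bl^{(j)}}$, the blocks are independent, and
\[
\card{\overline{\calM}_{\bl,K}}=\prod_{j=1}^{\Nb}\binom{N_j+2\bl^{(j)}_1}{2\bl^{(j)}_1}\;\mathbb P\Big(S=K\Big),\qquad S=\sum_{j=1}^{\Nb} Y_j,\quad Y_j=\textstyle\sum\bbm^{(j)}.
\]
Each $Y_j$ is symmetric about $0$. Its variance is the classical variance of the Mann--Whitney/Gaussian-binomial distribution, $N_j\bl^{(j)}_1(N_j+2\bl^{(j)}_1+1)/6$, so $\mathrm{Var}(S)=\textup{Var}_\bl$. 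The bound $\kappa_{\min}\le N_j\bl^{(j)}_1\le\kappa_{\max}$ makes each $Y_j$ supported in $[-\kappa_{\max},\kappa_{\max}]$ with variance bounded above and below uniformly in $j$. Hence $\textup{Var}_\bl\asymp\Nb$, and all cumulants of $S$ are $O(\Nb)$.

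Next we apply an Edgeworth-type local limit theorem on the lattice $L+\sum\bl+\Z$ (the lattice on which $S$ lives). This gives
\[
\mathbb P(S=K)=\frac{1}{\sqrt{2\pi\textup{Var}_\bl}}e^{-K^2/(2\textup{Var}_\bl)}\Big(1+\frac{\kappa_4}{24\textup{Var}_\bl^2}H_4\big(K/\sqrt{\textup{Var}_\bl}\big)+O(\Nb^{-2})\Big).
\]
All odd cumulants vanish by symmetry, so the $\Nb^{-1/2}$ correction term is absent. Taking the difference at $K=L$ and $K=L+1$ with $L\ll\sqrt{\textup{Var}_\bl}$, we expand the Gaussian to first order:
\[
e^{-L^2/2V}-e^{-(L+1)^2/2V}=\frac{2L+1}{2V}+O(V^{-2}).
\]
The Hermite correction is an even smooth function of $K/\sqrt V$ with $O(1/V)$ amplitude, so its discrete difference contributes $O(V^{-1}\cdot V^{-1})\cdot V^{-1/2}=O(\Nb^{-5/2})$. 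The $O(\Nb^{-2})$ remainder must likewise be controlled as a difference, and here we need the remainder to be a smooth function of $K/\sqrt V$ up to $O(\Nb^{-5/2})$. Pushing the expansion one order further (the term of order $\Nb^{-2}$ is again an even polynomial times the Gaussian, and odd orders vanish by symmetry) gives this. Collecting terms yields~\eqref{eq:dimpi-est}.

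The main obstacle is justifying the local expansion uniformly for non-identically distributed $Y_j$. We plan to use the characteristic-function route. Write $\mathbb P(S=K)=\frac{1}{2\pi}\int_{-\pi}^{\pi}\prod_j\varphi_j(t)e^{-\mathrm iKt}\,dt$. Near $t=0$, expand $\log\prod_j\varphi_j$ with cumulants bounded by $O(\Nb)$. Away from $0$, we need a uniform bound $|\varphi_j(t)|\le 1-c\,t^2$ on $[-\pi,\pi]$, or on the appropriate half-lattice. This holds because each $Y_j$ takes two consecutive lattice values with probability bounded below, using the bounds $\kappa_{\min}$ and $\kappa_{\max}$; one must check the lattice span is $1$, with a possible parity restriction handled by the condition $L+\sum\bl\in\Z$. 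This makes the tail integral exponentially small in $\Nb$. The remaining steps are routine bookkeeping identical to the proof of Proposition~\ref{prop:asym-dim-ge}, with $2l_i+1$ replaced by the Gaussian binomial normalizations.
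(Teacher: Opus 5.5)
Your proposal is correct and follows essentially the same route as the paper. The paper treats the block sums as independent variables, takes the variance from Tak\'acs and the fourth cumulant $C_{4\bl}$, applies Petrov's lattice Edgeworth local theorem, and differences at $K=L$ and $K=L+1$ exactly as in Proposition~\ref{prop:asym-dim-ge}. Your characteristic-function sketch for non-identically distributed summands, with the uniform span-one bound, supplies a justification the paper leaves implicit. One simplification: you do not need to handle the $O(\Nb^{-2})$ relative remainder as a difference or push the expansion further. After the prefactor $(2\pi\textup{Var}_\bl)^{-1/2}$ it is already $O(\Nb^{-5/2})$ at each of $K=L$ and $K=L+1$ separately. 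Also, the lattice of $S$ is $\sum\bl+\Z=L+\Z$, not $L+\sum\bl+\Z$.
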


\begin{proof}
    The proof is similar to the proof of Proposition~\ref{prop:asym-dim-ge}, using the variance given in~\cite[(43)]{takacs1986asymptotic}, and the fourth-order cumulants
    \[
    C_{4\bl} = -\sum_{j=1}^\Nb \frac{N_j \bl^{(j)}_1 (N_j+2\bl^{(j)}_1+1)(N_j^2+2N_j
    \bl^{(j)}_1+N_j+4{\bl^{(j)}_1}^2+2\bl^{(j)}_1)}{60}.
    \]
\end{proof}

As Conjecture~\ref{conj:asym-dim-gepi-1block} and Proposition~\ref{prop:asym-dim-gepi} suggest, when the order of equivariance $L$ is not to large, we can estimate $\dimpi{\bl}{L}$ by 
\[ \dimpiest{\bl}{L} = \prod_{j=1}^\Nb \binom{N_j+2\bl^{(j)}_1 }{ 2\bl^{(j)}_1}\left(\frac{2L+1}{2\sqrt{2\pi}(\textup{Var}_\bl)^{3/2}}\right), \]
in the asymptotic regime. In addition, it suggests that the dimensionalities of the GE-PI spaces scale linearly with respect to $L$, for moderate $L$.

Let us mention that when $N_j=1, \;\forall j\in\{1,2,\ldots,\Nb\}$, the GE-PI case is identical to the GE case, and Proposition~\ref{prop:asym-dim-gepi} degenerates to Proposition~\ref{prop:asym-dim-ge}.

\subsection{Extension to $O(3)$}

In a variety of physical implementations, the $O(3)$ symmetry is of greater interest than the $SO(3)$ one described above, i.e., considering not only rotation equivariance, but also reflection equivariance. Whereas $O(3)$ is not a connected group, our approach can still be applied to its two connected components separately. 

First, since $O(3)=\{\pm1\}\times SO(3)$, each element $R\in O(3)$ can be written as $R=sQ$ where $s:=\det(R)\in\{\pm1\}$ and $Q\in SO(3)$.
The irreducible representations of $O(3)$ are then defined for $R=sQ\in O(3)$ as
\[
    D^{\ell,+}(R) = D^\ell(Q),
    \qquad
    D^{\ell,-}(R) = sD^\ell(Q),
\]
for $\ell\in\N_0$, where $D^\ell$ stands for the Wigner-D matrix of degree $\ell$ (see Definition~\ref{def:wigner-D}). 
Given $l\in\calL$ with component $\ell\in\N_0$, all the compatible bases $\{\phi^l_m\}_{m\in\calM_l}$ mentioned in Section~\ref{sec:compatible_bases} corresponding to the representation $D^\ell$ satisfy
\begin{equation}\label{eq:equiv-1p-basis-O3}
    \phi^l_m(R\cdot\br) = \phi^l_m(sQ\cdot\br) = s^\ell\phi^l_m(Q\cdot\br)=s^\ell\sum_{\mu\in\calM_l}D^\ell_{\mu m}(Q)\phi^l_{\mu}(\br).
\end{equation}
Therefore, they are also compatible bases of the group $O(3)$ corresponding to the representation $D^{\ell,+}$ when $\ell$ is even, and $D^{\ell,-}$ when $\ell$ is odd. While other compatible bases may exist, we restrict ourselves to these specific bases. 

The GE functions in $\left[V^\bl\right]^{2L+1}$ and the GE-PI functions in $\left[\bV^\bl\right]^{2L+1}$ for the representation $D^{L,\pm}$ are strongly connected to the equivariant functions for the representation $D^{L}$. For notation simplicity, we denote by $V^{\bl,L,\pm}$ the space for GE functions in $\left[V^\bl\right]^{2L+1}$ corresponding to $D^{L,\pm}$. Likewise, we write $\bV^{\bl,L,\pm}$ for the spaces of the GE-PI functions in $\left[\bV^\bl\right]^{2L+1}$ for the representations $D^{L,\pm}$. 

\begin{proposition}\label{eq:equiv-basis-o3}
    Let $\bl\in\calL^N$, $L \in \N_0$. If $\sum\bl$ is even, then
    \[
        V^{\bl,L,+} = V^{\bl,L}, \quad \bV^{\bl,L,+} = \bV^{\bl,L}, 
    \]
    and
    \[  
        \dimge{\bl}{L,-} = \dimpi{\bl}{L,-}=0.
    \]
    Similarly, if $\sum\bl$ is odd, then
    \[
        V^{\bl,L,-} = V^{\bl,L}, \quad \bV^{\bl,L,-} = \bV^{\bl,L},
    \]
    and
    \[  
        \dimge{\bl}{L,+} = \dimpi{\bl}{L,+}=0.
    \]
\end{proposition}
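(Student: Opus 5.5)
The plan is to separate the $O(3)$ equivariance condition into its $SO(3)$ part and its reflection part. The $SO(3)$ part is already described by $V^{\bl,L}$ and $\bV^{\bl,L}$. The reflection part will turn out to be a simple sign condition that depends only on the parity of $\sum\bl$.

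\textbf{Step 1: reduction.} Every $R\in O(3)$ can be written $R=sQ$, and both $Q\in SO(3)$ and $-I$ lie in $O(3)$. Hence $F$ is equivariant for $D^{L,\pm}$ if and only if two conditions hold:
\begin{itemize}
\item[(i)] $F(Q\cdot\bR)=D^L(Q)F(\bR)$ for all $Q\in SO(3)$;
\item[(ii)] $F((-I)\cdot\bR)=\pm F(\bR)$, using $D^{L,+}(-I)=I$ and $D^{L,-}(-I)=-I$.
\end{itemize}
For the converse direction, combine (i) and (ii) through $F(sQ\cdot\bR)=F((sI)\cdot(Q\cdot\bR))$. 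This uses that the action is a group action and that $-I$ commutes with $Q$. Condition (i) says precisely that $F\in V^{\bl,L}$, or $F\in\bV^{\bl,L}$ in the PI setting. Therefore $V^{\bl,L,\pm}=\{F\in V^{\bl,L}: \text{(ii) holds}\}$, and the same description holds for $\bV^{\bl,L,\pm}$.

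\textbf{Step 2: the reflection acts by a fixed sign.} Apply \eqref{eq:equiv-1p-basis-O3} with $R=-I$, so $s=-1$ and $Q=I$. This gives $\phi^{l}_m((-I)\cdot\br)=(-1)^{\ell}\phi^l_m(\br)$, where $\ell$ is the $\ell$-component of $l$. Taking products over the variables, every $\phi^\bl_\bbm$ satisfies $\phi^\bl_\bbm((-I)\cdot\bR)=(-1)^{\sum\bl}\phi^\bl_\bbm(\bR)$. By the summation argument of Remark~\ref{rem:group_action}, the same identity holds for each $\bphi{\bl}{\bbm}$. By linearity, every component of every $F\in[V^\bl]^{2L+1}$ or $[\bV^\bl]^{2L+1}$ therefore satisfies $F((-I)\cdot\bR)=(-1)^{\sum\bl}F(\bR)$.

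\textbf{Step 3: conclusion.} Suppose $\sum\bl$ is even. Then (ii) with the sign $+$ holds automatically, so $V^{\bl,L,+}=V^{\bl,L}$ and $\bV^{\bl,L,+}=\bV^{\bl,L}$. Condition (ii) with the sign $-$ would force $F=-F$, hence $F=0$, which gives dimension $0$. The case where $\sum\bl$ is odd is symmetric.

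\textbf{Main obstacle.} The only delicate point is Step 2. One must check that the action of $-I$ on the domain really produces the factor $s^\ell$ for each basis in Table~\ref{tab:comp_bases}, for instance $Y_\ell^m(-\hat\br)=(-1)^\ell Y_\ell^m(\hat\br)$ together with invariance of the radial and species parts. I will take this from \eqref{eq:equiv-1p-basis-O3}, which the paper states for these bases. It also requires that the action extends to $O(3)$ compatibly with the $SO(3)$ action, which is used implicitly in Step 1.
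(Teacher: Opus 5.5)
Your proof is correct and follows essentially the paper's argument. You restrict to $SO(3)$ to get $V^{\bl,L,\pm}\subset V^{\bl,L}$, use \eqref{eq:equiv-1p-basis-O3} to show that $-I$ acts by $(-1)^{\sum\bl}$, and conclude that the matching-parity space is all of $V^{\bl,L}$ while the other one is zero. The differences are cosmetic: you get the sign identity on all of $[V^\bl]^{2L+1}$ by linearity rather than by expanding $F$ in the basis of Proposition~\ref{prop:ge-su2}, and you write out the PI case via Remark~\ref{rem:group_action}, which the paper only calls ``similar''.
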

\begin{proof}
We prove only for the GE case, and for the case where $\sum\bl$ is even. The proofs for all other situations are similar. 
On the one hand, for all $F\in V^{\bl,L,\pm}$, if we restrict the group action to elements of $SO(3)$ only, there holds 
\[
    \forall \br\in\Omega, \;  Q\in SO(3), \quad F(Q\cdot\br) = D^L(Q) F(\br),
\]
which indicates that $F\in V^{\bl,L}$. Hence, $V^{\bl,L,\pm}\subset V^{\bl,L}$.
On the other hand, for all $F\in V^{\bl,L}$, we have by definition that
\[
    \forall \br\in\Omega, \;  Q\in SO(3), \quad F(Q\cdot\br) = D^L(Q) F(\br).
\]
Furthermore, let $\{b^{\bl,L}_i\}_i$ be a basis of $V^{\bl,L}$ given in Proposition~\ref{prop:ge-su2}, then there exists $\mathbf{f}=\{f_i\}_i\subset\F$ such that 

\[
    F 
    = \sum_i f_i \; b^{\bl,L}_{i} 
    = \sum_i f_i \; \left[
        \sum_{\bbm \in \calM_{\bl,K}}c_{\bbm,i}
        \; \phi^\bl_\bbm
    \right]_{K\in\calM_L}.
\]
The last equality is due to~\eqref{eq:GE-basis-su2}. Using the equivariant property~\eqref{eq:equiv-1p-basis-O3} of the one-variable compatible basis $\phi^\bl_\bbm$, we have
\[
    \forall \br\in\Omega, \quad  F(-I\cdot\br) = (-1)^{\sum\bl} F(\br). 
\]
Thus, when $\sum\bl$ is even, we have immediately
\[
    \forall \br\in\Omega, \;  R=sQ\in O(3), \quad F(R\cdot\br) = s^{\sum\bl} D^L(Q) F(\br) = D^L(Q) F(\br) = D^{L,+}(R) F(\br).
\]
Therefore, $F\in V^{\bl,L,+}$, and thus $V^{\bl,L,+}=V^{\bl,L}$. In addition, $F$ can be in $V^{\bl,L,-}$ only when $\mathbf{f}=\mathbf{0}$. This completes the proof. 
\end{proof}

Proposition~\ref{eq:equiv-basis-o3} implies that the spaces of the GE and GE-PI functions for the $O(3)$ representations $D^{L,\pm}$, $V^{\bl,L,\pm}$ and $\bV^{\bl,L,\pm}$, are either spanned by basis functions given in Proposition~\ref{prop:ge-su2} and Theorem~\ref{thm:gepi-su2}, or are trivial zeros, depending on the parity of $\sum\bl$.

\section{Numerical results}
\label{sec:num}

In this section, we numerically implement the method described in Section~\ref{sec:equivariant_lie_algebra} on the groups $SO(3)$ and $SU(2)$ to validate our theoretical claims in the preceding Section~\ref{sec:appl}, and to showcase the efficiency of the proposed method, particularly for the GE-PI case. 
For simplicity, we only present numerical results for the group $SO(3)$ as the results for $SU(2)$ are extremely similar.
To ensure reproducibility of the results and facilitate use of the proposed method, the source code implementing the methods described in this work, as well as the code that generates all the figures in this manuscript, have been archived on Zenodo~\cite{zhang2026zenodo}. In addition to the archived repository, the proposed method has also been integrated into \href{https://github.com/ACEsuit/EquivariantTensors.jl}{\emph{EquivariantTensors.jl}}~\cite{ortner2024et}, a Julia package providing the core computational kernels and infrastructure for building equivariant tensor layers.

\subsection{Efficiency}\label{sec:efficiency}

We first illustrate the efficiency gain in obtaining the equivariant bases using our method, which is mainly due to the chosen approach for dealing with the permutations, and the full exploitation of the specific structure of the matrix $\MlL$ defined in~~\eqref{eq:M_GE} and~\eqref{eq:M_GE_PI} or, for $SU(2)$ and $SO(3)$ in particular,~\eqref{eq:M2} and~\eqref{eq:M2-PI}. 

\subsubsection{Complexity of the algorithm}\label{sec:num-complexity}

To compute the coupling coefficients, three main steps have to be performed: (1) constructing the set $\calM_\bl$ (GE case) or the set of classes $\oMl$ (GE-PI case), (2) building the matrix $\Mup$, and (3) finding its kernel. In practice, it appears that the first step is typically negligible, as it amounts to less than 1\% of the overall cost. For instance, in the GE-PI case, when $\bl = (8,8,8,8,8,8)$, $L = 20$, generating the classes takes around 0.3 $ms$, constructing the matrix $\Mup$ takes 16 $ms$, and finding the kernel of $\Mup$ takes 45 $ms$.
We therefore focus below on the complexity of the last two steps only, and for simplicity only in the GE-PI case, the GE case being similar.

First, to build $\Mup$, one needs to loop over the row indices $\bbm\in\overline\calM_{\bl,K}$ for $K\in\calM_{L+1}\backslash\{-L-1,L\}$. Noting that the maximum number of non-zero elements in the row corresponding to $\bbm$ is $\card{\calN_\bbm}+1\le2\sum_{j=1}^\Nb\bl^{(j)}_1+1$, the whole cost of step (2) is bounded by
\begin{equation}\label{eq:cost2}
    \text{\texttt{cost}}_{\rm 2} \lesssim \left(2\sum_{j=1}^\Nb\bl^{(j)}_1+1\right)\sum_{K\in\calM_{L+1}\backslash\{L,L+1\}}\card{\overline{\calM}_{\bl,K}}.
\end{equation}

\begin{figure}[b!]
    \centering
    \includegraphics[width=.8\linewidth]{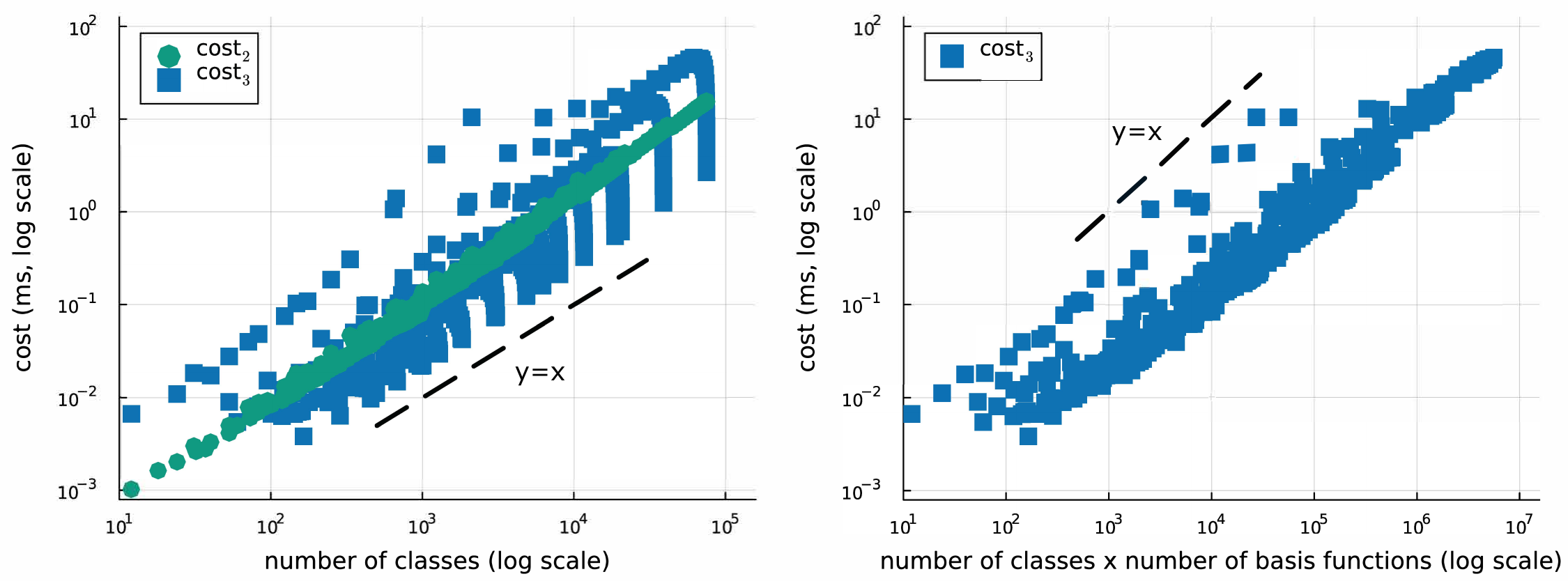}
    \caption{Breakdown of construction time (ms) of the GE-PI bases versus the number of classes in the matrix $\Mup$~\eqref{eq:M2-PI}, or the product of the number of classes and the number of basis functions.  
    }
    \label{fig:cost_versus_classes_breakdown}
\end{figure}

The computational cost of step (3) depends on the structure of the matrix $\Mup$. As can be seen on Figure~\ref{fig:block_structure_of_M}(c), $\Mup$ is a supertriangular block matrix, that is, a block matrix with non-zero blocks only on the main diagonal and the super-subdiagonal. Moreover, the first $2L$ diagonal blocks are scaled identities, and the last block $B_L^+$ can in fact be made upper triangular, up to some proper convention of the ordering of its rows and columns. Therefore, the complexity of obtaining the kernel of $\Mup$, using a back-substitution approach, is bounded by
\begin{equation}\label{eq:cost3}
    \text{\texttt{cost}}_{3} \lesssim 
    \dimpi{\bl}{L}
    \left(2\sum_{j=1}^\Nb\bl^{(j)}_1+1\right)
    \sum_{K\in\calM_{L+1}\backslash\{L,L+1\}}\card{\overline{\calM}_{\bl,K}}.
\end{equation}
Consequently, the total cost to generate a basis of GE-PI functions, neglecting step (1), is bounded by
\begin{equation}\label{eq:cost-all}
    \text{\texttt{cost}}^{\rm GE-PI} \lesssim 
    \left(\dimpi{\bl}{L}+1\right)
    \left(2\sum_{j=1}^\Nb\bl^{(j)}_1+1\right)
    \sum_{K\in\calM_{L+1}\backslash\{L,L+1\}}\card{\overline{\calM}_{\bl,K}}.
\end{equation}

 We provide in Figure~\ref{fig:cost_versus_classes_breakdown}  a breakdown of the computational cost as a function of the number of classes (left) for a large variety of choices for parameters $\bl= (l,\ldots,l)$ of length $\Nat$ and equivariance order $L$.  On the right of Figure~\ref{fig:cost_versus_classes_breakdown},
we plot the cost of step (3) for the same parameters as a function of the product of the number of classes and the dimensionality, from which we observe a linear complexity, as expected from~\eqref{eq:cost2} and ~\eqref{eq:cost3}. Notice that for any given $\bl$, the computational cost of our method~\eqref{eq:cost-all} is asymptotically proportional to the size of the coupling coefficients $\bc^{\bl,L}$, and is hence asymptotically optimal.

\subsubsection{Numerical efficiency}

Our focus in this subsection lies in the comparison of the efficiency of constructing the GE-PI spaces $\bV^{\bl,L}$ for a given $\bl$ and for arbitrary $L\in\calLG$, as the computation of GE-PI bases is in general very expensive, and more challenging than GE bases, which can also be computed using generalized Clebsch--Gordan coefficients~\cite{Dusson2022-ie}. Also, while the proposed method and our code work for general $\bl\in\calL^\Nat$, it is impractical to carry out a thorough comparison for all possible $\bl$ due to the large freedom in the parameter choices (e.g. $\bl\in\calL^N$ can be chosen to have arbitrary length, and the elements therein vary freely either). Thus, we restrict ourselves to 
$\bl=(l,l,\ldots,l)\in\calL^\Nat$, especially noting that bases for general $\bl$ can be constructed from such identical $\bl$, using~Proposition~\ref{prop:GEPI_rec}. 
We could also use $\bl=((n,\ell),(n,\ell),\ldots,(n,\ell))\in\calL^\Nat$ with $n\in\N$ without changing the coupling coefficients, but only the underlying basis, as indicated in Remark~\ref{rem:id-ell-blocks}.
Like in some other contexts~\cite{Dusson2022-ie,Batatia2022-zi}, we sometimes refer $\Nat$ to the \emph{correlation order} and $l$ the \emph{polynomial degree} if $l\in\N_0$.

In the following, we benchmark our method against four well-known existing packages that perform similar calculations.
The first package is E3NN~\cite{geiger2022e3nn}, introduced in the context of machine-learning interatomic potentials (MLIPs), which provides a framework for generating equivariant neural networks. 
The second one is Lie-NN~\cite{Batatia2023-us},
which is theoretically presented for any compact Lie group, and tested in practice for a variety of  groups like $SO(3)$, $SU(2)$, and $SO^+(1,3)$. 
The third one is MACE~\cite{Batatia2022-zi},  a widely known foundation model being used to fit MLIPs with equivariant message passing graph neural networks.
Finally, Cuequivariance~\cite{geiger2024accelerate} is released by Nvidia  and provides CUDA-accelerated building blocks for equivariant neural networks.

\begin{figure}[b!]
    \centering
    \includegraphics[width=.89\linewidth]{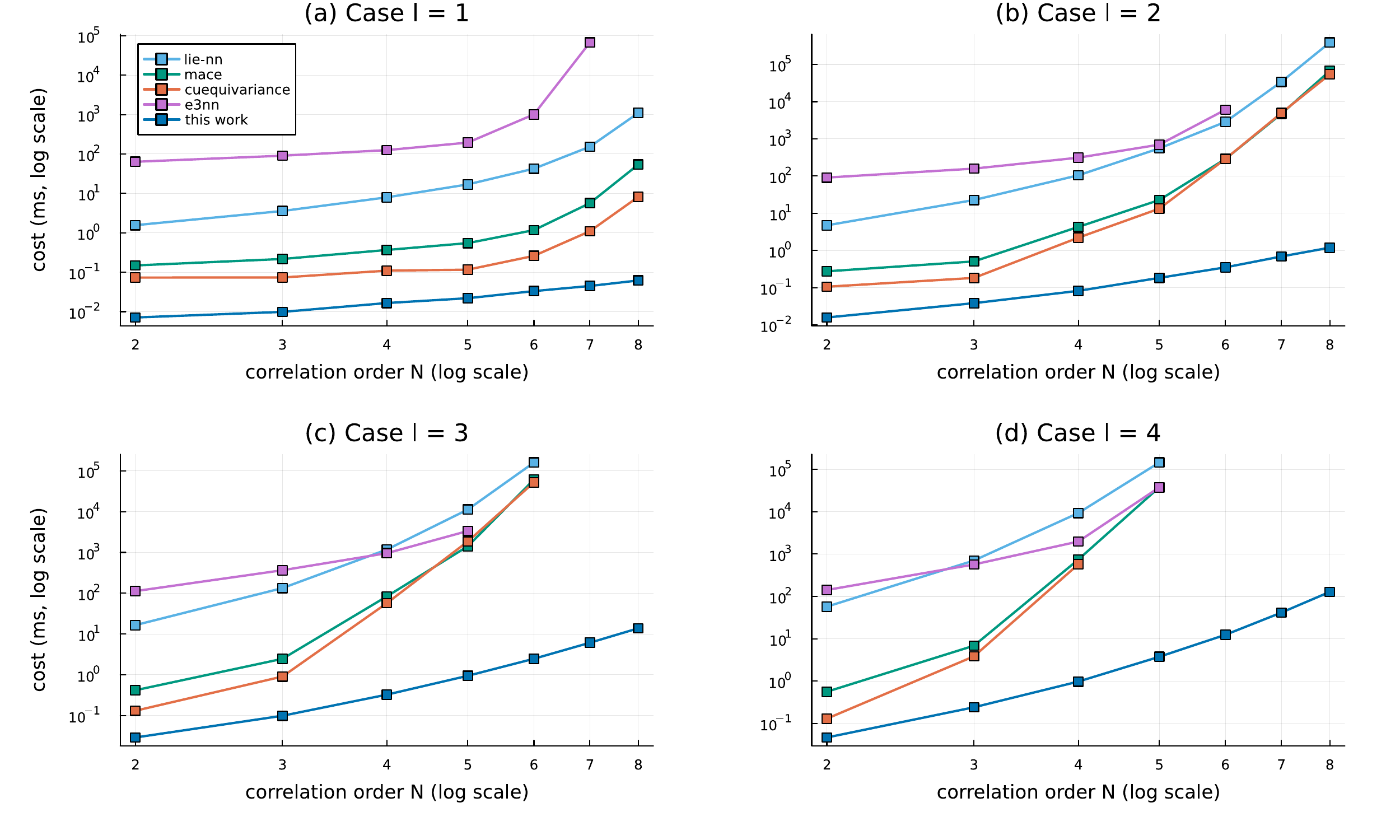}
    \caption{Run time for constructing GE-PI bases using different packages for $l=1, 2, 3, 4$ and for correlation orders $N$'s from $1$ to $8$.}
    \label{fig:cost_fixed_l}
\end{figure}

In Figure~\ref{fig:cost_fixed_l}, we set the polynomial degrees $l$ to be different fixed values ($l=1,2,3,4$, respectively), and show the relationship between the run time measured in $ms$ required for constructing bases of $\bV^{\bl,L},$  for $ L\in\{0,1,\ldots,Nl\}$ and the correlation order $N$. Similarly, in Figure~\ref{fig:cost_fixed_N} we fix the correlation order ($N = 3,4,5,6$) and showcase how the run time varies while increasing the polynomial degree $l$. We truncate each method when the construction time for the GE-PI function spaces $\bV^{\bl,L}$ exceeds $10^6\; ms$.

\begin{figure}[h!]
    \centering
    \includegraphics[width=.89\linewidth]{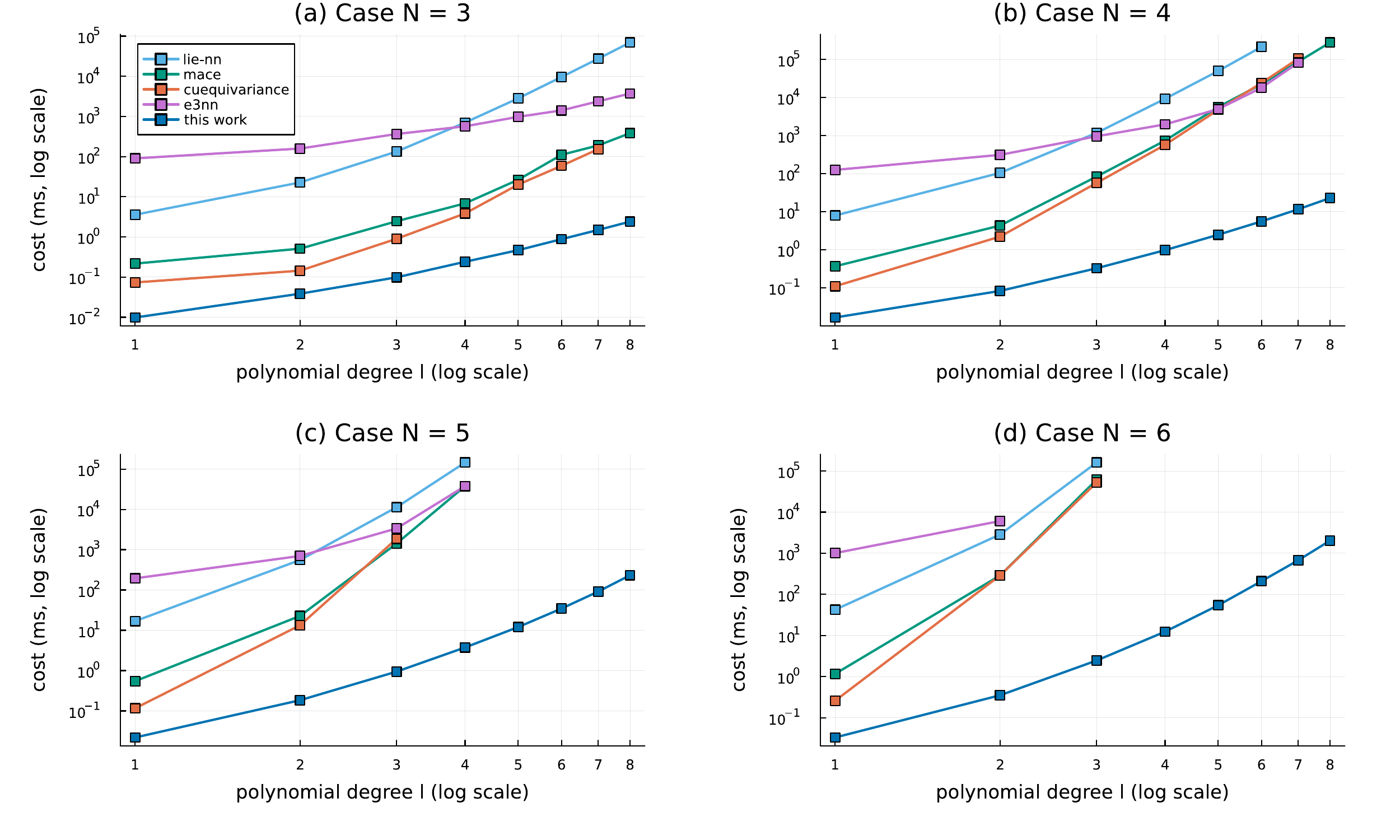}
    \caption{Run time for constructing the GE-PI bases using different packages for correlation orders $N = 3, 4, 5, 6$ and for polynomial degrees $l$'s from 1 to 8.}
    \label{fig:cost_fixed_N}
\end{figure}

Both Figure~\ref{fig:cost_fixed_l} and Figure~\ref{fig:cost_fixed_N} are presented on a log-log scale, from which we not only see that the construction cost of our method outperforms others in most of the cases, but also that in all the scenarios, our method appears to be 
of polynomial complexity, while other packages exhibit super-algebraic scaling.
In particular when $l=4$, none of the existing packages succeed in constructing the GE-PI basis for correlation order $N\ge 6$ within $10^6 \; ms$ while our method allows its construction in around $10^2 \; ms$. The advantages become more and more significant with the increase of either the correlation order $N$ or the polynomial degree~$l$.
We mention that all the packages we are comparing are written in Python, whereas ours is implemented using the Julia language. Despite the difference in programming languages, the qualitative scaling of the construction cost remains the same.

To see the scaling difference even clearer, we show in Figure~\ref{fig:cost_versus_classes} the construction time as either a function of the total number of basis functions $ \sum_{L=0}^{\sum\bl}\dimpi{\bl}{L}$ or that of the number of classes $\displaystyle\sum_{K\in\calM_{L+1}\setminus\{L,L+1\}}\card{\overline\calM_{\bl,K}}$. 
Let us mention that we compared the cost of constructing all the equivariant bases for a given $\bl\in\calL^N$ since the tested packages provide all of these equivariant bases simultaneously. In our implementation, each individual $L$-equivariant basis can be constructed independently, allowing us to investigate the cost of each basis individually, as presented on Figure~\ref{fig:cost_versus_classes_breakdown}.
A similar conclusion can be drawn from Figure~\ref{fig:cost_versus_classes}, that is, the cost of the proposed method scales polynomially with respect to the number of basis functions, or the number of classes, while the cost of the existing implementations seems to increase exponentially. 

\begin{figure}[t!]
    \centering
    \includegraphics[width=.88\linewidth]{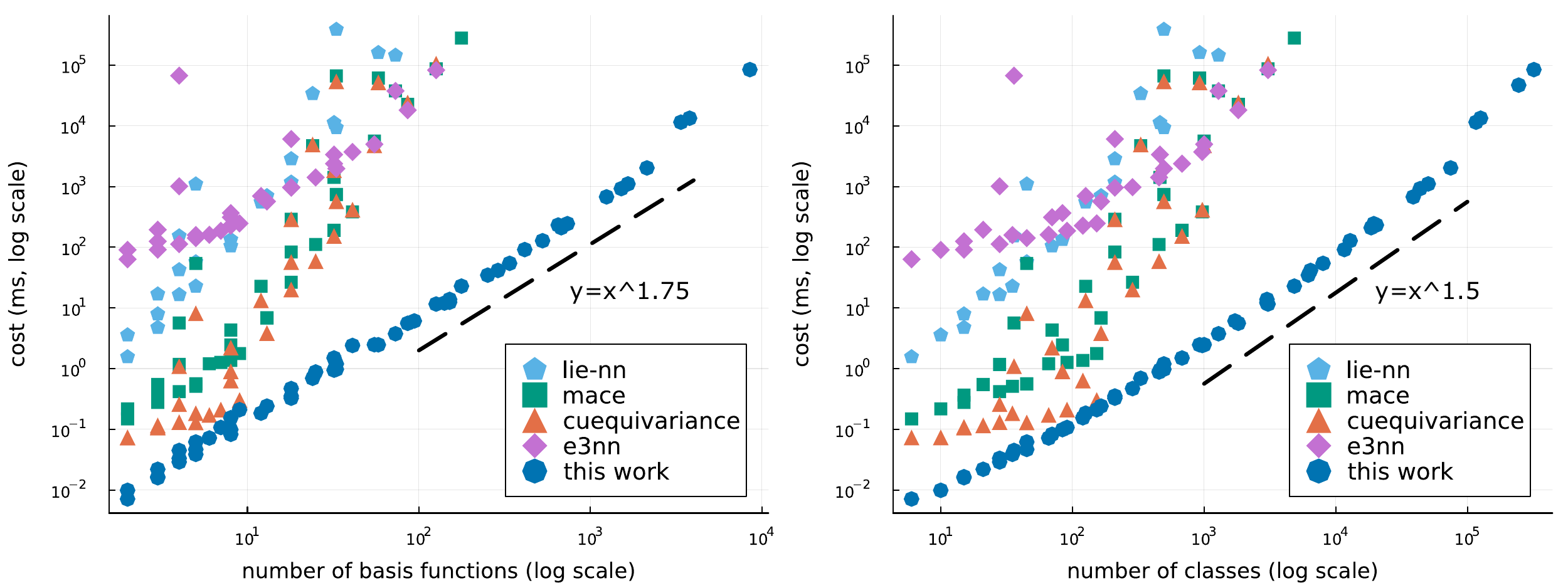}
    \caption{Construction time (ms) of the GE-PI bases versus the number of basis functions, or the number of classes in the matrix $\Mup$~\eqref{eq:M2-PI}.
    }
    \label{fig:cost_versus_classes}
\end{figure}

In summary, by using the Lie algebra of the underlying rotation groups, and by exploring the simplified structure of the resulting matrix $\MlL$, we avoid the typical exponential complexity arising from permutations, and achieve a method for constructing GE-PI spaces at an almost linear cost. 

\subsubsection{Recursive construction}\label{sec:num-rec}

\begin{figure}[b!]
    \centering
    \includegraphics[width=.8\linewidth]{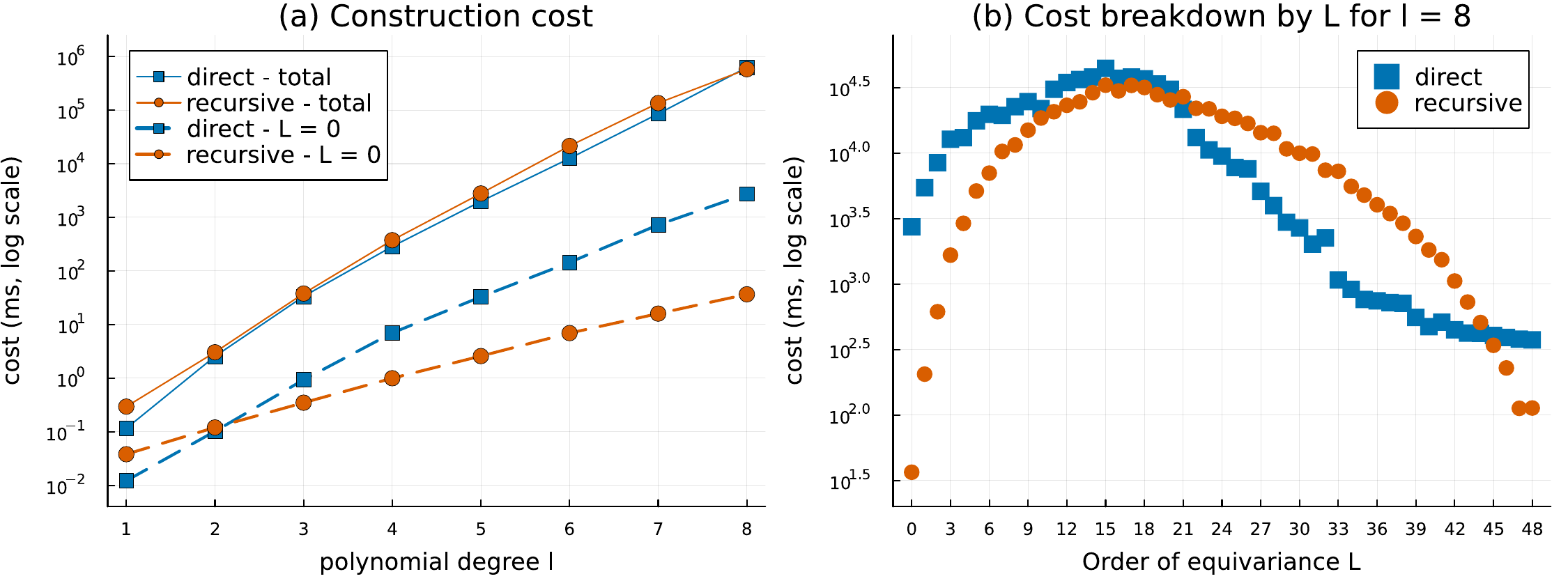}
    \caption{Comparison of recursive and direct construction time (\emph{ms}) for the GE-PI bases for $\boldsymbol{\ell} = (\ell,\ell,\ell,\ell,\ell,\ell)$ and $\bn = (1,1,1,2,2,2)$: (a) Runtime for constructing either all equivariant bases or a specific equivariant basis ($L=0$); (b) Cost breakdown by equivariance order $L$.
    }
    \label{fig:cost_recursive}
\end{figure}

We now evaluate and compare the computational efficiency of the direct and recursive constructions. Due to the large parameter choice in the recursive case, we restrict ourselves to the following setting: we consider $\bl$'s that can be divided into two non-intersecting blocks, each of length $N$, which gives a total of $2N$ variables. The $\ell$ values in both blocks are set to the same value. In particular, we take $N = 3$ with $\boldsymbol{\ell} = (\ell,\ell,\ell,\ell,\ell,\ell)$ and $\bn = (1,1,1,2,2,2)$, and analyze the construction time across varying polynomial degrees $\ell\in\{1,2,\ldots,8\}$ and orders of equivariance $L\in\{0,1,\ldots,N\ell\}$. The direct and the recursive construction time, measured in \emph{ms}, is illustrated in Figure~\ref{fig:cost_recursive}.
The solid lines in Figure~\ref{fig:cost_recursive}(a) represent the total computational cost of all equivariant bases for a given degree $l$, calculated using both the direct and recursive methods. The overall computational costs are comparable in magnitude in all cases, indicating that the recursive approach does not lead to a strict reduction in total complexity. However, a breakdown of the costs with respect to different $L$'s shows that they are distributed differently between the two methods. In particular, the recursive construction is substantially more efficient for small values of $L$, which correspond to the real implementation scenarios, as illustrated in Figure~\ref{fig:cost_recursive}(b). To emphasize this effect, we single out one of the most common cases, $L=0$, shown by dashed lines in Figure~\ref{fig:cost_recursive}(a), which highlights the significant efficiency gains that the recursive method can provide in practical applications.
A deeper comparison is left for future work.

\subsection{Dimensionality comparison}\label{sec:dimensionalities}
In this subsection, we investigate the dimension of GE and GE-PI spaces, or more precisely, 
the trend in the growth of the space dimensions composed of functions with different symmetries. 
Similarly to above, we focus on $\bl\in\calL^N$ that have identical components $l$, unless explicitly specified otherwise. In this setting, the dimensions $\dimge{\bl}{L}$ and $\dimpi{\bl}{L}$ are determined by three parameters: the polynomial degree $l$, the correlation order $N$ and the order of equivariance $L$.

\paragraph{Linear scaling with respect to $L$}

Propositions~\ref{prop:asym-dim-ge} and Conjecture~\ref{conj:asym-dim-gepi-1block} suggest
that the dimensionalities of $V^{\bl,L}$ and $\bV^{\bl,L}$ scale linearly with respect to the order of equivariance $L$,
as observed on Figure~\ref{fig:Linear_scale_in_L}, 
when $L$ is not too large. In practice, this is indeed the regime of interest.
In the first two panels of Figure~\ref{fig:Linear_scale_in_L}, we fix the correlation order $N=8$, and show the change of GE and GE-PI dimensions with respect to $L$ for several different $l$, while in the last two panels, similar plots for fixed $l=20$ and a variety of $N$ are displayed. 

Owing to the linear scaling of the dimensions in $L$, we consider hereafter only  $L=0$, the invariant case, which is arguably the case of most interest. The numerical results we will present in the rest of the manuscript can naturally be extended to moderate $L$ because of the linear relation verified above. 

\begin{figure}[h!]
    \centering
    \includegraphics[width=.9\linewidth]{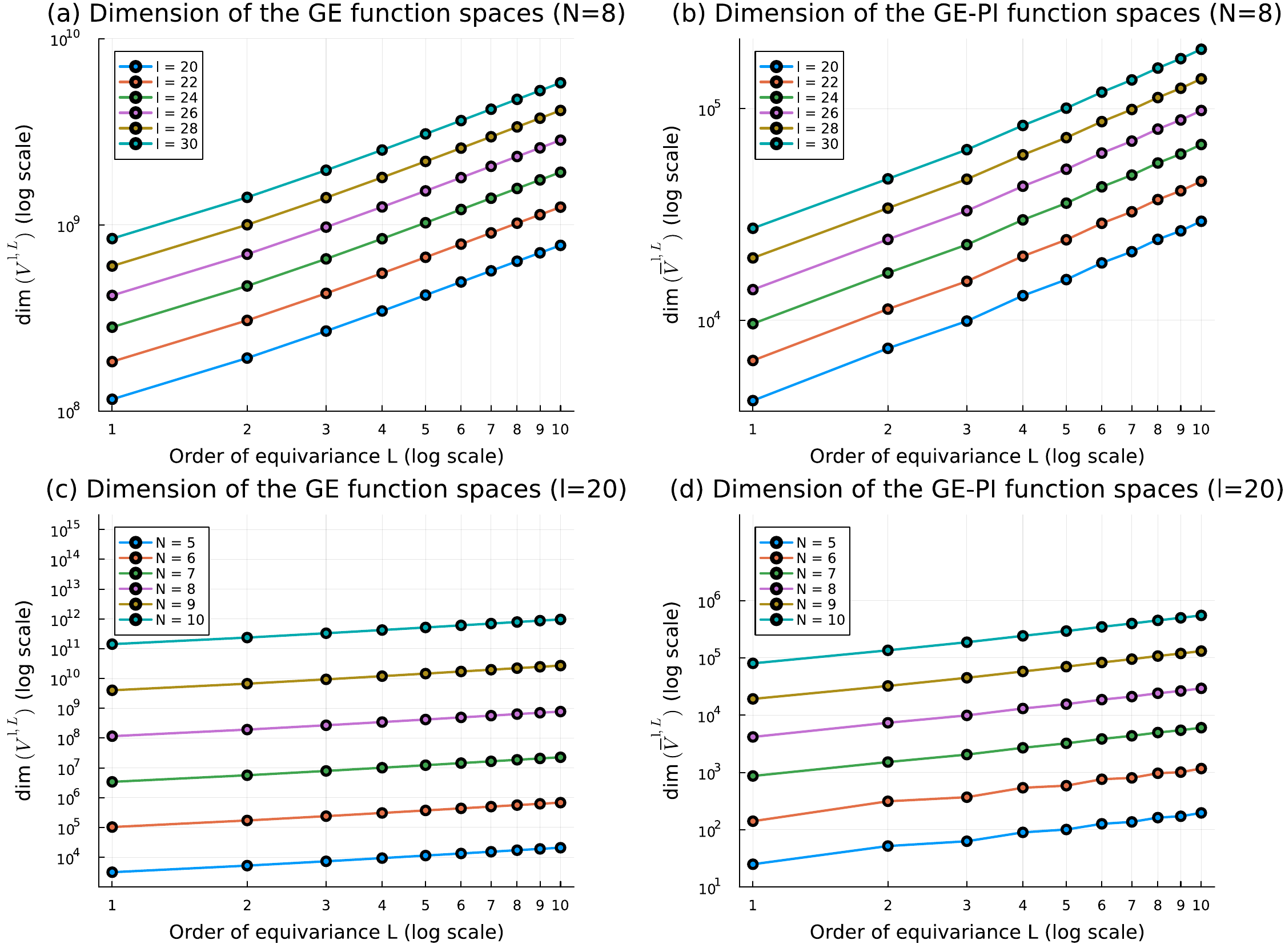}
    \caption{Relationship between the dimensions of the GE space $V^{\bl,L}$ and the GE-PI space $\bV^{\bl,L}$ and the order of equivariance $L$ with $\bl = (l,l,\ldots,l)\in\calL^N$ for varying polynomial degrees $l$ and correlation orders $N$. Both axes are displayed on a logarithmic scale. 
    }
    
    \label{fig:Linear_scale_in_L}
\end{figure}

\FloatBarrier

\paragraph{Asymptotic estimates}
Despite having explicit formulas for the GE and GE-PI dimensions (c.f. equations~\eqref{eq:dim_su2} and~\eqref{eq:dim_su2_pi}), they become increasingly costly to access as $\Nat$ increases. We rely on the estimations given in~\eqref{eq:dimge-est} and~\eqref{eq:dimpi-est} when $\Nat$ is unaffordably large. To this end, the estimated dimensions and the exact dimensions are compared in Figure~\ref{fig:dim-est}, in which we use solid circles to denote the exact dimensions $\dimge{\bl}{L}$ and $\dimpi{\bl}{L}$ and crosses to denote the estimated values $\dimgeest{\bl}{L}$ and $\dimpiest{\bl}{L}$. Furthermore, the normalized errors with respect to $\Nat$,
\[
  \text{\texttt{err}}_{\rm GE} :=  \frac{\dimge{\bl}{L} - \dimgeest{\bl}{L}}{\left(2l+1\right)^\Nat}, 
  \qquad
    \text{\texttt{err}}_{\rm GE-PI} :=   \frac{\dimpi{\bl}{L} - \dimpiest{\bl}{L}}{\binom{N+2l }{ 2l }},
\]
are presented in the right panels to show the accuracy of the estimations. 
As indicated by Figure~\ref{fig:dim-est}, the estimations align well with the exact dimensions, with the errors decaying at an algebraic rate of $N^{-5/2}$. We also observe a significant dimension reduction by considering PI on top of GE alone, which we will elaborate on in more detail later.

\begin{figure}[htb!]
    \centering
    \includegraphics[width=.9\linewidth]{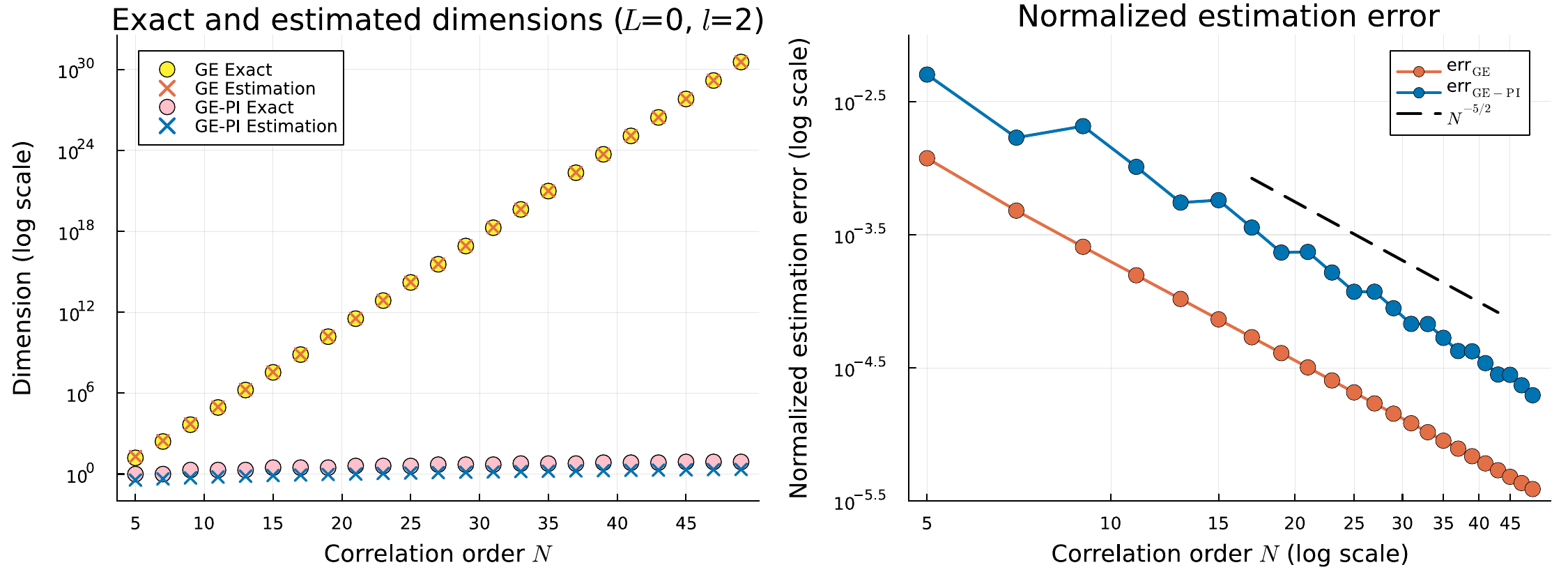}
    \includegraphics[width=.9\linewidth]{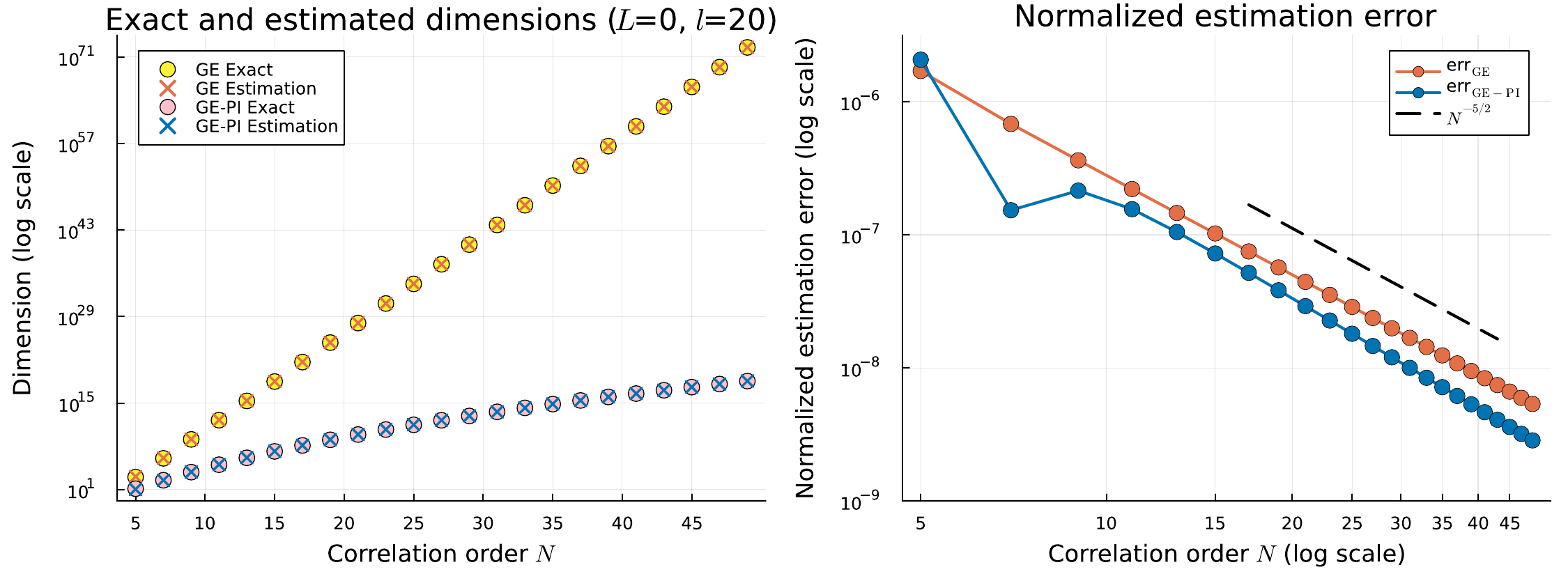}
    \caption{Exact and estimated dimensions of the GE space $V^{\bl,0}$ and the GE-PI space $\bV^{\bl,0}$ for $\bl = (l,l,\ldots,l)\in\calL^N$ with different $l$ and $\Nat$. Axes are displayed on different scales, as indicated in the Figure.
    } 
    \label{fig:dim-est}
\end{figure}

Note that we have estimations in dimensionalities for general $\bl$ as well, as shown in Propositions~\ref{prop:asym-dim-ge} and~\ref{prop:asym-dim-gepi}. To demonstrate that the observed behaviour is not specific to the minimal setting, we present in Figure~\ref{fig:dim-est-Nb} the results for a set of more general $\bl = (\bl^{(1)},\bl^{(2)},\ldots,\bl^{(\Nb)})$, where $\bl^{(j)} = ((n_j,\ell),(n_j,\ell))\in\calL^2$, with $n_j$ all distinct, and $\ell=1,2$. 
The trend suggests a similar conclusion to that of before. Up to this stage, we have all the tools to fully investigate the dimensionalities in all possible cases. 

\begin{figure}[htb!]
    \centering
    \includegraphics[width=.9\linewidth]{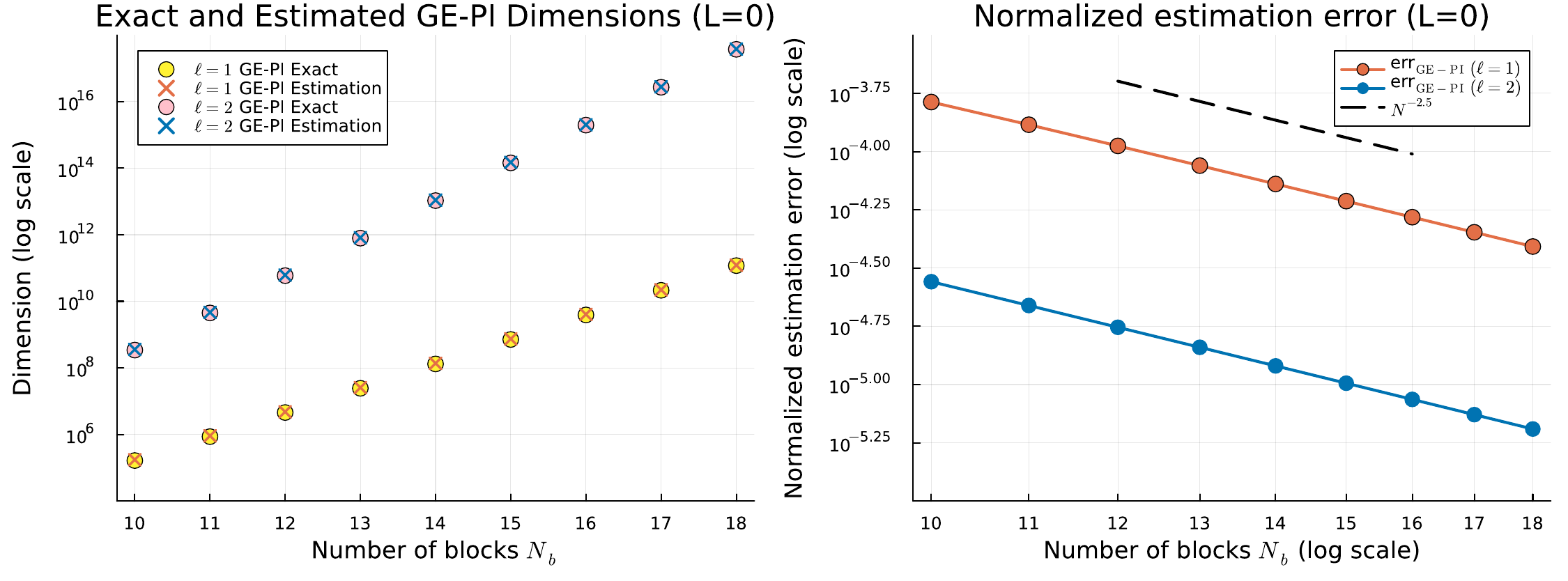}
    \caption{Exact and estimated dimensions of the GE-PI space $\bV^{\bl,0}$ for $\bl = (\bl^{(1)},\bl^{(2)},\ldots,\bl^{(\Nb)})$, where $\bl^{(j)} = ((n_j,\ell),(n_j,\ell),\ldots,(n_j,\ell))\in\calL^N$ with $n_j$ distinct, $l=1,2$ and $N_j=2$. Axes are displayed on different scales, as indicated in the Figure.}
    \label{fig:dim-est-Nb}
\end{figure}

\paragraph{GE vs GE-PI}
Finally we compare dimensions of function spaces exhibiting different symmetries. For comparison, the dimension of $V^\bl$ itself is obviously $\prod_{i=1}^N (2l_i+1)$, and hence, 
\[
    \dim(\left[V^\bl\right]^{2L+1}) = (2L+1)\prod_{i=1}^N (2l_i+1).
\]
Besides, we see from Proposition~\ref{prop:basis-pi} that the dimensionality of the PI function space $\left[\bV^\bl\right]^{2L+1}$ is 
\[
    \dim(\left[\bV^\bl\right]^{2L+1}) = (2L+1)\prod_{j=1}^\Nb \binom{N_j+2\bl^{(j)}_1}{ 2\bl^{(j)}_1}.
\]
We are interested in comparing the two dimensions above with $\dimge{\bl}{L}$ and $\dimpi{\bl}{L}$, in both the asymptotic and pre-asymptotic regimes. Owing to the linear scaling in $L$, we fix $L=0$ as before. 
The dimensionalities of the spaces with different symmetries are shown in Figure~\ref{fig:dim_comparison},
for correlation order $N=6,\;20,\;40,\;$ and $80$ respectively, in four separate plots, in each of which the polynomial degree $l$ ranges from $4$ to $20$, covering both the asymptotic and pre-asymptotic regimes. For $N=80$ only, we use the estimation formulas~\eqref{eq:dimge-est} and~\eqref{eq:dimpi-est}. 

\begin{figure}[t!]
    \centering
    \includegraphics[width=.97\linewidth]{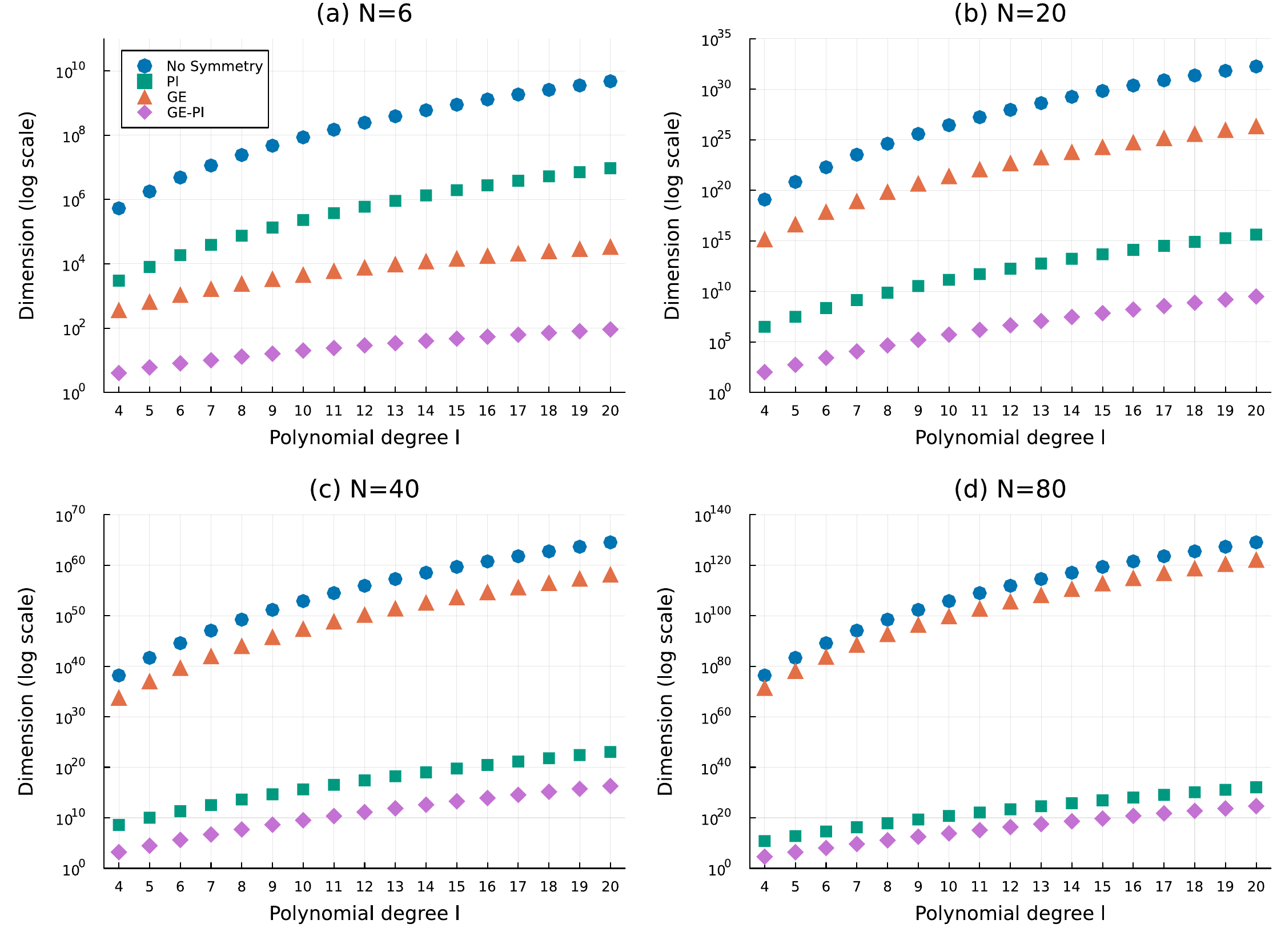}
    \caption{Dimensionality of spaces of functions in $V^{\bl}$ having different symmetries with $\bl = (l,l,\ldots,l)\in\calL^N$: (a) $N=6$; (b) $N=20$; (c) $N=40$; and (d) $N=80$. For $N=80$, we use the estimated dimensions.}
    \label{fig:dim_comparison}
\end{figure}

As can be seen from Figure~\ref{fig:dim_comparison}, the space without any symmetry and that with both symmetries exhibit the largest and smallest dimensions, respectively. When the correlation order is small, GE turns out to be of equal importance, if not more, than PI for dimension reduction. However, as $\Nat$ increases, the PI dimension becomes increasingly closer to the GE-PI dimension, making PI the dominating source of dimension reduction. 
Tables exhibiting precise dimensionality examples are given in Appendix~\ref{app:table_of_dim}. 
These results suggest that it is in general important for dimensionality reduction to include both permutation and group-equivariance symmetry, starting from permutation invariance.

\section{Conclusion and outlook}
\label{sec:conclusion}

In this article, we have introduced a direct numerical method for efficiently constructing group-equivariant (GE) and permutation-invariant (PI) spaces, valid for arbitrary linear connected Lie groups. Furthermore we have presented an alternative recursive approach valid for compact groups. These methods are based on the use of Lie algebra to derive a linear system whose kernel elements are in one-to-one correspondence with either GE or GE-PI functions. Using the specific form of the Lie algebra for $SO(3)$ and $SU(2)$, the linear system is further simplified, and can be very efficiently solved exploiting sparsity.
Indeed, our method scales almost  linearly with respect to the number of basis functions, largely outperforming existing packages in the literature, which typically scale exponentially. 
On top of this, we manage to provide the explicit dimensionality of the spaces of interest. 
Comparing GE and GE-PI dimensionality shows how important it is in practice to consider the permutation-invariance to reduce the computational cost of approximating GE-PI functions.
It is interesting to note that the structure for the linear systems are very similar in the GE and GE-PI cases, showcasing that imposing PI on top of GE using our method does not require extra computational cost; indeed the GE case can be seen as a particular case of GE-PI, in the extreme scenario that compatible bases remove the permutation-invariance constraint. 

As an immediate application, this method can be used
as a GE-PI feature generator for a broad range of existing architectures in scientific machine learning, such as those for interatomic potentials in chemistry and materials science (see e.g. the equivariant neural networks E3NN~\cite{geiger2022e3nn}, NequIP~\cite{Batzner2022-az}, and MACE~\cite{Batatia2022-zi}). 
At this point we leave for future work the detailed derivation of the linear system structure for groups beyond $SU(2)$ and $SO(3)$, such as $SO^+(1,3)$ and $SU(n)$, but we expect a similar sparsity to arise.
Another natural question is whether this work can be extended beyond permutation invariance.
It is first interesting to note that our derivation for the GE-PI case relies on starting from a known PI basis, and that starting from a basis of permutation antisymmetric functions, e.g. Slater determinants, our strategy directly applies to obtain group-equivariant and permutation anti-invariant functions.
Second, combining the work~\cite{Olver2026-tt} obtaining irreducible representations of tensor representations of the symmetric group, which is similar to what is proposed in this paper but considering the symmetric group instead of a Lie group, 
we could in principle obtain group-equivariant and permutation-equivariant functions for more general representations of the symmetric group, which we also leave for future work.

\FloatBarrier
\section{Proofs}
\label{sec:proofs}

We gather in this section the main proofs of this article.

\subsection{Proofs of Section~\ref{sec:equivariant_lie_algebra}}\label{sec:proof_equivariant_lie_algebra}

\begin{proof}[\bf{Proof of Proposition~\ref{prop:GE_basis}}]
Let $\bl\in \calL^N, L\in \calL.$
We consider a function  $F^{\bl,L}\in [V^\bl]^{\card{\calM_L}}$ as 
\[ 
    F^{\bl,L} := \left[ F^{\bl,L}_{k} \right]_{k\in\calM_L}
     = \left[ \sum_{\bbm \in \calM_\bl} c_{\bbm,k}^{\bl,L} \; \phi^\bl_\bbm \right]_{k\in\calM_L}.
\]
Using~\eqref{eq:1b-basis}, the group action over $F^{\bl,L}$ reads as 
\[
   \forall g\in G,\; \bR\in \Omega^\Nat, \quad F^{\bl,L}(g \cdot \bR) =  \left[ \sum_{\bbm,\bbm' \in \calM_\bl} c_{\bbm,k}^{\bl,L} \rep{\bl}{\bbm,\bbm'}(g) \phi^\bl_{\bbm'}(\bR) \right]_{k\in\calM_L},
\]
where $\displaystyle\rep{\bl}{\bbm,\bbm'}(g) = \prod_{i=1}^\Nat \rep{l_i}{m_i,m'_i}(g)$. Also, by definition, $F^{\bl,L}$ is equivariant with respect to the representation $\rep{L}{}$ if and only if
\[
    \forall g\in G,\; \bR\in\Omega^\Nat, k\in \calM_L, 
    \quad F^{\bl,L}_{k}(g \cdot \bR) = \sum_{k'\in\calM_L} \rep{L}{k,k'}(g)F_{k'}^{\bl,L}(\bR),
\]
that is,
\[ 
\forall g\in G,\; \bR\in\Omega^\Nat,  k\in\calM_L,
\quad
   \sum_{k'\in\calM_L} \sum_{\bbm \in \calM_\bl} c_{\bbm,k'}^{\bl,L} \rep{L}{k,k'}(g) \phi^\bl_\bbm(\bR)
   = 
   \sum_{\bbm,\bbm' \in \calM_\bl} c_{\bbm,k}^{\bl,L} \rep{\bl}{\bbm,\bbm'}(g) \phi^\bl_{\bbm'}(\bR).
\]
Switching $\bbm$ and $\bbm'$ in the right hand side and noting that $(\phi^\bl_\bbm)_{\bbm\in \calM_\bl}$ is a basis of $V^\bl$, we obtain that 
$F^{\bl,L}$ is GE with respect to the representation $\rep{L}{}$ if and only if
\begin{equation}\label{eq:GE-coeff}
\forall \bbm \in \calM_\bl,\; g\in G,\; k\in\calM_L, \qquad
     \sum_{k'\in\calM_L}c_{\bbm,k'}^{\bl,L} 
     \rep{L}{k,k'}(g)  
   = \sum_{\bbm' \in \calM_\bl} c_{\bbm',k}^{\bl,L} \rep{\bl}{\bbm',\bbm}(g). 
\end{equation}
By viewing $\{c_{\bbm,k}^{\bl,L}\}_{\bbm\in\mathcal{M}_\bl,k\in\mathcal{M}_L}$ as a matrix $C^{\bl,L}$ of size $\card{\mathcal{M}_\bl}\times\card{\mathcal{M}_L}$,~\eqref{eq:GE-coeff} is equivalent to
\begin{equation}
\label{eq:matrix_GE_sylvester}
        \forall g\in G, \quad \rep{L}{}(g)\left(C^{\bl,L}\right)^T = \left(C^{\bl,L}\right)^T\rep{\bl}{}(g).
\end{equation}
Taking the derivative of the above equation at the neutral element $e\in G$, we have
\begin{equation}\label{eq:diff-rep-GE}
    \forall X\in\mathfrak g, \quad d\rep{L}{}(X)\left(C^{\bl,L}\right)^T = \left(C^{\bl,L}\right)^Td\rep{\bl}{}(X).
\end{equation}
Since $d\rep{L}{}$ is linear,~\eqref{eq:diff-rep-GE} holds true if and only if
\begin{equation}\label{eq:diff-rep-GE-on-generator}
    \forall\dd=1,\ldots,N_{\rm{dim}},\quad d\rep{L}{}(X_\dd)\left(C^{\bl,L}\right)^T = \left(C^{\bl,L}\right)^Td\rep{\bl}{}(X_\dd).
\end{equation}
Recalling that $d\rep{L}{}(X_\dd) = \vrho{L,\dd}{}$ and noticing that for $\bbm',\bbm\in\calM_{\bl}$, 
\begin{equation}\label{eq:drho}
    d\rep{\bl}{}(X_\dd)_{\bbm',\bbm} = \vrho{\bl,\dd}{\bbm',\bbm}= \sum_{j=1}^\Nat \vrho{l_j,\dd}{m_j',m_j}\prod_{s=1,s\ne j}^\Nat \rep{l_s}{m'_s,m_s}(e)= 
    \sum_{j=1}^\Nat \vrho{l_j,\dd}{m'_j,m_j}\prod_{\substack{s=1 \\ s\ne j }}^N \delta_{m'_s,m_s},
\end{equation}
where $\delta_{ij}$ denotes the Kronecker delta,
we obtain the component-wise form of~\eqref{eq:diff-rep-GE-on-generator} as
\[
    \forall \dd=1,\ldots,N_{\rm dim}, k\in\calM_L, \bbm \in \calM_\bl, \quad
    \sum_{k'\in\calM_L}c_{\bbm,k'}^{\bl,L} \vrho{L,\dd}{k,k'}  
    = \sum_{\bbm' \in \calM_\bl} c_{\bbm',k}^{\bl,L}
    \sum_{j=1}^\Nat \left( \vrho{l_j,\dd}{m'_j,m_j}\prod_{\substack{s=1 \\s\ne j}}^\Nat \delta_{m'_s,m_s} \right),  
\]
or equivalently, for all $\dd=1,\ldots,N_{\rm dim},\; k\in\calM_L,\; \bbm \in \calM_\bl$, 
\begin{align*}
    \sum_{\bbm' \in \calM_\bl} \sum_{k'\in\calM_L}c_{\bbm,k'}^{\bl,L} \vrho{L,\dd}{k,k'}  \delta_{\bbm,\bbm'}
    & - \sum_{\bbm' \in \calM_\bl} \sum_{k'\in\calM_L} c_{\bbm',k}^{\bl,L} \delta_{k,k'}
    \sum_{j=1}^\Nat \left( \vrho{l_j,\dd}{m'_j,m_j}\prod_{\substack{s=1 \\s\ne j}}^\Nat \delta_{m'_s,m_s} \right) = 0,  
\end{align*}
which means that the vector of coupling coefficients $\bc^{\bl,L}:=\left(c_{\bbm,k}^{\bl,L}\right)_{\bbm\in\mathcal{M}_\bl,k\in\mathcal{M}_L}$ belongs to $\ker(M^{\bl,L})$. 

Reciprocally, if $\bc^{\bl,L} \in\ker(M^{\bl,L})$, equation~\eqref{eq:diff-rep-GE} is satisfied.
Multiplying both sides of the latter by $d\rep{L}{}(X)$ from the left and using~\eqref{eq:diff-rep-GE} itself  yields
\[
    \forall X\in\mathfrak g, \quad \left(d\rep{L}{}(X)\right)^2\left(C^{\bl,L}\right)^T = d\rep{L}{}(X)\left(C^{\bl,L}\right)^Td\rep{\bl}{}(X) = \left(C^{\bl,L}\right)^T\left(d\rep{\bl}{}(X)\right)^2.
\]
Iterating this argument, we see that
\[
    \forall X\in\mathfrak g,\; p\in\N_0, \quad \left(d\rep{L}{}(X)\right)^p\left(C^{\bl,L}\right)^T = \left(C^{\bl,L}\right)^T\left(d\rep{\bl}{}(X)\right)^p,
\]
and hence, using~\cite[(3.11)]{Hall2013-af},
\begin{equation}\label{eq:recip-cond}
    \forall X\in\mathfrak g,\quad \rep{L}{}\left(\exp(X)\right)\left(C^{\bl,L}\right)^T = \left(C^{\bl,L}\right)^T\rep{\bl}{}\left(\exp(X)\right).
\end{equation}
Finally, since any element $g\in G$ in the connected linear Lie group $G$ can be expressed as a finite product of exponentials, we easily recover~\eqref{eq:matrix_GE_sylvester} from~\eqref{eq:recip-cond}, using~\eqref{eq:diff-rep-GE} and that $\rep{\bl}{}$ and  $\rep{L}{}$ are representations.
The basis and dimensionality of $V^{\bl,L}$ are straightforward consequences.
\end{proof}

\begin{proof}[\bf{Proof of Proposition~\ref{prop:basis-pi}}]
    Since $\left\{ \phi^\bl_\bbm, \bbm\in \calM_\bl \right\}$  is 
    a basis of $V^{\bl}$, a spanning set for PI functions in $V^{\bl}$ is 
\[
    \left\{ \displaystyle\bpsi{\bl}{\bbm}: =\sum_{\sigma\in S_\Nat}\phi^\bl_\bbm\circ\sigma, \qquad \bbm\in \calM_\bl \right\}.
\]
Noting that $\bpsi{\bl}{\bbm}=\bpsi{\bl}{\bbm'}$ if there exists $\pi\in \calS_{\bl},$ such that $\bbm^{\prime}=\pi\bbm$, a spanning set 
of PI functions in $V^\bl$ is
$\left\{ \displaystyle\bpsi{\bl}{\bbm}, \bbm\in \oMl \right\}$.
Moreover, for $\bbm,\bbm'\in \oMl$,
\begin{align*}
    \left\langle\bpsi{\bl}{\bbm},\bpsi{\bl}{\bbm'}\right\rangle =& 
    \Nat!\left\langle\phi^\bl_\bbm, \sum_{\sigma\in S_\Nat}\phi^\bl_{\bbm'}\circ\sigma\right\rangle.
\end{align*}

Noting that permuting the variables corresponds to an inverse permutation on the basis functions indices, and using~\eqref{eq:orthogonality}, we obtain
\begin{align*}
\left\langle\bpsi{\bl}{\bbm},\bpsi{\bl}{\bbm'}\right\rangle
    = & \Nat !\sum_{\sigma\in S_\Nat}\left\langle\phi^\bl_\bbm, \phi^{\sigma\bl}_{\sigma\bbm'}\right\rangle 
    =  \Nat !\sum_{\sigma\in S_\Nat}\delta_{\bl,\sigma\bl}\delta_{\bbm,\sigma\bbm'} 
    =  \Nat !\sum_{\sigma\in \calS_\bl}\delta_{\bbm,\sigma\bbm'}. 
\end{align*}
Using the definition of $\oMl$, we easily obtain that 
$\left\langle\bpsi{\bl}{\bbm},\bpsi{\bl}{\bbm'}\right\rangle = 0$ if and only if $\bbm \neq \bbm'$, that is $\left\{ \displaystyle\bpsi{\bl}{\bbm},\; \bbm\in \oMl \right\}$ is an orthogonal basis of PI functions in $V^\bl$ and the dimension of the space of PI functions in $V^\bl$ is the cardinality of $\oMl$.
\end{proof}

\begin{proof}[\bf{Proof of Proposition~\ref{prop:basis-pi2}}]
First, since the functions $\bphi{\bl}{\bbm}$ are identical for all $\bbm$ in a class $\overline{\bbm} \in\oMl$
    the functions $\bphi{\bl}{\bbm},\;\bbm\in\oMl$ are clearly a spanning set of $\bV^{\bl}$.
We now need to show that the basis functions $\left\{ \bphi{\bl}{\bbm},\;\bbm\in\oMl\right\}$ are linearly independent.
Assume that
there exists coefficients $\bc \in \K^{|\oMl|}$ such that 
$\displaystyle\sum_{\bbm \in\oMl}c_\bbm\bphi{\bl}{\bbm}=0$. This means that we have for any $\br_1,\ldots,\br_N\in \Omega$,
\begin{equation}
    \label{eq:linearindependence}
    \displaystyle\sum_{\bbm \in\oMl}c_\bbm\bphi{\bl}{\bbm}(\br_1,\ldots,\br_\Nat) = 0.
\end{equation}
We are going to show by induction on $d \in \N$ that for any $\bR = (\br_1,\ldots,\br_\Nat)\in\Omega^\Nat$ and for all $1\le n_1\le\ldots\le n_\Nat\le \Nat$ such that $\card{\{n_1,\ldots,n_\Nat \}  }=d$, there holds
\begin{equation}
        \displaystyle\sum_{\bbm \in\oMl}c_\bbm \sum_{\bm{\alpha}\in \calS_{\{n_1,\ldots,n_\Nat\}}}\phi^\bl_\bbm
    (\br_{\alpha_1}, \ldots, \br_{\alpha_\Nat})=0,
    \label{eq:induction_step}
\end{equation}
where $\calS_{\{n_1,\ldots,n_\Nat\}}=\{({\sigma(n_1)},\ldots,{\sigma(n_\Nat)}),\;\sigma\in S_\Nat\}$ is the set of all different tuples $(n_1,\ldots,n_\Nat)$ in any order.

First, for $d=1$, 
for 
$\bR=(\br,\dots,\br)$, with $\br\in\Omega$,
\begin{align*}
    \displaystyle\sum_{\bbm \in\oMl}c_\bbm\bphi{\bl}{\bbm}(\br,\ldots,\br)
    &=\displaystyle\sum_{\bbm \in\oMl}c_\bbm\prod_{i=1}^\Nat\left(\sum_{j=1}^{\Nat}\phi^{l_i}_{m_i}\left(\br\right)\right)\\
    &=\displaystyle\sum_{\bbm \in\oMl}c_\bbm\prod_{i=1}^\Nat\left(\Nat\phi^{l_i}_{m_i}\left(\br\right)\right)\\
    &=\Nat^\Nat\displaystyle\sum_{\bbm \in\oMl}c_\bbm \phi^\bl_\bbm(\br,\ldots,\br).
\end{align*}
Using~\eqref{eq:linearindependence}, we obtain
\[
    \sum_{\bbm \in\oMl}c_\bbm \phi^\bl_\bbm(\br,\ldots,\br) = 0.
\]
Since $\br$ is arbitrary in $\Omega$, for $1\le n_1=\ldots=n_\Nat=n\le\Nat$,

\[
    \displaystyle\sum_{\bbm \in\oMl}c_\bbm \sum_{\bm{\alpha}\in \calS_{\{n_1,\ldots,n_\Nat\}}}\phi^\bl_\bbm
    (\br_{\alpha_1}, \ldots, \br_{\alpha_\Nat})=\displaystyle\sum_{\bbm \in\oMl}c_\bbm \phi^\bl_\bbm(\br_{n},\ldots,\br_{n})
    =0,
\]
which proves the result for $d=1$.

Now assume that~\eqref{eq:induction_step} holds for all $\bR\in\Omega^\Nat$ and all ordered $\bn\in\{1,\ldots,\Nat\}^\Nat$ having at most $d-1$ distinct values.
Let $\bk = \{k_1,\ldots,k_\Nat\}$ with $1\le k_1\le\ldots\le k_{\Nat}\le d$ such that $\card{\bk}=d$. 
For $\br_1,\ldots, \br_{d} \in \Omega$, denote $\bR_{\bk}=(\br_{k_1},\ldots,\br_{k_{\Nat}})$. Writing explicitly the repetition of variables
\[
  \bphi{\bl}{\bbm}(\bR_{\bk})=\prod_{i=1}^\Nat\left(\sum_{j=1}^{\Nat}\phi^{l_i}_{m_i}\left(\br_{k_j}\right)\right)=\prod_{i=1}^\Nat\left(\sum_{n=1}^{d}\blamb{}{\bk,n}\phi^{l_i}_{m_i}\left(\br_{n}\right)\right),
\]
where $\blamb{}{\bk} \in \N^{d}$ denotes the count vector of $\bk$ for the dictionary $\{1,\ldots,d\}$ (see~\cite[Supplementary lecture H]{Kozen2007-ot}).  
Switching the product and the sum, we obtain 
\begin{align*}
    \bphi{\bl}{\bbm}(\bR_{\bk})&=
    \displaystyle\sum_{1\le n_1,\ldots,n_\Nat\le d}\left(\prod_{i=1}^\Nat \blamb{}{\bk,n_i}\right)\left(\prod_{i=1}^\Nat\phi^{l_i}_{m_i}\left(\br_{{n_i}}\right)\right)\\
    &=
    \displaystyle\sum_{1\le n_1,\ldots,n_\Nat\le d}
    \left(\prod_{i=1}^{d} (\blamb{}{\bk,i})^{\blamb{}{\bn,i}}\right)
    \phi^\bl_\bbm\left(\br_{n_{1}},\ldots,\br_{{n_\Nat}}\right).
\end{align*}
The count vectors $\blamb{}{\bn}$ being independent of the order of the elements in $\bn$, 
we can restrict the first sum on ordered elements, and separate the sum depending on the number of different indices in $\bn$, that is,
\begin{align*}
      \bphi{\bl}{\bbm}(\bR_{\bk})&=
      \displaystyle\sum_{1\le n_1\le\ldots\le n_\Nat\le d}\left(\prod_{i=1}^{d} (\blamb{}{\bk,i})^{\blamb{}{\bn,i}}\right)\sum_{\bm{\alpha}\in \calS_{\{n_1,\ldots,n_\Nat\}}}\phi^\bl_\bbm
    (\br_{\alpha_1}, \ldots, \br_{\alpha_\Nat}) \\
    &= \displaystyle
    \sum_{d' = 1}^d \;
    \sum_{\substack{1\le n_1\le\ldots\le n_\Nat\le d\\\card{\bn}=d'}}\left(\prod_{i=1}^{d} (\blamb{}{\bk,i})^{\blamb{}{\bn,i}}\right)
    \sum_{\bm{\alpha}\in \calS_{\{n_1,\ldots,n_\Nat\}}}\phi^\bl_\bbm
    (\br_{\alpha_1}, \ldots, \br_{\alpha_\Nat}).
\end{align*}
Taking the linear combination of functions $\bphi{\bl}{\bbm}$  and using the induction step~\eqref{eq:induction_step}, the terms involving less than $d$ different values of $\br$ vanish,
\begin{align*}
    \displaystyle\sum_{\bbm \in\oMl}c_\bbm\bphi{\bl}{\bbm}(\bR_{\bk})
    &=\displaystyle\sum_{\substack{1\le n_1\le\ldots\le n_\Nat\le d\\\card{\bn}=d}}\left(\prod_{i=1}^{d} (\blamb{}{\bk,i})^{\blamb{}{\bn,i}}\right)\displaystyle\sum_{\bbm \in\oMl}c_\bbm\sum_{\bm{\alpha}\in \calS_{\{n_1,\ldots,n_\Nat\}}}\phi^\bl_\bbm
    (\br_{\alpha_1}, \ldots, \br_{\alpha_\Nat}) = 0,
\end{align*}
using~\eqref{eq:linearindependence}.
Noting that the Bernstein--Vandermonde matrix $\displaystyle\left(\prod_{i=1}^{d} (\blamb{}{\bk,i})^{\blamb{}{\bn,i}}\right)_{\substack{\text{ordered } \bn,\bk\in\{1,\ldots,\Nat\}^\Nat\\\card{\bk}=\card{\bn}=d}}$ is invertible~\cite[(2.10)]{Ainsworth2011-pk}, 
we obtain multiplying by the inverse of the matrix that, for any ordered $\bn\in\{1,\ldots,\Nat\}^\Nat$, with $\card{\bn}=d$ 
\[
    \displaystyle\sum_{\substack{1\le k_1\le\ldots\le k_\Nat\le d\\\card{\bk}=d}}\displaystyle\sum_{\bbm \in\oMl}
    c_\bbm\sum_{\bm{\alpha}\in \calS_{\{n_1,\ldots,n_\Nat\}}}\phi^\bl_\bbm
    (\br_{\alpha_1}, \ldots, \br_{\alpha_\Nat})=0.
\]
Since the terms inside the sum are independent of $\bk$, we can divide by the number of ordered $\bk\in\{1,\ldots,\Nat\}^\Nat$ having $d$ different coordinates, we finally obtain that for any $1\le n_1\le\ldots\le n_\Nat\le d$ such that $\card{\{n_1,\ldots,n_\Nat \}  }=d$ and any $\br_1,\ldots,\br_d \in \Omega$
\[
  \displaystyle\sum_{\bbm \in\oMl}c_\bbm\sum_{\bm{\alpha}\in \calS_{\{n_1,\ldots,n_\Nat\}}}\phi^\bl_\bbm
    (\br_{\alpha_1}, \ldots, \br_{\alpha_\Nat})=0,
\]
which concludes the induction, using that the labeling of the variables is arbitrary.

In particular for $d=\Nat$ and $\bn=(1,\ldots,\Nat)$, we have

\[
    \displaystyle\sum_{\bbm \in\oMl}c_\bbm  \displaystyle\sum_{\sigma\in S_\Nat}\phi^\bl_\bbm\left(\br_{\sigma_1},\ldots,\br_{\sigma_\Nat}\right)
    = \displaystyle\sum_{\bbm \in\oMl}c_\bbm \bpsi{\bl}{\bbm}(\br_{1},\ldots,\br_{\Nat})=0.
\]
From the linear independence of the $\bpsi{\bl}{\bbm}$, we conclude that the $\bphi{\bl}{\bbm}$ are linearly independent.
\end{proof}

In order to prove Theorem~\ref{thm:GEPI_basis}, we use $\obm$ to denote elements in $\oMl$, to distinguish it from $\bbm$, the elements in $\calM_\bl$. In addition, we need a preliminary lemma. For $\bl\in\calL^\Nat$, define two matrices $S^\bl\in\F^{\card{\oMl}\times\card{\calM_{\bl}}}$ and $T^\bl\in\F^{\card{\calM_{\bl}}\times\card{\oMl}}$ whose elements read, for $\obm\in\oMl$ and $\bbm'\in\calM_\bl$, as
    \[
        S^\bl_{\obm,\bbm'} = \begin{cases}
            1, & \quad\textup{if } \; \obm'= \obm \; \textup{ and }\;  \bbm' \; \textup{ is ordered}, \\
            0, & \quad \textup{otherwise,}
        \end{cases} \quad\quad \textup{and} \quad\quad  T^\bl_{\bbm',\obm} = \begin{cases}
            1, & \quad\textup{if }\;  \obm=\obm', \\
            0, & \quad \textup{otherwise.}
        \end{cases}
    \] 
Note that multiplying by $S^\bl$ on the left selects elements indexed by the ordered class representatives and multiplying by $T^\bl$ on the right sums over the elements of the same class.
\begin{lemma}
    There hold
    \begin{equation}\label{eq:TS_lemma}
        \forall g\in G, \quad T^{\bl}S^{\bl}\rep{\bl}{}(g)T^{\bl}=\rep{\bl}{}(g)T^{\bl},
    \end{equation}
    and
    \begin{equation}\label{eq:TS_lemma_lie_alg}
       \forall X\in \mathfrak g,\quad  T^{\bl}S^{\bl} d\rep{\bl}{}(X)T^{\bl}=d\rep{\bl}{}(X)T^{\bl}.
    \end{equation}
\end{lemma}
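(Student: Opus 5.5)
We need to show that $\rep{\bl}{}(g)T^\bl$ has rows constant on $\calS_\bl$-orbits. Then selecting the ordered representative with $S^\bl$ and re-expanding with $T^\bl$ changes nothing.

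\textbf{Step 1: the product $T^\bl S^\bl$.} By definition, $(T^\bl S^\bl)_{\bbm,\bbm'} = \sum_{\obm} T^\bl_{\bbm,\obm}S^\bl_{\obm,\bbm'}$. This equals $1$ exactly when $\bbm'$ is the ordered representative of the class $\overline{\bbm}$, and $0$ otherwise. So for any matrix $A$ with rows indexed by $\calM_\bl$, the product $T^\bl S^\bl A$ replaces row $\bbm$ of $A$ by the row of the ordered representative of $\overline{\bbm}$. Consequently $T^\bl S^\bl A = A$ whenever the rows of $A$ satisfy $A_{\pi\bbm,\cdot}=A_{\bbm,\cdot}$ for all $\pi\in\calS_\bl$. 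Both identities \eqref{eq:TS_lemma} and \eqref{eq:TS_lemma_lie_alg} therefore reduce to checking this row invariance for $A=\rep{\bl}{}(g)T^\bl$ and for $A=d\rep{\bl}{}(X)T^\bl$.

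\textbf{Step 2: row invariance for the group representation.} Write $P_\pi$ for the permutation matrix $(P_\pi)_{\bbm,\bbm'}=\delta_{\pi\bbm,\bbm'}$, with $\pi\in\calS_\bl$. Since $\pi\bl=\bl$, we have
\[
\rep{\bl}{\pi\bbm,\pi\bbm'}(g)=\prod_i \rep{l_{\pi(i)}}{m_{\pi(i)},m'_{\pi(i)}}(g)=\rep{\bl}{\bbm,\bbm'}(g),
\]
because reordering the factors of the product does not change it. Hence $P_\pi\rep{\bl}{}(g)P_\pi^{-1}=\rep{\bl}{}(g)$. Moreover $T^\bl$ is invariant under permuting its rows within classes, since $T^\bl_{\pi\bbm,\obm}=T^\bl_{\bbm,\obm}$; that is, $P_\pi^{-1}T^\bl=T^\bl$. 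Putting these together,
\[
(\rep{\bl}{}(g)T^\bl)_{\pi\bbm,\obm}=\sum_{\bbm'}\rep{\bl}{\pi\bbm,\bbm'}(g)T^\bl_{\bbm',\obm}.
\]
Substituting $\bbm'=\pi\bbm''$ turns this into $\sum_{\bbm''}\rep{\bl}{\bbm,\bbm''}(g)T^\bl_{\bbm'',\obm}$, which is row $\bbm$. This proves \eqref{eq:TS_lemma}.

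\textbf{Step 3: the Lie algebra identity.} I would obtain \eqref{eq:TS_lemma_lie_alg} in either of two ways. The first is to set $g=\exp(tX)$ in \eqref{eq:TS_lemma} and differentiate at $t=0$ using \eqref{eq:rep-deriv}. This is legitimate because $T^\bl$ and $S^\bl$ are constant matrices, so differentiation passes through the linear maps $A\mapsto T^\bl S^\bl A T^\bl$ and $A\mapsto AT^\bl$. The second is to check directly from \eqref{eq:drho} that $d\rep{\bl}{}(X)$ commutes with $P_\pi$, since permuting $j$ in the sum $\sum_j$ is harmless when $\pi\bl=\bl$.

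\textbf{Main obstacle.} The only delicate point is bookkeeping. One must be careful that $\calS_\bl$ permutes indices only within blocks of equal $l$. This restriction is what makes the relabeled product factors match; it is also what makes classes well defined.
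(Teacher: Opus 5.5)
Your proof is correct and follows essentially the same route as the paper. The paper also shows that $T^{\bl}S^{\bl}$ replaces row $\bbm$ by the row of its ordered representative $\widetilde\bbm=\pi\bbm$ with $\pi\in\calS_\bl$, uses $\rep{\bl}{\pi\bbm,\bmu}(g)=\rep{\bl}{\bbm,\pi^{-1}\bmu}(g)$ (valid because $\pi\bl=\bl$) together with the fact that $\pi^{-1}$ permutes the class $\obm'$ being summed over, and obtains the Lie algebra identity by differentiating the group identity. Your phrasing through the permutation matrices $P_\pi$ and row invariance repackages that entrywise computation, and it makes explicit that the relevant bijection acts on the class $\obm'$, a point the paper's display states loosely by summing over all of $\calM_\bl$.
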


\begin{proof}
    Let $g\in G$. From the definition of $T^\bl$ and $S^\bl$, there holds, for $\bbm\in\calM_\bl$ and $\obm'\in\oMl$,
    \[
        \left[\rep{\bl}{}(g)T^\bl\right]_{\bbm,\obm'}=\displaystyle\sum_{\bmu\in\calM_\bl}\rep{\bl}{\bbm,\bmu}(g)T^\bl_{\bmu,\obm'}= \sum_{\bmu\in\obm'}\rep{\bl}{\bbm,\bmu}(g),
    \]
    and for $\bbm,\; \bmu\in\calM_\bl$
    \[
        \left[T^\bl S^\bl\right]_{\bbm,\bmu}= \displaystyle\sum_{\obm'\in\oMl}T^\bl_{\bbm,\obm'} S^\bl_{\obm',\bmu}=\begin{cases}
            1, & \quad\textup{if } \bar\bmu=\obm \textup{ and } \bmu \textup{ is ordered} , \\
            0, & \quad \textup{otherwise}.
        \end{cases} 
    \]
    Thus,
    \begin{equation*}
        \left[T^\bl S^\bl\rep{\bl}{}(g)T^\bl\right]_{\bbm,\obm'} = \displaystyle\sum_{\bmu\in\calM_\bl}\left[T^\bl S^\bl\right]_{\bbm,\bmu}\left[\rep{\bl}{}(g)T^\bl\right]_{\bmu,\obm'} = \sum_{\bmu\in\obm'}\rep{\bl}{\widetilde\bbm,\bmu}(g),
    \end{equation*}
    where $\widetilde\bbm$ is the ordered representative of $\obm$.
    By definition of equivalent classes, there exists ${\pi}\in\calS_\bl$ such that ${\pi}\bbm=\widetilde{\bbm}$. Since ${\pi}\bl=\bl$ for any ${\pi}\in\calS_\bl$, there holds
    \begin{equation*}
        \rep{\bl}{\widetilde\bbm,\bmu}(g) = \rep{\bl}{{\pi}\bbm,\bmu}(g) = \displaystyle\prod_{i=1}^\Nat \rep{l_i}{m_{{\pi}_i},\mu_i}(g) = \displaystyle\prod_{i=1}^\Nat \rep{l_i}{m_i,\mu_{({\pi}^{-1})_i}}(g) = \rep{\bl}{\bbm,{\pi}^{-1}\bmu}(g).
    \end{equation*}
    Hence, since ${\pi}^{-1}$ acts as a bijection on $\calM_\bl$,
    \[
        \sum_{\bmu\in\calM_\bl}\rep{\bl}{\widetilde\bbm,\bmu}(g) = \sum_{\bmu\in\calM_\bl}\rep{\bl}{\bbm,{\pi}^{-1}\bmu}(g) = \sum_{\bmu\in\calM_\bl}\rep{\bl}{\bbm,\bmu}(g),
    \]
    and thus 
    \[
        \left[T^\bl S^\bl\rep{\bl}{}(g)T^\bl\right]_{\bbm,\obm'} = \left[\rep{\bl}{}(g)T^\bl\right]_{\bbm,\obm'},
    \]
    which is exactly~\eqref{eq:TS_lemma} element-wise. Equation~\eqref{eq:TS_lemma_lie_alg} is a direct consequence of differentiating~\eqref{eq:TS_lemma}.
\end{proof}

\begin{proof}[\bf{Proof of Theorem~\ref{thm:GEPI_basis}}]
   Let $\bl\in\calL^N$, $L\in \calL$.
   We consider a function  $F^{\bl,L}\in [\bV^\bl]^{\card{\calM_L}}$ as 
   \begin{equation*}
      F^{\bl,L} := 
    \left[ F^{\bl,L}_{k} \right]_{k\in\calM_L}
    = 
    \left[ \sum_{\obm \in \oMl} c_{\obm,k}^{\bl,L} \; \bphi{\bl}{\obm} \right]_{k\in\calM_L},
\end{equation*}
which is already PI, and where the coupling coefficients are in $\K$.
   Using~\eqref{eq:1b-basis}, the group action over $F^{\bl,L}$ reads as 
\[
   \forall g\in G,\; \bR\in \Omega^\Nat, \quad F^{\bl,L}(g \cdot \bR) =  \left[ \sum_{\obm \in \oMl} c_{\obm,k}^{\bl,L} \sum_{\bbm' \in \calM_\bl} \rep{\bl}{\bbm,\bbm'}(g) \bphi{\bl}{\obm'}(\bR) \right]_{k\in\calM_L}.
\]
Noting that the sum over $\bbm' \in \calM_\bl$ can be decomposed as a sum over $\obm'$ classes and possible permutations, we obtain
\begin{equation*}
\begin{split}
    F^{\bl,L}(g \cdot \bR) & 
     =  \left[ \sum_{\obm,\obm' \in \oMl} c_{\obm,k}^{\bl,L} \left( \sum_{\bmu\in\obm'}\rep{\bl}{\bbm,\bmu}(g) \right) \bphi{\bl}{\obm'}(\bR) \right]_{k\in\calM_L}.
\end{split}
\end{equation*}
Also by definition, $F^{\bl,L}$ is equivariant with respect to the representation $\rep{L}{}$ if and only if
\[
    \forall g\in G, \bR\in\Omega^\Nat, k\in \calM_L, 
    \quad F^{\bl,L}_{k}(g \cdot \bR) = \sum_{k'\in\calM_L} \rep{L}{k,k'}(g)F_{k'}^{\bl,L}(\bR),
\]
that is, for any $g\in G, \bR\in\Omega^\Nat, k\in\calM_L,$
\[ 
   \sum_{k'\in\calM_L} \sum_{\obm \in \oMl} c_{\obm,k'}^{\bl,L} \rep{L}{k,k'}(g) \bphi{\bl}{\obm}(\bR)
   = 
   \sum_{\obm,\obm' \in \oMl} c_{\obm,k}^{\bl,L} 
   \left( \sum_{\bmu\in\obm'}\rep{\bl}{\bbm,\bmu}(g) \right)
   \bphi{\bl}{\obm'}(\bR).
\]
Switching $\obm$ and $\obm'$ in the right hand side and noting that $(\bphi{\bl}{\obm})_{\obm\in \oMl}$ is a basis of PI functions in $\bV^\bl$, we obtain that $F^{\bl,L}$ is GE-PI with respect to the representation $\rep{L}{}$ if and only if
\begin{equation}\label{eq:GEPI-coeff}
\forall \obm \in \oMl, g\in G, k\in\calM_L, \qquad
     \sum_{k'\in\calM_L}c_{\obm,k'}^{\bl,L} 
     \rep{L}{k,k'}(g)  
   = \sum_{\obm' \in \oMl} c_{\obm',k}^{\bl,L} 
   \left( \sum_{\bmu\in\obm}\rep{\bl}{\bbm',\bmu}(g) \right). 
\end{equation}
Viewing $\{c_{\obm,k}^{\bl,L}\}_{\obm\in\oMl,k\in\mathcal{M}_L}$ as a matrix $C^{\bl,L}$ of size $\card{\oMl}\times\card{\mathcal{M}_L}$,
\eqref{eq:GEPI-coeff} is equivalent to
\begin{equation}\label{eq:matrix_GE-PI_sylvester}
    \forall g\in G, \quad \rep{L}{}(g)\left(C^{\bl,L}\right)^T = \left(C^{\bl,L}\right)^TS^\bl\rep{\bl}{}(g)T^\bl.
\end{equation}  
Taking the derivative of the above equation at the neutral element $e\in G$, we have
\begin{equation}\label{eq:diff-rep-GEPI}
    \forall X\in\mathfrak g, \quad d\rep{L}{}(X)\left(C^{\bl,L}\right)^T = \left(C^{\bl,L}\right)^TS^\bl d\rep{\bl}{}(X)T^\bl.
\end{equation}
Since $d\rep{L}{}$ is linear,~\eqref{eq:diff-rep-GEPI} holds true if and only if
\begin{equation}\label{eq:diff-rep-GEPI-on-generator}
    \forall\dd=1,\ldots,N_{\rm dim},\quad d\rep{L}{}(X_\dd)\left(C^{\bl,L}\right)^T = \left(C^{\bl,L}\right)^TS^\bl d\rep{\bl}{}(X_\dd)T^\bl.
\end{equation}
As in the GE case, we obtain the component-wise form of~\eqref{eq:diff-rep-GEPI-on-generator} using~\eqref{eq:drho}, that is
\begin{equation}
\label{eq:system_GEPI}
    \forall \dd =1,\ldots,N_{\rm{dim}}, k\in\calM_L, \obm \in \oMl, \;
    \sum_{k'\in\calM_L}c_{\obm,k'}^{\bl,L} \vrho{L,\dd}{k,k'}  
    = \sum_{\obm' \in \oMl} c_{\obm',k}^{\bl,L}
    \sum_{\bmu\in\obm}
    \sum_{j=1}^\Nat \left( \vrho{l_j,\dd}{m'_j,\mu_j}\prod_{\substack{s=1 \\s\ne j}}^\Nat \delta_{m'_s,\mu_s} \right).
\end{equation}
The above terms in the right-hand side can only be non-zero if $\bmu$ and $\bbm'$ differ by at most one element.
Once the class $\obm$ is given, the classes $\obm'\in \oMl$ that can contribute can be characterized using the count vectors. Indeed, changing one element in a vector $\bbm$ corresponds to modifying the count vector $\blamb{}{\bbm}$ as follows: one element in $\blamb{}{\bbm}$ is decreased by one, another is increased by one, and this can only affect a single block of indices corresponding to identical $\bl$'s at a time. 
Hence, the set of classes possibly leading to non-zero terms in the sum over $\obm'$ in~\eqref{eq:system_GEPI}, apart from $\obm$ itself, belongs to the set
$\calN_\obm$ defined in~\eqref{eq:interacting_classes}.
Therefore,
 \begin{align*}
     \forall \dd =1,\ldots,N_{\rm{dim}}, k\in\calM_L, \obm \in \oMl,
     \quad   \sum_{k'\in\calM_L}c_{\obm,k'}^{\bl,L} \vrho{L,\dd}{k,k'}  
    &= \sum_{j=1}^\Nat \vrho{l_j,\dd}{m_j,m_j}c_{\obm,k}^{\bl,L} + \sum_{\obm' \in \calN_\obm} c_{\obm',k}^{\bl,L}
    \sum_{\bmu\in\obm}
    \sum_{j=1}^\Nat
    \vrho{l_j,\dd}{m'_j,\mu_j}.
 \end{align*}
 Now given $\obm\in \oMl$ with corresponding 
 $\blamb{}{\obm} = (\blamb{}{\obm^{(1)}},\ldots,\blamb{}{\obm^{(\Nb)}})$, for any $\obm'\in \calN_\obm$, $\obm' = \obm_{(i,p,q)}$ for some $i\in\{ 1,\ldots, \Nb\}$, $p,q \in \calM_{\bl^{(i)}_1},\;\; p\ne q
     $ defined in~\eqref{eq:interacting_classes}, a short calculation noticing that $\bbm'$ is ordered shows that
\[
\sum_{\bmu\in\obm}
    \sum_{j=1}^\Nat
    \vrho{l_j,\dd}{m'_j,\mu_j} = 
    \blamb{}{{\obm'}^{(i)},q}
    \; \vrho{\bl^{(i)}_1,\dd}{q,p}.
\]
Hence, for all $\dd =1,\ldots,N_{\rm{dim}},\; k\in\calM_L,\; \obm \in \oMl,$ there holds
\begin{align*}
    \sum_{k'\in\calM_L}c_{\obm,k'}^{\bl,L} \vrho{L,\dd}{k,k'}  
    &- \sum_{j=1}^\Nat \vrho{l_j,\dd}{m_j,m_j}c_{\obm,k}^{\bl,L} - 
    \sum_{i=1}^\Nb 
    \sum_{\substack{p,q=1 \\ p \neq q}}^{\card{\calM_{l_i}}}
     c_{\obm_{(i,p,q)},k}^{\bl,L}
    \;\blamb{}{{\obm'}^{(i)},q}
    \; \vrho{\bl^{(i)}_1,\dd}{q,p}=0, 
\end{align*}
which indicates that the vector of coupling coefficients $\bc^{\bl,L}:=\left(c_{\bbm,k}^{\bl,L}\right)_{\obm\in\oMl,k\in\mathcal{M}_L}$ belongs to $\ker(M^{\bl,L})$. 

Reciprocally, assume that $\bc^{\bl,L} \in\ker(M^{\bl,L})$, i.e., that~\eqref{eq:diff-rep-GEPI}holds true.
Multiplying on the left by $d\rep{L}{}(X)$, using~\eqref{eq:diff-rep-GEPI} and~\eqref{eq:TS_lemma_lie_alg} yields
\begin{align*}
     \forall X\in\mathfrak g, \quad \left(d\rep{L}{}(X)\right)^2\left(C^{\bl,L}\right)^T &= d\rep{L}{}(X)\left(C^{\bl,L}\right)^TS^\bl d\rep{\bl}{}(X)T^\bl, \\
     & = \left(C^{\bl,L}\right)^TS^\bl d\rep{\bl}{}(X) T^\bl S^\bl d\rep{\bl}{}(X)T^\bl, \\
     & = \left(C^{\bl,L}\right)^TS^\bl\left(d\rep{\bl}{}(X)\right)^2 T^\bl.
\end{align*}
Iterating this argument as in the GE case and using~\cite[Equation (3.11)]{Hall2013-af}, we see that
\begin{equation}\label{eq:recip-cond-GEPI}
    \forall X\in\mathfrak g,\quad \rep{L}{}\left(\exp(X)\right)\left(C^{\bl,L}\right)^T = \left(C^{\bl,L}\right)^TS^\bl\rep{\bl}{}\left(\exp(X)\right)T^\bl.
\end{equation}
Finally, expressing any element $g\in G$ in the connected linear Lie group $G$ as a finite product of exponentials, we recover~\eqref{eq:matrix_GE-PI_sylvester} from~\eqref{eq:recip-cond-GEPI} using~\eqref{eq:TS_lemma} and that $\rep{\bl}{}$ and  $\rep{L}{}$ are representations.
The basis and dimensionality of $V^{\bl,L}$ are straightforward consequences.
\end{proof}

\begin{proof}[\bf{Proof of Proposition~\ref{prop:GE_rec}}]
First, since $V^\bl = V^{\bl^{(1)}} \otimes V^{\bl^{(2)}}$,~\eqref{eq:direct_sum_GE} yields that
\[ 
([b^{L_1}_{\bl^{(1)} i_1}]_{k_1} [b^{L_2}_{\bl^{(2)} i_2}]_{k_2})_{L_1,L_2 \in \N, i_1 \in \{ 1, \ldots, \dim{}{}(\bl^{(1)},L_1)\}, i_2 \in \{1, \ldots, \dimge{\bl^{(2)}}{L_2}\} }
\]
is a basis of $V^\bl$. Thus, 
any function $F^{\bl,L}\in [V^\bl]^{\card{\calM_L}}$ can be decomposed as 
\[
F^{\bl,L}
   = \left[ \sum_{k_1 \in \calM_{L_1}} \sum_{k_2 \in \calM_{L_2}}
    c^{\bl,L}_{(k_1,k_2), k}
    [b^{L_1}_{\bl^{(1)} i_1}]_{k_1} [b^{L_2}_{\bl^{(2)} i_2}]_{k_2} \right]_{k\in \calM_L},
\]
for some coefficients $\bc^{\bl,L}_{(k_1,k_2), k} \in \K$.
The group action over $F^{\bl,L}$ reads as 
\begin{equation}\label{eq:F-rec}
   \forall g\in G, \; \bR\in \Omega^\Nat, \quad F^{\bl,L}(g \cdot \bR) = 
   \left[ \sum_{k_1, k_1'\in \calM_{L_1}} \sum_{k_2 k_2' \in \calM_{L_2}} 
    c^{\bl,L}_{(k_1,k_2), k}
    \rep{L_1}{k_1,k_1'}(g) \rep{L_2}{k_2,k_2'}(g)
    [b^{L_1}_{\bl^{(1)} i_1}]_{k_1'} [b^{L_2}_{\bl^{(2)} i_2}]_{k_2'} \right]_{k\in \calM_L}.
\end{equation}
Moreover, $F^{\bl,L}$ is equivariant with respect to the representation $\rep{L}{}$ if and only if
\[
    \forall g\in G,\; \bR\in\Omega^\Nat,\; k\in \calM_L, 
    \quad F^{\bl,L}_{k}(g \cdot \bR) = \sum_{k'\in\calM_L} \rep{L}{k,k'}(g)F_{k'}^{\bl,L}(\bR).
\]
Switching $k_1,k_2$ and $k_1',k_2'$ in the right-hand side of~\eqref{eq:F-rec} and writing the equality on the considered basis, we obtain that 
$F^{\bl,L}$ is GE with respect to the representation $\rep{L}{}$ if and only if
\begin{equation*}
    \forall k_1 \in \calM_{L_1},\; k_2 \in \calM_{L_2},\; k\in\calM_L,\; g\in G, \quad \sum_{k'\in\calM_L} c^{\bl,L}_{(k_1,k_2), k'} \rep{L}{k,k'}(g)  
   = \hspace{-3mm} \sum_{k_1 \in \calM_{L_1}} \sum_{k_2 \in \calM_{L_2}}
   \hspace{-3mm}
   c^{\bl,L}_{(k_1',k_2'), k} \rep{L_1}{k_1,k_1'}(g) \rep{L_2}{k_2,k_2'}(g). 
\end{equation*}
These are the same equations as in Proposition~\ref{prop:GE_basis}, equation~\eqref{eq:GE-coeff}, with $(L_1,L_2)$ replacing $\bl$, hence the solutions are the same, as stated in the proposition.
The dimensionality \eqref{eq:dim_GE_rec} is then a direct consequence.
\end{proof}

\subsection{Proofs of Section~\ref{sec:appl}}\label{sec:proof_appl}

\subsubsection{Proof of Proposition~\ref{prop:ker-M2-GE}}\label{sec:proof-half-mat}

Before proving Proposition~\ref{prop:ker-M2-GE}, we first show that either of the upper half or the lower half of the matrix $\MlL_{2}$ defined in~\eqref{eq:M2} (c.f. Figure~\ref{fig:block_structure_of_M}(a)) is sufficient to determine its kernel, and therefore the coupling coefficients. 
Thus,  
we decompose 
\begin{equation}\label{eq:split-M}
    \MlL_{2} = \begin{pmatrix}
         \Mm_{2} \\
         \Mp_{2} \\
     \end{pmatrix},
\end{equation}
with 
$\Mm_{2}=\big[\Mm_{K,K'}\big]_{K\in \calM_{L+1}\backslash\{L,L+1\},~K'\in\calM_L}$ 
and
$\Mp_{2}=\big[\Mp_{K,K'}\big]_{K\in\calM_{L+1}\backslash\{-L,-L-1\},~K'\in\calM_L}$ 
with each block being defined as
\begin{equation*}
\Mpm_{K,K'} = \begin{cases}
    \Apm_{K}, 
    \quad & \textup{if } K'=K, \\
    \Bpm_{K\mp1}, 
    \quad & \textup{if } K'=K\mp 1, \\
    \mathbf{0}, \quad &\textup{otherwise.}
    \end{cases}
\end{equation*}
The matrices $\Apm_{K}\in\R^{\card{\calM_{\bl,K}}\times\card{\calM_{\bl,K}}}$ are diagonal matrices 
\[
    \Apm_{K} = a^\pm_K I_{\card{\calM_{\bl,K}}},
\]
with 
\begin{equation}
\label{eq:akpm2}
    a^\pm_K = \mp\sqrt{(L\mp K+1)(L\pm K)}.
\end{equation}
In particular, there hold
\begin{equation}
\label{eq:akpm}
    a^+_{K} = -a^-_{K-1},
\end{equation}
and
\begin{equation}\label{eq:A-diff}  
    (a^+_K)^2 - (a^-_K)^2= 2K.
\end{equation}
The matrices $\Bpm_{K}$ are defined as $\Bpm_{K}=\sum_{j=1}^N \Bpm_{K,j}$, where $\Bpm_{K,j}\in\R^{\card{\calM_{\bl,K\pm1}}\times\card{\calM_{\bl,K}}}$ is defined for $\bbm\in\calM_{\bl,K\pm1}$ and $\bbm' \in \calM_{\bl,K}$ as
\begin{equation}
\label{eq:Bpm}
        \Bpm_{K,j}[\bbm,\bbm'] 
        = \pm\sqrt{(l_j\mp m_j+1)(l_j\pm m_j)}\delta_{\bbm',\bbm_j^\mp} 
        = \pm\sqrt{(l_j\pm m_j'+1)(l_j\mp m_j')}\delta_{\bbm,({\bbm'}_j)^\pm}.
\end{equation}
While in the matrix $\MlL_2$, there appear only $B^\pm_K, \; K\in\calM_L$, we mention that the definition of the $B^\pm_K$ matrices applies to general $K\in\calM_\infty$, where $\calM_\infty=\{\pm L \;,\; L\in\calL_G\}$. Even when the set $\calM_{\bl,K \pm 1}$ (or $\calM_{\bl,K}$) is empty, the corresponding $B^\pm_K$ can be viewed as a generalized matrix with 0 rows (or 0 columns). We collect below some useful properties of the $B^\pm_K$ matrices. 

\begin{lemma}[Properties of the $B_K$ matrices]\label{lemma:laddar}
    Let $\bl\in\calL^N$, $L\in\calL_G,\;K\in\calM_\infty$, 
    and let the matrices $B_K^\pm$ be defined in~\eqref{eq:Bpm}, then
    \begin{equation}\label{eq:Bequality}
        (B_{K}^-)^T = -B_{{K-1}}^+.
    \end{equation}
    and
    \begin{equation}
        \big(\Bp_K\big)^T\Bp_K  - \big(\Bm_K\big)^T\Bm_K 
     = -2K\cdot I_{\card{\calM_{\bl,K}}}\label{eq:ladder2}.
    \end{equation}
\end{lemma}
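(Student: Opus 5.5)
We prove both identities entrywise from the definition~\eqref{eq:Bpm}, using $B^\pm_K=\sum_{j=1}^N B^\pm_{K,j}$ and the fact that $B^\pm_{K,j}$ changes only the $j$-th entry of $\bbm$ by $\pm1$.

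\emph{Transpose identity~\eqref{eq:Bequality}.} Both sides have size $\card{\calM_{\bl,K-1}}\times\card{\calM_{\bl,K}}$. Take $\bbm\in\calM_{\bl,K-1}$ and $\bbm'\in\calM_{\bl,K}$. By~\eqref{eq:Bpm},
\[
(B^-_K)^T[\bbm,\bbm'] = B^-_K[\bbm',\bbm] = \sum_j -\sqrt{(l_j+m_j+1)(l_j-m_j)}\,\delta_{\bbm',\bbm_j^+},
\]
where the second form of~\eqref{eq:Bpm} is read with $\bbm$ as the column index. Using the first form of~\eqref{eq:Bpm},
\[
B^+_{K-1}[\bbm,\bbm'] = \sum_j \sqrt{(l_j+m_j+1)(l_j-m_j)}\,\delta_{\bbm',\bbm_j^+}.
\]
The two expressions are negatives of each other, which proves~\eqref{eq:Bequality}. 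The only care needed is matching which of the two equivalent forms of~\eqref{eq:Bpm} is used for each index.

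\emph{Commutator identity~\eqref{eq:ladder2}.} The key observation is that $B^\pm_{K,j}$ acts only on the $j$-th tensor factor. On that factor it is the spin-$l_j$ ladder operator $J_\pm$ with the standard coefficients $\sqrt{(l\mp m+1)(l\pm m)}$, carrying a sign $\pm$. Expand
\[
(B^+_K)^TB^+_K=\sum_{j,j'}(B^+_{K,j})^TB^+_{K,j'}
\]
and likewise for the minus operator, then split into $j\neq j'$ and $j=j'$.

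\emph{Cross terms $j\neq j'$.} Entry $[\bbm,\bbm']$ of $(B^+_{K,j})^TB^+_{K,j'}$ is nonzero only when there is an intermediate $\bn\in\calM_{\bl,K+1}$ with $\bn=\bbm_j^+=\bbm'^+_{j'}$. This forces $\bbm'=(\bbm_j^+)_{j'}^-$, and the entry equals
\[
\sqrt{(l_j-m_j)(l_j+m_j+1)}\,\sqrt{(l_{j'}-m'_{j'})(l_{j'}+m'_{j'}+1)}.
\]
For the minus product, the intermediate is $\bbm_{j'}^-=\bbm'^-_j$, the entry is $\sqrt{(l_{j'}+m_{j'})(l_{j'}-m_{j'}+1)}\cdot\sqrt{(l_j+m'_j)(l_j-m'_j+1)}$, and the two minus signs cancel. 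Since $m'_{j'}=m_{j'}-1$ and $m'_j=m_j+1$, the two products coincide. Hence the cross terms cancel in the difference, including the boundary cases where an intermediate falls outside $\calM_\bl$, because then the corresponding square root vanishes. This is exactly the statement that $J^{(j)}$ and $J^{(j')}$ commute on different factors.

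\emph{Diagonal terms $j=j'$.} Both products are diagonal. The plus product gives $(l_j-m_j)(l_j+m_j+1)$ and the minus product gives $(l_j+m_j)(l_j-m_j+1)$. Their difference is $-2m_j$. Summing over $j$ gives $-2\sum\bbm=-2K$ for $\bbm\in\calM_{\bl,K}$, which is~\eqref{eq:ladder2}.

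The cross-term cancellation is the main obstacle. It requires careful index matching and checking the boundary situations where $\calM_{\bl,K\pm1}$ is empty or entries sit at $\pm l_j$. I would handle these uniformly by noting that the coefficient $\sqrt{(l-m)(l+m+1)}$ vanishes exactly when $\bbm_j^+\notin\calM_\bl$, and symmetrically for $\bbm_j^-$. With that convention every term can be written without restricting to the index set.
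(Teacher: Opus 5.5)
Your proof is correct and takes the same route as the paper. You check \eqref{eq:Bequality} entrywise for each $B^\pm_{K,j}$ and sum over $j$, and for \eqref{eq:ladder2} you split $\sum_{j,j'}$ into cross terms that cancel via $(B^+_{K,j})^TB^+_{K,j'}=(B^-_{K,j'})^TB^-_{K,j}$ for $j\neq j'$, plus diagonal terms contributing $(l_j-m_j)(l_j+m_j+1)-(l_j+m_j)(l_j-m_j+1)=-2m_j$.

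One cosmetic slip in the first part: both sides of \eqref{eq:Bequality} are $\card{\calM_{\bl,K}}\times\card{\calM_{\bl,K-1}}$, not $\card{\calM_{\bl,K-1}}\times\card{\calM_{\bl,K}}$. So for $\bbm\in\calM_{\bl,K-1}$, $\bbm'\in\calM_{\bl,K}$, the entries you actually computed are $B^-_K[\bbm,\bbm']$ and $B^+_{K-1}[\bbm',\bbm]$; your row and column labels are transposed on both sides, and the conclusion is unaffected.
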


\begin{proof}
From~\eqref{eq:Bpm}, for $K\in \calM_\infty,\;j\in \{1,\ldots, \Nat\}$, and $(\bbm,\bbm')\in\calM_{\bl,K-1}\times\calM_{\bl,K}$, there holds
\[
    \Bm_{K,j}[\bbm,\bbm'] = -\Bp_{K-1,j}[\bbm',\bbm],
\]
i.e.
    $(\Bm_{K,j})^T=-\Bp_{K-1,j}$.
We obtain~\eqref{eq:Bequality} by summing over $j\in\{1,\ldots, \Nat\}$.
Moreover, for $j,\;j'\in \{1,\ldots, \Nat\}$, $K\in \calM_\infty$, and $\bbm, \;\bbm'\in\calM_{\bl,K}$, 
\begin{equation}\label{eq:BTB}
\begin{split}
    [\big(\Bpm_{K,j}\big)^T \Bpm_{K,j'}][\bbm,\bbm']
    &= 
    \sum_{\bbm''\in\calM_{\bl,K\pm 1}}\Bpm_{K,j}[\bbm'',\bbm]\Bpm_{K,j'}[\bbm'',\bbm'] \\
    &= 
    \sum_{\bbm''\in\calM_{\bl,K\pm 1}}
    \sqrt{(l_j\pm m_j+1)(l_j\mp m_j)(l_{j'}\pm m'_{j'}+1)(l_{j'}\mp m'_{j'})}\delta_{\bbm'',\bbm_j^\pm}\delta_{\bbm'',{\bbm'}_{j'}^\pm}\\
    &= \begin{cases}
        (l_j\pm m_j+1)(l_j\mp m_j),
        \quad &j=j', ~\bbm' = \bbm, \\
        \sqrt{(l_j\pm m_j+1)(l_j\mp m_j)(l_{j'}\pm m'_{j'}+1)(l_{j'}\mp m'_{j'})},
        \quad &j\ne j', ~\bbm'=\bbm\mp{\bf e}_j\pm{\bf e}_{j'}, \\
        0, \qquad &\textup{otherwise.}
    \end{cases}
\end{split}
\end{equation}
By swapping $j$ and $j'$ in~\eqref{eq:BTB} and comparing it with the resulting equation, we see that
    \[
        \forall j\ne j', \qquad \big(\Bp_{K,j})^T \Bp_{K,j'} = \big(\Bm_{K,j'}\big)^T \Bm_{K,j},
    \]
    and
    \[
        \big(\Bp_{K,j}\big)^T\Bp_{K,j} - \big(\Bm_{K,j}\big)^T \Bm_{K,j} = \textup{Diag}\{-2m_j\}_{\bbm\in\calM_{\bl,K}} = -2m_j \cdot I_{\card{\calM_{\bl,K}}}.
    \]
Therefore, combining the two previous expressions gives
\begin{align*}
    \big(\Bp_K\big)^T\Bp_K  - \big(\Bm_K\big)^T\Bm_K 
    & = 
 \sum_{j,j'=1}^N 
            \Big( \big(\Bp_{K,j}\big)^T
            \Bp_{K,j'}
             - \big(\Bm_{K,j'}\big)^T \Bm_{K,j} \Big)
             \nonumber \\
        &= \sum_{j=1}^N 
            \Big( \big(\Bp_{K,j}\big)^T \Bp_{K,j}
             - \big(\Bm_{K,j}\big)^T \Bm_{K,j} \Big) \nonumber \\
        &= -\sum_{j=1}^N 2m_j\cdot I_{\card{\calM_{\bl,K}}} = -2\sum\bbm \cdot I_{\card{\calM_{\bl,K}}} = -2K\cdot I_{\card{\calM_{\bl,K}}},
\end{align*}
which proves~\eqref{eq:ladder2}.
\end{proof}

We can then prove the following proposition.
\begin{proposition}\label{prop:matrix_final}
    For $\bl\in \calL^N$, $L\in \calLG$, there holds
    \[
        \ker(\MlL_{2}) = \ker(\Mm_{2}) = \ker(\Mp_{2}).
    \]
\end{proposition}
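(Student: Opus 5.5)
Since $\MlL_2$ is the vertical stack of $\Mm_2$ and $\Mp_2$, we have $\ker(\MlL_2)=\ker(\Mm_2)\cap\ker(\Mp_2)$. It therefore suffices to prove $\ker(\Mm_2)\subseteq\ker(\Mp_2)$ and, symmetrically, $\ker(\Mp_2)\subseteq\ker(\Mm_2)$. We write $\bc=(\bc_{K'})_{K'\in\calM_L}$ for the block decomposition of a vector indexed by $\calM_{\bl,\le L}$. With the block definitions, $\Mm_2\bc=0$ reads
\[
a^-_K\bc_K+\Bm_{K+1}\bc_{K+1}=0,\quad K=-L,\ldots,L-1,\qquad \Bm_{-L}\bc_{-L}=0 ,
\]
where the second equation is the row block $K=-L-1$. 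Similarly, $\Mp_2\bc=0$ reads
\[
a^+_K\bc_K+\Bp_{K-1}\bc_{K-1}=0,\quad K=-L+1,\ldots,L,\qquad \Bp_{L}\bc_{L}=0 ,
\]
where the second equation is the row block $K=L+1$. Empty index sets simply give vacuous equations.

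\textbf{Step 1: the intermediate equations.} Assume $\Mm_2\bc=0$. For $K\in\{-L,\ldots,L-1\}$ we have $a^-_K\ne0$, since $(L+K+1)(L-K)>0$. Hence $\bc_K=-(a^-_K)^{-1}\Bm_{K+1}\bc_{K+1}$, so every block is determined by $\bc_L$. To obtain the $\Mp_2$ equation for index $K+1$, I apply $\Bp_K$ and use \eqref{eq:Bequality}, namely $\Bp_K=-(\Bm_{K+1})^T$, together with \eqref{eq:akpm}, namely $a^+_{K+1}=-a^-_K$. This gives
\[
a^+_{K+1}\bc_{K+1}+\Bp_K\bc_K=-a^-_K\bc_{K+1}+(a^-_K)^{-1}(\Bm_{K+1})^T\Bm_{K+1}\bc_{K+1}.
\]
This vanishes if and only if $(\Bm_{K+1})^T\Bm_{K+1}\bc_{K+1}=(a^-_K)^2\bc_{K+1}$. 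So the key claim is that $\bc_{K'}$ is an eigenvector of $(\Bm_{K'})^T\Bm_{K'}$ with eigenvalue $(a^-_{K'-1})^2=(a^+_{K'})^2$ for every $K'\in\{-L+1,\ldots,L\}$, and also that $\Bp_L\bc_L=0$.

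\textbf{Step 2: proving the eigen-relation by induction.} I would induct upward from $K'=-L$, using \eqref{eq:ladder2}, $(\Bp_K)^T\Bp_K=(\Bm_K)^T\Bm_K-2K\,I$, together with \eqref{eq:A-diff}.

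For the base case, $\Bm_{-L}\bc_{-L}=0$ gives $(\Bm_{-L})^T\Bm_{-L}\bc_{-L}=0=(a^-_{-L})^2\bc_{-L}$, since $a^-_{-L}=0$.

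For the inductive step, suppose $(\Bm_K)^T\Bm_K\bc_K=(a^-_K)^2\bc_K$. Then \eqref{eq:ladder2} gives $(\Bp_K)^T\Bp_K\bc_K=((a^-_K)^2-2K)\bc_K$, and by \eqref{eq:A-diff} this equals $(a^+_K)^2\bc_K$. Now use $\bc_K=-(a^-_K)^{-1}\Bm_{K+1}\bc_{K+1}$ and $\Bm_{K+1}=-(\Bp_K)^T$. These turn the relation $\Bm_{K+1}\bc_{K+1}=-a^-_K\bc_K$ into $(\Bp_K)^T\bc_{K+1}=a^-_K\bc_K$. Consequently
\[
(\Bm_{K+1})^T\Bm_{K+1}\bc_{K+1}=-\Bp_K\big(-(\Bp_K)^T\bc_{K+1}\big)=a^-_K\Bp_K\bc_K .
\]
It remains to identify this with $(a^-_K)^2\bc_{K+1}=(a^+_{K+1})^2\bc_{K+1}$, which amounts to showing $\Bp_K\bc_K=a^-_K\bc_{K+1}$. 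I expect this to be the main obstacle. The relation $\bc_K\propto(\Bp_K)^T\bc_{K+1}$ only controls components in the range of $(\Bp_K)^T$, so one needs a ladder-type commutation: the upward equations must be recovered from the downward ones plus the boundary condition.

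My first attempt would be to avoid element-wise induction and view the kernel through the representation. The equations $\Mm_2\bc=0$ say that the vector $v=\sum_K\bc_K$ in the $\bl$-tensor space satisfies the lowering conditions. Using the Casimir identity encoded by \eqref{eq:ladder2}, $J_+J_-=J^2-J_z^2+J_z$, I would show that $\|\Bp_K\bc_K-a^-_K\bc_{K+1}\|^2=0$. Expanding this norm with \eqref{eq:ladder2} and the inductive hypothesis, the cross terms should cancel, since $\langle\Bp_K\bc_K,\bc_{K+1}\rangle=\langle\bc_K,(\Bp_K)^T\bc_{K+1}\rangle=a^-_K\|\bc_K\|^2$ and also $(a^-_K)^2\|\bc_{K+1}\|^2=(a^-_K)^2\|\bc_K\|^2$.

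\textbf{Step 3: the top boundary and the reverse inclusion.} At the top, the same norm computation with $a^+_{L+1}=0$ yields $\Bp_L\bc_L=0$. The reverse inclusion $\ker(\Mp_2)\subseteq\ker(\Mm_2)$ follows by the symmetric argument, inducting downward from $K=L$ and exchanging the roles of $\pm$.
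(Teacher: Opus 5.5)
Your Step 1 is a correct reduction: for $\bc\in\ker(\Mm_2)$, the $\Mp_2$-equation in row block $K+1$ is equivalent to $(\Bm_{K+1})^T\Bm_{K+1}\bc_{K+1}=(a^-_K)^2\bc_{K+1}$. The gap is in Step 2, which never proves this.

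First, fix the labels. The hypothesis should read $(\Bm_K)^T\Bm_K\bc_K=(a^+_K)^2\bc_K$. In the base case the vanishing coefficient is $a^+_{-L}=a^-_{-L-1}=0$, whereas $a^-_{-L}=\sqrt{2L}\neq0$. Then \eqref{eq:ladder2} and \eqref{eq:A-diff} give $(\Bp_K)^T\Bp_K\bc_K=((a^+_K)^2-2K)\bc_K=(a^-_K)^2\bc_K$. With this, your expansion yields exactly
\[
\|\Bp_K\bc_K-a^-_K\bc_{K+1}\|^2=(a^-_K)^2\big(\|\bc_{K+1}\|^2-\|\bc_K\|^2\big).
\]
So everything rests on $\|\bc_{K+1}\|=\|\bc_K\|$, which you assert without justification. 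Since $a^-_K\bc_K=-\Bm_{K+1}\bc_{K+1}$, we have $(a^-_K)^2\|\bc_K\|^2=\langle\bc_{K+1},(\Bm_{K+1})^T\Bm_{K+1}\bc_{K+1}\rangle$. The norm equality is therefore just the Rayleigh-quotient form of the eigen-relation at level $K+1$, which is what you are trying to prove, so the induction is circular.

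It also cannot be repaired locally. The premises of your step (information at levels $\le K$ plus one link equation) do not see the component of $\bc_{K+1}$ in $\ker(\Bm_{K+1})$. For $\bl=(1,1)$ and $L=2$, replace $\bc_0$ by $\bc_0+tv$, where $v$ spans the one-dimensional $\ker(\Bm_0)$. This leaves $\bc_{-1}$ and the link $a^-_{-1}\bc_{-1}=-\Bm_0\bc_0$ intact. It breaks the eigen-relation at $K=0$ for $t\neq0$, because $(a^+_0)^2=6\neq0$. Such a $v$ is excluded only because $\bc_0$ must lie in the range of $\Bm_1\Bm_2$, that is, by the equations at the top of the chain, which a bottom-up induction never uses.

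The paper avoids this with a global identity, $(\Mm_2)^T\Mm_2=(\Mp_2)^T\Mp_2$, checked blockwise:
\begin{itemize}
\item off-diagonal blocks: $a^-_{K-1}(\Bm_K)^T=a^+_K\Bp_{K-1}$, by \eqref{eq:akpm} and \eqref{eq:Bequality};
\item diagonal blocks: $(\Bm_K)^T\Bm_K+(a^-_K)^2I=(\Bp_K)^T\Bp_K+(a^+_K)^2I$, by \eqref{eq:A-diff} and \eqref{eq:ladder2}, using $a^-_L=a^+_{-L}=0$ at the boundary.
\end{itemize}
Hence $\|\Mp_2\bc\|=\|\Mm_2\bc\|$ for every $\bc$, which gives both inclusions at once with no induction.

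Equivalently, you can rescue your own computation. Sum your local quantities $\|\Bp_K\bc_K-a^-_K\bc_{K+1}\|^2$ over all $K\in\calM_L$, expanding with \eqref{eq:ladder2} and the link equations but with no inductive hypothesis. The terms telescope to $\sum_K\big((a^+_K)^2-(a^-_K)^2-2K\big)\|\bc_K\|^2=0$, so each term vanishes.
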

\begin{proof}
We first show that $\big(\Mm_{2}\big)^T\Mm_{2} = 
    \big(\Mp_{2}\big)^T\Mp_{2}.$
Due to the block structure of $\Mm_{2}$ (respectively $\Mp_{2}$), the matrix $\big(\Mm_{2}\big)^T\Mm_{2}$ (resp. $\big(\Mp_{2}\big)^T\Mp_{2}$) is also a block matrix with row and column indices taking values from $\calM_L$. Noting that 
$a^+_{-L} = a^-_{L} =0$, 
we have for all $K,K'\in\calM_L$ that
\[
    \Big[\big(\Mm_{2}\big)^T\Mm_{2}\Big]_{K,K'} = 
    \begin{cases}
        \big(\Bm_K\big)^T\Bm_K + \big(\Am_K\big)^T\Am_K, \quad & \textup{if } K'=K, \\
        \big(\Bm_{K}\big)^T\Am_{K-1}, \quad & \textup{if } K'=K-1, \\
        \big(\Am_{K}\big)^T\Bm_{K+1}, \quad & \textup{if } K'=K+1, \\
        \mathbf{0}, \quad & \textup{otherwise,}
    \end{cases}
\]
and 
\[
    \Big[\big(\Mp_{2}\big)^T\Mp_{2}\Big]_{K,K'} = 
    \begin{cases}
        \big(\Bp_K\big)^T\Bp_K + \big(\Ap_K\big)^T\Ap_K, \quad & \textup{if } K'=K, \\
        \big(\Ap_{K}\big)^T\Bp_{K-1}, \quad & \textup{if } K'=K-1, \\
        \big(\Bp_{K}\big)^T\Ap_{K+1}, \quad & \textup{if } K'=K+1, \\
        \mathbf{0}, \quad & \textup{otherwise.}
    \end{cases}
\]
Using~\eqref{eq:akpm} and~\eqref{eq:Bequality}, we can easily show for $K\in \calM_L$, when the corresponding blocks are defined, that
\begin{equation}
    \Big[\big(\Mm_{2}\big)^T\Mm_{2}\Big]_{K,K\pm 1} 
= 
\Big[\big(\Mp_{2}\big)^T\Mp_{2}\Big]_{K,K \pm 1},
\label{eq:offdiagblocks}
\end{equation}
and using~\eqref{eq:A-diff} and ~\eqref{eq:ladder2}, we obtain that for $K\in\calM_L$,
\begin{equation}
    \Big[\big(\Mm_{2}\big)^T\Mm_{2}\Big]_{K,K} 
= 
\Big[\big(\Mp_{2}\big)^T\Mp_{2}\Big]_{K,K }.
\label{eq:diagblocks}
\end{equation}
Combining~\eqref{eq:offdiagblocks} and~\eqref{eq:diagblocks}, we have
$\big(\Mm_{2}\big)^T\Mm_{2} = 
    \big(\Mp_{2}\big)^T\Mp_{2}.$
Thus, for any $\bc\in\ker(\Mm_{2})$,
\[
    \|\Mm_{2}\bc\|^2 = \bc^T(\Mm_{2})^T\Mm_{2}\bc=\bc^T(\Mp_{2})^T\Mp_{2}\bc=\|\Mp_{2}\bc\|^2=0,
\]
which leads to $\bc\in\ker(\Mp_{2})$, and vice versa. Consequently, $\ker(\Mm_{2})=\ker(\Mp_{2})$. We complete the proof by using \eqref{eq:split-M}.
\end{proof}

We are now in the position of proving Proposition~\ref{prop:ker-M2-GE}. 

\begin{proof}[\bf{Proof of Proposition~\ref{prop:ker-M2-GE}}]
    We may write
    $\Mup=\big[\Mup_{K,K'}\big]_{K\in\calM_{L+1}\backslash \{L,-L-1\},~K'\in\calM_L}$, with each block being defined as
    \begin{equation*}
    \Mup_{K,K'} = \begin{cases}
        \Am_{K}, 
        \quad & \textup{if } K'=K, \\
        \Bm_{K\mp1}, 
        \quad & \textup{if } K'=K\mp1\ne L, \\
        \Bp_{L}, 
        \quad & \textup{if } K'=K-1=L, \\
        \mathbf{0}, \quad &\textup{otherwise.}
        \end{cases}
    \end{equation*}
    Using \eqref{eq:ladder2} with $K=L+1$, we have
    \begin{equation*}
        \big(\Bm_{L+1}\big)^T\Bm_{L+1} = \big(\Bp_{L+1}\big)^T\Bp_{L+1} +2(L+1) \cdot I_{\card{\calM_{\bl,L+1}}},
    \end{equation*}
    which yields that $\big(\Bm_{L+1}\big)^T\Bm_{L+1}$ is positive definite, so $\Bm_{L+1}$ has full rank. We obtain using~\eqref{eq:Bequality} that $\Bp_{L}$ has full rank.
    Thus, since the matrices $\Am_{K}$ 
    (yellow blocks on Figure~\ref{fig:block_structure_of_M}(a)) are scaled identities with non-zero diagonal elements for $K\in\calM_L\setminus \{L\}$, the matrix $\Mup$ has full rank.
    Since $\Mup$ consists of a subset of the rows of $\MlL_2$, we have $\ker(\MlL_2)\subset\ker(\Mup)$, 
    and using Proposition~\ref{prop:matrix_final}
    we obtain
    \begin{equation}\label{eq:dim-bound}
        \dim\big(\ker(\Mup)\big)\ge \dim\big(\ker(\MlL_2)\big)=\dim\big(\ker(\Mm_2)\big).
    \end{equation}
    Noting that $\Mup$ and $\Mm_2$ have the same size, equation~\eqref{eq:dim-bound} together with the rank–nullity theorem indicate that $\text{rank}(\Mup)\le\text{rank}(\Mm_2)$. However, as $\Mup$ has full rank, $\Mm_2$ must have full rank, i.e., $\text{rank}(\Mup)=\text{rank}(\Mm_2)$. Therefore
    \[
        \dim\big(\ker(\Mup)\big)=\dim\big(\ker(\Mm_2)\big) = \dim\big(\ker(\MlL_2)\big).
    \]
    Recalling that $\ker(\MlL_2)\subset\ker(\Mup)$, we easily obtain the result.
\end{proof}

\subsubsection{Proof of Proposition~\ref{prop:asym-dim-ge}}\label{sec:proof-asym-dim-ge}

\begin{proof}[\bf{Proof of Proposition~\ref{prop:asym-dim-ge}}]
By definition, $\card{\calM_{\bl,L}}$ is the number of $\bbm\in\calM_\bl$ such that $\sum\bbm=L$.
For $\bbm\in\calM_\bl$, we may view each $(m_i)_{i=1}^N$ as independent random variables uniformly distributed on $\calM_{l_i}$. Hence, $\sum\bbm = \sum_{i=1}^N m_i$ is a random variable with mean 0,  variance $\textup{Var}_{\bl} = \sum_{i=1}^N l_i(l_i+1)/3$ and fourth order cumulant 
\[
\textup{C}_{4\bl} = \sum_{i=1}^\Nat \left[ \frac{1}{|\calM_{l_i}|}\sum_{k\in\calM_{l_i}} k^4 - \frac{3}{|\calM_{l_i}|^2} \left( \sum_{k\in\calM_{l_i}} k^2 \right)^2 \right] = -\sum_{i=1}^N \frac{l_i (l_i+1) (2l_i^2 + 2l_i+1)}{15}.
\]
Under the condition~\eqref{eq:bounded-ll}, we have that $O(\textup{Var}_{\bl})=O(\textup{C}_{4\bl})=O(N)$ for $N$ sufficiently large. 
As a consequence of~\cite[Ch.~VII, Theorem~12]{petrov1975-si}, we have
\begin{equation*}
    P
    \left(\sum\bbm=L{\Big|}\bbm\in\calM_{\bl}
    \right) = 
    \frac{1}{\sqrt{2\pi \textup{Var}_{\bl}}}
    \exp\left(\frac{-L^2}{2\textup{Var}_{\bl}}\right)
        \Big(1 + \frac{\textup{C}_{4\bl}}{24 \textup{Var}_{\bl}^{2}}H_4
        \left(\frac{L}{\sqrt{\textup{Var}_{\bl}}}\right) + O\left(\frac{1}{\Nat^{2}}\right)\Big),
\end{equation*}
where $H_4(x) = x^4-6x^2+3$ stands for the Hermite polynomial of order $4$.
Since
$    |\calM_\bl|=\prod_{i=1}^N\big(2l_i+1\big), $
there holds
\begin{equation*}
    \card{\calM_{\bl,L}} = 
    \frac{\prod_{i=1}^N\big(2l_i+1\big)}{\sqrt{2\pi \textup{Var}_{\bl}}}
    \exp\left(\frac{-L^2}{2\textup{Var}_{\bl}}\right)
        \left(1 + \frac{\textup{C}_{4\bl}}{24 \textup{Var}_{\bl}^{2}}H_4\left(\frac{L}{\sqrt{\textup{Var}_{\bl}}}\right) + O\left(\frac{1}{\Nat^{2}}\right)\right).
\end{equation*}
Noting that for $L\ll \sqrt{\textup{Var}_{\bl}}$
\[
    H_4 \left(\frac{L}{\sqrt{\textup{Var}_{\bl}}} \right) - H_4 \left(\frac{L+1}{\sqrt{\textup{Var}_{\bl}}}\right) = O\left(\frac{1}{N}\right),
\]
and 
\[
    \exp\left(\frac{-L^2}{2\textup{Var}_{\bl}}\right) -  \exp\left(\frac{-(L+1)^2}{2\textup{Var}_{\bl}}\right)
    = 
    \frac{2L+1}{2\textup{Var}_{\bl}} + O\left(\frac{1}{N^2}\right), 
\]
we easily obtain~\eqref{eq:dimge-est}. 
\end{proof}

\subsubsection{Proof of Proposition~\ref{prop:ker-M2-GEPI}}\label{sec:proof-half-mat-pi}
Proposition~\ref{prop:ker-M2-GEPI} can be proven using a similar protocol as the proof of Proposition~\ref{prop:ker-M2-GE}. Concretely, we first show that either of the upper half or the lower half of the matrix $\MlL_{2}$ defined in~\eqref{eq:M2-PI} (Figure~\ref{fig:block_structure_of_M}(a)) is sufficient to determine its kernel. With a slight abuse of notation, we denote the two halves of $\MlL_2$ as $\MlL_2^\mp$, respectively. Then,
\begin{equation}\label{eq:split-M-PI}
    \MlL_2 = \begin{pmatrix}
        \MlL_2^- \\
        \MlL_2^+
    \end{pmatrix},
\end{equation}
where 
$\Mm_{2}=\big[\Mm_{K,K'}\big]_{K\in \calM_{L+1}\backslash\{L,L+1\},~K'\in\calM_L}$ 
and
$\Mp_{2}=\big[\Mp_{K,K'}\big]_{K\in\calM_{L+1}\backslash\{-L,-L-1\},~K'\in\calM_L}$ 
whose blocks are defined as
\begin{equation*}
\Mpm_{K,K'} = \begin{cases}
    \Apm_{K}, 
    \quad & \textup{if } K'=K, \\
    \Bpm_{K\mp1}, 
    \quad & \textup{if } K'=K\mp 1, \\
    \mathbf{0}, \quad &\textup{otherwise.}
    \end{cases}
\end{equation*}

Similar to the GE case, the matrices $\Apm_{K}$ are scaled identity matrices
\begin{equation*}
    \Apm_{K} = a^\pm_K I_{\card{\overline{\calM}_{\bl,K}}},
\end{equation*}
with $a^\pm_K$ defined in~\eqref{eq:akpm2}. In particular, ~\eqref{eq:akpm} and~\eqref{eq:A-diff} hold true in this case as well. 
The matrices $\Bpm_{K}$ are defined as
\[  
    \Bpm_{K}=\sum_{j=1}^\Nb\sum_{p\in\calM_{\bl^{(j)}_1}} \Bpm_{K,j,p}, 
\]
where $\Bpm_{K,j,p}\in\R^{\card{\overline{\calM}_{\bl,K\pm1}}\times\card{\oMlK}}$ is defined for $\bbm\in \overline{\calM}_{\bl,K\pm1}$ and $\bbm'\in\oMlK$ as
\begin{equation}\label{eq:Bpm-pi}
\begin{split}
    \Bp_{K,j,p}[\bbm,\bbm'] & = \blamb{}{{\bbm'}^{(j)},p-1}\sqrt{(\bl^{(j)}_1-p+1)(\bl^{(j)}_1+p)}\delta_{\bbm',\bbm_{(j,p,p-1)}} \\
    & = \big(\blamb{}{\bbm^{(j)},p-1}+1\big)\sqrt{(\bl^{(j)}_1-p+1)(\bl^{(j)}_1+p)}\delta_{\bbm,\bbm'_{(j,p-1,p)}}, \\
     \Bm_{K,j,p}[\bbm,\bbm'] & = 
    -\blamb{}{\bbm'^{(j)},p+1}\sqrt{(\bl^{(j)}_1+p+1)(\bl^{(j)}_1-p)}\delta_{\bbm',\bbm_{(j,p,p+1)}} \\
    & = -\big(\blamb{}{\bbm^{(j)},p+1}+1\big)\sqrt{(\bl^{(j)}_1+p+1)(\bl^{(j)}_1-p)}\delta_{\bbm,\bbm'_{(j,p+1,p)}}, 
\end{split}
\end{equation}
from which we obtain, noting that above definition for $B^\pm_K$ works for all $K\in\calM_\infty$, that for $K\in\calM_\infty$,
\begin{equation}
\label{eq:Bequality-pi}
    \forall\bbm\in\overline{\calM}_{\bl,K-1}, \;\bbm'\in\oMlK, \quad \big(\blamb{}{{\bbm}^{(j)},p+1}+1\big)\Bp_{K-1,j,p+1}[\bbm',\bbm] = -\blamb{}{\bbm^{(j)},p}\;\Bm_{{K,j,p}}[\bbm,\bbm'],
\end{equation}
or equivalently,
\begin{equation*}
    (\Bm_{{K,j,p}})^TW_{K-1,j,p} = -W_{K,j,p+1}\Bp_{K-1,j,p+1}, 
\end{equation*}
where 
\[
    W_{K,j,p} = \textup{Diag}\big(\blamb{}{{\bbm}^{(j)},p}\big)_{\bbm\in\overline{\calM}_{\bl,K}}.
\]
We have the following proposition for the matrices $\Bpm_{K}$.
\begin{lemma}[Properties of the $B_K$ matrices]\label{lem:laddar-pi}
    Let $\bl\in\calL^N$, $L\in\calL_G,$ $K\in\calM_\infty$, 
    and let the matrices $B_K^\pm$ be defined in~\eqref{eq:Bpm-pi}, then
    \begin{equation}\label{eq:per-sym-B-pi}
        \big(\Bm_{K}\big)^TW_{K-1} = - W_{K}\Bp_{K-1}, 
    \end{equation}
    and 
    \begin{equation}\label{eq:ladder-pi}
        \big(\Bp_{K}\big)^TW_{K+1}\Bp_{K} - \big(\Bm_{K}\big)^TW_{K-1}\Bm_{K} = -2K\cdot W_K,
    \end{equation}
    with $W_K$ being a diagonal matrix defined as
    \begin{equation}
    \label{eq:WK}
                W_K = \prod_{j=1}^\Nb\prod_{p\in\calM_{\bl^{(j)}_1}}
        (W_{K,j,p}) !,
    \end{equation}
    where the factorial is applied pointwise.
\end{lemma}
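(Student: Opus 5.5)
We reduce Lemma~\ref{lem:laddar-pi} to the GE case (Lemma~\ref{lemma:laddar}) by identifying the symmetric basis with the permutation-symmetrized basis. Let $T^\bl$ be the class-summing matrix introduced before Theorem~\ref{thm:GEPI_basis}, restricted to the $K$-sector, so that it maps $\oMlK$ into $\calM_{\bl,K}$. Write $B^\pm_{K,\mathrm{GE}}$ for the matrices~\eqref{eq:Bpm}. The first step is to check the intertwining relation
\[
B^\pm_{K,\mathrm{GE}}\,T^\bl \;=\; T^\bl\, B^\pm_K ,
\]
where $B^\pm_K$ is now the GE-PI matrix~\eqref{eq:Bpm-pi}. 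This should follow from the computation in the proof of Theorem~\ref{thm:GEPI_basis}: the ladder operator $\sum_j$ acting on a class sum produces, for each neighbouring class $\obm_{(j,p,p\pm1)}$, the multiplicity $\blamb{}{\cdot,p\pm1}$ that appears in~\eqref{eq:Bpm-pi}. Alternatively, one can verify it entrywise directly from~\eqref{eq:Bpm-pi}.

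Next we compute the Gram matrix $(T^\bl)^T T^\bl$ on $\oMlK$. This is diagonal, and its entry at $\obm$ is the class size. Within block $j$ the class size is the multinomial coefficient $N_j!/\prod_p \blamb{}{\bbm^{(j)},p}!$. Hence, up to the constant factor $\prod_j N_j!$,
\[
(T^\bl)^T T^\bl \;=\; c\, W_K^{-1}
\]
with $W_K$ as in~\eqref{eq:WK}. This explains the factorial weights. Here the convention in the paper needs care: the statement of the lemma uses $W_K$ rather than $W_K^{-1}$. We will therefore check, by comparing with the single-$(j,p)$ identity~\eqref{eq:Bequality-pi}, whether the intended weight is the inverse class size, and adjust the bookkeeping accordingly. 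This is the step where we expect the main obstacle, namely the precise normalization.

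To prove~\eqref{eq:per-sym-B-pi}, we multiply~\eqref{eq:Bequality} on the left by $(T^\bl)^T$ and on the right by $T^\bl$, and then use the intertwining relation on both sides. This gives
\[
(B^-_K)^T\,(T^\bl)^T T^\bl \;=\; -(T^\bl)^T T^\bl\, B^+_{K-1}
\]
on the appropriate sectors. Substituting the Gram matrix yields the weighted identity. If the weight comes out as $W^{-1}$, we conjugate to obtain the stated form. A direct alternative is to multiply the single-term relation~\eqref{eq:Bequality-pi} through by the remaining factorials. The factor $(\blamb{}{\cdot,p+1}+1)/\blamb{}{\cdot,p}$ is exactly the ratio of $\prod_p \blamb{}{\cdot,p}!$ between the two neighbouring classes, and summing over $j$ and $p$ then gives~\eqref{eq:per-sym-B-pi}.

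To prove~\eqref{eq:ladder-pi}, we sandwich~\eqref{eq:ladder2} between $(T^\bl)^T$ and $T^\bl$. The left-hand side becomes
\[
(B^+_K)^T\,(T^\bl)^T T^\bl\, B^+_K \;-\; (B^-_K)^T\,(T^\bl)^T T^\bl\, B^-_K ,
\]
with the Gram matrices taken on the sectors $K+1$ and $K-1$ respectively. The right-hand side becomes $-2K\,(T^\bl)^T T^\bl$. Substituting the Gram matrices gives the result with the same weight convention. Once this is in place, both identities can be used exactly as in Proposition~\ref{prop:matrix_final}, with the Euclidean norm replaced by the $W$-weighted norm.
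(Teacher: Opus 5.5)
The gap is in your first step: the relation $B^\pm_{K,\mathrm{GE}}T^\bl=T^\bl B^\pm_K$ is false for the matrices in \eqref{eq:Bpm-pi}. When the GE ladder acts on a class sum, each neighbouring class receives a multiplicity equal to the number of entries of the \emph{target} class equal to the \emph{new} value. By contrast, \eqref{eq:Bpm-pi} uses the number of entries of the \emph{source} class equal to the \emph{old} value, e.g.\ $\blamb{}{{\bbm'}^{(j)},p-1}$ in $\Bp_{K,j,p}$. Already for $\bl=(1,1)$ one gets $(\Bp_{0,\mathrm{GE}}T^\bl)[(0,1),\overline{(0,0)}]=\sqrt2$, while $(T^\bl\Bp_0)[(0,1),\overline{(0,0)}]=\Bp_0[\overline{(0,1)},\overline{(0,0)}]=2\sqrt2$.

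The class-sum computation actually gives $B^\pm_{K,\mathrm{GE}}T^\bl=T^\bl\,\widetilde B^\pm_K$ with $\widetilde B^\pm_K:=W_{K\pm1}B^\pm_KW_K^{-1}$. The relation that holds for the paper's $B^\pm_K$ is the transposed one, $(T^\bl)^TB^\pm_{K,\mathrm{GE}}=B^\pm_K(T^\bl)^T$. It is the transpose of \eqref{eq:TS_lemma_lie_alg} read as $d\rep{\bl}{}(X)T^\bl=T^\bl\big(S^\bl d\rep{\bl}{}(X)T^\bl\big)$.

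Consequently, the identities your sandwich produces are false:
$(\Bm_K)^TW_{K-1}^{-1}=-W_K^{-1}\Bp_{K-1}$ and $(\Bp_K)^TW_{K+1}^{-1}\Bp_K-(\Bm_K)^TW_{K-1}^{-1}\Bm_K=-2KW_K^{-1}$. For $\bl=(1,1)$, $K=1$, with classes ordered $\overline{(-1,1)},\overline{(0,0)}$, one has $(\Bm_1)^TW_0^{-1}=(-\sqrt2,-\sqrt2/2)$ but $-W_1^{-1}\Bp_0=(-\sqrt2,-2\sqrt2)$. Left and right multiplication by invertible diagonal matrices cannot turn a false identity into \eqref{eq:per-sym-B-pi} or \eqref{eq:ladder-pi}.

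So the obstacle you flagged is not the lemma's normalization; its weight $W_K$, proportional to the inverse class size, is correct. The problem is the direction of the intertwining. Your ``conjugation'' only works if it is applied to the ladder matrices themselves: run the sandwich with $\widetilde B^\pm_K$, then substitute $\widetilde B^\pm_K=W_{K\pm1}B^\pm_KW_K^{-1}$. That does yield both stated identities.

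Your ``direct alternative'' for \eqref{eq:per-sym-B-pi} is correct. You rescale \eqref{eq:Bequality-pi} by the factorials and note that $(\blamb{}{\bbm^{(j)},p+1}+1)/\blamb{}{\bbm^{(j)},p}$ is the ratio of the $W$-entries of the two neighbouring classes. This is exactly the paper's argument, so the first identity survives independently of the intertwining.

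For \eqref{eq:ladder-pi}, however, you have no valid route as written. Even with the correct relation, you cannot sandwich \eqref{eq:ladder2} directly, because $(T^\bl)^T$ sits next to $(\Bp_{K,\mathrm{GE}})^T$. The fix is as follows.
\begin{enumerate}
\item Use \eqref{eq:Bequality} to rewrite \eqref{eq:ladder2} as $\Bm_{K+1,\mathrm{GE}}\Bp_{K,\mathrm{GE}}-\Bp_{K-1,\mathrm{GE}}\Bm_{K,\mathrm{GE}}=2K\,I$.
\item Sandwich this and cancel the invertible Gram matrix $(T^\bl)^TT^\bl$ on the right. This gives $\Bm_{K+1}\Bp_K-\Bp_{K-1}\Bm_K=2K\,I$ on $\oMlK$, which is \eqref{eq:ladder-pi-inner}. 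The paper verifies this identity directly from the diagonal entries $\sum_{j,p}2p\,\blamb{}{\bbm^{(j)},p}=2K$.
\item Multiply on the left by $W_K$ and insert $W_K\Bp_{K-1}=-(\Bm_K)^TW_{K-1}$ (this is \eqref{eq:per-sym-B-pi} at index $K$). Also insert $W_K\Bm_{K+1}=-(\Bp_K)^TW_{K+1}$ (the transpose of \eqref{eq:per-sym-B-pi} at index $K+1$). The result is \eqref{eq:ladder-pi}, which is exactly how the paper concludes.
\end{enumerate}
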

\begin{proof}
    We evaluate elements on both sides of~\eqref{eq:per-sym-B-pi} for $\bbm'\in\overline{\calM}_{\bl,K}$ and $\bbm\in\overline{\calM}_{\bl,K-1}$. Note that there exist at most a pair of $(j,p)$ such that $\bbm'=\bbm_{(j,p,p-1)}$, in which case 
    the consistency of elements in both sides of~\eqref{eq:per-sym-B-pi} is verified with
    ~\eqref{eq:Bequality-pi}. 
    In the case of the absence of such a pair, both sides are $0$. Hence,~\eqref{eq:per-sym-B-pi} is valid. 

    Additionally, following a similar calculation to that in the proof of Lemma~\ref{lemma:laddar} and using~\eqref{eq:Lamb_blL}, $\Bm_{K+1}\Bp_{K} - \Bp_{K-1}\Bm_{K}$ is a diagonal matrix with diagonal elements being defined for $\bbm\in\overline{\calM}_{\bl,K}$ and satisfying
    \[
    \begin{split}
        (\Bm_{K+1}\Bp_{K} - \Bp_{K-1}\Bm_{K})[\bbm,\bbm] & = \sum_{j=1}^\Nb \sum_{p\in\calM_{\bl^{(j)}_1}} 2p\cdot \blamb{}{\bbm^{(j)},p} \\
        & = 2\sum\bbm = 2K.
    \end{split}
    \]
    Thus, 
    \begin{equation}\label{eq:ladder-pi-inner}  
        \Bm_{K+1}\Bp_{K} - \Bp_{K-1}\Bm_{K} = 2K\cdot I_{\card{\overline{\calM}_{\bl,K}}}.
    \end{equation}
    We complete the proof by multiplying both sides of~\eqref{eq:ladder-pi-inner} by $W_K$ on the left, and using~\eqref{eq:per-sym-B-pi}.
    \end{proof}
We then see that the following proposition holds true. 
\begin{proposition}
\label{prop:matrix_final_pi}
    For $\bl\in \calL^N$, $L\in \calLG$, there hold
    \[
        \ker(\MlL_{2}) = \ker(\Mm_{2}) = \ker(\Mp_{2}).
    \]
\end{proposition}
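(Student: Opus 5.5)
Adapting the proof of Proposition~\ref{prop:matrix_final} to the GE-PI case, the Euclidean Gram identity $(\Mm_2)^T\Mm_2=(\Mp_2)^T\Mp_2$ fails because $B^\pm_K$ are no longer transposes of each other up to sign. Instead, I will prove a weighted version
\[
(\Mm_2)^T \mathcal{W}^- \Mm_2 = (\Mp_2)^T \mathcal{W}^+ \Mp_2,
\]
where $\mathcal{W}^-$ and $\mathcal{W}^+$ are block diagonal with diagonal blocks $W_K$ from~\eqref{eq:WK}, indexed by the row-block sets $\calM_{L+1}\backslash\{L,L+1\}$ and $\calM_{L+1}\backslash\{-L,-L-1\}$ respectively. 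Since every $W_K$ has strictly positive diagonal (products of factorials), both weights are positive definite. If $\bc\in\ker(\Mm_2)$, then $\bc^T(\Mp_2)^T\mathcal{W}^+\Mp_2\bc=0$, so $(\mathcal{W}^+)^{1/2}\Mp_2\bc=0$ and hence $\Mp_2\bc=0$. The converse is symmetric, so $\ker(\Mm_2)=\ker(\Mp_2)$, and~\eqref{eq:split-M-PI} gives the equality with $\ker(\MlL_2)$.

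The identity is checked blockwise over $K,K'\in\calM_L$, as in the GE proof. On the diagonal, the $(K,K)$ block of the left side is $(\Bm_K)^T W_{K-1}\Bm_K + (a^-_K)^2 W_K$, and on the right it is $(\Bp_K)^T W_{K+1}\Bp_K + (a^+_K)^2 W_K$. Their difference vanishes by~\eqref{eq:ladder-pi} combined with~\eqref{eq:A-diff}. For the off-diagonal $(K,K-1)$ block, the left side is $(\Bm_K)^T W_{K-1} a^-_{K-1}$ and the right side is $a^+_K W_K \Bp_{K-1}$. These agree by~\eqref{eq:per-sym-B-pi} together with $a^+_K=-a^-_{K-1}$ from~\eqref{eq:akpm}. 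The $(K,K+1)$ blocks follow by transposition, using that the $W_K$ are diagonal, hence symmetric.

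The main care point is the boundary blocks. I will use $a^+_{-L}=a^-_L=0$ to confirm that the terms which would involve row indices $K=\pm L$, present in one half but absent in the other, are exactly the ones with zero coefficient. At the extreme rows $K=\pm(L+1)$ only the $B$ terms appear, and they are exactly accounted for by~\eqref{eq:ladder-pi}. I also need~\eqref{eq:ladder-pi} and~\eqref{eq:per-sym-B-pi} to hold when some $\oMlK$ is empty, which is the convention of $0$-row or $0$-column matrices already adopted. With this bookkeeping, equating the weighted Gram matrices is routine.
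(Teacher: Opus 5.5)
Your proposal is correct and is essentially the paper's proof. The paper likewise establishes the weighted Gram identity $(\Mm_2)^TW^-\Mm_2=(\Mp_2)^TW^+\Mp_2$ with block-diagonal weights built from the $W_K$. It checks the diagonal and off-diagonal blocks via Lemma~\ref{lem:laddar-pi} together with \eqref{eq:akpm}, and concludes from the positive definiteness of $W^\pm$. Your boundary bookkeeping with $a^-_L=a^+_{-L}=0$ and your use of \eqref{eq:A-diff} for the diagonal blocks are exactly what that step needs.
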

\begin{proof}
We show that $\big(\Mm_{2}\big)^TW^-\Mm_{2} = 
    \big(\Mp_{2}\big)^TW^+\Mp_{2}$,
where 
\[
\begin{split}
    W^- = \textup{Diag}(W_K)_{K\in\{-L-1,\ldots,L-1\}}, \\
    W^+ = \textup{Diag}(W_K)_{K\in\{-L+1,\ldots,L+1\}}.
\end{split}
\]
Clearly, $W^\pm$ are positive definite. Due to the block structure of $W^\pm$ and $\Mm_{2}$ (respectively $\Mp_{2}$), the matrix $\big(\Mm_{2}\big)^TW^-\Mm_{2}$ (resp. $\big(\Mp_{2}\big)^TW^+\Mp_{2}$) is also a block matrix with row and column indices taking values from $\calM_L$. Specifically, $\forall K,K'\in\calM_L$, noting that $a^-_{L} = a^+_{-L} =0,$
\[
    \Big[\big(\Mm_{2}\big)^TW^-\Mm_{2}\Big]_{K,K'} = 
    \begin{cases}
        \big(\Bm_K\big)^TW_{K-1}\Bm_K + \big(\Am_K\big)^TW_{K}\Am_K, \quad & \textup{if } K'=K, \\
        \big(\Bm_{K}\big)^TW_{K-1}\Am_{K-1}, \quad & \textup{if } K'=K-1, \\
        \big(\Am_{K}\big)^TW_{K}\Bm_{K+1}, \quad & \textup{if } K'=K+1, \\
        \mathbf{0}, \quad & \textup{otherwise,}
    \end{cases}
\]
and 
\[
    \Big[\big(\Mp_{2}\big)^TW^+\Mp_{2}\Big]_{K,K'} = 
    \begin{cases}
        \big(\Bp_K\big)^TW_{K+1}\Bp_K + \big(\Ap_K\big)^TW_{K}\Ap_K, \quad & \textup{if } K'=K, \\
        \big(\Ap_{K}\big)^TW_{K}\Bp_{K-1}, \quad & \textup{if } K'=K-1, \\
        \big(\Bp_{K}\big)^TW_{K+1}\Ap_{K+1}, \quad & \textup{if } K'=K+1, \\
        \mathbf{0}, \quad & \textup{otherwise.}
    \end{cases}
\]
Then, we see from equations~\eqref{eq:akpm} and~\eqref{eq:per-sym-B-pi} that their corresponding off-diagonal blocks are identical, and from equations~\eqref{eq:akpm} and~\eqref{eq:ladder-pi} that their corresponding diagonal blocks also coincide.

Thus, we have shown
$\big(\Mm_{2}\big)^TW^-\Mm_{2} = \big(\Mp_{2}\big)^TW^+\Mp_{2}$. 
As a result, for any $\bc\in\ker(\Mm_{2})$,
\[
    \|\Mp_{2}\bc\|_{W^+}^2 = \bc^T(\Mp_{2})^TW^+\Mp_{2}\bc=\bc^T(\Mm_{2})^TW^-\Mm_{2}\bc=\|\Mm_{2}\bc\|_{W^-}^2=0,
\]
which means $\bc\in\ker(\Mp_{2})$, and vice versa. Consequently, $\ker(\Mm_{2})=\ker(\Mp_{2})$. We complete the proof by using \eqref{eq:split-M-PI}.
\end{proof}

We are now in the position of proving Proposition~\ref{prop:ker-M2-GEPI}.

\begin{proof}[\bf{Proof of Proposition~\ref{prop:ker-M2-GEPI}}]
    We may write
    $\Mup=\big[\Mup_{K,K'}\big]_{K\in\calM_{L+1}\backslash \{L,-L-1\},~K'\in\calM_L}$, with each block being defined as
        \begin{equation*}
        \Mup_{K,K'} = \begin{cases}
            \Am_{K}, 
            \quad & \textup{if } K'=K, \\
            \Bm_{K\mp1}, 
            \quad & \textup{if } K'=K\mp1\ne L, \\
            \Bp_{L}, 
            \quad & \textup{if } K'=K-1=L, \\
            \mathbf{0}, \quad &\textup{otherwise.}
            \end{cases}
        \end{equation*}
    Using~\eqref{eq:ladder-pi} with $K=L+1$, 
    we obtain 
    \begin{equation*}
        \big(\Bm_{L+1}\big)^TW_{L}\Bm_{L+1} = \big(\Bp_{L+1}\big)^TW_{L+2}\Bp_{L+1} + 2(L+1)\cdot W_{L+1},
    \end{equation*}
    which yields that $\big(\Bm_{L+1}\big)^TW_{L}\Bm_{L+1}$
    is positive definite noting that $W_L,W_{L+1},W_{L+2}$ defined in~\eqref{eq:WK} are diagonal matrices with strictly positive elements. Hence, $\Bm_{L+1}$ has full rank. We obtain using~\eqref{eq:Bequality-pi} that $\Bp_{L}$ has full rank.
    Thus, since the matrices $\Am_{K}$ 
    (yellow blocks on Figure~\ref{fig:block_structure_of_M}(a)) are scaled identities with non-zero diagonal elements for $K\in\calM_L\setminus \{L\}$, the matrix $\Mup$ has full rank.
    Since $\Mup$ consists of a subset of the rows of $\MlL_2$, we have $\ker(\MlL_2)\subset\ker(\Mup)$, which combined with Proposition~\ref{prop:matrix_final_pi} leads to
    \begin{equation}\label{eq:dim-bound-pi}
        \dim\big(\ker(\Mup)\big)\ge \dim\big(\ker(\MlL_2)\big)=\dim\big(\ker(\Mm_2)\big).
    \end{equation}
    Noting that $\Mup$ and $\Mm_2$ have the same size, equation~\eqref{eq:dim-bound-pi}, together with the rank–nullity theorem, indicate that $\text{rank}(\Mup)\le\text{rank}(\Mm_2)$. However, as $\Mup$ has full rank, $\Mm_2$ must have full rank, i.e., $\text{rank}(\Mup)=\text{rank}(\Mm_2)$. Therefore
    \[
        \dim\big(\ker(\Mup)\big)=\dim\big(\ker(\Mm_2)\big) = \dim\big(\ker(\MlL_2)\big).
    \]
    We conclude recalling that $\ker(\MlL_2)\subset\ker(\Mup)$.
\end{proof}

\section*{Acknowledgements}

We would like to thank Christoph Ortner for insightful discussions. 
This work has received funding from the ANR NUMERIQ, project number ANR-24-CE46-2255. 
This work has been supported by the EIPHI Graduate school (contract ANR-17-EURE-0002) and by the Région Bourgogne Franche-Comté.
L.Z. acknowledges funding by  the Deutsche Forschungsgemeinschaft (DFG, German Research Foundation) - Project number 442047500 through the Collaborative Research Center “Sparsity and Singular Structures” (SFB 1481). 

\bibliographystyle{siam}
\bibliography{biblio}

\appendix

\section{Proof of Proposition~\ref{prop:deri_alpha_gamma}}\label{app:Wigner-Dmatricesproof}
\begin{proof}[Proof of Proposition~\ref{prop:deri_alpha_gamma}]
Taking the derivative of $D^l_{\mu m}$ with respect to $\alpha$ and $\gamma$, respectively, we have that
\[
    \frac{\partial D^l_{\mu m}}{\partial \alpha}(\alpha, \beta, \gamma)  = -\mathrm{i}\cdot me^{-\mathrm{i}m\alpha}d^l_{\mu m}(\beta)e^{-\mathrm{i}\mu\gamma},
\]
and
\[
    \frac{\partial D^l_{\mu m}}{\partial \gamma}(\alpha, \beta, \gamma)  = -\mathrm{i}\cdot \mu e^{-\mathrm{i}m\alpha}d^l_{\mu m}(\beta)e^{-\mathrm{i}\mu\gamma}.
\]
Inserting $(\alpha,\beta,\gamma) = (0,0,0)$ and noticing that $d_{\mu m}^l(0) = \delta_{\mu m}$, we arrive at~\eqref{eq:deri_alpha} and~\eqref{eq:deri_gamma}.

Differentiating the formula for the Wigner-D matrix with respect to $\beta$ gives
\begin{align*}
    \frac{\partial D^l_{\mu m}}{\partial \beta} & (\alpha, \beta, \gamma) = e^{-\mathrm{i}(m\alpha+ \mu\gamma)} [(l+m)!(l-m)!(l+\mu)!(l-\mu)!]^{\frac12} \\ &\sum_s \frac{(-1)^{s}}{2}\frac{(\cos\frac{\beta}{2})^{2l + \mu - m - 2s - 1}(\sin\frac{\beta}{2})^{m-\mu+2s-1}[(-2l-\mu+m+2s)(\sin\frac{\beta}{2})^2 + (m-\mu+2s)(\cos\frac{\beta}{2})^2]}{(l+\mu-s)!s! (m-\mu+s)!(l-m-s)!}.
\end{align*}
Let $(\alpha,\beta,\gamma) = (0,0,0)$, we see that the above derivative can be non-zero only when $m-\mu+2s-1=0$ or $m-\mu+2s+1=0$. Since $s\in \{\max(0, \mu-m),\ldots,\min(l+\mu,l-m)\}$, the latter is never true since its left hand side is always positive. In addition, the previous conditions can be fulfilled only when $|m-\mu| = 1$. Otherwise, the derivative with respect to $\beta$ at the origin will be zero.

If $m-\mu=1$, then only $s=0$ can contribute to the summation. Thus 
\begin{align*}
    \frac{\partial D^l_{\mu m}}{\partial \beta}(I)
    & = [(l+m)!(l-m)!(l+\mu)!(l-\mu)!]^{\frac{1}{2}}\frac{(-1)^{0}}{2}\frac{1}{(l+\mu)!0!1!(l-m)!} \\
    & = \frac{1}{2}\frac{[(l+m)!(l-m)!(l+m-1)!(l-m+1)!]^{\frac{1}{2}}}{(l+m-1)!(l-m)!} \\
    & = \frac{1}{2}\frac{[(l+m-1)!(l+m)(l-m)!(l+m-1)!(l-m)!(l-m+1)]^{\frac{1}{2}}}{(l+m-1)!(l-m)!} \\
    & = \frac{1}{2}\frac{[(l+m)(l-m+1)]^{\frac{1}{2}}(l+m-1)!(l-m'-1)!}{(l+m-1)!(l-m'-1)!} \\
    & = \frac{1}{2}[(l+m)(l-m+1)]^{\frac{1}{2}}. \\
\end{align*}

Similarly, when $m-\mu=-1$, only $s=1$ contributes to the summation and 
\begin{align*}
    \frac{\partial D^l_{\mu m}}{\partial \beta}(I)
    & = [(l+m)!(l-m)!(l+\mu)!(l-\mu)!]^{\frac{1}{2}}\frac{(-1)^{1}}{2}\frac{1}{(l+\mu-1)!1! 0!(l-m-1)!} \\
    & = - \frac{1}{2}\frac{[(l+m)!(l-m)!(l+m+1)!(l-m-1)!]^{\frac{1}{2}}}{(l+m)!(l-m-1)!} \\
    & = - \frac{1}{2}\frac{[(l+m)!(l-m-1)!(l-m)(l+m)!(l+m+1)(l-m-1)!]^{\frac{1}{2}}}{(l+m)!(l-m-1)!} \\
    & = - \frac{1}{2}[(l-m)(l+m+1)]^{\frac{1}{2}}, \\
\end{align*}
which completes the proof of~\eqref{eq:deri_beta}.
\end{proof} 

\section{Table of dimensionalities}\label{app:table_of_dim}
In this appendix, we provide tables that contain exact values of $\dimge{\bl}{L}$ (denoted by GE) and $\dimpi{\bl}{L}$ (denoted by GE-PI) for some typical $\bl=(l,l,\ldots,l)\in\calL^N$, which is the minimal unit of constructing the equivariant bases (c.f. Propositions~\ref{prop:GE_rec} and ~\ref{prop:GEPI_rec}). 
In this setting, the dimensions $\dimge{\bl}{L}$ and $\dimpi{\bl}{L}$ are determined by three parameters: the degree $l$, the correlation order $N$ and the order of equivariance $L$. We show tables for fixed $N$, or fixed $l$.

\begin{table}[h]
\resizebox{\textwidth}{!}{%
\begin{tabular}{|c|cc|cc|cc|cc|cc|cc|cc|cc|}
\hline
\multirow{2}{*}{$L/\Nat$} &
 \multicolumn{2}{c|}{1} &
 \multicolumn{2}{c|}{2} &
 \multicolumn{2}{c|}{3} &
 \multicolumn{2}{c|}{4} &
 \multicolumn{2}{c|}{5} &
 \multicolumn{2}{c|}{6} &
 \multicolumn{2}{c|}{7} &
 \multicolumn{2}{c|}{8} \\ \cline{2-17} 
&
 \multicolumn{1}{c|}{GE-PI} &
 GE &
 \multicolumn{1}{c|}{GE-PI} &
 GE &
 \multicolumn{1}{c|}{GE-PI} &
 GE &
 \multicolumn{1}{c|}{GE-PI} &
 GE &
 \multicolumn{1}{c|}{GE-PI} &
 GE &
 \multicolumn{1}{c|}{GE-PI} &
 GE &
 \multicolumn{1}{c|}{GE-PI} &
 GE &
 \multicolumn{1}{c|}{GE-PI} &
 GE \\ \hline
0 &
 \multicolumn{1}{c|}{0} &
 0 &
 \multicolumn{1}{c|}{1} &
 1 &
 \multicolumn{1}{c|}{0} &
 1 &
 \multicolumn{1}{c|}{1} &
 3 &
 \multicolumn{1}{c|}{0} &
 6 &
 \multicolumn{1}{c|}{1} &
 15 &
 \multicolumn{1}{c|}{0} &
 36 &
 \multicolumn{1}{c|}{1} &
 91 \\ \hline
1 &
 \multicolumn{1}{c|}{1} &
 1 &
 \multicolumn{1}{c|}{0} &
 1 &
 \multicolumn{1}{c|}{1} &
 3 &
 \multicolumn{1}{c|}{0} &
 6 &
 \multicolumn{1}{c|}{1} &
 15 &
 \multicolumn{1}{c|}{0} &
 36 &
 \multicolumn{1}{c|}{1} &
 91 &
 \multicolumn{1}{c|}{0} &
 232 \\ \hline
2 &
 \multicolumn{1}{c|}{} &
  &
 \multicolumn{1}{c|}{1} &
 1 &
 \multicolumn{1}{c|}{0} &
 2 &
 \multicolumn{1}{c|}{1} &
 6 &
 \multicolumn{1}{c|}{0} &
 15 &
 \multicolumn{1}{c|}{1} &
 40 &
 \multicolumn{1}{c|}{0} &
 105 &
 \multicolumn{1}{c|}{1} &
 280 \\ \hline
3 &
 \multicolumn{1}{c|}{} &
  &
 \multicolumn{1}{c|}{} &
  &
 \multicolumn{1}{c|}{1} &
 1 &
 \multicolumn{1}{c|}{0} &
 3 &
 \multicolumn{1}{c|}{1} &
 10 &
 \multicolumn{1}{c|}{0} &
 29 &
 \multicolumn{1}{c|}{1} &
 84 &
 \multicolumn{1}{c|}{0} &
 238 \\ \hline
4 &
 \multicolumn{1}{c|}{} &
  &
 \multicolumn{1}{c|}{} &
  &
 \multicolumn{1}{c|}{} &
  &
 \multicolumn{1}{c|}{1} &
 1 &
 \multicolumn{1}{c|}{0} &
 4 &
 \multicolumn{1}{c|}{1} &
 15 &
 \multicolumn{1}{c|}{0} &
 49 &
 \multicolumn{1}{c|}{1} &
 154 \\ \hline
5 &
 \multicolumn{1}{c|}{} &
  &
 \multicolumn{1}{c|}{} &
  &
 \multicolumn{1}{c|}{} &
  &
 \multicolumn{1}{c|}{} &
  &
 \multicolumn{1}{c|}{1} &
 1 &
 \multicolumn{1}{c|}{0} &
 5 &
 \multicolumn{1}{c|}{1} &
 21 &
 \multicolumn{1}{c|}{0} &
 76 \\ \hline
6 &
 \multicolumn{1}{c|}{} &
  &
 \multicolumn{1}{c|}{} &
  &
 \multicolumn{1}{c|}{} &
  &
 \multicolumn{1}{c|}{} &
  &
 \multicolumn{1}{c|}{} &
  &
 \multicolumn{1}{c|}{1} &
 1 &
 \multicolumn{1}{c|}{0} &
 6 &
 \multicolumn{1}{c|}{1} &
 28 \\ \hline
7 &
 \multicolumn{1}{c|}{} &
  &
 \multicolumn{1}{c|}{} &
  &
 \multicolumn{1}{c|}{} &
  &
 \multicolumn{1}{c|}{} &
  &
 \multicolumn{1}{c|}{} &
  &
 \multicolumn{1}{c|}{} &
  &
 \multicolumn{1}{c|}{1} &
 1 &
 \multicolumn{1}{c|}{0} &
 7 \\ \hline
8 &
 \multicolumn{1}{c|}{} &
  &
 \multicolumn{1}{c|}{} &
  &
 \multicolumn{1}{c|}{} &
  &
 \multicolumn{1}{c|}{} &
  &
 \multicolumn{1}{c|}{} &
  &
 \multicolumn{1}{c|}{} &
  &
 \multicolumn{1}{c|}{} &
  &
 \multicolumn{1}{c|}{1} &
 1 \\ \hline
\end{tabular}
}
\caption{Dimensions of $V^{\bl,L}$ (GE) and $\bV^{\bl,L}$ (GE-PI) for $\bl = (1,1,\ldots,1)$ with length $\Nat$, and for $L\in\{0,1,\ldots,\Nat\}$.}
\label{tab:l1}
\end{table}

\begin{table}[htb!]
\resizebox{\textwidth}{!}{%
\begin{tabular}{|c|cc|cc|cc|cc|cc|cc|cc|cc|}
\hline
\multirow{2}{*}{$L$/$\Nat$} & \multicolumn{2}{c|}{1} & \multicolumn{2}{c|}{2} & \multicolumn{2}{c|}{3} & \multicolumn{2}{c|}{4} & \multicolumn{2}{c|}{5} & \multicolumn{2}{c|}{6} & \multicolumn{2}{c|}{7} & \multicolumn{2}{c|}{8} \\ \cline{2-17} 
 & \multicolumn{1}{c|}{GE-PI} & GE & \multicolumn{1}{c|}{GE-PI} & GE & \multicolumn{1}{c|}{GE-PI} & GE & \multicolumn{1}{c|}{GE-PI} & GE & \multicolumn{1}{c|}{GE-PI} & GE & \multicolumn{1}{c|}{GE-PI} & GE & \multicolumn{1}{c|}{GE-PI} & GE & \multicolumn{1}{c|}{GE-PI} & GE \\ \hline
0 & \multicolumn{1}{c|}{0} & 0 & \multicolumn{1}{c|}{1} & 1 & \multicolumn{1}{c|}{0} & 1 & \multicolumn{1}{c|}{2} & 7 & \multicolumn{1}{c|}{0} & 31 & \multicolumn{1}{c|}{3} & 175 & \multicolumn{1}{c|}{0} & 981 & \multicolumn{1}{c|}{4} & 5719 \\ \hline
1 & \multicolumn{1}{c|}{0} & 0 & \multicolumn{1}{c|}{0} & 1 & \multicolumn{1}{c|}{1} & 3 & \multicolumn{1}{c|}{0} & 18 & \multicolumn{1}{c|}{2} & 90 & \multicolumn{1}{c|}{0} & 504 & \multicolumn{1}{c|}{4} & 2856 & \multicolumn{1}{c|}{1} & 16688 \\ \hline
2 & \multicolumn{1}{c|}{0} & 0 & \multicolumn{1}{c|}{1} & 1 & \multicolumn{1}{c|}{0} & 5 & \multicolumn{1}{c|}{2} & 26 & \multicolumn{1}{c|}{1} & 140 & \multicolumn{1}{c|}{4} & 780 & \multicolumn{1}{c|}{2} & 4480 & \multicolumn{1}{c|}{7} & 26320 \\ \hline
3 & \multicolumn{1}{c|}{1} & 1 & \multicolumn{1}{c|}{0} & 1 & \multicolumn{1}{c|}{2} & 7 & \multicolumn{1}{c|}{1} & 13 & \multicolumn{1}{c|}{4} & 175 & \multicolumn{1}{c|}{3} & 981 & \multicolumn{1}{c|}{7} & 5719 & \multicolumn{1}{c|}{5} & 33922 \\ \hline
4 & \multicolumn{1}{c|}{} &  & \multicolumn{1}{c|}{1} & 1 & \multicolumn{1}{c|}{1} & 6 & \multicolumn{1}{c|}{3} & 33 & \multicolumn{1}{c|}{2} & 189 & \multicolumn{1}{c|}{6} & 1095 & \multicolumn{1}{c|}{5} & 6489 & \multicolumn{1}{c|}{11} & 39046 \\ \hline
5 & \multicolumn{1}{c|}{} &  & \multicolumn{1}{c|}{0} & 1 & \multicolumn{1}{c|}{1} & 5 & \multicolumn{1}{c|}{1} & 32 & \multicolumn{1}{c|}{4} & 186 & \multicolumn{1}{c|}{3} & 1120 & \multicolumn{1}{c|}{8} & 6776 & \multicolumn{1}{c|}{7} & 41524 \\ \hline
6 & \multicolumn{1}{c|}{} &  & \multicolumn{1}{c|}{1} & 1 & \multicolumn{1}{c|}{1} & 4 & \multicolumn{1}{c|}{3} & 28 & \multicolumn{1}{c|}{3} & 170 & \multicolumn{1}{c|}{7} & 1064 & \multicolumn{1}{c|}{7} & 6621 & \multicolumn{1}{c|}{13} & 41468 \\ \hline
7 & \multicolumn{1}{c|}{} &  & \multicolumn{1}{c|}{} &  & \multicolumn{1}{c|}{1} & 3 & \multicolumn{1}{c|}{1} & 21 & \multicolumn{1}{c|}{4} & 145 & \multicolumn{1}{c|}{4} & 945 & \multicolumn{1}{c|}{9} & 6105 & \multicolumn{1}{c|}{9} & 39235 \\ \hline
8 & \multicolumn{1}{c|}{} &  & \multicolumn{1}{c|}{} &  & \multicolumn{1}{c|}{0} & 2 & \multicolumn{1}{c|}{2} & 15 & \multicolumn{1}{c|}{2} & 115 & \multicolumn{1}{c|}{6} & 791 & \multicolumn{1}{c|}{6} & 5334 & \multicolumn{1}{c|}{13} & 35357 \\ \hline
9 & \multicolumn{1}{c|}{} &  & \multicolumn{1}{c|}{} &  & \multicolumn{1}{c|}{1} & 1 & \multicolumn{1}{c|}{1} & 10 & \multicolumn{1}{c|}{3} & 84 & \multicolumn{1}{c|}{4} & 625 & \multicolumn{1}{c|}{9} & 4424 & \multicolumn{1}{c|}{10} & 30436 \\ \hline
10 & \multicolumn{1}{c|}{} &  & \multicolumn{1}{c|}{} &  & \multicolumn{1}{c|}{} &  & \multicolumn{1}{c|}{1} & 6 & \multicolumn{1}{c|}{2} & 56 & \multicolumn{1}{c|}{5} & 465 & \multicolumn{1}{c|}{6} & 3486 & \multicolumn{1}{c|}{12} & 25060 \\ \hline
11 & \multicolumn{1}{c|}{} &  & \multicolumn{1}{c|}{} &  & \multicolumn{1}{c|}{} &  & \multicolumn{1}{c|}{0} & 3 & \multicolumn{1}{c|}{2} & 35 & \multicolumn{1}{c|}{2} & 324 & \multicolumn{1}{c|}{7} & 2611 & \multicolumn{1}{c|}{8} & 19740 \\ \hline
12 & \multicolumn{1}{c|}{} &  & \multicolumn{1}{c|}{} &  & \multicolumn{1}{c|}{} &  & \multicolumn{1}{c|}{1} & 1 & \multicolumn{1}{c|}{1} & 20 & \multicolumn{1}{c|}{4} & 210 & \multicolumn{1}{c|}{5} & 1855 & \multicolumn{1}{c|}{11} & 14868 \\ \hline
13 & \multicolumn{1}{c|}{} &  & \multicolumn{1}{c|}{} &  & \multicolumn{1}{c|}{} &  & \multicolumn{1}{c|}{} &  & \multicolumn{1}{c|}{1} & 10 & \multicolumn{1}{c|}{2} & 126 & \multicolumn{1}{c|}{5} & 1245 & \multicolumn{1}{c|}{7} & 10696 \\ \hline
14 & \multicolumn{1}{c|}{} &  & \multicolumn{1}{c|}{} &  & \multicolumn{1}{c|}{} &  & \multicolumn{1}{c|}{} &  & \multicolumn{1}{c|}{0} & 4 & \multicolumn{1}{c|}{2} & 70 & \multicolumn{1}{c|}{3} & 785 & \multicolumn{1}{c|}{8} & 7336 \\ \hline
15 & \multicolumn{1}{c|}{} &  & \multicolumn{1}{c|}{} &  & \multicolumn{1}{c|}{} &  & \multicolumn{1}{c|}{} &  & \multicolumn{1}{c|}{1} & 1 & \multicolumn{1}{c|}{1} & 35 & \multicolumn{1}{c|}{4} & 462 & \multicolumn{1}{c|}{5} & 4781 \\ \hline
16 & \multicolumn{1}{c|}{} &  & \multicolumn{1}{c|}{} &  & \multicolumn{1}{c|}{} &  & \multicolumn{1}{c|}{} &  & \multicolumn{1}{c|}{} &  & \multicolumn{1}{c|}{1} & 15 & \multicolumn{1}{c|}{2} & 252 & \multicolumn{1}{c|}{6} & 2947 \\ \hline
17 & \multicolumn{1}{c|}{} &  & \multicolumn{1}{c|}{} &  & \multicolumn{1}{c|}{} &  & \multicolumn{1}{c|}{} &  & \multicolumn{1}{c|}{} &  & \multicolumn{1}{c|}{0} & 5 & \multicolumn{1}{c|}{2} & 126 & \multicolumn{1}{c|}{3} & 1708 \\ \hline
18 & \multicolumn{1}{c|}{} &  & \multicolumn{1}{c|}{} &  & \multicolumn{1}{c|}{} &  & \multicolumn{1}{c|}{} &  & \multicolumn{1}{c|}{} &  & \multicolumn{1}{c|}{1} & 1 & \multicolumn{1}{c|}{1} & 56 & \multicolumn{1}{c|}{4} & 924 \\ \hline
19 & \multicolumn{1}{c|}{} &  & \multicolumn{1}{c|}{} &  & \multicolumn{1}{c|}{} &  & \multicolumn{1}{c|}{} &  & \multicolumn{1}{c|}{} &  & \multicolumn{1}{c|}{} &  & \multicolumn{1}{c|}{1} & 21 & \multicolumn{1}{c|}{2} & 462 \\ \hline
20 & \multicolumn{1}{c|}{} &  & \multicolumn{1}{c|}{} &  & \multicolumn{1}{c|}{} &  & \multicolumn{1}{c|}{} &  & \multicolumn{1}{c|}{} &  & \multicolumn{1}{c|}{} &  & \multicolumn{1}{c|}{0} & 6 & \multicolumn{1}{c|}{2} & 210 \\ \hline
21 & \multicolumn{1}{c|}{} &  & \multicolumn{1}{c|}{} &  & \multicolumn{1}{c|}{} &  & \multicolumn{1}{c|}{} &  & \multicolumn{1}{c|}{} &  & \multicolumn{1}{c|}{} &  & \multicolumn{1}{c|}{1} & 1 & \multicolumn{1}{c|}{1} & 84 \\ \hline
22 & \multicolumn{1}{c|}{} &  & \multicolumn{1}{c|}{} &  & \multicolumn{1}{c|}{} &  & \multicolumn{1}{c|}{} &  & \multicolumn{1}{c|}{} &  & \multicolumn{1}{c|}{} &  & \multicolumn{1}{c|}{} &  & \multicolumn{1}{c|}{1} & 28 \\ \hline
23 & \multicolumn{1}{c|}{} &  & \multicolumn{1}{c|}{} &  & \multicolumn{1}{c|}{} &  & \multicolumn{1}{c|}{} &  & \multicolumn{1}{c|}{} &  & \multicolumn{1}{c|}{} &  & \multicolumn{1}{c|}{} &  & \multicolumn{1}{c|}{0} & 7 \\ \hline
24 & \multicolumn{1}{c|}{} &  & \multicolumn{1}{c|}{} &  & \multicolumn{1}{c|}{} &  & \multicolumn{1}{c|}{} &  & \multicolumn{1}{c|}{} &  & \multicolumn{1}{c|}{} &  & \multicolumn{1}{c|}{} &  & \multicolumn{1}{c|}{1} & 1 \\ \hline
\end{tabular}%
}
\caption{Dimensions of $V^{\bl,L}$ (GE) and $\bV^{\bl,L}$ (GE-PI) for $\bl = (3,3,\ldots,3)$ with length $\Nat$, and for $L\in\{0,1,\ldots,3\Nat\}$.}
\label{tab:l3}
\end{table}

\begin{table}[htb!]
\setlength{\tabcolsep}{7pt}
\resizebox{\textwidth}{!}{%
\begin{tabular}{|c|cc|cc|cc|cc|cc|cc|cc|cc|}
\hline
\multirow{2}{*}{$L/l$} & \multicolumn{2}{c|}{0} & \multicolumn{2}{c|}{1} & \multicolumn{2}{c|}{2} & \multicolumn{2}{c|}{3} & \multicolumn{2}{c|}{4} & \multicolumn{2}{c|}{5} & \multicolumn{2}{c|}{6} & \multicolumn{2}{c|}{7} \\ \cline{2-17} 
 & \multicolumn{1}{c|}{GE-PI} & GE & \multicolumn{1}{c|}{GE-PI} & GE & \multicolumn{1}{c|}{GE-PI} & GE & \multicolumn{1}{c|}{GE-PI} & GE & \multicolumn{1}{c|}{GE-PI} & GE & \multicolumn{1}{c|}{GE-PI} & GE & \multicolumn{1}{c|}{GE-PI} & GE & \multicolumn{1}{c|}{GE-PI} & GE \\ \hline
0 & \multicolumn{1}{c|}{1} & 1 & \multicolumn{1}{c|}{0} & 1 & \multicolumn{1}{c|}{1} & 1 & \multicolumn{1}{c|}{0} & 1 & \multicolumn{1}{c|}{1} & 1 & \multicolumn{1}{c|}{0} & 1 & \multicolumn{1}{c|}{1} & 1 & \multicolumn{1}{c|}{0} & 1 \\ \hline
1 & \multicolumn{1}{c|}{} &  & \multicolumn{1}{c|}{1} & 3 & \multicolumn{1}{c|}{0} & 3 & \multicolumn{1}{c|}{1} & 3 & \multicolumn{1}{c|}{0} & 3 & \multicolumn{1}{c|}{1} & 3 & \multicolumn{1}{c|}{0} & 3 & \multicolumn{1}{c|}{1} & 3 \\ \hline
2 & \multicolumn{1}{c|}{} &  & \multicolumn{1}{c|}{0} & 2 & \multicolumn{1}{c|}{1} & 5 & \multicolumn{1}{c|}{0} & 5 & \multicolumn{1}{c|}{1} & 5 & \multicolumn{1}{c|}{0} & 5 & \multicolumn{1}{c|}{1} & 5 & \multicolumn{1}{c|}{0} & 5 \\ \hline
3 & \multicolumn{1}{c|}{} &  & \multicolumn{1}{c|}{1} & 1 & \multicolumn{1}{c|}{1} & 4 & \multicolumn{1}{c|}{2} & 7 & \multicolumn{1}{c|}{1} & 7 & \multicolumn{1}{c|}{2} & 7 & \multicolumn{1}{c|}{1} & 7 & \multicolumn{1}{c|}{2} & 7 \\ \hline
4 & \multicolumn{1}{c|}{} &  & \multicolumn{1}{c|}{} &  & \multicolumn{1}{c|}{1} & 3 & \multicolumn{1}{c|}{1} & 6 & \multicolumn{1}{c|}{2} & 9 & \multicolumn{1}{c|}{1} & 9 & \multicolumn{1}{c|}{2} & 9 & \multicolumn{1}{c|}{1} & 9 \\ \hline
5 & \multicolumn{1}{c|}{} &  & \multicolumn{1}{c|}{} &  & \multicolumn{1}{c|}{0} & 2 & \multicolumn{1}{c|}{1} & 5 & \multicolumn{1}{c|}{1} & 8 & \multicolumn{1}{c|}{2} & 11 & \multicolumn{1}{c|}{1} & 11 & \multicolumn{1}{c|}{2} & 11 \\ \hline
6 & \multicolumn{1}{c|}{} &  & \multicolumn{1}{c|}{} &  & \multicolumn{1}{c|}{1} & 1 & \multicolumn{1}{c|}{1} & 4 & \multicolumn{1}{c|}{2} & 7 & \multicolumn{1}{c|}{2} & 10 & \multicolumn{1}{c|}{3} & 13 & \multicolumn{1}{c|}{2} & 13 \\ \hline
7 & \multicolumn{1}{c|}{} &  & \multicolumn{1}{c|}{} &  & \multicolumn{1}{c|}{} &  & \multicolumn{1}{c|}{1} & 3 & \multicolumn{1}{c|}{1} & 6 & \multicolumn{1}{c|}{2} & 9 & \multicolumn{1}{c|}{2} & 12 & \multicolumn{1}{c|}{3} & 15 \\ \hline
8 & \multicolumn{1}{c|}{} &  & \multicolumn{1}{c|}{} &  & \multicolumn{1}{c|}{} &  & \multicolumn{1}{c|}{0} & 2 & \multicolumn{1}{c|}{1} & 5 & \multicolumn{1}{c|}{1} & 8 & \multicolumn{1}{c|}{2} & 11 & \multicolumn{1}{c|}{2} & 14 \\ \hline
9 & \multicolumn{1}{c|}{} &  & \multicolumn{1}{c|}{} &  & \multicolumn{1}{c|}{} &  & \multicolumn{1}{c|}{1} & 1 & \multicolumn{1}{c|}{1} & 4 & \multicolumn{1}{c|}{2} & 7 & \multicolumn{1}{c|}{2} & 10 & \multicolumn{1}{c|}{3} & 13 \\ \hline
10 & \multicolumn{1}{c|}{} &  & \multicolumn{1}{c|}{} &  & \multicolumn{1}{c|}{} &  & \multicolumn{1}{c|}{} &  & \multicolumn{1}{c|}{1} & 3 & \multicolumn{1}{c|}{1} & 6 & \multicolumn{1}{c|}{2} & 9 & \multicolumn{1}{c|}{2} & 12 \\ \hline
11 & \multicolumn{1}{c|}{} &  & \multicolumn{1}{c|}{} &  & \multicolumn{1}{c|}{} &  & \multicolumn{1}{c|}{} &  & \multicolumn{1}{c|}{0} & 2 & \multicolumn{1}{c|}{1} & 5 & \multicolumn{1}{c|}{1} & 8 & \multicolumn{1}{c|}{2} & 11 \\ \hline
12 & \multicolumn{1}{c|}{} &  & \multicolumn{1}{c|}{} &  & \multicolumn{1}{c|}{} &  & \multicolumn{1}{c|}{} &  & \multicolumn{1}{c|}{1} & 1 & \multicolumn{1}{c|}{1} & 4 & \multicolumn{1}{c|}{2} & 7 & \multicolumn{1}{c|}{2} & 10 \\ \hline
13 & \multicolumn{1}{c|}{} &  & \multicolumn{1}{c|}{} &  & \multicolumn{1}{c|}{} &  & \multicolumn{1}{c|}{} &  & \multicolumn{1}{c|}{} &  & \multicolumn{1}{c|}{1} & 3 & \multicolumn{1}{c|}{1} & 6 & \multicolumn{1}{c|}{2} & 9 \\ \hline
14 & \multicolumn{1}{c|}{} &  & \multicolumn{1}{c|}{} &  & \multicolumn{1}{c|}{} &  & \multicolumn{1}{c|}{} &  & \multicolumn{1}{c|}{} &  & \multicolumn{1}{c|}{0} & 2 & \multicolumn{1}{c|}{1} & 5 & \multicolumn{1}{c|}{1} & 8 \\ \hline
15 & \multicolumn{1}{c|}{} &  & \multicolumn{1}{c|}{} &  & \multicolumn{1}{c|}{} &  & \multicolumn{1}{c|}{} &  & \multicolumn{1}{c|}{} &  & \multicolumn{1}{c|}{1} & 1 & \multicolumn{1}{c|}{1} & 4 & \multicolumn{1}{c|}{2} & 7 \\ \hline
16 & \multicolumn{1}{c|}{} &  & \multicolumn{1}{c|}{} &  & \multicolumn{1}{c|}{} &  & \multicolumn{1}{c|}{} &  & \multicolumn{1}{c|}{} &  & \multicolumn{1}{c|}{} &  & \multicolumn{1}{c|}{1} & 3 & \multicolumn{1}{c|}{1} & 6 \\ \hline
17 & \multicolumn{1}{c|}{} &  & \multicolumn{1}{c|}{} &  & \multicolumn{1}{c|}{} &  & \multicolumn{1}{c|}{} &  & \multicolumn{1}{c|}{} &  & \multicolumn{1}{c|}{} &  & \multicolumn{1}{c|}{0} & 2 & \multicolumn{1}{c|}{1} & 5 \\ \hline
18 & \multicolumn{1}{c|}{} &  & \multicolumn{1}{c|}{} &  & \multicolumn{1}{c|}{} &  & \multicolumn{1}{c|}{} &  & \multicolumn{1}{c|}{} &  & \multicolumn{1}{c|}{} &  & \multicolumn{1}{c|}{1} & 1 & \multicolumn{1}{c|}{1} & 4 \\ \hline
19 & \multicolumn{1}{c|}{} &  & \multicolumn{1}{c|}{} &  & \multicolumn{1}{c|}{} &  & \multicolumn{1}{c|}{} &  & \multicolumn{1}{c|}{} &  & \multicolumn{1}{c|}{} &  & \multicolumn{1}{c|}{} &  & \multicolumn{1}{c|}{1} & 3 \\ \hline
20 & \multicolumn{1}{c|}{} &  & \multicolumn{1}{c|}{} &  & \multicolumn{1}{c|}{} &  & \multicolumn{1}{c|}{} &  & \multicolumn{1}{c|}{} &  & \multicolumn{1}{c|}{} &  & \multicolumn{1}{c|}{} &  & \multicolumn{1}{c|}{0} & 2 \\ \hline
21 & \multicolumn{1}{c|}{} &  & \multicolumn{1}{c|}{} &  & \multicolumn{1}{c|}{} &  & \multicolumn{1}{c|}{} &  & \multicolumn{1}{c|}{} &  & \multicolumn{1}{c|}{} &  & \multicolumn{1}{c|}{} &  & \multicolumn{1}{c|}{1} & 1 \\ \hline
\end{tabular}%
}
\caption{Dimensions of $V^{\bl,L}$ (GE) and $\bV^{\bl,L}$ (GE-PI) for $\bl=(l,l,l)$ with different $l$ and $L$.}
\label{tab:n3}
\end{table}

\begin{table}[htb!]
\setlength{\tabcolsep}{7pt}
\centering
\resizebox{\textwidth}{!}{%
\begin{tabular}{|c|cc|cc|cc|cc|cc|cc|cc|cc|}
\hline
\multirow{2}{*}{$L/l$} & \multicolumn{2}{c|}{0} & \multicolumn{2}{c|}{1} & \multicolumn{2}{c|}{2} & \multicolumn{2}{c|}{3} & \multicolumn{2}{c|}{4} & \multicolumn{2}{c|}{5} & \multicolumn{2}{c|}{6} & \multicolumn{2}{c|}{7} \\ \cline{2-17} 
 & \multicolumn{1}{c|}{GE-PI} & GE & \multicolumn{1}{c|}{GE-PI} & GE & \multicolumn{1}{c|}{GE-PI} & GE & \multicolumn{1}{c|}{GE-PI} & GE & \multicolumn{1}{c|}{GE-PI} & GE & \multicolumn{1}{c|}{GE-PI} & GE & \multicolumn{1}{c|}{GE-PI} & GE & \multicolumn{1}{c|}{GE-PI} & GE \\ \hline
0 & \multicolumn{1}{c|}{1} & 1 & \multicolumn{1}{c|}{1} & 3 & \multicolumn{1}{c|}{1} & 5 & \multicolumn{1}{c|}{2} & 7 & \multicolumn{1}{c|}{2} & 9 & \multicolumn{1}{c|}{2} & 11 & \multicolumn{1}{c|}{3} & 13 & \multicolumn{1}{c|}{3} & 15 \\ \hline
1 & \multicolumn{1}{c|}{} &  & \multicolumn{1}{c|}{0} & 6 & \multicolumn{1}{c|}{0} & 12 & \multicolumn{1}{c|}{0} & 18 & \multicolumn{1}{c|}{0} & 24 & \multicolumn{1}{c|}{0} & 30 & \multicolumn{1}{c|}{0} & 36 & \multicolumn{1}{c|}{0} & 42 \\ \hline
2 & \multicolumn{1}{c|}{} &  & \multicolumn{1}{c|}{1} & 6 & \multicolumn{1}{c|}{2} & 16 & \multicolumn{1}{c|}{2} & 26 & \multicolumn{1}{c|}{3} & 36 & \multicolumn{1}{c|}{4} & 46 & \multicolumn{1}{c|}{4} & 56 & \multicolumn{1}{c|}{5} & 66 \\ \hline
3 & \multicolumn{1}{c|}{} &  & \multicolumn{1}{c|}{0} & 3 & \multicolumn{1}{c|}{0} & 17 & \multicolumn{1}{c|}{1} & 31 & \multicolumn{1}{c|}{1} & 45 & \multicolumn{1}{c|}{1} & 59 & \multicolumn{1}{c|}{2} & 73 & \multicolumn{1}{c|}{2} & 87 \\ \hline
4 & \multicolumn{1}{c|}{} &  & \multicolumn{1}{c|}{1} & 1 & \multicolumn{1}{c|}{2} & 15 & \multicolumn{1}{c|}{3} & 33 & \multicolumn{1}{c|}{4} & 51 & \multicolumn{1}{c|}{5} & 69 & \multicolumn{1}{c|}{6} & 87 & \multicolumn{1}{c|}{7} & 105 \\ \hline
5 & \multicolumn{1}{c|}{} &  & \multicolumn{1}{c|}{} &  & \multicolumn{1}{c|}{1} & 10 & \multicolumn{1}{c|}{1} & 32 & \multicolumn{1}{c|}{2} & 54 & \multicolumn{1}{c|}{3} & 76 & \multicolumn{1}{c|}{3} & 98 & \multicolumn{1}{c|}{4} & 120 \\ \hline
6 & \multicolumn{1}{c|}{} &  & \multicolumn{1}{c|}{} &  & \multicolumn{1}{c|}{1} & 6 & \multicolumn{1}{c|}{3} & 28 & \multicolumn{1}{c|}{4} & 54 & \multicolumn{1}{c|}{5} & 80 & \multicolumn{1}{c|}{7} & 106 & \multicolumn{1}{c|}{8} & 132 \\ \hline
7 & \multicolumn{1}{c|}{} &  & \multicolumn{1}{c|}{} &  & \multicolumn{1}{c|}{0} & 3 & \multicolumn{1}{c|}{1} & 21 & \multicolumn{1}{c|}{2} & 51 & \multicolumn{1}{c|}{3} & 81 & \multicolumn{1}{c|}{4} & 111 & \multicolumn{1}{c|}{5} & 141 \\ \hline
8 & \multicolumn{1}{c|}{} &  & \multicolumn{1}{c|}{} &  & \multicolumn{1}{c|}{1} & 1 & \multicolumn{1}{c|}{2} & 15 & \multicolumn{1}{c|}{4} & 45 & \multicolumn{1}{c|}{6} & 79 & \multicolumn{1}{c|}{7} & 113 & \multicolumn{1}{c|}{9} & 147 \\ \hline
9 & \multicolumn{1}{c|}{} &  & \multicolumn{1}{c|}{} &  & \multicolumn{1}{c|}{} &  & \multicolumn{1}{c|}{1} & 10 & \multicolumn{1}{c|}{2} & 36 & \multicolumn{1}{c|}{3} & 74 & \multicolumn{1}{c|}{5} & 112 & \multicolumn{1}{c|}{6} & 150 \\ \hline
10 & \multicolumn{1}{c|}{} &  & \multicolumn{1}{c|}{} &  & \multicolumn{1}{c|}{} &  & \multicolumn{1}{c|}{1} & 6 & \multicolumn{1}{c|}{3} & 28 & \multicolumn{1}{c|}{5} & 66 & \multicolumn{1}{c|}{7} & 108 & \multicolumn{1}{c|}{9} & 150 \\ \hline
11 & \multicolumn{1}{c|}{} &  & \multicolumn{1}{c|}{} &  & \multicolumn{1}{c|}{} &  & \multicolumn{1}{c|}{0} & 3 & \multicolumn{1}{c|}{1} & 21 & \multicolumn{1}{c|}{3} & 55 & \multicolumn{1}{c|}{4} & 101 & \multicolumn{1}{c|}{6} & 147 \\ \hline
12 & \multicolumn{1}{c|}{} &  & \multicolumn{1}{c|}{} &  & \multicolumn{1}{c|}{} &  & \multicolumn{1}{c|}{1} & 1 & \multicolumn{1}{c|}{2} & 15 & \multicolumn{1}{c|}{4} & 45 & \multicolumn{1}{c|}{7} & 91 & \multicolumn{1}{c|}{9} & 141 \\ \hline
13 & \multicolumn{1}{c|}{} &  & \multicolumn{1}{c|}{} &  & \multicolumn{1}{c|}{} &  & \multicolumn{1}{c|}{} &  & \multicolumn{1}{c|}{1} & 10 & \multicolumn{1}{c|}{2} & 36 & \multicolumn{1}{c|}{4} & 78 & \multicolumn{1}{c|}{6} & 132 \\ \hline
14 & \multicolumn{1}{c|}{} &  & \multicolumn{1}{c|}{} &  & \multicolumn{1}{c|}{} &  & \multicolumn{1}{c|}{} &  & \multicolumn{1}{c|}{1} & 6 & \multicolumn{1}{c|}{3} & 28 & \multicolumn{1}{c|}{5} & 66 & \multicolumn{1}{c|}{8} & 120 \\ \hline
15 & \multicolumn{1}{c|}{} &  & \multicolumn{1}{c|}{} &  & \multicolumn{1}{c|}{} &  & \multicolumn{1}{c|}{} &  & \multicolumn{1}{c|}{0} & 3 & \multicolumn{1}{c|}{1} & 21 & \multicolumn{1}{c|}{3} & 55 & \multicolumn{1}{c|}{5} & 105 \\ \hline
16 & \multicolumn{1}{c|}{} &  & \multicolumn{1}{c|}{} &  & \multicolumn{1}{c|}{} &  & \multicolumn{1}{c|}{} &  & \multicolumn{1}{c|}{1} & 1 & \multicolumn{1}{c|}{2} & 15 & \multicolumn{1}{c|}{4} & 45 & \multicolumn{1}{c|}{7} & 91 \\ \hline
17 & \multicolumn{1}{c|}{} &  & \multicolumn{1}{c|}{} &  & \multicolumn{1}{c|}{} &  & \multicolumn{1}{c|}{} &  & \multicolumn{1}{c|}{} &  & \multicolumn{1}{c|}{1} & 10 & \multicolumn{1}{c|}{2} & 36 & \multicolumn{1}{c|}{4} & 78 \\ \hline
18 & \multicolumn{1}{c|}{} &  & \multicolumn{1}{c|}{} &  & \multicolumn{1}{c|}{} &  & \multicolumn{1}{c|}{} &  & \multicolumn{1}{c|}{} &  & \multicolumn{1}{c|}{1} & 6 & \multicolumn{1}{c|}{3} & 28 & \multicolumn{1}{c|}{5} & 66 \\ \hline
19 & \multicolumn{1}{c|}{} &  & \multicolumn{1}{c|}{} &  & \multicolumn{1}{c|}{} &  & \multicolumn{1}{c|}{} &  & \multicolumn{1}{c|}{} &  & \multicolumn{1}{c|}{0} & 3 & \multicolumn{1}{c|}{1} & 21 & \multicolumn{1}{c|}{3} & 55 \\ \hline
20 & \multicolumn{1}{c|}{} &  & \multicolumn{1}{c|}{} &  & \multicolumn{1}{c|}{} &  & \multicolumn{1}{c|}{} &  & \multicolumn{1}{c|}{} &  & \multicolumn{1}{c|}{1} & 1 & \multicolumn{1}{c|}{2} & 15 & \multicolumn{1}{c|}{4} & 45 \\ \hline
21 & \multicolumn{1}{c|}{} &  & \multicolumn{1}{c|}{} &  & \multicolumn{1}{c|}{} &  & \multicolumn{1}{c|}{} &  & \multicolumn{1}{c|}{} &  & \multicolumn{1}{c|}{} &  & \multicolumn{1}{c|}{1} & 10 & \multicolumn{1}{c|}{2} & 36 \\ \hline
22 & \multicolumn{1}{c|}{} &  & \multicolumn{1}{c|}{} &  & \multicolumn{1}{c|}{} &  & \multicolumn{1}{c|}{} &  & \multicolumn{1}{c|}{} &  & \multicolumn{1}{c|}{} &  & \multicolumn{1}{c|}{1} & 6 & \multicolumn{1}{c|}{3} & 28 \\ \hline
23 & \multicolumn{1}{c|}{} &  & \multicolumn{1}{c|}{} &  & \multicolumn{1}{c|}{} &  & \multicolumn{1}{c|}{} &  & \multicolumn{1}{c|}{} &  & \multicolumn{1}{c|}{} &  & \multicolumn{1}{c|}{0} & 3 & \multicolumn{1}{c|}{1} & 21 \\ \hline
24 & \multicolumn{1}{c|}{} &  & \multicolumn{1}{c|}{} &  & \multicolumn{1}{c|}{} &  & \multicolumn{1}{c|}{} &  & \multicolumn{1}{c|}{} &  & \multicolumn{1}{c|}{} &  & \multicolumn{1}{c|}{1} & 1 & \multicolumn{1}{c|}{2} & 15 \\ \hline
25 & \multicolumn{1}{c|}{} &  & \multicolumn{1}{c|}{} &  & \multicolumn{1}{c|}{} &  & \multicolumn{1}{c|}{} &  & \multicolumn{1}{c|}{} &  & \multicolumn{1}{c|}{} &  & \multicolumn{1}{c|}{} &  & \multicolumn{1}{c|}{1} & 10 \\ \hline
26 & \multicolumn{1}{c|}{} &  & \multicolumn{1}{c|}{} &  & \multicolumn{1}{c|}{} &  & \multicolumn{1}{c|}{} &  & \multicolumn{1}{c|}{} &  & \multicolumn{1}{c|}{} &  & \multicolumn{1}{c|}{} &  & \multicolumn{1}{c|}{1} & 6 \\ \hline
27 & \multicolumn{1}{c|}{} &  & \multicolumn{1}{c|}{} &  & \multicolumn{1}{c|}{} &  & \multicolumn{1}{c|}{} &  & \multicolumn{1}{c|}{} &  & \multicolumn{1}{c|}{} &  & \multicolumn{1}{c|}{} &  & \multicolumn{1}{c|}{0} & 3 \\ \hline
28 & \multicolumn{1}{c|}{} &  & \multicolumn{1}{c|}{} &  & \multicolumn{1}{c|}{} &  & \multicolumn{1}{c|}{} &  & \multicolumn{1}{c|}{} &  & \multicolumn{1}{c|}{} &  & \multicolumn{1}{c|}{} &  & \multicolumn{1}{c|}{1} & 1 \\ \hline
\end{tabular}%
}
\caption{Dimensions of $V^{\bl,L}$ (GE) and $\bV^{\bl,L}$ (GE-PI) for $\bl=(l,l,l,l)$ with different $l$ and $L$.}
\label{tab:n4}
\end{table}

\end{document}